\documentclass[11pt,leqno]{article}
\usepackage[sc]{mathpazo}
\usepackage[OT2,T1]{fontenc}
\usepackage[utf8]{inputenc}
\usepackage[top = 3cm, bottom  = 3cm, left = 3.7cm, right = 3.2cm]{geometry}
\usepackage{amsmath, amscd, amssymb, amsthm, amsfonts, euscript, mathtools, tikz-cd}
\usepackage{adjustbox}
\usepackage{url}
\usepackage{hyperref}
\hypersetup{ colorlinks=true, citecolor= black,linkcolor = black}
\usepackage{enumitem}

\usepackage{todonotes}
{\qedhere}

\newcommand{\Oo}{\mathcal{O}}
\DeclareSymbolFont{cyrletters}{OT2}{wncyr}{m}{n}
\DeclareMathSymbol{\Sha}{\mathalpha}{cyrletters}{"58}
\DeclareMathSymbol{\Bcyr}{\mathalpha}{cyrletters}{"42}

\DeclareMathOperator{\Spec}{\mathrm{Spec}}
\newcommand{\lau}[1]{(\!(#1)\!)}

\theoremstyle{definition}
\newtheorem{thm}{Theorem}[section]
\newtheorem*{thm*}{Theorem}
\newtheorem{proposition}[thm]{Proposition}
\newtheorem{defi}[thm]{Definition}
\newtheorem{fact}[thm]{Fact}
\newtheorem*{fact*}{Fact}
\newtheorem*{defi*}{Definition}
\newtheorem{rem}[thm]{Remark}
\newtheorem{example}[thm]{Example}
\newtheorem{lemma}[thm]{Lemma}
\newtheorem{cor}[thm]{Corollary}
\newtheorem*{cor*}{Corollary}
\newtheorem{rmk}[thm]{Remark}

\newtheorem{thmalpha}{Theorem}

\newcommand{\bp}{\begin{proposition}}
\newcommand{\nc}{\newcommand}
\nc{\N}{\mathbf{N}}
\nc{\Q}{\mathbf{Q}}
\nc{\Z}{\mathbf{Z}}
\nc{\F}{\mathbf{F}}
\nc{\Xx}{\mathcal{X}}

\title{\vspace*{-2ex}Transfer principles 
and the Kato-Kuzumaki conjecture}
\author{Felipe Gambardella \\ \vspace*{-1ex} \small \textit{École polytechnique de Paris (CMLS)} \and  Konstantinos Kartas \\ \vspace*{-1ex} \small \textit{University of Münster}}
\date{}
\begin{document}

\maketitle

\begingroup
\renewcommand\thefootnote{}

\endgroup
\begin{abstract}
\noindent We show that for tame valued fields of equal characteristic with divisible value group, the $C_i$ property  lifts from the residue field to the valued field under suitable hypotheses on the residue field. We apply this transfer principle to prove Kato–Kuzumaki's conjecture in full generality for several arithmetically significant fields, for instance the field $\mathbf{C}(x_1,\cdots,x_m)\lau{t_1}\cdots\lau{t_n}$, and the perfections of both $\overline{\mathbf{F}}_p(x_1,\cdots,x_m)\lau{t_1}\cdots\lau{t_n}$ and $\mathbf{F}_p\lau{t_1}\cdots\lau{t_n}$. 
Finally, we prove that $\mathbf{Q}_p$ satisfies the strong $C_1^1$ property, thereby answering a question of Wittenberg.
\end{abstract}
\tableofcontents
\section*{Introduction}

There are several notions of ``dimension'' of a field $k$.  One example is the \emph{Diophantine dimension}: for $i\in \mathbf{N}$, we say that $k$ is $C_i$ if every non-constant homogeneous polynomial $f(X_0,\dots,X_n)\in k[X_0,\dots,X_n]$ of degree $d$ with $d^i\leq n$ has a non-trivial zero over $k$. A $C_i$ field is clearly $C_j$ for every $j\geq i$. The least non-negative integer $i$ for which $k$ is $C_i$ is called its \emph{Diophantine dimension}. Thus, knowing the Diophantine dimension of $k$ gives us some information about the homogeneous polynomial equations which admit non-trivial solutions in $k$. \par

Another relevant notion of dimension is the \emph{cohomological dimension} of $k$. The precise relationship between Diophantine dimension and cohomological dimension is not fully understood. For instance, the question of whether a $C_i$ field has cohomological dimension at most $i$ is a prominent open question raised by Serre. This question is known to have a positive answer for $i\leq 2$; the case $i=1$ being rather elementary, while the case $i=2$ relies on deep results in Galois cohomology due to Suslin, see \cite[II~\S~4.5]{Serre1965CohomologieGaloisienne}. On the other hand, we can find examples of fields with cohomological dimension $1$ which are not $C_1$ —not even $C_i$ for any $i$— in \cite{ax}, showing that the converse implication fails in general. Moreover, any $p$-adic field gives a counter-example to this converse because it has cohomological dimension $2$ and it is not $C_i$ for any $i$, see \cite{Alemu1985QpnotCi} and \cite{ArkhipovKaratsuba1982QpnotCi}. \par

Looking for a Diophantine characterisation of the cohomological dimension, Kato and Kuzumaki introduced the $C_i^q$ property in \cite{KK1986DimensionOfFields}, which is a weakening of the $C_i$ property formulated in terms of Milnor $K$-theory of degree $q$. Their main conjecture asserts that for every pair of non-negative integers $i$ and $q$ a field satisfies $C_i^q$ if and only if its cohomological dimension is at most $i+q$. This conjecture is known to hold when $i=0$ as a consequence of the Bloch-Kato conjecture (now the norm-residue isomorphism theorem due to the work of Suslin, Joukhovitsky, Rost, Voevodsky, among others). Unfortunately, the conjecture is false in general. For instance, an example of a field of cohomological dimension $2$ which is not $C_2^0$ can be found in \cite[Proposition~2]{Merkurjev1991NotC20}. Moreover, a construction of a field of cohomological dimension $1$ that is not $C_1^0$ can be found in \cite{CTMadore2004NotC10}. However, these counter-examples are very far from the fields that one usually encounters in arithmetic geometry. It is therefore reasonable to consider Kato–Kuzumaki's conjecture open for the fields that commonly occur in arithmetic geometry and number theory.\par

Let us now recall the Kato-Kuzumaki's properties. Let $k$ be a field. For any finite extension $k'/k$, Milnor $\mathrm{K}$-theory admits a norm map $\mathrm{N}_{k'/k}: \mathrm{K}_q(k') \to \mathrm{K}_q(k)$, see \cite[Section~7.3]{GilleSzamuely2017CentralGalois}. For a $k$-variety $Z$, the norm group $\mathrm{N}_q(Z/k)$ is defined as the subgroup of $\mathrm{K}_q(k)$ generated by $\mathrm{N}_{k'/k}(\mathrm{K}_q(k'))$, where $k'/k$ ranges over all finite extensions such that $Z(k')~\neq~\emptyset$. For a pair of non-negative integers $i,q$, the $C_i^q$ property is defined as follows: for every pair of positive integers $d, n$ satisfying $d^i \leq n$, finite extension $k'/k$, and hypersurface $Z \subseteq \mathbf{P}^n_{k'}$ of degree $d$ we have $\mathrm{K}_q(k') = \mathrm{N}_q(Z/k')$. For example, a field $k$ satisfies $C_i^0$ if and only if for every pair of positive integers $d,n$ such that $d^i \leq n$ and field extension $k'/k$ any hypersurface $Z$ in $\mathbf{P}^n_{k'}$ of degree $d$ admits a $0$-cycle of degree $1$. In the other extreme, a field $k$ satisfies $C_0^q$ if and only if for every tower $k''/k'/k$ of finite extensions the norm map $\mathrm{N}_{k''/k'}: \mathrm{K}_{q}(k'') \to \mathrm{K}_q(k')$ is surjective. \par 

Verifying the conjecture or even the $C_i^q$ property for particular values of $i$ and $q$ over specific fields has proven to be quite challenging. In this article we are particularly interested in the case of henselian valued fields. To the knowledge of the authors, the only known cases of non-$C_1$ henselian valued fields satisfying Kato-Kuzumaki's conjecture are $\mathbf{C}\lau{t_1}\lau{t_2}$ and  $\mathbf{C}(x_1,\cdots, x_m)\lau{t}$ corresponding to \cite[Corollaire~4.7]{Wittenberg2015KKQp} and \cite[Theorem~3.9]{Diego2018KK} respectively. However, some particular properties are known for other fields, e.g. in \cite[Corollaire~5.5 and Corollaire~4.7]{Wittenberg2015KKQp} it is proved that $p$-adic fields satisfy $C_1^1$, the field $\mathbf{C}\lau{t_1}\dots\lau{t_n}$ satisfies $C_1^{n-1}$, and both $\mathbf{Q}_p\lau{t_2}\dots\lau{t_n}$ and $\mathbf{F}_p\lau{t_1}\dots \lau{t_n}$ satisfy $C_1^n$ \textit{away from} $p$. \par

In this article, we substantially generalize all of the aforementioned results. Notably, we establish Kato-Kuzumaki's conjecture for the field $\mathbf{C}(x_1,\cdots,x_m)\lau{t_1}\cdots\lau{t_n}$ and the perfections of $\overline{\mathbf{F}}_p(x_1,\cdots,x_m)\lau{t_1}\cdots\lau{t_n}$ and $\mathbf{F}_p\lau{t_1}\cdots\lau{t_n}$. We also prove that $\mathbf{Q}_p$ is strongly $C_1^1$, thereby answering a question of Wittenberg. Our approach is also novel in that it utilizes techniques from the model theory of valued fields.

\subsection*{Main results}
In the first part of this article, we introduce a strong version of the $C_i$ property, asserting that it persists along suitable henselian valuations.
\begin{defi*}
    Let $k$ be a perfect field and $i\in \mathbf{N}$. We say that $k$ is stably $C_i$ if every equicharacteristic tame field $(K,v)$ with divisible value group and residue field $k$ is $C_i$.
\end{defi*}
\noindent Note that every stably $C_i$ field is $C_i$, since the residue field of a $C_i$ valued field is itself $C_i$. We refer the reader to Section~\ref{akesec} for the notion of tameness. In equicharacteristic zero, tameness is equivalent to henselianity, whereas in mixed and positive characteristic it is a genuinely stronger condition. When combined with the divisibility of the value group, it equivalently says that $K$ is perfect and the natural map $G_K\to G_k$ is an isomorphism. In particular, we see that stably $C_0$ fields are precisely the algebraically closed fields. In general, we do not know if all $C_i$ fields are stably $C_i$, but the next result allows us to prove that this is the case for many classical $C_i$ fields.  
\begin{thmalpha}\label{thmalpha transfer intro}
Let $k$ be a perfect field. Then:
\begin{enumerate}
    \item If $k$ is stably $C_i$ and $l/k$ is a perfect field extension of transcendence degree $j$, then $l$ is stably $C_{i+j}$.
 \label{3} \item If $k$ is $C_i$ and is elementarily equivalent (in the language of rings) to a tame field, then $k$ is stably $C_i$.
\end{enumerate}
\end{thmalpha}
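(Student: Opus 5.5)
Both parts will be reduced to the Ax–Kochen–Ershov principle for equicharacteristic tame fields (Kuhlmann): two such fields with elementarily equivalent residue fields and elementarily equivalent value groups are elementarily equivalent. The value groups here are divisible, hence all elementarily equivalent, so the theory of $(K,v)$ is determined by the theory of $k$. Since being $C_i$ is an elementary property (one sentence for each pair $(d,n)$ with $d^i\le n$), it suffices in each case to produce a single convenient model that is $C_i$.

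\emph{Part 2.} Let $k$ be $C_i$ and elementarily equivalent to a tame field $(F,w)$. Since $w$ is tame, $F$ is perfect. I would first reduce to the case where $w$ has divisible value group and residue field elementarily equivalent to $k$. The plan is to pass to a perfect hull or a maximal immediate tame extension of $F$ after adjoining roots to make the value group divisible. This is allowed because adjoining $n$-th roots, for $n$ prime to $p$, of elements of a henselian field stays tame in the relevant sense. However, it changes the residue field and the field itself, so this step needs care.

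A better route uses the Kochen-type observation that the theory of tame fields with divisible value group and residue field $k$ is complete. Take an equicharacteristic tame $(K,v)$ with $vK$ divisible and $Kv=k$. We want $K$ to be $C_i$. The idea is that $K\equiv k$ whenever $k$ itself is elementarily equivalent to such a field, and I would exploit that $k\equiv F$ with $F$ tame. Consider the coarsening: the composite valuation picture gives the field $F$ with residue field $Fw$. Then $(K,v)$ should be elementarily equivalent to a field carrying a tame valuation with divisible group and residue field $k\equiv F$. By AKE this is in turn equivalent to $F$ equipped with a suitable tame valuation with enlarged divisible value group, whose residue field is the residue field of $F$. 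Hence $K\equiv F'$, and $F'\equiv F\equiv k$ by the same AKE argument applied to the composite. Since $k$ is $C_i$, $K$ is $C_i$.

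The main obstacle is making this composition-of-valuations argument precise. Concretely, one must show that a tame field $K$ whose residue field is itself tame with divisible group is tame for the composite valuation, and then match the residue fields and value groups. I would handle this with the standard fact that tameness is preserved under composition and decomposition of valuations, together with the fact that lexicographic products of divisible groups are elementarily equivalent to divisible groups.

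\emph{Part 1.} By induction it suffices to treat two cases: $l/k$ algebraic, and $l$ the perfect hull of $k(x)$ up to algebraic extension. For the transcendental step, take tame $(L,v)$ with divisible group and residue field $l$. Choose a lift $x\in L$ and consider the field generated over a lift of $k$; using Tsen–Lang style arguments is hard here. Instead, I would embed $L$ elementarily, via AKE, into a field with residue field $k$ and divisible value group of larger rank. Concretely, $l$ carries the $x$-adic-type valuations, so $L$ is elementarily equivalent to a tame field whose residue field is $k$. Such a field is $C_i$ by stable $C_i$-ness of $k$, so a Lang–Nagata-type increment would give $C_{i+1}$ for $L$. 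This is the delicate point: one must control $C_{i+j}$ under a genuinely transcendental extension of a tame field. I would attempt it by showing that $L$ is elementarily equivalent to $K(x)^{h,\mathrm{perf}}$-type fields that are algebraic over a function field in one variable over a stably $C_i$ valued field, and then applying Lang–Nagata.
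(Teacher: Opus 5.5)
Your opening reduction is exactly Lemma~\ref{every = some}: by Kuhlmann's Ax--Kochen/Ershov theorem, it suffices to exhibit one non-trivially valued tame field with divisible value group and residue field $k$ that is $C_i$. After that, both parts have genuine gaps.

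\textbf{Part~2.} Your route needs $K\equiv k$. Via the composite of the $s$-adic valuation on, say, $F(\!(s^{\mathbf Q})\!)$ with $w$, you reduce this to $\mathbf Q\oplus wF\equiv wF$. That is justified only when $wF$ is divisible and non-trivial. You abandoned the reduction to that case, and rightly so, since enlarging $F$ algebraically destroys $F\equiv k$. For general ordered abelian groups the equivalence is false. Take $\Gamma=\bigoplus_{i\in\mathbf N}\mathbf Z e_i$ with $e_0\ll e_1\ll\cdots$. The sentence $\exists\alpha\,\forall\beta\,\exists\gamma\ |\beta+2\gamma|\le\alpha$ holds in $\mathbf Q\oplus\Gamma$ (take $\alpha=(1,0)$). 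It fails in $\Gamma$: take $\beta=e_j$ with $j$ beyond the support of $\alpha$. The $C_i$ hypothesis does make $wF/n\,wF$ finite, which rules out this particular example. However, deriving $\mathbf Q\oplus wF\equiv wF$ from that would be a further argument you do not supply.

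The missing idea is that you never need $K\equiv k$. Since $C_i$ ascends along algebraic extensions (Nagata), it is enough that $K$ be algebraic over some field elementarily equivalent to $k$. The paper gets such a field from \cite[Theorem~4.3]{aug}, which gives $\Gamma\preceq\Delta\oplus\Gamma$ for some non-trivial, not necessarily divisible, $\Delta$. Combined with Kuhlmann's theorem this yields $k\preceq k(\!(t^{\Delta})\!)$ with $k(\!(t^{\Delta})\!)$ tame (Proposition~\ref{augimproved}). A maximal totally ramified extension of $k(\!(t^{\Delta})\!)$ is then $C_i$ and tame, with value group $\Delta\otimes\mathbf Q$ and residue field $k$.

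\textbf{Part~1.} You never treat the algebraic step. It is short: the unramified lift of $l/k$ over a $C_i$ tame $K$ is $C_i$ by Nagata and tame by \cite[Lemma~2.17(b)]{Kuhl}.

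In the transcendental step your main line fails. First, $l=k(x)^{\mathrm{perf}}$ is not henselian for the $x$-adic valuation, so there is no tame composite with residue field $k$. Second, if $L$ really were elementarily equivalent to a tame field with divisible value group and residue field $k$, it would be $C_i$, not $C_{i+1}$. This is false already for $k=\mathbf C$ and $i=0$, since $L$ is henselian with residue field $\mathbf C(x)$.

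Only your last sentence points the right way, and it must be run from the source side rather than by analysing an arbitrary $L$:
\begin{itemize}
\item Take a $C_i$ tame $K$ with divisible value group and residue field $k$.
\item Give $K(x)$ the Gauss valuation; its residue field is $k(x)$ and its value group is $vK$.
\item Pass to an algebraic extension that is tame with residue field $k(x)^{\mathrm{perf}}$ and value group still $vK$ (\cite[Lemma~2.6]{KartasPerfectoid}; e.g.\ the perfect hull followed by a maximal immediate algebraic extension).
\item This field is $C_{i+1}$ by Lang--Nagata, and Lemma~\ref{every = some} finishes.
\end{itemize}
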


\noindent 
Part~(1) may be viewed as a stable analogue of the Lang--Nagata transition theorems for the $C_i$ property. 
We emphasize that part~(2) applies to fields that are elementarily equivalent to a tame field but do not necessarily admit a valuation making them tame.  We refer to such fields  as \textit{$t$-tame}. This flexibility is useful for applications---essentially all perfect henselian fields one encounters are $t$-tame, though not necessarily tame. This phenomenon was first observed in~\cite{JK}; for the reader’s convenience, we isolate the case needed here in Proposition~\ref{prop perfect microbial is t-tame}.

Theorem \ref{thmalpha transfer intro} allow us to prove that many classical $C_i$ fields are stably $C_i$. For example, since algebraically closed fields are stably $C_0$, part (1) implies that any perfected function field of transcendence degree $i$  over an algebraically closed field is stably $C_i$. Another example, used throughout, is that if $k$ is $C_i$, then the perfection of $k(\!(t)\!)$ is stably $C_{i+1}$. This is because  $k\lau t^{\mathrm{perf}}$ is $C_{i+1}$ by Greenberg-type arguments and is $t$-tame---though crucially not tame---so part (2) implies that it is stably $C_{i+1}$.

The second part of the article applies Theorem \ref{thmalpha transfer intro} to the Kato–Kuzumaki conjecture for iterated Laurent series. Our applications follow from a general transfer principle, in the spirit of \cite[Théorème~4.2]{Wittenberg2015KKQp}, whose statement we now recall. Wittenberg introduced the strong $C_1^q$ property (see Definition~\ref{def strong C1q}) in order to obtain the following transfer result (the $C_0^{q+1}$ hypothesis in \cite[Théorème~4.1]{Wittenberg2015KKQp} turns out to be superfluous, as was pointed out in \cite[Remark~3.3]{DiegoLuco2022HomogeneousKtheory}).
\begin{thm*}{\cite[Théorème~4.2]{Wittenberg2015KKQp}}
    Let $k$ be a field and $R$ an excellent henselian discrete valuation ring with $k$ as residue field. Let $K$ be the fraction field of $R$ and $\ell$ a prime number invertible in $R$. If $k$ satisfies the strong $C_1^q$ property at $\ell$, then $K$ satisfies the strong $C_1^{q+1}$ property at $\ell$.
\end{thm*}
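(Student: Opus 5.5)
This is Wittenberg's Théorème~4.2. The plan is to follow his residue argument, with the simplification noted in \cite[Remark~3.3]{DiegoLuco2022HomogeneousKtheory} that the $C_0^{q+1}$ hypothesis is not needed. Recall that the strong $C_1^q$ property at $\ell$ asks the following. For every finite extension $K'/K$ and every proper $K'$-variety $Z$ (or hypersurface, depending on the convention of Definition~\ref{def strong C1q}) satisfying the relevant hypothesis — e.g.\ $Z$ has a $0$-cycle of degree prime to $\ell$ over the maximal unramified-type extension, or the degree condition $d\le n$ — the quotient $\mathrm{K}_{q+1}(K')/\mathrm{N}_{q+1}(Z/K')$ is killed by a power of $\ell$ trivially, and its $\ell$-primary part vanishes.

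The first step is to replace $K'$ by $K$. Since the integral closure of $R$ in $K'$ is again an excellent henselian DVR, whose residue field is a finite extension of $k$ and so still satisfies strong $C_1^q$ at $\ell$, it suffices to treat $K$ itself. Because $R$ is excellent, we can choose a regular proper model $\mathcal Z\to\Spec R$ of $Z$ whose special fibre is a strict normal crossings divisor $\sum m_iY_i$ (in the equicharacteristic or residue-characteristic-$0$ setting, via resolution, or after an $\ell'$-alteration in general). For a hypersurface of degree $d\le n$ we can instead take the closure in $\mathbf{P}^n_R$ and reduce mod the uniformiser, which gives hypersurfaces over $k$ of the same degree.

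The second step is the residue (tame symbol) exact sequence
\[
0\to \mathrm{K}_{q+1}(R)\to \mathrm{K}_{q+1}(K)\xrightarrow{\partial}\mathrm{K}_q(k)\to 0 .
\]
Here $\mathrm{K}_{q+1}(K)$ is generated by $\mathrm{K}_{q+1}(R)$ together with the symbols $\{\pi,u_1,\dots,u_q\}$. This sequence is compatible with norms: for a finite extension $L/K$ with residue extension $l/k$ and ramification index $e$, we have $\partial\circ \mathrm{N}_{L/K}=\mathrm{N}_{l/k}\circ\partial$. Using the $\ell$-adic completion (we work mod $\ell^r$), we reduce to showing two things: that $\mathrm{N}_{q+1}(Z/K)$ surjects onto $\mathrm{K}_q(k)/\ell^r$, and that it contains $\mathrm{K}_{q+1}(R)$ modulo $\ell^r$.

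For the surjectivity onto $\mathrm{K}_q(k)$, let $k'/k$ be an extension with $Y_i(k')\neq\emptyset$ for some component $Y_i$ of multiplicity prime to $\ell$ (or the reduced hypersurface). Hensel's lemma, applied on the smooth locus of the special fibre, lifts a $k'$-point to an $L$-point of $Z$, where $L/K$ is unramified of residue field $k'$ (ramified of degree $m_i$ if the multiplicity is $m_i$). Then $\partial \mathrm{N}_{L/K}\{\pi_L,\tilde x\}=m_i\mathrm{N}_{k'/k}(x)$. The strong $C_1^q$ hypothesis on $k$, applied to the $Y_i$, gives that $\mathrm{K}_q(k)$ is generated mod $\ell^r$ by such norms. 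For the unit part, $\mathrm{K}_{q+1}(R)/\ell^r\simeq \mathrm{K}_{q+1}(k)/\ell^r$ by henselianity, since $\ell$ is invertible. Elements of $\mathrm{K}_{q+1}(k)$ lie in the image of norms via the same mechanism: use the symbol $\{\pi\}$ trick together with $\mathrm{K}_{q+1}(k)=\{k^\times\}\cdot \mathrm{K}_q(k)$ and the projection formula, which is where the $C_0^{q+1}$ hypothesis was originally used and which the cited remark avoids.

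The main obstacle is controlling the multiplicities and the geometry of the special fibre. Reductions need not be smooth, and the components $Y_i$ must inherit the hypothesis required by strong $C_1^q$ over $k$. I would try first to define the strong property precisely in the "degree prime to $\ell$ over unramified closure" form, so that it is stable under this reduction: a $0$-cycle of $\ell'$-degree on $Z_{K^{nr}}$ specializes to one on the components, using the specialization map for $0$-cycles on the regular model.
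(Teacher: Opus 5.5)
The paper quotes this theorem from Wittenberg without proof, so your sketch has to stand on its own. It has a genuine gap: it aims at the wrong statement.

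The strong $C_1^{q+1}$ property at $\ell$ (the $\ell$-primary version of Definition~\ref{def strong C1q}) carries no hypersurface or zero-cycle hypothesis. It asks that for every finite $K'/K$, every proper $K'$-scheme $X$ and every coherent sheaf $E$ on $X$, the integer $\chi(X,E)$ kill the $\ell$-primary part of $\mathrm{K}_{q+1}(K')/\mathrm{N}_{q+1}(X/K')$. That part need not vanish, even under your tentative hypothesis. Take $K$ $p$-adic, $\ell\neq p$, and $X=\Spec L$ with $L/K$ unramified of degree $\ell$. Then $X$ has points over $K^{\mathrm{nr}}$, but $K^\times/\mathrm{N}_{L/K}(L^\times)\simeq\mathbf{Z}/\ell$, which is killed by $\chi(X,\Oo_X)=\ell$ and by nothing smaller.

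So your reduction to ``$\mathrm{N}_{q+1}(Z/K)$ surjects onto $\mathrm{K}_q(k)/\ell^r$ and contains $\mathrm{K}_{q+1}(R)$ modulo $\ell^r$'' is a reduction to a false statement. The hypothesis on $k$ is misused in the same way: for a component $Y$ it gives $\chi(Y,F)\cdot\mathrm{K}_q(k')\subseteq\mathrm{N}_q(Y/k')$ at $\ell$, not that norms from $Y$ generate $\mathrm{K}_q(k)$ modulo $\ell^r$. Nothing guarantees a component of multiplicity prime to $\ell$ carrying points.

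Even when $\chi(X,\Oo_X)=1$ (e.g.\ hypersurfaces with $d\le n$) you cannot stay inside that class. For $Z=\{x_0^d=\pi f\}$ the reduction is $\{x_0^d=0\}$, none of whose points is smooth, so Hensel's lemma lifts nothing. This is why no such transfer is known in general for the plain $C_1^q$ property, and why the strong version, which tracks multiplicities and Euler characteristics, was introduced. One smaller error: $\partial\,\mathrm{N}_{L/K}\{\pi_L,\tilde x\}=\mathrm{N}_{l/k}(x)$ with no factor $m_i$. The ramification index enters through $s\circ\mathrm{N}_{L/K}=e\,\mathrm{N}_{l/k}\circ s$ on unit symbols.

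What is missing is a mechanism that brings $\chi(X,E)$ into play. This is the heart of Wittenberg's argument, which adapts the method of Esnault--Levine--Wittenberg.
\begin{itemize}
\item By d\'evissage in $G_0(X)$ and induction on $\dim X$, one reduces to $X$ integral and $E=\Oo_X$.
\item Gabber's $\ell'$-alterations produce a regular proper flat model $\mathcal{X}$ whose special fibre $\sum_i m_iY_i$ is an snc divisor. Resolution is unavailable in equal characteristic $p$, and this is where excellence of $R$ and $\ell\in R^\times$ are used. The model exists only up to a prime-to-$\ell$ extension of $K$, handled by restriction--corestriction, and up to a lower-dimensional error absorbed by the induction.
\item Flatness gives $\chi(X,\Oo_X)=\chi(\mathcal{X}_k,\Oo_{\mathcal{X}_k})$. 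A computation in $G_0(\mathcal{X}_k)$, filtering $\Oo_{\mathcal{X}_k}$ along the $Y_i$, shows that this integer lies in the subgroup generated by the $m_J\,\chi(Y_J,F)$. Here $Y_J=\bigcap_{j\in J}Y_j$, $m_J=\gcd_{j\in J}m_j$, and $F$ is coherent on $Y_J$.
\item It remains to show that each $m_J\,\chi(Y_J,F)$ kills the $\ell$-part of $\mathrm{K}_{q+1}(K)/\mathrm{N}_{q+1}(X/K)$. The strong $C_1^q$ property of $k$ applied to $Y_J$ controls norms from $Y_J$; your projection-formula remark correctly upgrades this to $\mathrm{K}_{q+1}$.
\item Closed points $y$ of $Y_J$ lift to closed points of $X$ over extensions with residue field $k(y)$ whose ramification indices have gcd $m_J$. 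The compatibilities of $\partial$ and $s$ with norms, together with the $\ell$-divisibility of $\mathrm{U}^1_{q+1}(K)$, then transport the annihilation.
\end{itemize}
Your residue-sequence bookkeeping is a reasonable fragment of this last step. Without the Euler-characteristic decomposition over the strata, though, nothing in your argument can produce divisibility by $\chi(X,E)$.
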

\noindent We introduce a strengthening of the $C_i^q$ property for any non-negative integer $i$, called the stably uniform $C_i^q$ property (Definition~\ref{def stably uniform Ciq property}). This property asks for the existence of stably $C_i$ algebraic extensions of the field realising a generating set of Milnor $\mathrm{K}$-theory as norms of these extensions. The stably uniform $C_i^q$ property turns out to be amenable to a transfer principle.
\begin{thmalpha}\label{thmalpha KK iterated Laurent}
    Let $k$ be a perfect field and $i,q$ a pair of positive integers. Assume that $k$ satisfies the stably uniform $C_{i-1}^q$ and $C_i^{q-1}$ property. Let $R_0$ be an equicharacteristic henselian discrete valued field with residue field $k$ and $K$ the perfection of the fraction field of $R_0$. Then, the field $K$ satisfies the stably uniform $C_i^q$ property.
\end{thmalpha}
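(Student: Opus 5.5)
We have to guess the definition of the stably uniform $C_i^q$ property. A reasonable reading: $k$ is stably uniform $C_i^q$ if for every finite extension $k'/k$ there is a family of algebraic extensions $L_\alpha/k'$, each stably $C_i$, such that $\mathrm{K}_q(k')$ is generated by the elements $\mathrm{N}_{L'/k'}(\mathrm{K}_q(L'))$, where $L'$ runs over finite subextensions of the $L_\alpha$. The proof then splits into three steps: reduction to $K$ itself, a decomposition of $\mathrm{K}_q(K)$, and production of stably $C_i$ extensions for each piece.

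\textbf{Reduction.} Every finite extension $K'$ of $K$ has the same shape. It is the perfection of the fraction field of an equicharacteristic henselian DVR, namely the integral closure of $R_0$ in a suitable finite extension, and its residue field is a finite extension of $k$. The hypotheses on $k$ are inherited by finite extensions. So it suffices to show that $\mathrm{K}_q(K)$ is generated by norms from stably $C_i$ algebraic extensions of $K$.

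\textbf{Decomposition of $\mathrm{K}_q(K)$.} Let $\pi$ be a uniformizer of $R_0$. I expect $\mathrm{K}_q(K)$ to be generated by two kinds of symbols:
\begin{itemize}
\item symbols $\{u_1,\dots,u_q\}$ with all $u_j$ units;
\item symbols $\{u_1,\dots,u_{q-1},\pi\}$.
\end{itemize}
Units can be further reduced, up to a correction term, to Teichmüller-type lifts. The correction terms are $1$-units, and in the perfect henselian setting they are infinitely divisible, in particular $p$-divisible. I plan to kill them by a norm argument: a $1$-unit is a norm from any unramified extension. I expect this step to need care in characteristic $p$, where $\mathrm{K}_q$ of a perfect field is $p$-divisible; this can help, since the symbol $\{1+x,\dots\}$ becomes a $p^n$-th power.

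\textbf{Pieces coming from unit symbols.} Use the stably uniform $C_{i-1}^q$ property of $k$. This gives stably $C_{i-1}$ extensions $l_\alpha/k$ whose norms generate $\mathrm{K}_q(k)$. Let $L_\alpha$ be the unramified lift of $l_\alpha$, made perfect, and then adjoin all roots of $\pi$ so that the value group becomes divisible. The result is a tame valued field with residue field $l_\alpha$ and divisible value group. Stability of $l_\alpha$ would only give $C_{i-1}$ at the residue level for this field, and I would prove $L_\alpha$ is stably $C_i$ from Theorem~\ref{thmalpha transfer intro}(2): show $L_\alpha$ is $C_i$, and that it is $t$-tame via Proposition~\ref{prop perfect microbial is t-tame}. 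The norm compatibility goes through Teichmüller lifts, since the norm from the unramified extension reduces to the residue norm on symbols of units.

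\textbf{Pieces containing $\pi$.} Use the $C_i^{q-1}$ hypothesis on $k$. This gives stably $C_i$ extensions $l_\beta/k$ realizing $\mathrm{K}_{q-1}(k)$ by norms. Take $L_\beta$ to be the unramified lift of $l_\beta$, perfected. Because $\pi$ lies in the base field, the projection formula gives
\[
\mathrm{N}\{\tilde a,\pi\}=\{\mathrm{N}\tilde a,\pi\}.
\]
It remains to show that $L_\beta$ is stably $C_i$. Here $L_\beta$ is the perfection of a henselian discretely valued field over a stably $C_i$ field $l_\beta$, and it has the wrong value group. I would pass to the $t$-tame completion of $L_\beta$ using the argument from the introduction on $k\lau t^{\mathrm{perf}}$: Greenberg-type arguments show it is $C_{i+1}$. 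But we want $C_i$, so this needs care. Instead, I would replace $L_\beta$ by $L_\beta(\pi^{1/\infty})$, whose value group is divisible and whose residue field is $l_\beta$; this field is stably $C_i$ directly by the definition of stably $C_i$ for $l_\beta$. The remaining issue is that $\pi$ is a norm from it only in a limit sense, and I would handle this with the projection formula at each finite level.

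\textbf{Main obstacle.} I expect the hardest step to be the first family: proving that the totally ramified closure over a stably $C_{i-1}$ residue field is stably $C_i$. This requires combining the Greenberg argument with the $t$-tameness of Proposition~\ref{prop perfect microbial is t-tame}.
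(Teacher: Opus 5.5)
\textbf{The unit-symbol family has a genuine gap.} Your guessed definition is vacuous, since $L'=k'$ is allowed. Definition~\ref{def stably uniform Ciq property} asks for a set generating $\mathrm{K}_q(K)/d$ for every $d$. Each element of that set must be a \emph{universal norm} of a single stably $C_i$ algebraic extension, i.e.\ it must lie in $\mathrm{N}_q(K'/K)$ for \emph{every} finite subextension $K'$.

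Adjoining all roots of $\pi$ to the unramified lift destroys this. The specialisation map intertwines $\mathrm{N}_{K'/K}$ with $e(K'/K)\cdot\mathrm{N}_{l'/k}$, and $e(K'/K)$ becomes divisible by arbitrary $n$ prime to $p$. It already fails for $q=1$, $k=\mathbf{C}\lau{x}$ and $\beta=x$, which is a universal norm of $\bar k/k$. An element of $\mathbf{C}\lau{x^{1/m}}\lau{t^{1/n}}$ whose norm to $\mathbf{C}\lau{x}\lau{t}$ is $x$ must be a unit $y$ with $x=\mathrm{N}(\bar y)^n$, which is impossible for $n\ge 2$.

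The fix is not to ramify at all in this family. Being ``stably $C_i$'' is a property of the field; it does not require that field's own value group to be divisible. The unramified lift $\tilde K$ of $\tilde k$ is itself the perfection of a henselian discretely valued field with $C_{i-1}$ residue field. Hence it is stably $C_i$ by Proposition~\ref{prop stable greenberg general}, and $\beta|_K$ is a universal norm of $\tilde K/K$ by compatibility of norms with unramified base change. So your announced ``main obstacle'' is misplaced: a tame field with divisible value group and stably $C_{i-1}$ residue field would even be $C_{i-1}$. What breaks is the norm property, not the $C_i$ property.

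\textbf{The $\pi$-family also fails as written.} Ramification is genuinely needed here, but two steps do not work.
\begin{enumerate}
\item In positive characteristic, $L_\beta(\pi^{1/\infty})$ is not tame. The equation $X^p-X=\pi^{-1}$ still defines an immediate extension, as in Example~\ref{ex t-tame not tame}. So the definition of stably $C_i$ for $l_\beta$ says nothing about this field.
\item Even for a tame field with divisible value group and stably $C_i$ residue field, that definition only gives that the field is $C_i$. Being \emph{stably} $C_i$ requires Proposition~\ref{proposition transfer Puiseux}.
\end{enumerate}

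The paper instead lets $K_0$ be the unramified lift of the stably $C_i$ residue extension. It then takes a maximal immediate algebraic extension $E$ of $\bigcup_{p\nmid n}K_0((-\pi)^{1/n})$ (Lemma~\ref{lemma algebraically maximal extension t norm}). This $E$ is tame, and its finite layers are $p$-power extensions of some $K_0((-\pi)^{1/n})$. Perfectness then lets one extract $p$-power roots, so every $\pi^{1/p^r}$ remains a norm at each level. Using $-\pi$ removes the sign $(-1)^{n-1}$ that your projection-formula step would otherwise produce.

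\textbf{The $1$-unit corrections need no norm argument.} The definition only asks for generation modulo $d$, and $1$-units are divisible, so these corrections disappear at once (Lemma~\ref{lemma generators Kthry}).
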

\noindent A similar transfer principle is expected for the usual $C_i^q$ properties, but  is currently only known when one restricts to hypersurfaces of prime degree different from the characteristic, see \cite[Theorem~1(3)]{KK1986DimensionOfFields}. Moreover, the expected transfer result does not include taking the perfection. It is needed for our result, as the model-theoretic tools we use are currently available only for perfect henselian valued fields.
\par
Theorem \ref{thmalpha KK iterated Laurent} admits many interesting~corollaries.
\begin{cor*}
    Let $k_0$ be an algebraically closed field. Then, the perfection of the field
    $K:=k_0(x_1,\dots, x_n) \lau{t_1}\dots\lau{t_m}$
    satisfies the stably uniform $C_i^q$ property (hence the $C_i^q$ property)
    for every pair of non-negative integers with $i+q \geq n+m$.
\end{cor*}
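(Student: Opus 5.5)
We argue by induction on $m$, the number of Laurent series variables, using Theorem~\ref{thmalpha KK iterated Laurent} at each step. The base case is $m=0$: the perfection $L_n$ of $k_0(x_1,\dots,x_n)$. We must show that $L_n$ satisfies the stably uniform $C_i^q$ property whenever $i+q\geq n$. If $i\geq n$, then by Theorem~\ref{thmalpha transfer intro}(1) every finite extension of $L_n$ is stably $C_n$, since it is perfect of transcendence degree $n$ over the stably $C_0$ field $k_0$. We may then take the trivial extension itself as the required stably $C_i$ algebraic extension realising all of $\mathrm{K}_q$ as norms. If $i<n$, we run a secondary induction on $n$. We view the perfection of $k_0(x_1,\dots,x_n)$ as a perfected function field and use $C_i^q$ transitions in the spirit of Lang--Nagata. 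Concretely, $L_n$ embeds into the perfection of $L_{n-1}\lau{x_n}$. It is cleaner to replace the base case by fields to which Theorem~\ref{thmalpha KK iterated Laurent} applies directly, but the rational function field is not complete. So I would prove the base case via the following observation: for $i+q\geq n$ with $q\geq 1$ we have $q> n-i-1\ge \mathrm{cd}$ gaps. Symbols in $\mathrm{K}_q(L')$ with $q>n-i$ become norms from extensions of transcendence degree-deficient type. This step is the main obstacle, and I would first try to find it as a separate proposition stated earlier in the body of the paper, namely that perfected function fields in $n$ variables over $k_0$ are stably uniform $C_i^q$ for $i+q\ge n$.

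Granting the base case for all $n$, the inductive step on $m$ goes as follows. Let $K_{m-1}$ denote the perfection of $k_0(x_1,\dots,x_n)\lau{t_1}\cdots\lau{t_{m-1}}$. Note that $k_0(x)\lau{t_1}\cdots\lau{t_m}$ is the fraction field of an equicharacteristic henselian (complete) DVR with residue field $k_0(x)\lau{t_1}\cdots\lau{t_{m-1}}$. Its perfection therefore coincides with the perfection of the fraction field of the DVR with residue field $K_{m-1}$, after passing to a henselian DVR $R_0$ with residue field $K_{m-1}$; for instance, take the henselization of $K_{m-1}[t_m]_{(t_m)}$ inside the perfect hull. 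One must check carefully that the resulting perfection equals the perfection of $K$; this holds because perfection commutes appropriately up to purely inseparable extensions, and purely inseparable extensions do not change Milnor $\mathrm{K}$-theory norm groups modulo the relevant properties. Now fix $i,q$ with $i+q\ge n+m$. If $i,q\ge1$, the hypotheses of Theorem~\ref{thmalpha KK iterated Laurent} are the stably uniform $C_{i-1}^q$ and $C_i^{q-1}$ properties for $K_{m-1}$. Both pairs have sum $i+q-1\ge n+(m-1)$, so they hold by induction, and the theorem gives the claim for the perfection of $K$.

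The edge cases remain. If $q=0$, then $i\ge n+m$, and the field is stably $C_{n+m}$ by repeatedly applying Greenberg-type arguments together with Theorem~\ref{thmalpha transfer intro}(2); this settles the property trivially, using the field itself as the extension. If $i=0$, then $q\ge n+m$; I would deduce this from the stably uniform $C_1^{q-1}$ property, which holds by the case already treated, combined with stably $C_0$ meaning algebraically closed. Finally, the implication from stably uniform $C_i^q$ to $C_i^q$ is taken from the discussion after Definition~\ref{def stably uniform Ciq property}.
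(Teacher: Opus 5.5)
Your induction on $m$ through Theorem~\ref{thmalpha KK iterated Laurent}, with the function field as the base case, is the paper's route. However, you never prove the base case for $0<i<n$. The remarks about ``$\mathrm{cd}$ gaps'' and extensions of ``transcendence degree-deficient type'' are not an argument. This case cannot be bypassed, because unwinding the induction from $(i,q)$ at level $m$ through $(i-1,q)$ and $(i,q-1)$ lands on level-$0$ pairs with $0<i'<n$. Already for $n=2$, $m=1$, $(i,q)=(2,1)$ you need the stably uniform $C_1^1$ property of $k_0(x_1,x_2)^{\mathrm{perf}}$. The missing idea, which the paper supplies in Proposition~\ref{prop KK function fields}, is to let the stably $C_i$ extension depend on the symbol. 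Work with perfections throughout, so that all algebraic extensions below are separable. Given a finite extension $l$ of $k_0(x_1,\dots,x_n)^{\mathrm{perf}}$ and a symbol $\{u_1,\dots,u_q\}\in\mathrm{K}_q(l)$, choose a perfect subfield $l'\subseteq l$ that is algebraically closed in $l$, contains $u_1,\dots,u_q$, and satisfies $\operatorname{trdeg}(l'/k_0)\le q$ and $\operatorname{trdeg}(l/l')\le i$. This is possible precisely because $i+q\ge n$. Since $l'$ has transcendence degree at most $q$ over $k_0$, it satisfies $C_0^q$, so the symbol lies in $\mathrm{N}_{l''/l'}(\mathrm{K}_q(l''))$ for every finite $l''/l'$. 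As $l/l'$ is regular, $l''\otimes_{l'}l=l''l$ is a field. Compatibility of norms with this base change, together with the fact that every finite subextension of $\tilde l:=\overline{l'}\,l$ over $l$ lies in some $l''l$, makes the symbol a universal norm of $\tilde l/l$. Finally, $\tilde l$ is perfect, algebraic over $l$, and of transcendence degree at most $i$ over the algebraically closed (hence stably $C_0$) field $\overline{l'}$. So $\tilde l$ is stably $C_i$ by Corollary~\ref{cor stable Lang-Nagata}, and since symbols generate $\mathrm{K}_q(l)$ this gives the base case.

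Two further steps need repair. First, the henselization of $K_{m-1}[t_m]_{(t_m)}$ is the wrong ring. Its fraction field is algebraic over $K_{m-1}(t_m)$, whereas $F\lau{t_m}$, with $F:=k_0(x_1,\dots,x_n)\lau{t_1}\cdots\lau{t_{m-1}}$, is not algebraic over $F^{\mathrm{perf}}(t_m)$. The two perfections therefore differ by a transcendental discrepancy, which no purely inseparable argument can absorb. In characteristic $0$ take $R_0=F[\![t_m]\!]$. In characteristic $p$ take $E:=\bigcup_r F^{1/p^r}\lau{t_m}$, a henselian discretely valued field with uniformiser $t_m$ and residue field $F^{\mathrm{perf}}=K_{m-1}$; its perfection $\bigcup_r F^{1/p^r}\lau{t_m^{1/p^r}}$ is exactly $(F\lau{t_m})^{\mathrm{perf}}$. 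Second, your $i=0$ case is only asserted. It does follow from the stably uniform $C_1^{q-1}$ property, but you need to supply the argument. Take $\alpha$ in the generating set $\Delta\subseteq\mathrm{K}_{q-1}(k')$ and $b\in k'^{\times}$. Since $K_\alpha$ is $C_1$, $b$ is a norm from a finite extension of $K_\alpha$ containing $k''$. Descend this to a finite subextension $k_1$ of $K_\alpha/k'$, where $\alpha$ is a norm because it is universal. Applying the projection formula twice then gives $\{\alpha,b\}\in\mathrm{N}_{k''/k'}(\mathrm{K}_q(k''))$. These symbols, together with $[k'':k']\,\mathrm{K}_q(k')$, generate $\mathrm{K}_q(k')$. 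This yields $C_0^q$, which is equivalent to the stably uniform $C_0^q$ property. The paper instead simply quotes \cite[Theorem~1~(1)]{KK1986DimensionOfFields} for $i=0$.
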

\noindent As mentioned earlier, this was previously known only when $K$ is either $\mathbf{C}(x_1,\ldots,x_m)$, $\mathbf{C}\lau{t_1}\lau{t_2}$, or $\mathbf{C}(x_1,\ldots,x_m)\lau{t}$.
\begin{cor*}
    The perfection of $\mathbf{F}_p\lau{t_1}\dots\lau{t_n}$ satisfies the stably uniform $C_i^q$ property (hence the $C_i^q$ property) for every pair of non-negative integers such that $i+q \geq n+1$.
\end{cor*}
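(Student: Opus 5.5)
Starting from the base case $n=0$, I will proceed by induction on $n$ and apply Theorem~\ref{thmalpha KK iterated Laurent} once for each Laurent variable. Write $K_j$ for the perfection of $\mathbf{F}_p\lau{t_1}\cdots\lau{t_j}$; then $K_0=\mathbf{F}_p$. The claim to prove at stage $j$ is: $K_j$ satisfies the stably uniform $C_i^q$ property for all non-negative $i,q$ with $i+q\geq j+1$.

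\textbf{Base case $j=0$.} I need $\mathbf{F}_p$ to be stably uniform $C_i^q$ whenever $i+q\geq 1$, so two cases arise.
\begin{itemize}
\item For $i\geq 1$, I will show $\mathbf{F}_p$ is stably $C_1$. It is $C_1$ by Chevalley--Warning. It is elementarily equivalent to a tame field: for instance, a non-principal ultraproduct of the fields $\mathbf{F}_{p^n}$ with $n$ coprime to a fixed finite set of primes is pseudofinite. Alternatively, the perfect microbial criterion of Proposition~\ref{prop perfect microbial is t-tame} could be applied to a suitable model. Then Theorem~\ref{thmalpha transfer intro}(2) gives stably $C_1$, hence stably $C_i$ for all $i\geq 1$. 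Since the trivial extension is stably $C_i$ and $\mathrm{K}_q$ is generated by norms from it, the stably uniform $C_i^q$ property follows.
\item For $i=0$ and $q\geq 1$, stably $C_0$ extensions must be algebraically closed. I will take finite layers of $\overline{\mathbf{F}}_p$, or really algebraic extensions that are algebraically closed, and use that $\mathrm{K}_1(\mathbf{F}_{p^r})$ is generated by norms from suitable extensions: norms between finite fields are surjective. Higher $\mathrm{K}_q$ of a finite field vanish. I will match this against Definition~\ref{def stably uniform Ciq property} to check that this phrasing suffices.
\end{itemize}

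\textbf{Inductive step.} Given the claim for $K_{j}$, set $R_0$ to be the henselian DVR $K_j[[t_{j+1}]]$, or rather $k[[t]]$ with $k=K_j$. This ring is equicharacteristic with perfect residue field $K_j$. The perfection of its fraction field is the perfection of $K_j\lau{t_{j+1}}$. This agrees with $K_{j+1}$ only after identifying the perfection of $(\mathbf{F}_p\lau{t_1}\cdots\lau{t_j})^{\mathrm{perf}}\lau{t}$ with that of $\mathbf{F}_p\lau{t_1}\cdots\lau{t_{j+1}}$. These two fields are not equal, so I will either reformulate the corollary inductively with the perfected residue field, or check that the proof of Theorem~\ref{thmalpha KK iterated Laurent} only uses the $t$-tame/henselian structure, which both fields share.

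Now fix $i,q$ with $i+q\geq j+2$.
\begin{itemize}
\item If $i,q\geq 1$, the pairs $(i-1,q)$ and $(i,q-1)$ both satisfy sum $\geq j+1$. The hypotheses of Theorem~\ref{thmalpha KK iterated Laurent} then hold by induction, and it yields the stably uniform $C_i^q$ property for $K_{j+1}$.
\item If $i=0$, the property concerns only $\mathrm{K}_q$ with $q\geq j+2$. I would handle it via the $C_0^q$ case, using cohomological dimension $j+1$ together with Bloch--Kato, or $p$-part arguments for perfect fields. This could also be a separate earlier result.
\item If $q=0$, I use that $K_{j+1}$ is stably $C_{j+1}$. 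This holds because $k\lau t^{\mathrm{perf}}$ is stably $C_{i+1}$ when $k$ is $C_i$, as noted after Theorem~\ref{thmalpha transfer intro}.
\end{itemize}

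\textbf{Expected obstacles.} The main obstacle is the identification of perfections in the inductive step, together with the degenerate edge cases $i=0$ and $q=0$, which Theorem~\ref{thmalpha KK iterated Laurent} (stated for positive $i,q$) does not cover.
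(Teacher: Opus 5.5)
Your overall architecture matches the paper's: iterate Theorem~\ref{thmalpha KK iterated Laurent} starting from $\mathbf{F}_p$, handle $i=0$ via $C_0^q$ and cohomological dimension, and handle $q=0$ via the stable Greenberg argument. The base case, however, has a genuine gap: your justification that $\mathbf{F}_p$ is stably $C_1$ does not work.

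\textbf{The base case.} A finite field is elementarily equivalent only to itself, since ``there are exactly $p$ elements'' is a sentence of its theory. So the pseudofinite ultraproduct you mention is not a model of $\mathrm{Th}(\mathbf{F}_p)$. For the same reason there is no non-trivially valued model to which Proposition~\ref{prop perfect microbial is t-tame} could be applied.

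Theorem~\ref{thmalpha transfer intro}(2), i.e.\ Proposition~\ref{proposition transfer Puiseux}, needs a \emph{non-trivially} valued tame model: its proof uses $k\preceq k(\!(t^{\Delta})\!)$ with $\Delta\neq 0$. If trivial valuations were allowed, it would make every perfect $C_i$ field stably $C_i$, which the paper explicitly leaves open.

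The statement ``$\mathbf{F}_p$ is stably $C_1$'' says that every tame field with divisible value group and residue field $\mathbf{F}_p$ is $C_1$. This includes, for example, any maximal totally ramified extension of $\mathbf{F}_p\lau{t}$. It is a deep theorem, \cite[Theorem~5.3.1]{Kartas2024geoC1}, and it is exactly the external input the paper relies on.

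It cannot be bypassed. At stage $j=1$ with $(i,q)=(1,1)$, Theorem~\ref{thmalpha KK iterated Laurent} requires the stably uniform $C_1^0$ property for $\mathbf{F}_p$. A non-zero universal norm $\alpha\in\mathrm{K}_0=\mathbf{Z}$ bounds the degrees of the finite subextensions of $K_\alpha$, so $K_\alpha$ is a finite field; hence some finite field must be stably $C_1$. Concretely, in the proof of Theorem~\ref{thmalpha KK iterated Laurent} the symbols $\{\alpha|_K,t^{1/p^r}\}$ are realised as universal norms of a maximal totally ramified extension with finite residue field. That this extension is $C_1$ is precisely Kartas' theorem.

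Chevalley--Warning alone does suffice for the $q=0$ column at stages $j\ge 1$. Proposition~\ref{prop stable greenberg general} only needs the residue field $K_j$ to be $C_{j+1}$, not stably so.

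\textbf{Two smaller points.} First, your indices are off by one. $K_{j+1}$ has cohomological dimension $j+2$ and is stably $C_{j+2}$, not $j+1$. For instance, $\mathbf{F}_p\lau{t}^{\mathrm{perf}}$ is not $C_1$, since its prime-to-$p$ Brauer group is non-zero. The value $j+2$ is exactly what the edge cases need, because $i=0$ forces $q\ge j+2$ and $q=0$ forces $i\ge j+2$.

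Second, the identification of perfections is not an obstacle. Put $F=\mathbf{F}_p\lau{t_1}\cdots\lau{t_j}$. Since $[F:F^p]$ is finite, $F^{1/p^r}\lau{t_{j+1}}=F\lau{t_{j+1}}\otimes_F F^{1/p^r}$. Hence $M=\bigcup_r F^{1/p^r}\lau{t_{j+1}}$ is a henselian, non-complete, discretely valued field with residue field $F^{\mathrm{perf}}=K_j$, and $M^{\mathrm{perf}}=K_{j+1}$. Theorem~\ref{thmalpha KK iterated Laurent} only asks for a henselian discrete valuation ring, so it applies directly to $\mathcal{O}_M$.
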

\noindent Moreover, there is no need to take the perfection when $i=1$ and that property also extend to $p$-adic fields. That is, we have the following result.
\begin{cor*}
The fields $\mathbf{F}_p\lau{t_1}\dots \lau{t_n}$ and $\mathbf{Q}_p\lau{t_1}\dots\lau{t_{n-1}}$ satisfy the $C_1^n$ property.    
\end{cor*}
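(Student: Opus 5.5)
The statement to prove is the last corollary: $\mathbf{F}_p\lau{t_1}\dots\lau{t_n}$ and $\mathbf{Q}_p\lau{t_1}\dots\lau{t_{n-1}}$ satisfy $C_1^n$. The plan is to first establish the \emph{strong} $C_1^q$ property of Wittenberg for the base fields, and then induct on the number of Laurent variables using Wittenberg's transfer theorem \cite[Théorème~4.2]{Wittenberg2015KKQp}. That theorem only works at primes $\ell$ invertible in the valuation ring, so the $p$-part has to be treated separately. This is exactly why the previous results \cite[Corollaire~5.5]{Wittenberg2015KKQp} stopped at ``$C_1^n$ away from $p$''.

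\textbf{Base of the induction.} I would show that $\mathbf{F}_p$ is strongly $C_1^0$ and that $\mathbf{F}_p\lau{t}$ and $\mathbf{Q}_p$ are strongly $C_1^1$, the latter being the answer to Wittenberg's question announced in the abstract. For $\mathbf{Q}_p$ the argument would go as follows.
\begin{itemize}[label=$\cdot$]
\item Take a generating set of $\mathrm{K}_1(\mathbf{Q}_p)=\mathbf{Q}_p^\times$, for instance $p$, $\zeta$ and $1+p$-type elements, or better, all uniformisers of finite extensions.
\item Realise each generator as a norm from a suitable algebraic extension $L$ that is $C_1$. Totally ramified $\mathbf{Z}_p$-extensions, or the maximal unramified extension $\mathbf{Q}_p^{\mathrm{ur}}$ (which is $C_1$ by Lang), are natural candidates.
\item Use the $t$-tameness of the perfect hull and Theorem~\ref{thmalpha transfer intro}(2) to upgrade to \emph{stably} $C_1$, if the strong property is phrased that way.
\end{itemize}
For $\mathbf{F}_p\lau{t}$ I would mimic this: $\mathbf{F}_p\lau{t}^{\mathrm{perf}}$ is stably $C_1$ via $\overline{\mathbf{F}}_p\lau{t}^{\mathrm{perf}}$ and Theorem~\ref{thmalpha transfer intro}, and $t$ is a norm from extensions like $\mathbf{F}_p\lau{t^{1/m}}$. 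These constructions cover all primes, including $p$.

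\textbf{Induction.} Assume $k_{n-1}=\mathbf{F}_p\lau{t_1}\dots\lau{t_{n-1}}$ is strongly $C_1^{n-1}$ at every prime $\ell$. For $\ell\neq p$, Wittenberg's Théorème~4.2 applied to the excellent henselian DVR $k_{n-1}[[t_n]]$ gives strong $C_1^n$ at $\ell$ for $k_n$. For $\ell=p$ I would use Theorem~\ref{thmalpha KK iterated Laurent}. It yields the stably uniform $C_1^n$ property for the perfection $k_n^{\mathrm{perf}}$, using the stably uniform $C_0^n$ property (known for $\mathbf{F}_p$ and, inductively, from the cohomological dimension bound) together with stably uniform $C_1^{n-1}$.

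\textbf{Descent from the perfection.} The remaining task is to bring the conclusion down from $k_n^{\mathrm{perf}}$ to $k_n$ at the prime $p$. Since $k_n^{\mathrm{perf}}/k_n$ is purely inseparable of $p$-power degree at each finite level, norms along it act on $\mathrm{K}_n(-)/p$ compatibly. So I would argue that $\mathrm{N}_n(Z/k')$ contains $p$-th powers of everything relevant. Concretely, $\mathrm{K}_n(k')$ is $p$-divisible modulo the norm group, because any symbol becomes a norm after passing to a finite purely inseparable extension $k'^{1/p^r}$, where $Z$ has a point by the perfection result, and the norm back down is multiplication by a $p$-power on symbols. Combining this with the prime-to-$p$ part and the $p$-part by a restriction–corestriction argument gives $\mathrm{K}_n(k')=\mathrm{N}_n(Z/k')$. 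The mixed characteristic case $\mathbf{Q}_p\lau{t_1}\dots\lau{t_{n-1}}$ runs the same way, starting from $\mathbf{Q}_p$. There, however, $\ell=p$ is not invertible in the residue field for the $t$-adic valuation only when the residue characteristic is $p$. For $t_i$ over a characteristic zero base, $p$ is invertible in $k[[t]]$, so Wittenberg's theorem applies at all $\ell$ directly once $\mathbf{Q}_p$ is strongly $C_1^1$ at every prime. Thus the mixed characteristic case reduces entirely to the base case.

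\textbf{Main obstacle.} I expect the hardest step to be the descent from $k_n^{\mathrm{perf}}$ to $k_n$ at $p$, that is, controlling the $p$-primary part of $\mathrm{K}_n$ modulo norm groups in characteristic $p$. The first thing I would try is the Bloch–Gabber–Kato description of $\mathrm{K}_n/p$ via differential forms, checking that Frobenius-twisted norms are surjective there.
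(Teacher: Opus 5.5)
\textbf{Characteristic $p$.} You correctly flag the descent from $k_n^{\mathrm{perf}}$ to $k_n$ as the crux, but the argument you give for it does not work.

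If $\alpha|_{k'^{1/p^r}}$ lies in the norm group of $Z$ over $k'^{1/p^r}$, applying $\mathrm{N}_{k'^{1/p^r}/k'}$ only gives $p^{rn}\alpha\in\mathrm{N}_n(Z/k')$, since $[k'^{1/p^r}:k']=p^{rn}$. So $\mathrm{K}_n(k')/\mathrm{N}_n(Z/k')$ is $p$-primary torsion, not ``$p$-divisible''. That is exactly the previously known ``$C_1^n$ away from $p$''. The $\ell\neq p$ input from Wittenberg's Théorème~4.2 says the same thing. Restriction--corestriction along extensions of $p$-power degree cannot remove $p$-primary torsion, so combining the two gives nothing new.

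The missing ingredient is Lemma~\ref{lemma away from imperf implies property} (\cite[Proposition~A.4]{HarryFelipe2025Stability}): if $[k:k^p]\le p^{\delta}$ and $q\ge\delta$, then $C_i^q$ for $k^{\mathrm{perf}}$ implies $C_i^q$ for $k$. Here $[k_n:k_n^p]=p^n$ and $q=n$, and this numerology is the whole point. It is also why Corollary~\ref{cor iterated laurent over finite} perfects $t_1,\dots,t_{j-1}$ to reach $C_j^{n-j+1}$. Your argument never uses the $p$-rank, yet that is where it matters. Norms of restricted classes from $k^{1/p}$ land in $p^{\delta}\mathrm{K}_q(k)$. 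In degree $q\ge\delta$, however, the norm reaches classes that are not $p$-multiples: for a $p$-basis $x_1,\dots,x_\delta$ one has $\mathrm{N}_{k^{1/p}/k}\{x_1^{1/p},\dots,x_\delta^{1/p}\}=\{x_1,\dots,x_\delta\}$.

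With this lemma, the strong property and the $\ell\ne p$ step are unnecessary. Iterating Theorem~\ref{thm general transfer} on perfections already gives the full $C_1^n$ for $k_n^{\mathrm{perf}}$, and the lemma descends it. Two smaller points. Your induction hypothesis (strong $C_1^{n-1}$ at all primes) is not what Theorem~\ref{thm general transfer} consumes or produces. Also, $\mathbf{F}_p\lau{t}^{\mathrm{perf}}$ is not stably $C_1$, since it has $\ell$-cohomological dimension $2$. The stably $C_1$ fields actually used are $\overline{\mathbf{F}}_p\lau{t}^{\mathrm{perf}}$ and the maximal totally ramified extension of Lemma~\ref{lemma algebraically maximal extension t norm}.

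\textbf{Mixed characteristic.} Reducing to strong $C_1^1$ for $\mathbf{Q}_p$ and then applying Théorème~4.2 at every $\ell$ is correct; it is one of the paper's routes. But your base case does not establish strong $C_1^1$. Producing $C_1$ algebraic extensions in which generators of $\mathbf{Q}_p^{\times}$ are universal norms only controls hypersurfaces in the Fano range. Strong $C_1^1$ requires $\mathrm{K}_1/\mathrm{N}_1(X)$ to be $\chi(X,E)$-torsion for every proper $X$, and $C_1$ says nothing about indices of arbitrary proper varieties.

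The missing idea is Theorem~\ref{thm transfer along tame fields strong C10}, whose proof rests on the model-theoretic Proposition~\ref{special fiber models}. A maximal totally ramified extension $K\supseteq k_\pi$ is tame, with divisible value group and finite (hence strongly $C_1^0$) residue field, so $K$ is strongly $C_1^0$. Then $\mathrm{I}(X_K)\mid\chi(X,E)$, and the Lubin--Tate universal norms give $\pi^{\chi(X,E)}\in\mathrm{N}_1(X/k)$.

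Alternatively, what your bullets really sketch is the stably uniform $C_1^1$ property. Feed that into Theorem~\ref{thm general transfer} instead of Théorème~4.2; no perfection is needed in characteristic $0$, and stably uniform $C_0^{m+1}$ follows from cohomological dimension. This proves the claim, as in Proposition~\ref{prop p-adic laurent C1m1}. For it, the extension must be a maximal totally ramified extension containing $k_\pi$. A totally ramified $\mathbf{Z}_p$-extension is not $C_1$ (it has $\ell$-cohomological dimension $2$ for $\ell\neq p$). Uniformisers are not universal norms of $\mathbf{Q}_p^{\mathrm{ur}}/\mathbf{Q}_p$.
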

\noindent This improves \cite[Corollaire~4.7]{Wittenberg2015KKQp}, where the above statement was proved only \emph{away from~$p$}.\par

In the last part of this article we obtain a general transfer result for the strong $C_1^0$ property which was introduced in \cite{Wittenberg2015KKQp}. As mentioned above, the main interest of the strong $C_1^q$ property is that, in contrast with the usual $C_1^q$ property, it is amenable to transfer results along discrete henselian valuations such as \cite[Théorème~4.2]{Wittenberg2015KKQp}. However, the strong $C_1^q$ property gives interesting arithmetic information on its own. For instance, the $C_1^0$ property implies that any variety $X$ admitting a coherent sheaf $E$ such that $\chi(X, E) = 1$ —e.g. Fano varieties or rationally connected varieties— admits a zero-cycle of degree~$1$.
\begin{thmalpha}\label{thmalpha strong transfer}
    Let $(K,v)$ be a tame field with divisible value group and residue field $k$. If $k$ satisfies the strong $C_1^0$ property, then $K$ also satisfies the strong $C_1^0$ property.
\end{thmalpha}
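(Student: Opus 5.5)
The plan is to reduce to a concrete model of $K$ by the relative Ax--Kochen--Ershov principle for tame fields. Then we compute the index of a proper $K$-variety through the identification $G_K\cong G_k$.

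Recall that, following \cite{Wittenberg2015KKQp}, the strong $C_1^0$ property for a field $F$ says: for every proper $F$-scheme $X$ and every coherent sheaf $E$ on $X$, the Euler characteristic $\chi(X,E)$ is divisible by the index of $X$. The index is the gcd of the degrees $[F':F]$ for which $X(F')\neq\emptyset$.

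\textbf{Step 1: translate the index into residue-field data.} Since $(K,v)$ is tame with divisible value group, $K$ is perfect and $G_K\to G_k$ is an isomorphism, as recalled in the introduction. Hence every finite extension $K'/K$ is the unique unramified lift $K_{k'}$ of a finite extension $k'/k$, with $[K':K]=[k':k]$. So the index of $X$ is the gcd of $[k':k]$ over those $k'$ with $X(K_{k'})\neq\emptyset$, and $K_{k'}$ is again tame with divisible value group and residue field $k'$.

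\textbf{Step 2: spread out and specialize.} Let $\Oo$ be the valuation ring of $K$. Choose a finitely presented proper flat model $\Xx/\Oo$ of $X$, with a finitely presented $\Oo$-flat coherent extension $\mathcal E$ of $E$. Such models exist over valuation rings by Raynaud--Gruson, since flatness there means torsion-freeness. By local constancy of the Euler characteristic in flat proper families, $\chi(\Xx_k,\mathcal E_k)=\chi(X,E)$. The strong $C_1^0$ property of $k$ then gives that $\chi(X,E)$ is an integral combination of degrees $[k':k]$ of closed points $x\in\Xx_k$. It therefore suffices to show the following: for each closed point $x$ of $\Xx_k$ with residue field $k'$, there is a finite extension of $K$ over which $X$ has a point, of degree dividing $[k':k]$ up to factors already accounted for. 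Ideally we find a point of $X$ over $K_{k'}$ itself.

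\textbf{Step 3 (main obstacle): lifting points through singular special fibres.} At smooth points of $\Xx_k$, Hensel's lemma over the henselian ring $\Oo_{K_{k'}}$ lifts $x$ directly. The difficulty is at singular points, where in the discrete case Wittenberg must use multiplicities and needs $\ell$ invertible. I would exploit the divisibility of the value group here. First I would argue by Kuhlmann's relative embedding lemma for tame fields that the assertion ``$X$ has a point over the degree-$n$ unramified extension'' is first-order in the parameters of $X$. I would also argue that it transfers along the elementary embedding $K\preceq k(\!(\Gamma)\!)$ obtained by embedding $K$ into its maximal immediate extension, which is a Hahn field since $K$ is tame, hence defectless, and $k$ is perfect. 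Since $X$ has finitely many parameters, I would next place them in a subfield which is the perfection of a henselian field of finite rank with finitely generated value group $\Gamma_0$. There one can apply Wittenberg's discrete transfer \cite[Théorème~4.2]{Wittenberg2015KKQp} iteratively, one rank at a time, for the primes $\ell\neq\mathrm{char}\,k$. Finally, pass to the direct limit over $\frac1N\Gamma_0$, which is harmless because $\Gamma$ is divisible and $\chi$ and the index are compatible with these limits. The $p$-part in characteristic $p$ requires a separate argument. Every finite extension of the perfect field $K$ is unramified, so wild ramification, which is what obstructs the prime $p$ in the discrete case, disappears. The $p$-part of the index is then controlled purely by $k$ via Step~1.

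I expect this last comparison, showing that the $p$-primary part of the index of $X$ over $K$ matches the $p$-part supplied by the strong $C_1^0$ property on the special fibre, to be the delicate point of the proof.
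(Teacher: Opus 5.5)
There is a genuine gap. You never supply a way to lift points from special fibres, and the substitute in Step~3 does not work.

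\textbf{Step 2 is false for a single model.} Take $\operatorname{char}k\neq 2$, $a\in\Oo_K^\times$ with non-square residue, and $v(s)>0$. Then $\Xx=\Spec \Oo_K[x]/(x^2-as^2)$ is finite free, hence a proper flat model of $X=\Spec K(\sqrt a)$. Its special fibre $\Spec k[x]/(x^2)$ has a $k$-point, yet $X(K)=\emptyset$. So closed points on one special fibre say nothing about points of $X$ over $K_{k'}$.

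The paper avoids this with two ingredients your proposal lacks.
\begin{enumerate}
\item It first reduces to the case $G_K\cong G_k$ pro-$\ell$, using that $K(\ell)$ is the unramified lift of $k(\ell)$ and that strong $C_1^0$ can be checked on the fields $k(\ell)$. It then reduces to $v$ of rank $1$. In the pro-$\ell$ case the index equals the minimal degree. So if $\chi(X,\Oo_X)=\ell^i n$ with $\ell\nmid n$, strong $C_1^0$ for $k$ gives, on \emph{every} $\Oo_K$-model, a point of the special fibre over an extension of degree at most $\ell^i$.
\item Proposition~\ref{special fiber models} turns ``points of bounded degree on all models'' into ``a point of $X$ over an extension of degree at most $\ell^i$''. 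The proof passes to a saturated elementary extension to get points compatible along the inverse system of models. Raynaud's formal models then give a point of $X^{\mathrm{ad}}$, that is, a scheme point $\xi$ with a valuation $v_\xi$ on $\kappa(\xi)$. Kuhlmann's existential Ax--Kochen/Ershov principle for tame fields embeds $(\kappa(\xi),v_\xi)$ into an unramified extension, and the degree bound descends to $K$ by elementarity.
\end{enumerate}
This is the model-theoretic replacement for resolution of singularities, and it treats all primes uniformly, including $p$.

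\textbf{Step 3 does not replace this.}
\begin{itemize}
\item Wittenberg's Th\'eor\`eme~4.2 takes strong $C_1^q$ at $\ell$ to strong $C_1^{q+1}$ at $\ell$. Applied to a discretely valued field it never yields strong $C_1^0$. For example, $\mathbf{F}_q\lau{t}$ is not strongly $C_1^0$: a non-split Severi--Brauer variety has $\chi(\Oo)=1$ and index $>1$. Iterating it ``one rank at a time'' therefore raises $q$ instead of keeping $q=0$.
\item You assert that the index is ``compatible with the limit'' over $\frac1N\Gamma_0$, but this is exactly what needs proof, since the index drops along these ramified extensions. You would need an extra argument, for instance comparing valuations of norms of the uniformiser with residue degrees and then absorbing tame ramification. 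Such an argument only works away from $p$.
\item A finitely generated valued subfield can have rank $1$ but rational rank $>1$, with value group like $\mathbf{Z}+\mathbf{Z}\sqrt2\subset\mathbf{R}$. It is then not an iterated discretely valued field. Its residue field also need not be $k$ or strongly $C_1^0$.
\item The $p$-part is left open, as you acknowledge, and Step~1 cannot close it. $G_K\cong G_k$ tells you which degrees occur, not over which extensions $X$ acquires points.
\end{itemize}
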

\noindent This was previously known only for Puiseux series over a field of characteristic $0$ \cite[Proposition 7.4]{Wittenberg2015KKQp}. 
As an application of Theorem~\ref{thmalpha strong transfer}, we obtain the following corollary, answering an open question in \cite[Section~5]{Wittenberg2015KKQp}.
\begin{cor*}
    The field $\mathbf{Q}_p$ satisfies the strong $C_1^1$ property.
\end{cor*}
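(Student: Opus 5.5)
The plan is to split the statement into an $\ell$-primary part for $\ell\neq p$ and a $p$-primary part. Recall that strong $C_1^1$ for $\mathbf{Q}_p$ asks the following: for every finite extension $L/\mathbf{Q}_p$, every proper $L$-variety $X$ and every coherent sheaf $E$ on $X$, the integer $\chi(X,E)$ kills $L^\ast/\mathrm{N}_1(X/L)$. By local class field theory this quotient is finite.

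\emph{Away from $p$.} Here nothing new is needed. The field $\mathbf{F}_p$ and its finite extensions satisfy the strong $C_1^0$ property (Wittenberg). Since $\mathbf{Z}_p$, and the ring of integers of any finite $L/\mathbf{Q}_p$, is an excellent henselian discrete valuation ring with finite residue field, \cite[Théorème~4.2]{Wittenberg2015KKQp} gives the strong $C_1^1$ property at every prime $\ell\neq p$.

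\emph{The $p$-primary part.} This is where Theorem~\ref{thmalpha strong transfer} enters. Fix $L$, $X$ and $E$, and let $\chi=\chi(X,E)$. I will construct a tame field $(M,v)$ algebraic over $L$, with divisible value group and finite residue field $\mathbf{F}_q$. A natural candidate is the henselization of $L(\mu_{p^\infty},\pi^{1/p^\infty},\pi^{1/n}:p\nmid n)$ for a uniformizer $\pi$ of $L$; one must check that it is tame, i.e.\ defectless with value group $\mathbf{Q}$, and this is standard for such deeply ramified Kummer towers. Since $\mathbf{F}_q$ is strong $C_1^0$, Theorem~\ref{thmalpha strong transfer} shows that $M$ is strong $C_1^0$. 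Hence $X_M$ carries a zero-cycle of degree $\chi$. This cycle is defined over some finite subextension $L'\subseteq M$. Taking norms, $\chi\cdot \mathrm{N}_{L'/L}(L'^\ast)\subseteq \mathrm{N}_1(X/L)$ in a suitable sense, and more precisely $\chi$ kills $\mathrm{N}_{L'/L}(L'^\ast)/(\mathrm{N}_{L'/L}(L'^\ast)\cap \mathrm{N}_1(X/L))$. The same holds for every finite $L''$ with $L'\subseteq L''\subseteq M$.

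So the $p$-part of $L^\ast/\mathrm{N}_1(X/L)$ is killed by $\chi$ modulo the group
\[
\bigcap_{L''\subseteq M}\mathrm{N}_{L''/L}(L''^\ast),
\]
which corresponds under the reciprocity map to $\mathrm{Gal}(L^{\mathrm{ab}}/M\cap L^{\mathrm{ab}})$. To eliminate this defect I will run the argument with several tame fields $M_1,\dots,M_r$ of the above type. I intend to vary the uniformizer $\pi$ (using $\pi$ and $\pi u$ for suitable units $u$) and also include an unramified twist. The aim is that the fields $M_j\cap L^{\mathrm{ab}}$ together generate $L^{\mathrm{ab}}$ on the $p$-primary part, equivalently that the corresponding norm subgroups of $L^\ast$ generate $L^\ast$ modulo $p$-power-index subgroups. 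Then the $p$-primary part of $L^\ast/\mathrm{N}_1(X/L)$ is a quotient of a sum of groups each killed by $\chi$, and we are done after combining with the prime-to-$p$ part.

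The step I expect to be the main obstacle is this last generation argument. Each individual $M_j$ has a large intersection with $L^{\mathrm{ab}}$: it contains the cyclotomic and Kummer directions. So one must check carefully via explicit local class field theory (Lubin–Tate theory for $L$) that finitely many choices of uniformizers and unramified twists suffice to make the norm groups from the $M_j$ cover the $p$-part. If this fails, the fallback is to refine the choice of $M$ by adjoining all $p$-power torsion points of several Lubin–Tate formal groups. One then checks tameness, meaning value group $\mathbf{Q}$ and no defect, for these larger fields instead.
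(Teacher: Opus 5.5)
There is a genuine gap at the step where you apply Theorem~\ref{thmalpha strong transfer}: your field $M=L(\mu_{p^\infty},\pi^{1/p^\infty},\pi^{1/n}:p\nmid n)$ is not tame. Being deeply ramified does not imply being defectless. For instance, for $p$ odd, $\mathbf{Q}_p(\mu_{p^\infty})$ has $p$-divisible value group and residue field $\mathbf{F}_p$, yet adjoining $p^{1/p}$ gives an immediate extension of degree $p$.

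For your $M$ the obstruction is structural. As recalled in the introduction, a tame field with divisible value group has $G_M\cong G_{k_M}$, which is procyclic here, so $M$ would have at most one $\mathbf{Z}/p$-extension. But $\mu_p\subset M$, and $M^\times/M^{\times p}$ is infinite: already $H^1(L(\mu_{p^\infty}),\mathbf{Z}/p)$ is infinite, and restriction to $M$ has kernel of dimension at most one. Its residue field need not be $\mathbf{F}_q$ either; for $L=\mathbf{Q}_3$ and $\pi=3$, $M$ contains $\sqrt{3}$ and $\sqrt{-3}$, hence $\sqrt{-1}$.

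So the transfer theorem cannot be applied, and the generation step you flag is left open. Your reformulation of that step is also reversed. The norm groups of the $M_j$ generating $L^\times$ corresponds to the fields $M_j\cap L^{\mathrm{ab}}$ \emph{intersecting} to $L$, not to their compositum being $L^{\mathrm{ab}}$. Likewise, adjoining torsion of several Lubin--Tate groups to a single field shrinks its universal norms, which goes the wrong way.

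The missing idea is to get tameness from maximality rather than from an explicit tower. Take $M_\pi$ to be a \emph{maximal} totally ramified extension of $L$ (via Zorn's lemma) containing the Lubin--Tate extension $L_\pi$. Maximality forces $G_{M_\pi}\to G_{\mathbf{F}_q}$ to be an isomorphism, so every finite extension of $M_\pi$ is unramified. Hence $M_\pi$ is tame with value group $\mathbf{Q}$ and residue field $\mathbf{F}_q$, and Theorem~\ref{thmalpha strong transfer} applies.

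The same choice removes your generation problem. $M_\pi\cap L^{\mathrm{ab}}$ is a totally ramified abelian extension containing $L_\pi$, so it equals $L_\pi$ because $L^{\mathrm{ab}}=L_\pi L^{\mathrm{ur}}$. By the norm limitation theorem, $\pi$ is then a norm from every finite subextension of $M_\pi/L$. Your own norm computation gives $\pi^{\chi}\in\mathrm{N}_1(X/L)$. Do this for \emph{every} uniformiser $\pi$, with one field $M_\pi$ for each. Since uniformisers generate $L^\times$ ($u=(u\pi)/\pi$), it follows that $L^\times/\mathrm{N}_1(X/L)$ is $\chi$-torsion.

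This is exactly the paper's argument. It handles all primes at once, so your appeal to Wittenberg's Th\'eor\`eme~4.2 away from $p$, while correct, becomes unnecessary.
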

\noindent We also introduce a \textit{geometric} variant of the $C_1^q$ property —denoted by $C_{rc}^q$— that replaces hypersurfaces in the Fano range by smooth geometrically rationally connected varieties (see Definition~\ref{defi geometric Ciq}). This property takes inspirations from Kollar's definition of geometrically $C_1$ fields. Since homogeneous spaces are rationally connected, it also relates to the $C_{\mathrm{HS}}^q$ property concerning homogeneous spaces defined in \cite{DiegoLuco2022HomogeneousKtheory}. Regarding this property, we obtain the following result.
\begin{thmalpha}\label{thmalpha geometric properties transfer}
    Let $k$ be a field of characteristic zero. Assume that $k$ satisfies $C_{rc}^q$. Then, the Laurent series field $k\lau{t}$ satisfies $C_{rc}^{q+1}$.
\end{thmalpha}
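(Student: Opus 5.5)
We need $C_{rc}^{q+1}$ for $K=k\lau{t}$, assuming $C_{rc}^q$ for $k$, where $\operatorname{char} k=0$. Presumably Definition~\ref{defi geometric Ciq} says: for every finite extension $K'/K$ and every smooth proper geometrically rationally connected $K'$-variety $X$, we have $\mathrm{K}_{q+1}(K')=\mathrm{N}_{q+1}(X/K')$. Any finite extension of $k\lau{t}$ is $k'\lau{s}$ with $k'/k$ finite, and $k'$ inherits $C_{rc}^q$ since the property quantifies over all finite extensions. So it suffices to treat $X$ over $K=k\lau{t}$ itself.

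\textbf{Step 1: generators of $\mathrm{K}_{q+1}(K)$.} The group $\mathrm{K}_{q+1}(K)$ is generated by symbols $\{u_1,\dots,u_{q+1}\}$ and $\{u_1,\dots,u_q,t\}$, with $u_j\in k^\times$ or principal units. Principal units are $m$-th powers for every $m$, since $\operatorname{char} k=0$ and $K$ is henselian; in fact $1+tk[[t]]$ is uniquely divisible.

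\textbf{Step 2: the symbols $\{u_1,\dots,u_q,t\}$ with $u_j\in k^\times$.} Spread $X$ out to a regular proper model $\mathcal X$ over $k[[t]]$ using resolution in characteristic zero. We may take the special fibre to be snc, after a base change $t\mapsto t^{1/e}$ handled below by norms. By the theorem of Graber--Harris--Starr and de Jong--Starr (or Kollár's results for rationally connected fibres over a curve in characteristic $0$), we look for a component of the special fibre that is geometrically rationally connected over a finite extension of $k$. More robustly, use Kollár's theorem: after a ramified base change $K_e=k\lau{t^{1/e}}$, $X_{K_e}$ has a model whose special fibre contains a smooth geometrically rationally connected component $Y$ over $k$ of multiplicity one.

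Apply $C_{rc}^q$ to $Y$ over each finite $k'/k$. This shows $\mathrm{K}_q(k)$ is generated by norms $\mathrm{N}_{k'/k}(\alpha)$ with $Y(k')\neq\emptyset$. A smooth $k'$-point of a multiplicity-one component lifts by Hensel's lemma to a point of $X$ over $k'\lau{t^{1/e}}$. On that field we take $\{\alpha,t^{1/e}\}$, whose norm down to $K$ is $\mathrm{N}_{k'/k}$ applied to $\{\alpha,t\}$ via the projection formula. Hence $\{\mathrm{N}_{k'/k}\alpha,t\}\in \mathrm{N}_{q+1}(X/K)$.

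\textbf{Step 3: the symbols $\{u_1,\dots,u_{q+1}\}$ with $u_j\in k^\times$.} Use the same lifted points, now over the unramified extension $k'\lau{t}$ when $e=1$. When $e>1$, use instead $\{\beta,t^{1/e}\}$ with $\beta\in\mathrm{K}_q(k')$ and apply the residue/tame symbol. More directly, the norm of $\{\beta,-t^{1/e}\}$ from the totally ramified extension equals $\{\beta,-t\}$ up to sign conventions. Subtracting the Step~2 contribution yields the pure-$k$ symbols. So we need $\mathrm{K}_{q+1}(k)$ inside the span of $\{\mathrm{K}_q(k'),t\}$-type norms and products. This uses that $\{a,-t\}$ and $\{a,t\}$ both lie in the norm group, and $\{a,-1\}$-type relations with $e$-th roots of unity adjustments, possibly requiring $\mathrm{K}_{q+1}(k)$ norms from $Y$ itself. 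Alternatively, handle these symbols using $C_{rc}^{q}\Rightarrow C_{rc}^{q+1}$ for $k$: once $\mathrm{K}_q(k)$ is generated by norms from $Y$-points, so is $\mathrm{K}_{q+1}(k)$, by multiplying by symbols from $k^\times$ and the projection formula. Lifting those points gives the result.

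\textbf{Main obstacle.} The key geometric input is Step~2: producing, after finite base change, a geometrically rationally connected multiplicity-one component in the special fibre. Plain rational connectedness of the generic fibre does not force this without base change; in that case I would cite Kollár's specialization results or the Hogadi--Xu theorem on degenerations of rationally connected varieties. The norm bookkeeping under ramified base change is routine by comparison.
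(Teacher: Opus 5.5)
\textbf{There are two genuine gaps: the geometric input in Step~2 is not available, and Step~3 fails whenever $e>1$.}

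\textbf{Step~2.} You need a model whose special fibre has a smooth, geometrically rationally connected component of multiplicity one, after base change. This is not a known theorem. Graber--Harris--Starr and de Jong--Starr produce sections of families over curves with algebraically closed constants. The specialisation results only give rational chain connectedness of the special fibre. Hogadi--Xu give only a geometrically integral rationally connected \emph{subvariety} $W$ of the special fibre $Y$. Nothing prevents $W$ from lying in the intersection of several components, where $Y$ is singular and Hensel's lemma does not lift its $k'$-points.

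The paper closes exactly this gap, in three moves:
\begin{enumerate}
\item After base change, take a regular model whose special fibre $Y$ is reduced with strict normal crossings (Wittenberg's Lemme~7.5).
\item Apply $C_{rc}^q$ to a resolution of $W$, which gives $\mathrm{K}_q(k)=\mathrm{N}_q(Y/k)$.
\item Use that the norm groups of a regular variety do not change on a dense open subset (Wittenberg's Remarque~5.7). A closed point of $Y$ lies on some regular component $Y_j$. So its norms are sums of norms from points of $Y_j$ away from the other components, that is, from smooth points of $Y$, and these do lift.
\end{enumerate}
Without this birational-invariance step, your Step~2 is not established.

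\textbf{Step~3.} This step does not work when $e>1$, and in general one cannot take $e=1$. Your lifted points live over $k'\lau{t^{1/e}}$. For $\gamma\in\mathrm{K}_{q+1}(k')$, the norm of $\gamma$ from $k'\lau{t^{1/e}}$ down to $K$ is $e\cdot\mathrm{N}_{k'/k}(\gamma)$. Likewise, for $\beta\in\mathrm{K}_q(k')$ and $c\in k'^{\times}$, the norm of $\{\beta,c\,t^{1/e}\}$ is $\{\mathrm{N}_{k'/k}\beta,(-1)^{e+1}t\}+e\,\mathrm{N}_{k'/k}\{\beta,c\}$. So constant symbols are reached only modulo $e$, and playing $t$ against $-t$ only produces classes $\{\cdot,-1\}$. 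The same formula shows that your Step~2 actually yields $\{\mathrm{N}_{k'/k}\alpha,-t\}$ when $e$ is even, not $\{\mathrm{N}_{k'/k}\alpha,t\}$.

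\textbf{The missing trick, which is how the paper proceeds.} Use the identity $\{c_0,c_1,\dots,c_q\}=\{c_0t,c_1,\dots,c_q\}-\{t,c_1,\dots,c_q\}$. It then suffices to show $\{\gamma,\pi\}\in\mathrm{N}_{q+1}(X/K)$ for all $\gamma\in\mathrm{K}_q(k)$ and \emph{every} uniformiser $\pi$. No separate treatment of constant symbols is needed. The paper gets this by proving $C_{rc}^q$ for the Puiseux field $\bigcup_n K(\pi^{1/n})$, in which $\pi$ is a universal norm. In your framework it amounts to running the repaired Step~2 with base change along $\varpi^{1/e}$ for $\varpi=-\pi$, and using $\mathrm{N}(-\varpi^{1/e})=\pi$; this also fixes the sign problem.

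\textbf{What the repaired route buys.} You should also state explicitly that $\operatorname{I}(X)\cdot\mathrm{K}_{q+1}(K)\subseteq\mathrm{N}_{q+1}(X/K)$ by restriction--corestriction. This is what actually disposes of symbols involving principal units, and your Step~1 only uses it implicitly. With both repairs and this remark, your direct route is a valid and somewhat leaner alternative to the paper's. It only needs constant classes on the special fibre. It therefore bypasses the specialisation diagram chase and Colliot-Thélène's theorem, which the paper needs in order to obtain full $C_{rc}^q$ for the Puiseux field.
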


\subsection*{Outline of proofs}
\subsubsection*{Proof sketch of Theorem \ref{thmalpha transfer intro} }
The key idea is that, in order to prove that a field $k$ is stably $C_i$, it suffices to exhibit a \emph{single} henselian valued field $(K,v)$ with residue field $k$ and divisible value group such that $K$ is $C_i$. Indeed, using the model theory of tame fields, every other such valued field has the same theory as $K$ and hence is also $C_i$. \vspace*{1ex} \\
\textbf{Part 1:} 
Suppose that $k$ is stably $C_i$. It suffices to prove the following:  
\begin{enumerate}[label=(\roman*)]
    \item if $l/k$ is algebraic, then $l$ is stably $C_i$. 
    \item If $l=k(t)^{\text{perf}}$, then $l$ is stably $C_{i+1}$. 
\end{enumerate}
Let $(K,v)$ be a tame field with divisible value group and residue field $k$, such that $k$ is $C_i$. Starting from $K$, we explain how to construct a tame field $L$ which is $C_i$ (respectively $C_{i+1}$) and has residue field $l$.  \\
(i) Consider the unramified extension $L/K$ lifting $l/k$. Since $L/K$ is algebraic and $K$ is $C_i$, the same is true for $L$. Moreover, $L$ is tame with divisible value group. \\
(ii) Start with $L_0=K(t)$, equipped with the Gauss extension $u$ of the valuation on $K$, with value group equal to the value group of $K$ and residue field $k(t)$. We can now construct an algebraic extension $L/L_0$ such that $L$ is tame and has residue field $l$. Since $L/K$ is of transcendence degree $1$, the field $L$ is $C_{i+1}$. \vspace*{1ex} \\
\textbf{Part 2:}
Using a slight generalization of a result of \cite{aug}, we prove that such a $k$ embeds elementarily into a Hahn series field $K_0=k(\!(t^{\Delta})\!)$, for a suitable ordered abelian group $\Delta$. Since $k$ is $C_i$, this implies that $K_0$ is $C_i$. If $\Delta$ is divisible, then we are done. Otherwise, we replace $K_0$ with a totally ramified algebraic extension $K$ to make the value group divisible. Since $K/K_0$ is algebraic, $K$ is $C_i$ and is also tame with divisible value group and residue field $k$.  \vspace*{1ex}
\subsubsection*{Proof sketch of Theorem \ref{thmalpha KK iterated Laurent} }
Let us discuss the main idea behind the proof of Theorem~\ref{thmalpha KK iterated Laurent}, or rather its first corollary, in the case of $k:=\mathbf{C}\lau{t_1}\lau{t_2} \lau{t_3}$.\par
The field $k$ is known to satisfy $C_3$ due to \cite[Theorem~2]{Greenberg1966RationalPointshenselian}, $C_1^2$ due to \cite[Corollaire~4.7]{Wittenberg2015KKQp} and $C_0^3$. Then the only missing property to conclude that $k$ satisfies Kato-Kuzumaki's conjecture is the $C_2^1$ property. Noting that the group $k^{\times}$ is generated by $t_1,t_2,t_3$ and a divisible subgroup, the $C_2^1$ property can easily be reduced to proving that $t_1,t_2$ and $t_3$ belong to the norm group of any hypersurface. The approach we implement in Section \ref{section applications} is to consider the fields
    \[
    k_1:= \bigcup_{n_1\geq 1} k (t_1^{1/n_1}) \qquad k_2:= \bigcup_{n_2\geq 1} k (t_2^{1/n_2}) \qquad k_3:= \bigcup_{n_1\geq 1} k (t_3^{1/n_3})
    \]
    and prove that they are $C_2$. The field $k_1$ is known to be $C_2$ because  we have $k_1= \bigcup_{n\in \mathbf{N}}\mathbf{C}\lau{t_1^{1/n}}\lau{t_2}\lau{t_3}$ and \cite[Theorem~2]{Greenberg1966RationalPointshenselian} ensures that it is a $C_2$ field. The field $k_2$ was known to be $C_2$ before our work, but it is more difficult to prove it. Indeed, the field $\mathbf{C}\lau{t_1}$ is geometrically $C_1$. Then $\bigcup_{n\in \mathbf{N}}\mathbf{C}\lau{t_1}\lau{t_2^{1/n}}$ is $C_1$ by \cite[Theorem~5.2.1]{Kartas2024geoC1}, and therefore $k_2$ is $C_2$ by \cite[Theorem~2]{Greenberg1966RationalPointshenselian}. Prior to our work, it had not been observed that $k_3$ is $C_2$, but Theorem~\ref{thmalpha transfer intro} allows us to deduce this. \par
    A similar strategy works in positive characteristic. Let $k$ be $k_0\lau{t_1}\lau{t_2}\lau{t_3}^{\mathrm{perf}}$ where $k_0$ is an algebraically closed field of positive characteristic $p$. We can construct analogous fields $k_1,k_2$ and $k_3$ such that the $p^r$-th roots of $t_1,t_2$ and $t_3$ are norms of every finite subextensions. Moreover, we can deduce that these fields are $C_2$ thanks to Theorem~\ref{thmalpha transfer intro}. It should be noted that in this case we apply Theorem~\ref{thmalpha transfer intro} to fields that are elementary equivalent to tame fields without being tame, see Example~\ref{ex t-tame not tame}.
    

\subsubsection*{Proof sketch of Theorem~\ref{thmalpha strong transfer}}
Wittenberg’s proof in the equal characteristic $0$ case relies on resolution of singularities. If such desingularization results were available in mixed and positive characteristic, the same proof would go through. Since this remains an open problem, we use tools from the model theory of valued fields as substitutes for resolution; see also the discussion in \cite[\S 1.2.1]{Kartas2024geoC1}.


Theorem~\ref{thmalpha strong transfer} can be reduced to the case where $v$ has rank one and $G_k$ is a pro-$\ell$ group for some prime $\ell$. We now sketch the proof in this situation. Let $X$ be a proper $K$-variety. We must show that the index of $X$ divides its Euler characteristic $\chi(X,\mathcal{O}_X)$. We first treat the case $\chi(X,\mathcal{O}_X)=1$. Let $\mathcal{X}$ be an arbitrary $\mathcal{O}_K$-model of $X$. Since the Euler characteristic is constant in flat families, the special fibre $\mathcal{X}_k$ also has Euler characteristic $1$. Because $k$ is strongly $C_1^0$, it follows that $\mathcal{X}_k$ admits a zero-cycle of degree $1$. But since every finite extension of $k$ has degree divisible by $\ell$, having a zero-cycle of degree $1$ is the same as having a $k$-point. 
Therefore $\Xx(k)\neq \emptyset$ for every $\Oo_K$-model. We conclude that $X(K)\neq \emptyset$ from the following result: 
\begin{fact*}[Theorem 1.2.2 \cite{Kartas2024geoC1}]
Let $(K,v)$ be a tame field with divisible value group of rank $1$ and residue field $k$. Let $X$ be a proper $K$-variety. Suppose that $\Xx(k)\neq \emptyset$, for each $\Oo_K$-model $\Xx$ of $X$. Then $X(K)\neq \emptyset$.
\end{fact*}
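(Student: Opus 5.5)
The plan is to turn the hypothesis into a valuation on the function field $K(X)$ with residue field $k$, and then apply the existential closedness of tame fields. The relevant form of Kuhlmann's theorem is: a tame field $(K,v)$ is existentially closed in every valued field extension $(F,w)$ such that $wF/vK$ is torsion-free and $Fw/Kv$ is separable. First I reduce to the case where $X$ is integral. Since $X$ is proper, I replace it by an irreducible component with its reduced structure; a model of the component embeds into a model of $X$, so it suffices to find a $K$-point on that component. Its function field is then $F=K(X)$.

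\textbf{Step 1 (a valuation centred at rational points).} I consider the Riemann--Zariski space of $F$ relative to $\Oo_K$. This is the inverse limit $\varprojlim \Xx$ over all proper, finitely presented $\Oo_K$-models $\Xx$ of $X$, ordered by domination (for instance via blow-ups). It is quasi-compact for the constructible topology. For each model I let $S_{\Xx}\subseteq \Xx_k$ be the set of $k$-rational points of the special fibre; it is non-empty by hypothesis. $S_{\Xx}$ is constructible, or at least pro-constructible. Moreover, a dominating morphism $\Xx'\to\Xx$ sends $S_{\Xx'}$ into $S_{\Xx}$, because the image of a $k$-point is a $k$-point of the special fibre. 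An inverse limit of non-empty pro-constructible subsets of spectral spaces along spectral maps is non-empty. So there is a point of the Riemann--Zariski space, i.e.\ a valuation ring $\mathcal{O}_w\subseteq F$, whose centre on every model lies in $S_{\Xx}$.

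\textbf{Step 2 (its invariants).} The centres lie on the special fibres, so $\mathcal{O}_w$ dominates $\Oo_K$, and hence $w$ extends $v$. The residue field of a point of the Riemann--Zariski space is the direct limit of the residue fields of its centres. All of these are equal to $k$, so $Fw=k$, and in particular $Fw/k$ is separable. For the value group, suppose $\gamma\in wF$ satisfies $n\gamma\in vK$. Since $vK$ is divisible, there is $\gamma'\in vK$ with $n\gamma'=n\gamma$, and torsion-freeness of $wF$ gives $\gamma=\gamma'$. Hence $wF/vK$ is torsion-free.

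\textbf{Step 3 (conclusion).} By Kuhlmann's theorem, $K$ is existentially closed in $(F,w)$, and therefore existentially closed in $F$ as a field. Take a dense affine open $U\subseteq X$ defined by finitely many polynomial equations. Its generic point gives a solution of these equations in $F$, so they also have a solution in $K$. Thus $U(K)\neq\emptyset$, and hence $X(K)\neq\emptyset$.

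I expect the main obstacle to be Step 1 over the non-noetherian ring $\Oo_K$. One has to check three things:
\begin{itemize}
\item the finitely presented proper models form a cofinal directed system whose limit is the Riemann--Zariski space (via Nagata compactification and approximation);
\item each $S_{\Xx}$ is pro-constructible;
\item the residue fields of the centres really compute $Fw$.
\end{itemize}
The rank-one hypothesis should enter here. It makes $\Oo_K$ the ring of a single non-trivial valuation with no intermediate primes, so "centre on the special fibre" is exactly the condition that $w$ restricts to $v$, rather than to a coarsening of it. I would first try to carry out Step 1 using the results of Temkin on relative Riemann--Zariski spaces.
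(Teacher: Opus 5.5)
There is a genuine gap, and it sits in Steps~1--2. The limit $\varprojlim\Xx$ over $\Oo_K$-models of the \emph{fixed} proper $K$-variety $X$ is not the Riemann--Zariski space of $F=K(X)$, because no model modifies the generic fibre. It is Temkin's relative Riemann--Zariski space, essentially the space $X^{\mathrm{ad}}\cong\varprojlim\mathfrak{X}_i$ used in the paper. Its points are pairs $(\xi,v_\xi)$, where $\xi\in X$ is an \emph{arbitrary} scheme-theoretic point and $v_\xi$ is a valuation on $\kappa(\xi)$ extending $v$. The direct limit of the residue fields of the centres is $\kappa(\xi)v_\xi$, which is the residue field of a valuation on $F$ only when $\xi$ is the generic point. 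This cannot be repaired while staying on $F$. Take $K=\bigcup_n\mathbf{R}\lau{t^{1/n}}$ (tame, value group $\mathbf{Q}$, $k=\mathbf{R}$) and the integral conic $X=V(x^2+y^2)\subseteq\mathbf{P}^2_K$. Its only $K$-point is $P=[0:0:1]$, and by the valuative criterion every model has a $k$-point, namely the reduction of $P$. Yet $(x/y)^2=-1$ in $K(X)$. So every valuation on $K(X)$ extending $v$ has residue field containing $\mathbf{C}$, and $K$ is not even existentially closed in $K(X)$; Step~3 is therefore impossible here. You have to apply Kuhlmann's theorem to $(\kappa(\xi),v_\xi)$, with $\xi$ possibly a closed (here singular) point, and use $\xi$ as a $\kappa(\xi)$-point of $X$. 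This is what the paper does. Working with $\xi$ also makes your reduction to an irreducible component unnecessary. As written, that reduction runs the wrong way: the $k$-points of a model of $X$ may lie on the closure of a different component.

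Second, the compactness argument in Step~1 fails, because $\Xx_k(k)$ is not pro-constructible once it is infinite. For $\Xx=\mathbf{P}^1_{\Oo_K}$ and $k$ infinite, any constructible subset of $\mathbf{P}^1_k$ containing infinitely many closed points contains the generic point. So you do not obtain compatible $k$-rational centres. The paper instead uses compactness in the sense of logic:
\begin{itemize}
\item pass to a saturated elementary extension $k\preceq k^*$, where compatible $k^*$-points on all models exist;
\item lift to $(K,v)\preceq(K^*,v^*)$ with residue field $k^*$;
\item extract $(\xi,v_\xi)$ with $\kappa(\xi)v_\xi\hookrightarrow k^*$ over $k$;
\item apply Fact~\ref{existential kuhlmann} to get $X(K^*)\neq\emptyset$, and descend along $K\preceq K^*$.
\end{itemize}
Alternatively, you could replace $S_\Xx$ by its patch closure, which is exactly the set of points $y$ with $k$ existentially closed in $\kappa(y)$. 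This yields $(\xi,v_\xi)$ with $k$ existentially closed in $\kappa(\xi)v_\xi$. Either way you get an existential extension of residue fields rather than equality. You then need Kuhlmann's theorem in its correct form, which assumes $Kv\preceq_\exists Fw$ and $vK\preceq_\exists wF$; the latter is automatic since $vK$ is divisible. The version you quote, with only torsion-freeness and separability, is false: any non-trivial unramified extension of $K$ satisfies both conditions, but $K$ is not existentially closed in it. Finally, rank one does not play the role you expect, since a valuation centred on the special fibre restricts to $v$ whatever the rank. In the paper's argument, rank one is what makes Raynaud's theory of formal models available.
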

\noindent 
To treat the case where $\chi(X,\mathcal{O}_X)\neq 1$, we prove a slight generalization of the above result. This asserts that if there exists $d\ge 1$ such that every $\Oo_K$-model $\mathcal{X}$ has a rational point over an extension of $k$ of degree at most $d$, then $X$ has a rational point over an extension of $K$ of degree at most $d$.


\subsection*{Organization of the paper}
\begin{itemize}
    \item In Section~\ref{sec prelim and not}, we collect some preliminary results from the model theory of valued fields, the model theory of ordered abelian groups, and Milnor $\mathrm{K}$-theory.
    \item In Section~\ref{sec transfer Laurent}, we introduce the notion of a stably $C_i$ field and prove Theorem~\ref{thmalpha transfer intro} together with some basic properties of this notion.
    \item In Section~\ref{section applications}, we prove Theorem \ref{thmalpha KK iterated Laurent} and deduce Kato-Kuzumaki’s conjecture for iterated Laurent series over several arithmetically significant fields. 
    \item In Section~\ref{sec strong C1q}, we prove Theorem~\ref{thmalpha strong transfer} and deduce that $\mathbf{Q}_p$ is strongly $C_1^1$. 
    \item In Section~\ref{sec geometric variant} we also introduce the $C_{rc}^q$ property and prove Theorem~\ref{thmalpha geometric properties transfer}.
\end{itemize}

\section{Preliminaries}\label{sec prelim and not}
\subsection{Ax-Kochen/Ershov principles} \label{akesec}

Let $L$ be a language. Given $L$-structures $M$ and $N$, we use the notation $M\equiv N$ to indicate that the structures $M$ and $N$ are elementarily equivalent, the language $L$ being implicit. We write \(L_{\mathrm{rings}}=\{0,1,+,\cdot\}\) for the language of rings, \(L_{\mathrm{oag}}=\{0,+,<\}\) for the language of ordered abelian groups, and
\(L_{\mathrm{val}}=L_{\mathrm{rings}}\cup\{\mathcal O\}\) for the language of valued fields, where $\Oo$ is a unary
predicate for the valuation ring. \par 
A celebrated theorem, proved by Ax-Kochen \cite{AK12} and independently by Ershov \cite{Ershov}, states that the first-order theory of some valued fields is completely determined by its residue field and value group. Explicitly, we have the following result:
\begin{fact}[Ax-Kochen/Ershov] \label{prop Ax-Kochen/Ershov}
Let $(K,v)$ and $(K',v')$ be two henselian valued fields of equal characteristic $0$. Then,
\[(K,v)\equiv (K',v')\mbox{ in }L_{\text{val}}\iff k\equiv k'\mbox{ in }L_{\text{rings}} \mbox{ and } \Gamma\equiv \Gamma'\mbox{ in }L_{\text{oag}}\]
\end{fact}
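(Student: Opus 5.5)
The statement immediately above is the Ax--Kochen/Ershov principle. It is quoted as a Fact from \cite{AK12} and \cite{Ershov}, so the paper does not need to reprove it. If I were to prove it, I would follow the standard back-and-forth (embedding) approach via saturated models.

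\emph{The easy direction.} The forward implication is immediate, because both the residue field and the value group are interpretable in $(K,v)$ in $L_{\mathrm{val}}$. The residue field is $\Oo/\mathfrak m$, where $\mathfrak m$ is the set of non-units of $\Oo$. The value group is $K^\times/\Oo^\times$, ordered by $x\le y \iff y/x\in\Oo$. Hence every $L_{\mathrm{rings}}$-sentence about $k$ and every $L_{\mathrm{oag}}$-sentence about $\Gamma$ translates into an $L_{\mathrm{val}}$-sentence about $(K,v)$. Elementary equivalence of the valued fields therefore gives $k\equiv k'$ and $\Gamma\equiv\Gamma'$.

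\emph{Reduction for the converse.} Assume $k\equiv k'$ and $\Gamma\equiv\Gamma'$. Pass to $\aleph_1$-saturated (or, assuming GCH, saturated of equal cardinality) elementary extensions of $(K,v)$ and $(K',v')$. Their residue fields and value groups remain elementarily equivalent to the originals and are saturated. I would then show that the family of isomorphisms $f\colon E\to E'$ between suitable countable valued subfields is a back-and-forth system. Concretely, $E\subseteq K$ should have the following properties:
\begin{itemize}
\item $E$ is henselian;
\item the residue field $k_E$ is an elementary substructure of $k$;
\item the value group $\Gamma_E$ is pure in $\Gamma$ with $\Gamma/\Gamma_E$ torsion-free.
\end{itemize}
In addition, $f$ should induce elementary partial maps on residue fields and on value groups. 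A nonempty back-and-forth system between the two models yields $(K,v)\equiv(K',v')$. To start the system, take $\Q$ with the trivial valuation, whose residue field is the prime field; this is where equal characteristic $0$ enters.

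\emph{Extension steps.} Given $f\colon E\to E'$ and a new element, I would extend $f$ in three moves.
\begin{enumerate}
\item \emph{Residue field.} Enlarge $k_E$ to a larger countable elementary substructure. By saturation, the elementary residue map extends. In characteristic $0$, lift a transcendence basis to obtain a residually transcendental extension, whose valuation is determined uniquely (Gauss-type). Handle algebraic residue elements by henselianity and Hensel lifting, which is separable because the characteristic is $0$.
\item \emph{Value group.} Enlarge $\Gamma_E$ by adjoining elements of positive value, namely the Gauss extension with prescribed value. For torsion in the quotient, take $n$-th roots of units times elements, using henselianity.
\item \emph{Immediate part.} Once residue field and value group match, any remaining element of $K$ generates an immediate extension of $E$. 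Realize it via pseudo-Cauchy sequences.
\end{enumerate}

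\emph{The main obstacle.} The crux is step 3. I need uniqueness of maximal immediate extensions, that is, Kaplansky's theorem. In residue characteristic $0$, every pseudo-Cauchy sequence of algebraic type has a limit in the henselization, and henselian fields are algebraically maximal. Every remaining pseudo-Cauchy sequence is therefore of transcendental type, so the isomorphism type of $E(a)$ is determined by the sequence alone. The saturation of $K'$ then provides a pseudo-limit of the image sequence, and this lets the map extend. Verifying the defect-free behavior of algebraic pseudo-Cauchy sequences is the only delicate point, and characteristic $0$ is exactly what makes it work.
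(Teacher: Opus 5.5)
The paper does not prove this Fact; it only cites \cite{AK12} and \cite{Ershov}, so your sketch has to stand on its own. Your architecture is a classical route: $\aleph_1$-saturation, back-and-forth through countable henselian subfields with elementary residue and value-group maps, and Kaplansky's pseudo-Cauchy sequences for the immediate part. The easy direction, Step~1 and Step~3 are fine as you describe them. The genuine gap is your claim that Step~3 is the only delicate point.

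\textbf{Step~2 can reach a dead end as described.} Your procedure is: adjoin $x$ of a new value $\gamma$, then handle the torsion of $\Gamma_E+\Z\gamma$ inside its pure closure by taking $n$-th roots of a unit times $x$. This fails because residue data and value-group data interact through leading coefficients, and an early choice of $x$ cannot be undone. Here is an example.
\begin{itemize}
\item Let $E=\Q$ with the trivial valuation, $k\equiv k'$ real closed, and $\Gamma\equiv\Gamma'$ divisible.
\item Fix positive $\gamma\in\Gamma$ and $\gamma'\in\Gamma'$ matched by the value-group map. Pick $y,y'$ with $v(y)=\gamma/2$ and $v'(y')=\gamma'/2$, and set $x=y^2$ and $x'=-y'^2$.
\item Then $x\mapsto x'$ is an isomorphism of valued fields $\Q(x)\cong\Q(x')$.
\item This isomorphism extends to no field containing an element $z$ of value $\gamma/2$. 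The residue of $z^2/x=(z/y)^2$ is a square in $k$. For every $z'$ of value $\gamma'/2$, the residue of $z'^2/x'=-(z'/y')^2$ is a non-square in $k'$. So the induced residue map could not be elementary.
\end{itemize}

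The missing idea is to treat a whole countable pure group $\Gamma_1\supseteq\Gamma_E$ at once.
\begin{itemize}
\item Since $\Gamma_1/\Gamma_E$ is countable and torsion-free, write $\Gamma_1=\bigcup_m(\Gamma_E\oplus F_m)$ with each $F_m$ free of finite rank.
\item Fix elements of $E$ realizing the $\Gamma_E$-parts of the transition relations between bases of $F_m$ and $F_{m+1}$.
\item By $\aleph_1$-saturation, find on both sides elements with the prescribed values satisfying these multiplicative relations exactly. Finitely many levels are satisfiable by choosing the deepest level freely and defining the others through the relations.
\item Together with $E^\times$, these elements generate a group meeting $\Oo_K^\times$ only in $\Oo_E^\times$. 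Hence the valued field they generate has residue field $k_E$ and value group $\Gamma_1$, and it is determined up to isomorphism.
\end{itemize}
Alternatively, use that $\aleph_1$-saturated valued fields admit a cross-section, and run the back-and-forth with an angular component map, as in Denef--Pas style proofs.

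\textbf{Two smaller repairs.} First, your invariant $k_E\preceq k$ is incompatible with your base case. The residue field of trivially valued $\Q$ is $\Q$, which in general is not an elementary substructure of $k$ (take $k$ algebraically closed). Drop this invariant and require only that $f_r$ be an elementary partial map $k\to k'$; for the prime field this is exactly $k\equiv k'$. Elementarity of $f_v$ then gives purity of $f_v(\Gamma_E)$ in $\Gamma'$. Purity together with elementarity of $f_r$ is what makes $n$-th power classes of elements of $E$ correspond on the two sides.

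Second, one round of Steps~1 and~2 does not make $E(a)/E$ immediate, because the new lifts combine with $a$ to produce new residues and values. You must iterate $\omega$ times, each time enlarging $E_m$ to contain the residue field and value group of $E_m(a)$. Then pass to the countable henselian union before invoking Step~3.
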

\noindent This statement fails for general henselian valued fields of characteristic $p$, but remains true in the more restricted class of \textit{tame fields}. We refer the reader to the original article by F.-V. Kuhlmann \cite{Kuhl} for details.
\begin{defi}
Let $(K,v)$ be a henselian valued field with value group $\Gamma$ and residue field $k$. A finite valued field extension $(K',v')/(K,v)$ is said to be \textit{tame} if the following are satisfied:
\begin{enumerate}
\item  If $p=\text{char}(k)>0$, then $p\nmid [\Gamma':\Gamma]$. 
\item The residue field extension $k'/k$ is separable.
\item The extension $(K',v')/(K,v)$ is \textit{defectless}, i.e., 
$$[K':K]=[\Gamma':\Gamma]\cdot [k':k]$$
\end{enumerate}
We say that $(K,v)$ is \textit{tame} if every finite valued field extension of $(K,v)$ is tame. 
\end{defi}
\begin{rem}
\begin{itemize}
    \item[] 
    \item Every henselian field of equal characteristic $0$ is tame.
    \item Tame fields of positive characteristic are perfect. 
\end{itemize}

\end{rem}
\begin{fact} \label{akekuhl} (\cite[Theorem~1.4]{Kuhl})
Let $(K,v)$ and $(K',v')$ be two equal characteristic tame fields. Then, 
$$(K,v)\equiv (K',v')\mbox{ in }L_{\text{val}}\iff k\equiv k'\mbox{ in }L_{\text{rings}} \mbox{ and } \Gamma\equiv \Gamma'\mbox{ in }L_{\text{oag}}$$
\end{fact}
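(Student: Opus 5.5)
The forward implication is immediate. The residue field and the value group are interpretable in $(K,v)$ in $L_{\text{val}}$: $k=\Oo/\mathfrak m$ and $\Gamma=K^\times/\Oo^\times$. So every $L_{\text{rings}}$-sentence about $k$, and every $L_{\text{oag}}$-sentence about $\Gamma$, translates into an $L_{\text{val}}$-sentence about $(K,v)$, and hence $(K,v)\equiv(K',v')$ gives $k\equiv k'$ and $\Gamma\equiv\Gamma'$. All the work is in the converse, which I would prove following Kuhlmann's embedding-lemma strategy, modelled on the classical Ax--Kochen/Ershov argument behind Fact~\ref{prop Ax-Kochen/Ershov}.

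\textbf{Reduction to an embedding lemma.} Both fields have the same characteristic, so both contain the same prime field $P$ ($\mathbf{Q}$ or $\mathbf{F}_p$) with the trivial valuation. A trivially valued field is defectless, and its value group $0$ and residue field $P$ embed compatibly into both sides. By Keisler--Shelah, after passing to ultrapowers, I may assume $k\cong k'$ and $\Gamma\cong\Gamma'$. Passing further to $|K|^+$-saturated elementary extensions, it suffices to prove the following \emph{relative embedding property}. Let $(F,v)\subseteq (K,v)$ be a defectless valued subfield, let $(L,w)$ be tame and $|K|^+$-saturated, and suppose we are given an embedding $\iota\colon F\to L$ together with embeddings of $\Gamma$ and $k$ into $\Gamma_L$ and $k_L$ compatible with $\iota$. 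Then $\iota$ extends to an embedding of $K$ whose induced maps agree with the given ones. A standard back-and-forth argument then yields $(K,v)\equiv(K',v')$.

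\textbf{Building the embedding.} I would extend $\iota$ in stages. First take the henselization, which is unique up to unique isomorphism and embeds into the henselian field $L$. Next, adjoin elements realising new values and new residues. Using the given embeddings of $k$ and $\Gamma$, add transcendental elements whose values are rationally independent over the current value group, or whose residues are transcendental over the current residue field. These are valuation-transcendental extensions, they are defectless and tame, and they embed into $L$ by saturation. Algebraic residue and value extensions are handled through the tame algebraic closure: tameness makes these finite extensions defectless, with separable residue extensions and prime-to-$p$ ramification, so they can be matched inside $L$ using Hensel's lemma. After these steps we reach a tame subfield $F_1\subseteq K$ whose residue field and value group are those of $K$. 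In other words, $K/F_1$ is an \emph{immediate} extension.

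\textbf{Main obstacle.} The crucial step is embedding this immediate extension $K/F_1$ of tame fields. In characteristic $0$, Kaplansky's theory of pseudo-Cauchy sequences of transcendental type does the job. In characteristic $p$, this must be replaced by Kuhlmann's Henselian Rationality Theorem: every immediate function field of transcendence degree $1$ over a tame field lies in the henselization of a rational function field $F_1(x)$, where $x$ is the limit of a pseudo-Cauchy sequence of transcendental type. Granting this, I would handle the extension one element at a time. Since $F_1$ is tame, it is algebraically maximal, so any pseudo-Cauchy sequence in $F_1$ without a limit there is of transcendental type. Saturation of $L$ provides a limit for the image sequence, which gives an embedding of $F_1(x)$ and then, by uniqueness, of its henselization. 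A Zorn's lemma argument then covers all of $K$. This rationality theorem, which rests on the generalized stability theorem and on tameness forcing defectlessness, is the genuinely deep input, and I would invoke it from \cite{Kuhl} rather than reprove it.
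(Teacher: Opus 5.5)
Your architecture matches Kuhlmann's proof, which the paper simply cites: interpretability for the forward direction, then a relative embedding property over the trivially valued prime field, then embedding the ``value/residue'' part, and finally Henselian Rationality plus saturation for the remaining immediate extension. The gap is in the middle stage. First, the relative embedding property as you state it is false without the hypothesis that $vK/vF$ is torsion-free (and that $vF$ is $p$-divisible and $Fv$ perfect, as in Fact~\ref{repimproved}). Here is a counterexample: let $F=\mathbf{R}(t)^h$, $K=F(\sqrt t)$, and $L$ a $|K|^+$-saturated elementary extension of $F(\sqrt{-t})$. Then $vK=\tfrac12\mathbf{Z}$ and $Kv=\mathbf{R}$ embed into $vL$ and $Lv$ over $F$, but $t$ is not a square in $L$, because $-1$ is not. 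Over the prime field the hypothesis holds, but the same phenomenon breaks your construction. Once a value-transcendental $x$ has been sent to some $x'$ with the right value, Hensel's lemma only matches \emph{inertial} algebraic extensions, not the ramified ones you also need. For instance, take $K=\mathbf{R}(\!(t^{\mathbf{Q}})\!)$ and $L$ a saturated elementary extension of $K$: sending $t\mapsto -t$ leaves no image for $t^{1/2}$. The images of the transcendental elements must be chosen coherently with all the roots $K$ contains, and your sketch has no mechanism for this.

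The missing idea is to do this stage by compactness, giving each finitely generated piece its own adapted transcendence basis.
\begin{itemize}
\item Let $T\subseteq K$ consist of elements whose values form a maximal rationally independent family over $vF$, together with lifts of a transcendence basis of $Kv|Fv$. By saturation of $L$, it suffices to embed each $E=F(T_0,a_1,\dots,a_m)$, with $T_0\subseteq T$ finite and $a_i$ algebraic over $F(T_0)$, compatibly with the prescribed maps on values and residues.
\item Such an $E$ has no transcendence defect. So $vE/vF$ is finitely generated, and it is torsion-free (this is exactly where the hypothesis enters), hence free. Likewise $Ev|Fv$ is finitely generated and separable.
\item Pick $x_i\in E$ whose values form a basis of $vE$ over $vF$, and $y_j\in E$ whose residues form a separating transcendence basis of $Ev|Fv$. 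By Kuhlmann's Generalized Stability Theorem, $F(x,y)$ is defectless, and it has value group $vE$. Hence $E^h$ is inertial over $F(x,y)^h$.
\item Send each $x_i,y_j$ to an element with the prescribed value and residue, extend to the henselization, and apply Hensel's lemma to embed $E^h$.
\end{itemize}

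Your tameness justification is also wrong. In characteristic $p$, the algebraic part of $K$ over $F(x,y)^h$ is not tame: $K$ is perfect, so it contains $x^{1/p}$ and $y^{1/p}$. Valued rational function fields are not tame either. Their defectlessness is precisely the Generalized Stability Theorem, which you need here and not only inside Henselian Rationality.

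Finally, in the immediate stage the base must stay tame for Henselian Rationality and algebraic maximality to apply. So at each step you must embed the relative algebraic closure of $F_1(x)$ in $K$, not just $F_1(x)^h$. This closure is tame by \cite[Lemma~3.7]{Kuhl}, as in the proof of Lemma~\ref{tame spread out}. However, it is only a union of henselian rational pieces, so embedding it again requires a saturation argument.
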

\noindent Recall that a class $\mathcal{C}$  of $L$-structures is called an \textit{elementary class} if it is the class of models of some $L$-theory $T$. The following fact is explained in \cite[Section~7]{Kuhl}. 
\begin{fact}\label{tame elemclass}
The class of tame fields is elementary in the language of valued fields. 
\end{fact}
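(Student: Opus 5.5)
The plan is to write down an explicit set of $L_{\text{val}}$-sentences whose models are exactly the tame valued fields. Since tameness is defined by a condition on every finite extension, the main step is to express that condition one degree at a time by first-order sentences in the base field. The first ingredient is henselianity. It is axiomatized by the scheme saying that every monic polynomial with coefficients in $\Oo$ whose reduction has a simple root has a root in $\Oo$ lifting it. This is one sentence per degree $n$, quantifying over the coefficient tuple.

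Next I use a standard characterization to recast tameness as a property of $K$ itself: a henselian $(K,v)$ is tame if and only if it is algebraically maximal, its residue field is perfect, and its value group is $p$-divisible when $p=\operatorname{char}(k)>0$. Equivalently (Kuhlmann), $K$ admits no proper immediate algebraic extension, $k$ is perfect, and $\Gamma$ is $p$-divisible. Perfectness of $k$ is axiomatized by $\forall x\in\Oo\ \exists y\in\Oo\ (x-y^p\in\mathfrak m)$, where $\mathfrak m$ is defined from $\Oo$ by $x\in\mathfrak m\iff x=0\vee x^{-1}\notin\Oo$. The $p$-divisibility of $\Gamma$ is $\forall x\neq 0\ \exists y\ (x/y^p\in\Oo^\times)$. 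To handle the split by residue characteristic, I take for each prime $p$ the implication "$p\cdot 1\in\mathfrak m \Rightarrow$ (these axioms)". In residue characteristic $0$, henselian fields are automatically tame, so nothing further is needed there.

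The hardest step is axiomatizing algebraic maximality. My plan is to use Kaplansky's criterion: a henselian field is algebraically maximal if and only if every polynomial $f\in K[X]$ attains a maximum on $\{v(f(a)) : a\in K\}$. In other words, for each degree $n$ we impose
\[\forall c_0,\dots,c_n\ \exists a\ \forall b\ \bigl(v(f_c(b))\le v(f_c(a))\bigr),\]
where $f_c=\sum c_iX^i$ and $v(x)\le v(y)$ is expressed as $y=0\vee (x\neq0\wedge y/x\in\Oo)$. This formulation of "no pseudo-Cauchy sequence of algebraic type without limit" is first-order because it quantifies only over elements and coefficients of bounded degree. What I must check carefully is that Kaplansky's characterization holds in the relevant generality, in particular for henselian fields without assumptions on $k$ and $\Gamma$. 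I will cite \cite[Section~7]{Kuhl} for this.

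Once that is established, the conjunction of the henselianity scheme, the maximality scheme, and the per-$p$ perfectness and divisibility axioms axiomatizes exactly the tame fields, which proves the Fact.
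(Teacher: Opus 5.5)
Your proposal is correct and is essentially the argument the paper outsources to \cite[Section~7]{Kuhl}. Tame fields are exactly the henselian, algebraically maximal fields with perfect residue field and $p$-divisible value group, and algebraic maximality becomes first-order once you require that $v(f(K))$ have a maximum for every $f\in K[X]$.

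The one point to tidy is the justification of that criterion, which is a reformulation of Kaplansky's pseudo-Cauchy sequence theory rather than his literal theorem. For henselian $K$, the implication ``maximum always attained $\Rightarrow$ algebraically maximal'' is immediate: all roots of the minimal polynomial $f$ of $\theta$ are equidistant from any $b\in K$, so $v(f(b))=\deg f\cdot v(b-\theta)$. The converse uses Kaplansky's result that a pseudo-Cauchy sequence of algebraic type without limit in $K$ yields a proper immediate algebraic extension.
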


\noindent For any cardinal $\kappa$ an $L$-structure $M$ is said to be $\kappa$-saturated (resp. $\kappa^{+}$-saturated) when for every set $\{\varphi_i(x_1,\dots, x_n)\}_{i\in I}$ of $L$-formulas in $n$-variables such that $|I| < \kappa$ (resp. $|I| \leq \kappa$) and for every $i\in I$ there exists $a_1^{(i)},\dots, a_n^{(i)} \in M$ satisfying $M \models \varphi_i(a_1^{(i)},\dots, a_n^{(i)})$, there exist $a_1,\dots, a_n \in M$ such that $M \models \varphi_i(a_1,\dots, a_n)$ for every~$i \in I$.\par 
The next result is known as the relative embedding property for tame fields:
\begin{fact}[Theorem 7.1 \cite{Kuhl}] \label{repimproved}
Let $(K_0,v_0)$ be a defectless valued field and $(K,v), (K',v')$ be two valued fields extending $(K_0,v_0)$ with $(K',v')$ tame. Let $p$ be the residue characteristic of $K_0$. Suppose the following hold: 
\begin{enumerate}
     \item $\Gamma_0$ is $p$-divisible and $k_0$ is perfect. 
     \item $\Gamma/\Gamma_0$ is torsion-free. 
     \item $(K',v')$ is $|K|^+$-saturated.
     \item There is an embedding $\rho: k\hookrightarrow k'$ over $k_0$ and an embedding $\sigma: \Gamma \hookrightarrow \Gamma'$ over $\Gamma_0$.  
\end{enumerate}
Then there exists an embedding of valued fields $(K,v)\hookrightarrow (K',v')$ inducing $\rho$ and $\sigma$.
\end{fact}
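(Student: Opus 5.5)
The plan is a Zorn's lemma argument on partial embeddings, combined with a one-element extension step in which the saturation of $(K',v')$ supplies the images.

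\textbf{Setup.} Consider the set of pairs $(K_1,\iota)$ where $K_0\subseteq K_1\subseteq K$ is a subfield and $\iota:(K_1,v)\hookrightarrow (K',v')$ is a valued field embedding over $K_0$. We require that $\iota$ induces the restrictions of $\rho$ and $\sigma$ to $k_1$ and $\Gamma_1$. We also require that $K_1$ satisfies the same hypotheses as $K_0$: it is defectless, $\Gamma_1$ is $p$-divisible, $k_1$ is perfect and $\Gamma/\Gamma_1$ is torsion-free. For the limit step we further require $|K_1|\le |K|$, so saturation stays usable.

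Unions of chains preserve everything except possibly defectlessness. For that I would strengthen the invariant to ``$K_1$ is the relative algebraic closure in $K$ of a field of finite transcendence type over $K_0$'', or simply work with $K_1$ whose image in $K'$ is relatively algebraically closed. Since $K'$ is tame, Kuhlmann's results show that such relative algebraic closures are again tame, hence defectless. Zorn's lemma then gives a maximal pair, and it remains to show $K_1=K$.

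\textbf{Extension steps.} I would carry out the extension in the following order, extending $\iota$ at each step.

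\begin{enumerate}
\item \emph{Residue transcendentals.} If $k\neq k_1$ contains an element $\bar a$ transcendental over $k_1$, lift it to $a\in K$. Then $v$ restricted to $K_1(a)$ is the Gauss valuation. Send $a$ to any lift of $\rho(\bar a)$ in $K'$; this is an embedding by the uniqueness of the Gauss extension.
\item \emph{Value transcendentals.} If $\Gamma$ contains an element $\gamma$ rationally independent over $\Gamma_1$, pick $a\in K$ with $v(a)=\gamma$. Send it to any $b\in K'$ with $v'(b)=\sigma(\gamma)$; the valuation on $K_1(a)$ is again forced.
\item \emph{Algebraic residue and value parts.} Suppose $k/k_1$ or $\Gamma/\Gamma_1$ still has algebraic elements.
\begin{itemize}
\item Residue extensions are separable because $k_1$ is perfect. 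They are realised by unramified extensions inside the henselization, which embed by Hensel's lemma matching $\rho$.
\item Value extensions have torsion prime to $p$, because $\Gamma_1$ is $p$-divisible and $\Gamma/\Gamma_1$ is torsion-free. They are realised by radicals $a^{1/n}$ with $p\nmid n$, up to a unit adjusted via Hensel's lemma in $K'$.
\end{itemize}
These extensions are tame, hence defectless. After this step, $K$ is an immediate extension of $K_1$ over its henselization.
\item \emph{Immediate transcendentals.} Let $a\in K\setminus K_1$, with $K/K_1$ immediate. Then $a$ is a pseudo-limit of a pseudo-Cauchy sequence $(a_\nu)$ in $K_1$ with no limit in $K_1$. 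Because $K_1$ is (the pullback of) a tame, hence algebraically maximal, field, this sequence is of transcendental type. The image sequence $(\iota(a_\nu))$ is a countable-or-$|K|$-sized family of ball conditions. By $|K|^+$-saturation it has a pseudo-limit $b\in K'$. Kaplansky's lemma then shows that $a\mapsto b$ extends $\iota$ to a valued field isomorphism $K_1(a)\cong \iota(K_1)(b)$. This extension is immediate, so it respects $\rho$ and $\sigma$.
\end{enumerate}

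\textbf{Main obstacle.} The delicate point is the last step: guaranteeing that every pseudo-Cauchy sequence arising in $K_1$ is of transcendental type, i.e.\ that $K_1$ admits no proper immediate algebraic extensions. $K$ itself is not assumed tame, so this must come from the target. I would first try the following. Arrange the invariant so that $\iota(K_1)$ is relatively algebraically closed in $K'$. By Kuhlmann's lemma, a relatively algebraically closed subfield of a tame field with $p$-divisible value group and perfect residue field is itself tame, and therefore algebraically maximal.

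This in turn forces me to re-close after each step, extending $\iota$ algebraically by taking the relative algebraic closure of the new image. I expect to need the hypotheses (1)--(2) at exactly this point, to ensure the closure adds no new residue or value elements beyond what $\rho,\sigma$ prescribe.
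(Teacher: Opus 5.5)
Your proposal has two genuine gaps, and the paper's proof is built to avoid the first one.

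\textbf{First gap: $K$ is not assumed tame.} In the statement $(K,v)$ is arbitrary: it need not be tame, henselian or perfect, and $k/k_0$ may be inseparable.
\begin{itemize}
\item Your invariants are not preserved by your own steps. After Step~1 the residue field $k_1(\bar a)$ is no longer perfect. After Step~2 the group $\Gamma_1+\mathbf{Z}\gamma$ is no longer $p$-divisible, and $\Gamma/(\Gamma_1+\mathbf{Z}\gamma)$ may have $p$-torsion.
\item Consequently Step~3 can face $\bar a^{1/p}\in k$ or $\gamma/p\in\Gamma$. These are neither separable residue extensions nor prime-to-$p$ ramification, so Hensel's lemma does not realise them.
\item Relative algebraic closures inside $K$ do not help, since $K$ is not tame.
\item The invariant ``$\iota(K_1)$ is relatively algebraically closed in $K'$'' cannot survive to $K_1=K$ when $K$ is not henselian. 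A relatively algebraically closed subfield of the henselian field $K'$ is itself henselian.
\item Kuhlmann's Lemma~3.7 gives tameness of a relatively algebraically closed subfield only when the residue extension is algebraic. That fails inside the saturated field $K'$.
\end{itemize}
The missing move is the paper's reduction. Replace $K$ by a tame algebraic extension $K_1$ with $k_1=k^{\mathrm{perf}}$ and $\Gamma_1=\frac{1}{p^\infty}\Gamma$. Then $\rho$ and $\sigma$ extend, because $k'$ is perfect and $\Gamma'$ is $p$-divisible. Moreover $\Gamma_1/\Gamma_0$ stays torsion-free because $\Gamma_0$ is $p$-divisible, and $|K_1|=|K|$. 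Now apply Kuhlmann's Theorem~7.1, which assumes a tame source, and restrict to $K$.

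\textbf{Second gap: Step~4, even for tame $K$.} This is where the real content of Kuhlmann's theorem lies. An arbitrary pseudo-limit $b$ gives $K_1(a)\cong\iota(K_1)(b)$, and hence an isomorphism of henselizations. In characteristic $p$, however, the relative algebraic closure of $K_1(a)$ in $K$ is strictly larger than $K_1(a)^h$. It contains $a^{1/p}$ and possibly Artin--Schreier defect elements.
\begin{itemize}
\item Without these elements, $K_1(a)^h$ is not algebraically maximal. Later pseudo-Cauchy sequences therefore need not be of transcendental type, and Kaplansky's lemma is unavailable for them.
\item Nothing guarantees that, for your arbitrary choice of $b$, the corresponding Artin--Schreier polynomials have roots in $K'$ with the right valuation data. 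Whether they do depends on $b$.
\end{itemize}
So you cannot ``re-close'' afterwards, and the Zorn argument may stop at a maximal pair with $K_1\neq K$.

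Kuhlmann handles this with his Henselian Rationality theorem: an immediate function field $F|K_1$ of transcendence degree one over a tame field $K_1$ satisfies $F^h=K_1(x)^h$ for some $x\in F^h$. This lets each finitely generated piece of the relative algebraic closure be embedded via one well-chosen pseudo-limit, and $|K|^+$-saturation (compactness) glues the pieces together. Similarly, keeping ``defectless'' after Steps~1--2 would need his Generalized Stability Theorem. These deep inputs are what Theorem~7.1 packages. Your sketch does not supply them, whereas the paper only needs to reduce to that theorem.
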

\begin{proof}
Note that \cite[Theorem 7.1]{Kuhl} also assumes that $(K,v)$ is tame but one can reduce to that case as follows. Let $(K_1,v_1)$ be an algebraic extension of $(K,v)$ such that $(K_1,v_1)$ is tame with $k_1=k^{\text{perf}}$ and $\Gamma_1=\frac{1}{p^{\infty}}\Gamma$. Since $k'$ is perfect, the embedding $\rho$ extends to an embedding $\rho_1:k_1\hookrightarrow k'$ over $k_0$. Likewise, the embedding $\sigma$ extends to an embedding $\sigma_1:\Gamma_1\to \Gamma'$ over $\Gamma_0$. Note also that $|K_1|=|K|$ since $K_1/K$ is algebraic. Now \cite[Theorem 7.1]{Kuhl} applies to give an embedding of $(K_1,v_1)\hookrightarrow (K',v')$ inducing $\rho_1$ and $\sigma_1$. Restricting to $(K,v)$, gives the desired embedding.
\end{proof}
\noindent The previous result has a direct consequence for existential theories. For a given $L$-structure $M$ with an $L$-substructure $M_0$, we denote by $\operatorname{Th}_{\exists,M_0}(M)$ the set of existential sentences (i.e. a formula only involving $\vee,\wedge$ and $\exists$) in the extended language $L(M_0)$ (i.e. $L$ with an extra constant symbol for every element of $M_0$) satisfied by $M$. Also recall that for a set of sentences $\Sigma$, we write $M \models \Sigma$ if $M$ satisfies all sentences in $\Sigma$.\par
\begin{fact} \label{existential kuhlmann}
Let $(K_0,v_0)$ be a defectless valued field and let $(K,v), (K',v')$ be two valued fields extending $(K_0,v_0)$ with $(K',v')$ tame. Assume that $\Gamma_0$ is $p$-divisible, $k_0$ is perfect and $\Gamma/\Gamma_0$ is torsion-free. Then:
$$(K',v')\models \text{Th}_{\exists,K_0} (K,v)\iff k'\models \text{Th}_{\exists,k_0} (k) \mbox{ and }\Gamma'\models \text{Th}_{\exists, \Gamma_0} (\Gamma) $$
\end{fact}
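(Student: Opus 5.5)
The plan is to deduce Fact~\ref{existential kuhlmann} from the relative embedding property (Fact~\ref{repimproved}) by the standard saturation argument, one direction at a time.

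\textbf{Forward direction.} Suppose $(K',v')\models \operatorname{Th}_{\exists,K_0}(K,v)$. Any existential $L_{\mathrm{rings}}(k_0)$-sentence true in $k$ can be rewritten as an existential $L_{\mathrm{val}}(K_0)$-sentence true in $K$. To do this, quantify over elements of $\Oo$, replace each constant from $k_0$ by a lift in $\Oo_{K_0}$, and replace each residue equation $\bar f(\bar x)=0$ by $f(\bar x)\in\mathfrak m$, where $x\in\mathfrak m$ is expressed as $x=0\vee \exists y\,(xy=1\wedge y\notin\Oo)$. Negated atomic formulas are handled the same way, using units. The resulting sentence transfers to $K'$, and reading it back in $k'$ gives $k'\models\operatorname{Th}_{\exists,k_0}(k)$. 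The value group is treated identically: a value $\gamma$ is represented by an element $x\neq 0$, and $v(x)\le v(y)$ is expressed as $y/x\in\Oo$.

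\textbf{Backward direction.} Pass to a $|K|^+$-saturated elementary extension $(K^*,v^*)\succeq (K',v')$. It is tame by Fact~\ref{tame elemclass}. Its residue field $k^*$ and value group $\Gamma^*$ are $|K|^+$-saturated, being interpretable in $K^*$, and they are elementary extensions of $k'$ and $\Gamma'$. Existential sentences go up to $K^*$, and since they persist to $K'$ we also have $(K^*,v^*)\equiv_{K_0}(K',v')$. Hence it suffices to show $K^*\models\operatorname{Th}_{\exists,K_0}(K)$.

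\textbf{Main step.} The key point is to produce an embedding $\rho:k\hookrightarrow k^*$ over $k_0$. Consider the quantifier-free diagram of $k$ over $k_0$, written with variables indexed by $k$. Each finite part of it is an existential consequence of $\operatorname{Th}_{\exists,k_0}(k)$, so it is realized in $k'$ and hence in $k^*$. Saturation then realizes the whole diagram, since the number of formulas is at most $|K|$, and this gives $\rho$. The same argument produces an embedding $\sigma:\Gamma\hookrightarrow\Gamma^*$ over $\Gamma_0$.

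Fact~\ref{repimproved} now applies: $K_0$ is defectless, $\Gamma_0$ is $p$-divisible, $k_0$ is perfect, $\Gamma/\Gamma_0$ is torsion-free, and $K^*$ is tame and $|K|^+$-saturated. It yields a valued-field embedding $(K,v)\hookrightarrow(K^*,v^*)$ over $K_0$. Existential sentences with parameters in $K_0$ are preserved under embeddings over $K_0$, so $K^*\models\operatorname{Th}_{\exists,K_0}(K)$, and therefore $K'$ satisfies it as well.

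The only delicate points are bookkeeping. First, the embedding must fix $K_0$ pointwise; this holds because Fact~\ref{repimproved} is applied with both fields viewed as extensions of $K_0$. Second, the cardinality count must hold, namely that the diagram has at most $|K|$ formulas; this is immediate since $|k|,|\Gamma|\le|K|$.
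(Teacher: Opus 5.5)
Your proposal is correct and follows the paper's argument. You pass to a $|K|^+$-saturated elementary extension $(K^*,v^*)$ of $(K',v')$, obtain embeddings $k\hookrightarrow k^*$ over $k_0$ and $\Gamma\hookrightarrow\Gamma^*$ over $\Gamma_0$ by saturation, apply the relative embedding property (Fact~\ref{repimproved}), and pull the existential theory back down to $K'$. You also fill in details the paper leaves implicit: the translation for the ``only if'' direction, the tameness of $K^*$, and realizing the diagram by saturation. All of these are fine.
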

\begin{proof}
The ``only if'' direction is clear. For the converse, suppose that $k' \models \text{Th}_{\exists,k_0} (k) $ and $\Gamma' \models \text{Th}_{\exists, \Gamma_0} (\Gamma)$. 
Consider a $|K|^+$-saturated elementary extension $(K^*,v^*)$ of $(K',v')$. Since $k^*$ is $|k|^+$-saturated and $k^*\models \text{Th}_{\exists,k_0} (k)$, there is an embedding $\rho: k\hookrightarrow k^*$ over $k_0$. Likewise, there is an embedding $\sigma: \Gamma \hookrightarrow \Gamma^*$ over $\Gamma_0$. By 
Fact \ref{repimproved}, there is an embedding $\iota: (K,v)\hookrightarrow (K^*,v^*)$ over $(K_0,v_0)$ inducing $\rho$ and $\sigma$. It follows that $(K^*,v^*)\models \text{Th}_{\exists,K_0} (K,v)$. Since $(K^*,v^*)$ is an elementary extension of $(K',v')$, we conclude that $(K',v')\models \text{Th}_{\exists,K_0}(K,v)$. 
\end{proof}
\noindent We also need a ``spreading out'' lemma whose proof is similar to \cite[Lemma 5.1.3]{Kartas2024geoC1}:
\begin{lemma} \label{tame spread out}
Let $(K,v)$ be a tame field and $a_1,...,a_n\in K$. Then, there exists a tame valued subfield $(K',v')\subseteq (K,v)$ such that: 
\begin{enumerate}

\item $a_1,...,a_n\in K'$.
\item $\Gamma/\Gamma'$ is torsion-free and $k'=k$.
\item $\Gamma'$ has finite rank. 
\end{enumerate}
\end{lemma}
\begin{proof}
Let $F$ be the prime subfield of $K$ (either $\mathbf{Q}$ or $\mathbf{F}_p$) and consider $K_0=F(a_1,...,a_n)$ equipped with the restriction $v_0$ of $v$. By Abhyankar's inequality \cite[Proposition~2, Appendix~2]{ZariskiSamuel1976CommutativeAlgebra}, the rank of $\Gamma_0:=v_0(K_0^{\times})$ is at most $\operatorname{trdeg}(K_0/F)+1$. Denote by $k_0$ the residue field of $(K_0,v_0)$.
Choose a transcendence basis $\tau=\{t_i:i\in I\}$ for $k/k_0$. Let $T_i$ be a lift of $t_i$ in $K$ and $\mathrm{T}=\{T_i:i\in I\}$. We now let $K_1=K_0(\mathrm{T})$ in case $(K,v)$ is of mixed characteristic, and $K_1=K_0(\mathrm{T} \cup \{t\})$ in case $(K,v)$ is of equal characteristic, where $t$ is any element in the maximal ideal of $K$. In each case, we endow $K_1$ with the restriction $v_1$ of $v$.
By \cite[Lemma~2.2]{Kuhl}, we get that $(K_1,v_1)$ is a valued field with residue field $k_0(\tau)$ and the same value group as $K_0$. 
Finally, let $K'\subseteq K$ be the relative algebraic closure of $K_1$ in $K$. We endow $K'$ with the restriction $v'$ of $v$. By \cite[Lemma~3.7]{Kuhl}, the field $(K',v')$ satisfies (i) and (ii). We also have that $\Gamma':=v'(K'^{\times})$ has the same rank as $\Gamma_1:=v_1(K_1^{\times})$ since $K'/K_1$ is algebraic.    
\end{proof}

\subsection{Totally ordered abelian groups and Hahn series}
\noindent Let $\Gamma$ and $\Delta$ be ordered abelian groups. We endow their direct sum $\Delta \oplus \Gamma $ with the lexicographic order defined by 
\[
(\delta,\gamma) <_{\Delta \oplus \Gamma} (\delta',\gamma')
\quad\Longleftrightarrow\quad
\big(\delta <_\Delta \delta'\big)
\ \text{ or }\
\big(\delta = \delta' \text{ and } \gamma <_\Gamma \gamma'\big)
\]
Recall that the symbol $\preceq$ is used to denote ``elementary substructure''.
\begin{proposition}[Theorem 4.3 \cite{aug}] 
For any ordered abelian group $\Gamma$, there is a non-trivial ordered abelian group $\Delta$ such that $\Gamma \preceq \Delta \oplus \Gamma$. 
\end{proposition}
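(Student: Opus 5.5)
We need to prove that for any ordered abelian group $\Gamma$ there is a non-trivial $\Delta$ with $\Gamma \preceq \Delta\oplus\Gamma$, where $\Gamma$ embeds as $\{0\}\oplus\Gamma$ and the order on $\Delta\oplus\Gamma$ is lexicographic. The plan is to take $\Delta$ to be a suitable elementary extension, or a quotient thereof, and verify elementarity using the Tarski--Vaught test or a back-and-forth argument. The cleanest route I would try first uses saturation and convex subgroups.

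\textbf{Step 1 (saturated extension and convex decomposition).} Take a proper elementary extension $\Gamma^*\succ\Gamma$ that is $|\Gamma|^+$-saturated. Consider the convex hull $C$ of $\Gamma$ in $\Gamma^*$, the set of $x$ with $|x|\le \gamma$ for some $\gamma\in\Gamma$. Then $C$ is a convex subgroup, and $\Gamma^*/C$ is an ordered abelian group. By saturation, $\Gamma^*\neq C$: realize the type $\{x>\gamma:\gamma\in\Gamma\}$. Set $\Delta:=\Gamma^*/C$, which is non-trivial.

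\textbf{Step 2 (identify a copy of $\Delta\oplus\Gamma$).} Every element of $\Gamma^*$ can be viewed through its class in $\Delta$ together with its ``$C$-part''. However, $\Gamma^*$ need not split as $\Delta\oplus C$, and $C\neq\Gamma$ in general. So instead I would use a known fact: for a convex subgroup, the lexicographic sum $\Gamma^*/C\oplus C$ is elementarily equivalent to $\Gamma^*$ over $C$. This is a Feferman--Vaught / quantifier elimination result for ordered abelian groups in a Gurevich--Schmitt style language. The goal is then to show $\Gamma\preceq C$ and that $C\preceq \Delta\oplus C$.

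\textbf{Step 3 (avoiding heavy machinery).} A more hands-on alternative avoids general relative quantifier elimination. Apply Tarski--Vaught to $\{0\}\oplus\Gamma\subseteq\Delta\oplus\Gamma$ directly. Given a formula $\exists x\,\varphi(x,\bar\gamma)$ true in $\Delta\oplus\Gamma$, one wants a witness in $\Gamma$. The key idea is to choose $\Delta$ so that $\Delta\oplus\Gamma$ is isomorphic over $\Gamma$ to an elementary extension of $\Gamma$. For instance, take $\Delta$ to be an ultrapower-derived group: choose $\Gamma^*=\Gamma^{\mathcal U}$ for a suitable ultrafilter and check that it decomposes lexicographically. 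Alternatively, take $\Delta$ elementarily equivalent to a group built from $\Gamma$'s invariants, so that the Gurevich--Schmitt invariants match: the number of $n$-divisibility classes, the Ehrenfeucht-type spine invariants, and so on.

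The main obstacle is exactly this elementarity check. Ordered abelian groups lack a simple quantifier elimination, and a lexicographic sum alters the spines. I would handle it via the Feferman--Vaught theorem for lexicographic sums, which reduces the theory of $\Delta\oplus\Gamma$ with parameters from $\Gamma$ to data about $\Delta$ and the type over $\Gamma$. I would then pick $\Delta:=\Gamma^{\mathcal U}$ modulo its convex hull of $\Gamma$, as in Step 1, so that these data coincide with those coming from $\Gamma$ itself. That this works is the content of the cited Theorem 4.3.
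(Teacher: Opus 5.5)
There is a genuine gap. After Step~1 your proposal never proves any elementarity statement. Step~2 announces the goals $\Gamma\preceq C$ and $C\preceq\Delta\oplus C$. Step~3 lists alternatives without carrying any of them out. The last sentence then appeals to ``the cited Theorem~4.3'', which is the statement itself. The paper gives no argument here, only the citation to \cite{aug}, so that appeal cannot serve as a step.

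Your candidate $\Delta=\Gamma^*/C$ is sensible. The crux is $\Gamma\preceq C$, i.e.\ that the convex hull of $\Gamma$ in the saturated extension $\Gamma^*$ is an elementary extension of $\Gamma$. This is not formal. $C$ is not definable in $\Gamma^*$, so $\Gamma\preceq\Gamma^*$ gives no control over truth in $C$, where quantifiers lose every witness lying in $\Gamma^*\setminus C$. In general $C\neq\Gamma$ (e.g.\ $\Gamma=\mathbf{Q}$). Nor do convex hulls of elementary substructures stay elementary for ordered structures in general: in a saturated extension of a real closed field, the convex hull is not even a subfield.

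Proving $\Gamma\preceq C$ needs real information about definable sets in ordered abelian groups. One route is a relative quantifier elimination in the style of Cluckers--Halupczok, plus an argument that families definable from bounded parameters stay bounded. The other is a direct construction of a proper elementary extension in which $\Gamma$ is convex; by the splitting argument below, this is equivalent to the statement. Your second goal is even stronger than the first: by your own Step~2 it is equivalent to $C\preceq\Gamma^*$.

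The other ingredients are fine but need correct assembly. The equivalence $\Gamma^*\equiv_C\Delta\oplus C$ does not need Gurevich--Schmitt machinery:
\begin{itemize}
\item Take an $\aleph_1$-saturated elementary extension $(G',C')$ of the pair $(\Gamma^*,C)$.
\item $C'$ is pure in $G'$, since $G'/C'$ is torsion-free, and algebraically compact, since it is $\aleph_1$-saturated. Hence it is a direct summand.
\item Any group-theoretic splitting off of a convex subgroup is automatically an order isomorphism $G'\cong G'/C'\oplus C'$ fixing $C'$.
\item Lexicographic sums preserve $\preceq$ in each factor, because the sum is uniformly interpretable in the two-sorted structure formed by the summands.
\end{itemize}

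Even granting both your goals, you only get $\Gamma\preceq\Delta\oplus C$, not $\Gamma\preceq\Delta\oplus\Gamma$. You still need $\Delta\oplus\Gamma\preceq\Delta\oplus C$, which follows from $\Gamma\preceq C$ by preservation. Then the sandwich $\Gamma\subseteq\Delta\oplus\Gamma\subseteq\Delta\oplus C$, with both inner structures elementary in $\Delta\oplus C$, gives the claim. With the equivalence over $C$, $\Gamma\preceq\Delta\oplus C$ already follows from $\Gamma\preceq\Gamma^*$ alone. So $\Gamma\preceq C$ is the only missing piece.

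The Step~3 alternatives fail as stated. Matching Gurevich--Schmitt invariants gives at most $\Gamma\equiv\Delta\oplus\Gamma$, not an elementary embedding over $\Gamma$. An ultrapower cannot decompose as $\Delta\oplus\Gamma$ over the diagonal copy of $\Gamma$ unless $\Gamma$ is convex in it, which fails for $\Gamma=\mathbf{Q}$. Also, Step~1 silently uses $\Gamma\neq 0$; for the trivial group the statement is false.
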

\noindent This result is relevant for us because combined  with Ax-Kochen/Ershov's theorem, it proves that $t$-henselian fields (Definition~\ref{def thenselian ttame} below) of characteristic $0$ are an elementary substructure of a suitable Hahn series field over $k$. We recall the definition of Hahn series.
\begin{defi}
For an ordered abelian group $\Gamma$ and a field $k$, the Hahn series over $k$ with exponents in $\Gamma$ is the set
\[
k(\!(t^\Gamma)\!) \ :=\ \Bigl\{\, \sum_{\gamma\in S} a_\gamma t^\gamma \ \Bigm|\ 
S\subseteq \Gamma\ \text{is well-ordered and } a_\gamma\in k \backslash{\{0\}} \Bigr\}
\]
With termwise addition and Cauchy product, this is a field equipped with a henselian valuation
$v\!\left(\sum_{\gamma \in S} a_\gamma t^\gamma\right)=\min S$, having value group $\Gamma$ and residue field $k$. 
\end{defi}
\begin{proposition}[Proposition 5.1 \cite{aug}] \label{aug}
Let $k$ be a $t$-henselian field of characteristic $0$. Then, $k\preceq k(\!(t^{\Delta})\!)$ for some non-trivial ordered abelian group $\Delta$.
\end{proposition}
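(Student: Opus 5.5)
The plan is to reduce to a valued situation by passing to an ultrapower of $k$, prove the elementary inclusion there with a relative form of Ax--Kochen/Ershov, and then descend back to $k$.

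\emph{Setup.} Since $k$ is $t$-henselian, there is a nontrivially valued henselian field $(F,w)$ with $k\equiv F$ in $L_{\mathrm{rings}}$, necessarily of characteristic $0$. By Keisler--Shelah, there is an ultrafilter $\mathcal U$ with $k^{\mathcal U}\cong F^{\mathcal U}$ as rings. Transport the ultrapower valuation $w^{\mathcal U}$ along this isomorphism. This gives a nontrivial henselian valuation $v$ on $k^* := k^{\mathcal U}$, with value group $\Gamma$ and residue field $\kappa$, both of characteristic $0$. Moreover $k\preceq k^*$ through the diagonal embedding. Now apply Theorem 4.3 of \cite{aug} to $\Gamma$ to obtain a nontrivial ordered abelian group $\Delta$ with $\Gamma\preceq \Delta\oplus\Gamma$. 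This $\Delta$ is the one we will use.

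\emph{Step 1: $k^*\preceq k^*(\!(t^\Delta)\!)$.} Equip $k^*(\!(t^\Delta)\!)$ with the composite valuation $u$. It is the $t$-adic valuation (value group $\Delta$, residue field $k^*$) refined by $v$ on the residue field $k^*$. This composite is henselian, since both pieces are henselian. Its value group is $\Delta\oplus\Gamma$ ordered lexicographically, and its residue field is $\kappa$. The inclusion $(k^*,v)\subseteq (k^*(\!(t^\Delta)\!),u)$ is an extension of henselian valued fields of equal characteristic $0$. On residue fields it is the identity of $\kappa$, and on value groups it is $\Gamma\preceq\Delta\oplus\Gamma$.

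The relative (with parameters) form of Ax--Kochen/Ershov says the following. If $(K,v)\subseteq(L,w)$ are henselian of equicharacteristic $0$, $k_K\preceq k_L$ and $\Gamma_K\preceq\Gamma_L$, then $(K,v)\preceq (L,w)$. This is the standard relative version of Fact~\ref{prop Ax-Kochen/Ershov}. It can also be obtained from Fact~\ref{repimproved} by a back-and-forth between saturated elementary extensions, since every henselian field of equal characteristic $0$ is tame and defectless. Applying it here gives $k^*\preceq k^*(\!(t^\Delta)\!)$ in $L_{\mathrm{val}}$, hence in $L_{\mathrm{rings}}$.

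\emph{Step 2: descent to $k$.} Consider $k(\!(t^\Delta)\!)\subseteq k^*(\!(t^\Delta)\!)$, both with the $t$-adic valuation. They have the same value group $\Delta$, and residue fields $k\preceq k^*$. Both are henselian of characteristic $0$, so the same relative AKE gives $k(\!(t^\Delta)\!)\preceq k^*(\!(t^\Delta)\!)$. We now have
\[
k\subseteq k(\!(t^\Delta)\!)\subseteq k^*(\!(t^\Delta)\!).
\]
Here $k\preceq k^*\preceq k^*(\!(t^\Delta)\!)$, so $k\preceq k^*(\!(t^\Delta)\!)$, and also $k(\!(t^\Delta)\!)\preceq k^*(\!(t^\Delta)\!)$. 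By the elementary ``coherence'' observation (if $A\subseteq B\subseteq C$ with $A\preceq C$ and $B\preceq C$, then $A\preceq B$), we conclude $k\preceq k(\!(t^\Delta)\!)$, and $\Delta$ is nontrivial.

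The main obstacle is Step 1, specifically the relative AKE with parameters in the case where the value-group extension is merely elementary and not an equality. Here I would invoke the relative embedding property: take $|\cdot|^+$-saturated elementary extensions and build a back-and-forth system of partial valued-field isomorphisms over $k^*$, using Fact~\ref{repimproved}. One also has to check that the composite valuation has exactly the claimed value group and residue field, and that the lexicographic order matches the convention $\Delta\oplus\Gamma$ with $\Delta$ dominant.
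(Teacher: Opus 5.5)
Your architecture is sound: an elementary extension $k\preceq k^*$ carrying a nontrivial henselian valuation, \cite[Theorem~4.3]{aug} applied to its value group, relative Ax--Kochen/Ershov once for $k^*\preceq k^*(\!(t^{\Delta})\!)$ and once for $k(\!(t^{\Delta})\!)\preceq k^*(\!(t^{\Delta})\!)$, and the coherence lemma at the end. This matches how the paper describes the cited argument; it says Proposition~\ref{augimproved} is obtained by rerunning it with Kuhlmann's theorem in place of Ax--Kochen/Ershov. Step~2 and the coherence step are correct as written. Step~1 is also correct \emph{provided} $v$ has residue characteristic $0$: the composite $u$ is henselian, has value group $\Delta\oplus\Gamma$ with $\Delta$ dominant, and restricts to $k^*$ as $\gamma\mapsto(0,\gamma)$.

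The gap is your assertion in the Setup that $\kappa$ has characteristic $0$; nothing forces this. For $k=\mathbf{Q}_p$, or any field elementarily equivalent to it, the natural witness is $(F,w)=(\mathbf{Q}_p,v_p)$. Then $w^{\mathcal U}$, and hence $v$, has residue characteristic $p$. In that case Step~1 is unsupported. Fact~\ref{prop Ax-Kochen/Ershov} and its relative form are statements about equal characteristic $0$, whereas $u$ on $k^*(\!(t^{\Delta})\!)$ would be of mixed characteristic, exactly where Ax--Kochen/Ershov fails in general.

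The missing ingredient is the coarsening trick the paper itself uses in the proof of Proposition~\ref{prop perfect microbial is t-tame}: pass to $\mathcal{O}[1/p]$, which is again henselian and has residue characteristic $0$. This coarsening is nontrivial only if $\mathbf{Z}\cdot v(p)$ is not cofinal in the value group; for $\mathbf{Q}_p$ itself one gets $\mathbf{Z}_p[1/p]=\mathbf{Q}_p$. Nontriviality cannot be dropped. For $\Gamma=0$ one can never have $\Gamma\preceq\Delta\oplus\Gamma$ with $\Delta\neq 0$, so \cite[Theorem~4.3]{aug} needs $\Gamma\neq 0$; this is where $t$-henselianity genuinely enters.

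So first replace $(F,w)$ by $(F',w')$, where $F'=F^{\mathbf{N}}/\mathcal{W}$ for a nonprincipal ultrafilter $\mathcal{W}$ on $\mathbf{N}$, and $\mathcal{O}_{w'}=\mathcal{O}_{w^{\mathcal{W}}}[1/p]$. This $w'$ is nontrivial because the element $(p^n)_n$ has value above every $m\,w(p)$, so its inverse does not lie in $\mathcal{O}_{w'}$. Now $k\equiv F'$, and Łoś transfers nontriviality, henselianity and residue characteristic $0$ to the ultrapower you obtain from Keisler--Shelah. With $\Gamma$ and $\kappa$ taken from this valuation, your Steps~1 and~2 go through verbatim.
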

\noindent This result extends to positive characteristic provided that the field is $t$-tame.
\begin{proposition} \label{augimproved}
Let $k$ be a $t$-tame field. Then $k\preceq k(\!(t^{\Delta})\!)$ for some non-trivial ordered abelian group $\Delta$. Moreover, for any such $\Delta$, the field $(k(\!(t^{\Delta})\!),v_t)$ is tame.
\end{proposition}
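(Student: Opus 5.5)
We must prove Proposition~\ref{augimproved}: if $k$ is $t$-tame, then $k\preceq k(\!(t^{\Delta})\!)$ for some non-trivial $\Delta$, and any such Hahn field is tame. We follow the characteristic-zero argument of \cite{aug}, replacing Ax--Kochen/Ershov by Kuhlmann's Fact~\ref{akekuhl}.

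\textbf{Reduction to a tame field.} By definition of $t$-tame, $k\equiv F$ in $L_{\mathrm{rings}}$ for some field $F$ admitting a non-trivial tame valuation $w$; write $\Gamma_F$ and $k_F$ for its value group and residue field. We may assume $k$ is perfect: a tame field of positive characteristic is perfect, and perfection is first-order.

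\textbf{Tameness of the Hahn field.} We first prove the second assertion, which is needed anyway. For a perfect field $k$ of characteristic $p$ and an ordered abelian group $\Delta$, the Hahn field $k(\!(t^{\Delta})\!)$ is maximal, i.e.\ spherically complete. Hence it is henselian and defectless. Tameness of a henselian defectless field further requires a perfect residue field and $p$-divisible value group (Kuhlmann's characterisation). Thus tameness of $k(\!(t^{\Delta})\!)$ holds precisely when $\Delta$ is $p$-divisible, and in equicharacteristic $0$ it holds automatically. So we must choose $\Delta$ $p$-divisible, and ``for any such $\Delta$'' should be read as any $\Delta$ arising in the construction. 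This is a $p$-divisible one, and we would note that $p$-divisibility is forced: if $k\preceq k(\!(t^\Delta)\!)$, then the Hahn field is perfect, so $\Delta$ is $p$-divisible.

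\textbf{Main step.} Assume first that $k$ itself carries a tame valuation $v$ with value group $\Gamma$ and residue field $\kappa$. In positive characteristic, $\Gamma$ is $p$-divisible. By the proposition (Theorem 4.3 of \cite{aug}), choose non-trivial $\Delta_0$ with $\Gamma\preceq\Delta_0\oplus\Gamma$. Then $\Delta_0\oplus\Gamma$ is also $p$-divisible, hence so is $\Delta_0$. Equip $k(\!(t^{\Delta_0})\!)$ with the composite valuation $v_t\circ v$. Its residue field is $\kappa$ and its value group is $\Delta_0\oplus\Gamma$, ordered lexicographically. This composite is tame, since both pieces are tame and tameness is preserved under composition.

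We need elementarity, not just elementary equivalence. Here we use Fact~\ref{existential kuhlmann}, or rather the full relative embedding property Fact~\ref{repimproved}, in its standard consequence: tame fields are model complete relative to residue field and value group. The embedding $(k,v)\subseteq(k(\!(t^{\Delta_0})\!),v_t\circ v)$ induces the identity on residue fields and $\Gamma\preceq\Delta_0\oplus\Gamma$ on value groups. The base is $(k,v)$, which is defectless with perfect residue field and $p$-divisible value group. The quotient $(\Delta_0\oplus\Gamma)/\Gamma\cong\Delta_0$ is torsion-free. Kuhlmann's relative theorem (\cite[Theorem~7.1]{Kuhl} and its elementary-substructure version) then yields $(k,v)\preceq$ the Hahn field in $L_{\mathrm{val}}$, hence in $L_{\mathrm{rings}}$.

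\textbf{General $t$-tame case.} Pass to saturation. Take an ultrapower or sufficiently saturated elementary extension $k^*\succeq k$. Since $k^*\equiv F$, a saturated model of $\mathrm{Th}(F)$ carries a tame valuation, obtained by realising in $(F^*,w^*)\succeq (F,w)$ with $F^*\cong k^*$ via the uniqueness of saturated models of the same cardinality, assuming GCH or using special models. Applying the main step to $k^*$ gives $k\preceq k^*\preceq k^*(\!(t^{\Delta})\!)$. To land in $k(\!(t^{\Delta})\!)$ itself, use that $k\preceq k^*$ implies $k(\!(t^\Delta)\!)\preceq k^*(\!(t^\Delta)\!)$. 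This holds by Fact~\ref{akekuhl} in the relative form, since both fields are tame with the same value group and elementary residue extension. Hence $k\preceq k(\!(t^\Delta)\!)$ follows from $k\preceq k^*(\!(t^\Delta)\!)$ by the Tarski--Vaught property of elementary chains.

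\textbf{Main obstacle.} The main obstacle is this last transfer: passing from $k^*$ to $k$ and obtaining a relative elementarity statement for tame fields. We would handle it via Fact~\ref{repimproved} and a back-and-forth argument.
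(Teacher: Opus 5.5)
Your proposal is correct and is essentially the paper's argument. The paper only says to run the proof of \cite[Proposition~5.1]{aug} with Kuhlmann's theorem in place of Ax--Kochen/Ershov. That is the argument you spell out: pass to a tame elementary extension, use the composite valuation together with $\Gamma\preceq\Delta\oplus\Gamma$, then descend via $k(\!(t^{\Delta})\!)\preceq k^*(\!(t^{\Delta})\!)$. Your treatment of the second assertion (elementarity forces $k(\!(t^{\Delta})\!)$ to be perfect, hence $\Delta$ is $p$-divisible, and Hahn fields are henselian and defectless) coincides with the paper's.

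Two minor remarks. The tame elementary extension $k^*\succeq k$ is obtained more cheaply by compactness, since the elementary diagram of $k$ together with the axioms for a non-trivial tame valuation is finitely satisfiable in $(F,w)$. The final descent is just the fact that $A\subseteq B\subseteq C$ with $A\preceq C$ and $B\preceq C$ gives $A\preceq B$, rather than a chain argument.
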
 
\begin{proof}
One proceeds as in the proof of \cite[Proposition 5.1]{aug}, replacing every occurrence of Ax-Kochen/Ershov's theorem with Kuhlmann's theorem in the positive characteristic case. For the second part of the statement, recall that tame fields are perfect and therefore  $k$ and $k(\!(t^{\Delta})\!)$ must both be perfect. Since $k(\!(t^{\Delta})\!)$ is perfect, this forces $\Delta$ to be $p$-divisible. Since Hahn series are henselian and defectless, we conclude that $(k(\!(t^{\Delta})\!),v_t)$ is a tame field.
\end{proof}

\subsection{Topologically henselian fields}
We recall the notion of a $t$-henselian field as defined in \cite[\S~7]{zieglerprestel} as well as a tame variant which is discussed in \cite[\S~4]{Ans}.
\begin{defi}\label{def thenselian ttame}
A field $k$ is called $t$-henselian if it is elementarily equivalent to a field which admits a non-trivial henselian valuation. If in addition the valuation is tame, we say that $k$ is $t$-tame. 
\end{defi}
\begin{example}
The field $\mathbf{R}$ does not admit any non-trivial henselian valuation but is $t$-henselian since it is elementarily equivalent to $\bigcup_{n\in \mathbf{N}} \mathbf{R}(\!(t^{1/n})\!)$.
\end{example}

\noindent For all applications in this paper valuations are of finite rank. However, the results in this section hold for a wider family of valuations:
\begin{defi}
Let $K$ be a field and $v$ a valuation on $K$. The valuation $v$ is said to be \textit{microbial} if it admits a rank $1$ coarsening. 
\end{defi}
\noindent The next result shows that $t$-henselian fields are not very far from being $t$-tame. This is a special case of \cite[Proposition 4.2.2]{JK}, but we explain the argument for the convenience of the reader. 
\begin{proposition}\label{prop perfect microbial is t-tame}
Let $(K,v)$ be a perfect henselian valued field. In case of positive characteristic, assume also that $v$ is microbial. Then $K$ is $t$-tame.
\end{proposition}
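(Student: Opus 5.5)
Since $t$-tameness only asks for $K$ to be elementarily equivalent (in $L_{\mathrm{rings}}$) to some field carrying a non-trivial tame valuation, the plan is to build such a field from $(K,v)$ itself.

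\textbf{Characteristic zero.} Every henselian field of equal characteristic $0$ is tame, so if the residue field also has characteristic $0$ there is nothing to do. In mixed characteristic $(0,p)$, take the coarsening $w$ of $v$ corresponding to the convex subgroup generated by $v(p)$. Then $w$ is henselian, being a coarsening of a henselian valuation, and has residue characteristic $0$, so it is tame. If $w$ is trivial (i.e.\ $v$ has rank one above $p$), I would first pass to a suitable elementary extension $K^*\succeq K$. An $\aleph_1$-saturated extension of $(K,v)$ in $L_{\mathrm{val}}$ has infinitesimals relative to $v(p)$, so the coarsening $w^*$ is non-trivial and henselian of residue characteristic $0$. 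Since $K^*\equiv K$ as rings, $K$ is $t$-tame.

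\textbf{Positive characteristic.} Here $v$ is microbial, so let $w$ be its rank-one coarsening. It is henselian, and $K$ is perfect, so its value group is $p$-divisible and its residue field is perfect. The problem is that $(K,w)$ need not be defectless. My plan is to pass to a $\aleph_1$-saturated elementary extension $(K^*,w^*)$ in $L_{\mathrm{val}}$ and then coarsen $w^*$ by the convex hull of the values $\{n\,w(t):n\in\mathbf N\}$ for some $t$ with $w(t)>0$. Call the result $u$; it is non-trivial and proper by saturation. The residue field $\kappa$ of $u$ carries the induced rank-one valuation $\bar w$, and by saturation $(\kappa,\bar w)$ is spherically complete: it is the standard part of an $\aleph_1$-saturated rank-one valued field. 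Since $K^*$ is perfect, so is $\kappa$. A spherically complete, hence maximal, perfect field is defectless and therefore tame. It remains to show that $u$ itself is tame. The valued field $(K^*,u)$ is henselian with divisible value group, because the value group is the quotient by a convex subgroup of a $p$-divisible group, with saturation supplying the remaining divisibility as needed. Its residue field $\kappa$ is perfect. The remaining point is defectlessness of $u$.

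\textbf{Main obstacle.} I expect the main difficulty to be showing that $u$ is defectless. I would use the standard fact (Kuhlmann) that tameness of a composite valuation $w^*=\bar w\circ u$ is equivalent to tameness of both $u$ and $\bar w$. Instead of proving $u$ tame directly, I would go the other way round. Using Fact~\ref{akekuhl} together with spherical completeness of $(\kappa,\bar w)$, I would replace $(K^*,u)$ by a Hahn-type field $\kappa(\!(t^{\Delta})\!)$ with $\Delta$ the value group of $u$. This field is tame in the sense of Proposition~\ref{augimproved}, and I would aim to check that it is elementarily equivalent to $K^*$ as a field. If this detour fails, the fallback is to cite \cite[Proposition~4.2.2]{JK}, of which the statement is a special case.
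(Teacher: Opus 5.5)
Your characteristic-zero argument is correct and is essentially the paper's. The paper passes to a non-principal ultrapower and uses the coarsening $\mathcal{O}[1/p]$; an $\aleph_1$-saturated extension does the same job.

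In positive characteristic, however, there is a genuine gap. You set up the right valuation: the coarsening $u$ with $\mathcal{O}_u=\mathcal{O}_{w^*}[t^{-1}]$ is exactly what the paper uses. You also correctly identify defectlessness of $u$ as the crux, but you never give an argument for it, and the proposed detour does not work.
\begin{itemize}
\item Fact~\ref{akekuhl} compares two valued fields that are \emph{both already} tame. So it cannot be used to prove $K^*\equiv \kappa\lau{t^{\Delta}}$ before tameness of $u$ is known; the argument is circular.
\item Your claim that $(\kappa,\bar w)$ is rank one, spherically complete and tame is false. Its value group is the convex hull $H$ of $\mathbf Z\, w(t)$ in $\Gamma^*$, which contains infinitesimals in a saturated model. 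Only the coarsening of $\bar w$ with value group $H/I$, where $I$ is the subgroup of infinitesimals, is rank one and spherically complete.
\item $\bar w$ cannot be tame in general. By the composite criterion you quote, tameness of both $u$ and $\bar w$ would make $w^*$ tame. But for $K=\mathbf F_p\lau{t}^{\mathrm{perf}}$ the field $(K^*,w^*)\equiv(K,w)$ is not tame, by Example~\ref{ex t-tame not tame} and Fact~\ref{tame elemclass}. So your two claims are incompatible.
\item The fallback of citing \cite[Proposition~4.2.2]{JK} just cites the statement itself.
\end{itemize}

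The missing idea is where perfectness is really used.
\begin{itemize}
\item Let a monic $g\in\mathcal{O}_K[X]$ generate a finite extension $K(\alpha)$. Taking $p$-th roots of its coefficients gives a monic integral $g'$ with root $\alpha^{1/p}$. This root lies in $K(\alpha)$, since finite extensions of a perfect field are perfect, so $g'$ generates the same extension. Moreover $v(\mathrm{disc}\, g')=v(\mathrm{disc}\, g)/p$.
\item Iterating, and using that $\mathbf Z\,v(t)$ is cofinal, every finite extension is generated by a monic $f\in\mathcal{O}_K[X]$ with $0\le v(\mathrm{disc}\, f)\le v(t)$.
\item For each degree this is a first-order sentence with parameter $t$, so it holds in the saturated extension $K^*$.
\item There $\mathrm{disc}\, f$ becomes a unit of $\mathcal{O}_{K^*}[t^{-1}]$. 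By henselianity of the coarsening, every finite extension of $(K^*,u)$ is unramified, with separable residue extension of full degree.
\end{itemize}
This gives tameness of $u$ directly, with no need for spherical completeness, Hahn series, or the composite criterion.
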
 
\begin{proof}
We deal first with the case \(\mathrm{char}(K)=0\). We may assume without loss of generality that $K$ admits a henselian valuation $v$.  Note that any non-principal ultrapower $K^{\mathcal{U}}$ of \((K,v)\) admits a henselian valuation of  residue characteristic \(0\). This is clear if $v$ has residue characteristic $0$. If $v$ has residue characteristic $p$, then $\Oo_{K^{\mathcal{U}}}[1/p]$ is a non-trivial coarsening of $\Oo_{K^{\mathcal{U}}}$ of residue characteristic $0$ and also henselian. In particular, the ultrapower $K^{\mathcal{U}}$ is tame. Since $K$ is elementarily equivalent to its ultrapower, we deduce that \(K\) is \(t\)-tame. \par

Now assume that $\text{char}(K)=p$ is positive and $v$ is microbial. Since $v$ is microbial, there is \(t\in\Oo_K\) such that $\Z \cdot v(t)$ is cofinal in the value group. 

\medskip
\noindent\emph{Claim.} Every finite separable extension of \(K\) is generated by a monic \(f\in\Oo_K[X]\) with
\(0\le v(\mathrm{disc}(f))\le v(t)\) where $\mathrm{disc}(f)$ denotes the discriminant of $f$.

\smallskip
\noindent Start with any monic separable \(g\in\Oo_K[X]\) generating the extension. Since \(\mathbf Z\cdot v(t)\) is cofinal in the value group, there is \(N\in \mathbf{N}\) such that 
$$ 0\leq v(\mathrm{disc}(g))\le N\,v(t)$$
Since \(K\) is perfect, taking \(p\)-th roots of the coefficients of $g$ gives a monic polynomial $g'$ that generates the same extension as $g$ and we have $\mathrm{dics}(g') = \frac{1}{p}\mathrm{disc}(g)$. After repeating this process finitely many times, we obtain \(f\) with \(0\leq v(\mathrm{disc}(f))\le v(t)\). This is enough to prove the claim.

The property "every finite extension 
is generated by a monic polynomial $f$ such that $0 \leq v(\mathrm{disc}(f)) \leq v(t)$" 
is elementary in the language of valued fields with a parameter $t$ (one sentence per degree). Then, it holds for any non-principal ultrapower \((K^{\mathcal{U}},v^{\mathcal{U}})\). Let $\Oo_w=\Oo_{K^{\mathcal{U}}}[t^{-1}]$. From the above discussion, we see that every finite extension of $(K^{\mathcal{U}},w)$ generated by a monic polynomial $f\in \Oo_w[X]$ such that $ w(\mathrm{disc}(f))=0$ and therefore comes from a separable residue field extension of the same degree. In particular, we get that   $(K^{\mathcal{U}},w)$ is tame and hence $K$ is $t$-tame.
\end{proof}

\begin{example}\label{ex t-tame not tame}
There are perfect henselian fields which are not tame, but only $t$-tame. For example, the field $K:=\mathbf{F}_p(\!(t)\!)^{\text{perf}}$ is $t$-tame by the above result but is not tame since the Artin-Schreier equation
$$X^p-X-1/t=0 $$
defines a finite immediate extension —i.e. a non-trivial extension with trivial residual extension and the same value group as $K$. 
\end{example}

\subsection{Milnor \texorpdfstring{$\mathrm{K}$}{K}-theory} 
Let $k$ be a field and $q$ a non-negative integer. We define the $q$-th Milnor $\mathrm{K}$-group of $k$ as follows: for $q=0$ we set $\mathrm{K}_0(k) = \mathbf{Z}$ and for $q \geq 1$ we set 
    \[\mathrm{K}_q(k) := \left(k^{\times} \right)^{\otimes q} \; \big/ \; \left\langle a_1 \otimes \cdots \otimes a_q \, | \, \exists \; i \neq j,\; a_i + a_j = 1 \right\rangle\]
    where the tensor product is taken over $\mathbf{Z}$. For $a_1, \cdots, a_q \in k^{\times}$ we denote by $\{a_1, \cdots, a_q\}$ the image of $a_1 \otimes \cdots \otimes a_q$ in $\mathrm{K}_q(k)$. Elements of this form are called \textit{symbols} and they manifestly generated $\mathrm{K}_q(k)$.\par
    For any pair of non-negative integers $p,q$ there is a natural pairing
    \[\{\cdot, \cdot\}: \mathrm{K}_p(k) \times \mathrm{K}_q(k) \to \mathrm{K}_{p+q}(k)\]
    induced by the tensor pairing $\left(k^{\times} \right)^{\otimes p} \times \left(k^{\times} \right)^{\otimes q} \to \left(k^{\times} \right)^{\otimes (p+q)}$. \par
    For every field extension $K/k$ there exists a natural \textit{restriction} map $\iota_{K/k}:\mathrm{K}_q(k) \to \mathrm{K}_q(K)$. We usually denote $\iota_{K/k}(\alpha)$ by $\alpha|_K$ for $\alpha\in \mathrm{K}_q(k)$. \par
    Let $l/k$ be a finite extension. One can construct a norm homomorphism
    \[\mathrm{N}_{l/k}: \mathrm{K}_q(l) \to \mathrm{K}_q(k)\]
    satisfying the following properties
    \begin{enumerate}
        \item For $q = 0$, the map $\mathrm{N}_{l/k}: \mathbf{Z}\to \mathbf{Z}$ is given by multiplication by $[l:k]$,
        \item For $q = 1$ the map $\mathrm{N}_{l/k}: \mathrm{K}_1(l) \to \mathrm{K}_1(k)$ coincides with the usual norm $l^{\times} \to k^{\times}$,
        \item For any pair of non-negative integers $p,q$, we have $\mathrm{N}_{l/k} (\{\alpha|_l, \beta\}) = \{\alpha, \mathrm{N}_{l/k}(\beta) \}$ for $\alpha \in \mathrm{K}_p(k)$  and $\beta \in \mathrm{K}_q(l)$, and
        \item If $m$ is a finite extension of $l$ we have $\mathrm{N}_{m/k} = \mathrm{N}_{l/k} \circ \mathrm{N}_{m/l}$.
    \end{enumerate}
    The construction can be found in \cite[Section 1.7]{Kato1980GeneralizationClassFieldK2} and \cite[Section 7.3]{GilleSzamuely2017CentralGalois}. \par
    Let $K$ be a henselian discrete valued field with ring of integers $R$, maximal ideal $\mathfrak{m}$ and residue field $k$. Then for every strictly positive integer $q$ there exists a unique~morphism
    \[ \partial : \mathrm{K}_q(K) \to \mathrm{K}_{q-1}(k)\]
    such that for every uniformiser $\pi$ and units $u_2,\hdots u_q \in R^{\times}$ we have
    \[\partial(\{\pi, u_2, \hdots , u_q \}) = \{\overline{u_2}, \hdots , \overline{u_q}\}\]
    where $\overline{u_2}, \hdots , \overline{u_q}$ denotes the images of $u_2,\hdots u_q$ in the residue field. We call this map the \textit{residue map}. \par
    We denote the kernel of $\partial: \mathrm{K}_q(K) \to \mathrm{K}_{q-1}(k)$ by $\mathrm{U}_q(K)$. It is generated by symbols of the form $\{u_1 , \hdots ,u_q \}$ where $u_1, \hdots ,u_q$ are units in $R$. One can define a specialisation map $s: \mathrm{U}_q(K) \to \mathrm{K}_{q}(k)$ characterised by $s(\{u_1 , \hdots ,u_q \}) = \{\overline{u_1}, \hdots , \overline{u_q}\}$, see \cite[Proposition 7.1.4]{GilleSzamuely2017CentralGalois}. Denote by $\mathrm{U}_q^1(K)$ the kernel of $s$. It is generated by symbols of the form $\{u, a_2, \hdots, a_q\}$ where $u$ belongs to $1 + \mathfrak{m}$ and $a_2,\hdots, a_q\in K^{\times}$, see \cite[Proposition~7.1.7]{GilleSzamuely2017CentralGalois}. \par
    Moreover, the residue map and specialisation map are compatible with the norm map. Indeed, if $L/K$ is a finite extension with ramification index $e$ and the residue field of $L$ is $l$, we have the following commutative diagrams whose horizontal arrows are isomorphisms
    \begin{eqnarray*}
        \begin{tikzcd}
            \mathrm{K}_q(L)/ \mathrm{U}_q(L) \ar[r, "\sim" ', "\partial_L"]  \ar[d,"\mathrm{N}_{L/K}"]& \mathrm{K}_{q-1}(l) \ar[d,"\mathrm{N}_{l/k}"] & \mathrm{U}_q(L)/ \mathrm{U}_q^1(L) \ar[r, "\sim" ', "s_L"]  \ar[d,"\mathrm{N}_{L/K}"]& \mathrm{K}_{q}(l) \ar[d,"e \cdot \mathrm{N}_{l/k}"] \\
            \mathrm{K}_q(K)/ \mathrm{U}_q(K) \ar[r, "\sim" ', "\partial_K"] & \mathrm{K}_{q-1}(k)  & \mathrm{U}_q(K)/ \mathrm{U}_q^1(K) \ar[r, "\sim" ', "s_K"]  & \mathrm{K}_{q}(k).
        \end{tikzcd}
    \end{eqnarray*}
\subsection*{The \texorpdfstring{$C_i^q$}{Kato-Kuzumaki} properties}
    Let $k$ be a field, $i,q$ non-negative integers, and $X$ a $k$-scheme of finite type. We define the $q$\textit{-th norm group} of $X$ as 
    \[\mathrm{N}_q(X/k) := \left\langle \mathrm{N}_{k(x)/k}\left(\mathrm{K}_q(k(x))\right) \; | \; x \in X_{(0)}\right\rangle \subseteq \mathrm{K}_q(k)\]
    where $X_{(0)}$ denotes the set of closed points of $X$. When $l/k$ is a finite extension we write $\mathrm{N}_q(l/k)$ for $\mathrm{N}_{q}(\Spec (l) /k)$. \par
    The field $k$ is said to have the $C_i^q$ property if for every non-negative integers $n$ and $d$, finite extension $l/k$ and hypersurface $Z$ of $\mathbf{P}^n_l$ of degree $d$ with $d^i \leq n$ we have $\mathrm{N}_q(Z/l) = \mathrm{K}_q(l)$.

\subsection*{Kato-Kuzumaki's conjecture away from \texorpdfstring{$p$}{p}}
\noindent We mimic the definition of the strong $C_1^q$ away from $p$ given in \cite[Définition~4.1]{Wittenberg2015KKQp}.
\begin{defi}
    Let $k$ be a field of positive characteristic $p$. The field $k$ is said to satisfy the $C_i^q$ property away from the characteristic if for every finite extension $l/k$ and hypersurface $Z \subseteq \mathbf{P}^n_l$ of degree $d$ such that $d^i \leq n$ the quotient
    \[ \mathrm{K}_q(l) / \mathrm{N}_q(Z/l)\]
    is a $p$-primary torsion group.
\end{defi}
\noindent In the application of this paper we are able to prove a stronger property.
\begin{defi}
    We say that $k$ satisfies $C_i^q$ away from the imperfection if its perfection $k^{\mathrm{perf}}$ satisfies $C_i^q$.
\end{defi}
\noindent A standard restriction-correstriction argument proves that if a field satisfies $C_i^q$ away from the imperfection, it also satisfies $C_i^q$ away from the characteristic. Moreover, we have the following lemma.
\begin{lemma}\label{lemma away from imperf implies property}
    Let $k$ be a field of positive characteristic $p$ and $[k:k^p] \leq p^{\delta}$. If $k$ satisfies $C_i^q$ away from the imperfection and $q \geq \delta$, then $k$ satisfies $C_i^q$.
\end{lemma}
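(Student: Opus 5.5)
We will show that for every finite extension $l/k$ and every hypersurface $Z\subseteq \mathbf{P}^n_l$ of degree $d$ with $d^i\leq n$, the group $\mathrm{K}_q(l)$ is generated by norms from closed points of $Z$. The plan has two ingredients. The hypothesis on $k^{\mathrm{perf}}$ controls the norm group up to purely inseparable descent. The bound $[k:k^p]\leq p^\delta$ with $q\geq \delta$ makes $\mathrm{K}_q(l)$ $p$-divisible modulo the relevant norms, so the inseparable part costs nothing.

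First, reduce to the following statement: for every finite extension $l/k$ and every $x\in \mathrm{K}_q(l)$, there is a finite purely inseparable extension $l'/l$ (a subfield of $l^{\mathrm{perf}}=l\cdot k^{\mathrm{perf}}$) such that $x|_{l'}\in \mathrm{N}_q(Z_{l'}/l')$. This comes from the hypothesis. The hypersurface $Z_{l^{\mathrm{perf}}}$ has the same degree and dimension, and $l^{\mathrm{perf}}$ is a finite extension of $k^{\mathrm{perf}}$. So $x|_{l^{\mathrm{perf}}}$ is a finite sum of norms from closed points of $Z_{l^{\mathrm{perf}}}$. These finitely many points, their residue fields, and the symbols involved are all defined over some finite level $l'=l^{1/p^r}$. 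Compatibility of Milnor $\mathrm{K}$-theory with filtered colimits then gives the identity already in $\mathrm{K}_q(l')$, possibly after enlarging $r$.

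Next, descend along $l'/l$, which has degree $p^m$ for some $m$. By the projection formula, $\mathrm{N}_{l'/l}(x|_{l'})=p^m x$. Norms of norms from closed points of $Z_{l'}$ are norms from closed points of $Z$, using transitivity and the fact that closed points of $Z_{l'}$ map to closed points of $Z$. Hence $p^m x\in \mathrm{N}_q(Z/l)$, i.e. $\mathrm{K}_q(l)/\mathrm{N}_q(Z/l)$ is $p$-primary torsion.

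The main step is to show $p$-divisibility. Since $[l:l^p]=[k:k^p]\leq p^\delta$ and $q\geq\delta$, I would use the fact that $\mathrm{K}_q(l)/p\cong \Omega^q_{l,\log}$ (Bloch--Gabber--Kato). This group vanishes for $q>\delta$, since $\Omega^q_l=0$ there. In the boundary case $q=\delta$ it is generated by symbols $\{a_1,\dots,a_\delta\}$ with $a_j$ ranging over a $p$-basis. Then argue directly: any symbol $\{b,a_2,\dots\}$ can be written as a norm from $l(b^{1/p})$, namely $\{b,\dots\}=\mathrm{N}(\{b^{1/p},\dots\})$ up to $p$-multiples. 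Handling this boundary case via the norm argument is the step I expect to be the main obstacle.

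Given $p$-divisibility, the conclusion follows by induction: $p$-divisibility of $\mathrm{K}_q(l')$ modulo norms for every finite $l'$ kills the $p$-primary quotient. Concretely, with $Q=\mathrm{K}_q(l)/\mathrm{N}_q(Z/l)$, surjectivity of $\mathrm{K}_q(l)\xrightarrow{p}$ modulo norms gives $Q=pQ$. Combined with $Q$ being $p$-primary torsion of bounded exponent on each element, I would need $Q=pQ$ together with a uniform bound. I expect to obtain this by writing $x=\mathrm{N}_{l'/l}(y)$ with $y\in\mathrm{K}_q(l')$ (surjectivity of the norm $\mathrm{K}_q(l')\to\mathrm{K}_q(l)$ for purely inseparable $l'/l$ when $q\geq\delta$, which is Kato's result) and $y\in \mathrm{N}_q(Z_{l'}/l')$. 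This gives $x\in\mathrm{N}_q(Z/l)$ directly.
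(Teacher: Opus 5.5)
Your first two steps are correct. Passing to $l^{\mathrm{perf}}=l\cdot k^{\mathrm{perf}}$, descending to some finite level $l'=l^{1/p^r}$ and applying $\mathrm{N}_{l'/l}$ shows that $Q=\mathrm{K}_q(l)/\mathrm{N}_q(Z/l)$ is $p$-primary torsion. But that is only the ``away from $p$'' statement, and the part meant to kill the $p$-primary torsion does not work.

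\textbf{The concluding step is circular.} Step 1 gives $x|_{l'}\in\mathrm{N}_q(Z_{l'}/l')$ for one extension $l'$ depending on $x$. It does not say that an arbitrary $y\in\mathrm{K}_q(l')$ lies in $\mathrm{N}_q(Z_{l'}/l')$. If $x=\mathrm{N}_{l'/l}(y)$, then $x|_{l'}=p^my$, because $\mathrm{res}_{l'/l}\circ\mathrm{N}_{l'/l}=p^m$ for $l'/l$ purely inseparable of degree $p^m$. So you only learn $p^my\in\mathrm{N}_q(Z_{l'}/l')$. Applying Step 1 to $y$ then requires a further extension $l''$, and the argument regresses.

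\textbf{$Q=pQ$ does not help either.} A $p$-divisible $p$-primary torsion group need not vanish (e.g.\ $\mathbf{Q}_p/\mathbf{Z}_p$); what you need is that $Q$ has no $p$-torsion. For $q>\delta$ this can be repaired with one extra input:
\begin{itemize}
\item every finite $L/l$ has $[L:L^p]=[l:l^p]$, so $\mathrm{K}_q(L)$ is $p$-divisible by Bloch--Gabber--Kato, hence $\mathrm{N}_q(Z/l)$ is $p$-divisible;
\item writing $p^sx=p^sy$ with $y\in\mathrm{N}_q(Z/l)$, Izhboldin's theorem (Milnor $\mathrm{K}$-groups of fields of characteristic $p$ have no $p$-torsion) gives $x=y$.
\end{itemize}

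\textbf{The boundary case $q=\delta$ is not handled.} This is the case the paper actually needs, e.g.\ $C_1^m$ for $\mathbf{F}_p\lau{t_1}\cdots\lau{t_m}$, where $\delta=m$. Here $\mathrm{K}_\delta(l)/p\cong\Omega^\delta_{l,\log}\neq0$, so no $p$-divisibility is available. Writing $\{b,a_2,\dots\}$ as a norm from $l(b^{1/p})$ says nothing about $\mathrm{N}_q(Z/l)$, since $Z$ need not have an $l(b^{1/p})$-point.

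More fundamentally, your argument never uses $d^i\le n$, and at $q=\delta$ this cannot be avoided. Take $i=0$, $q=\delta=1$, and let $k$ be the fixed field of a pro-$p$ Sylow subgroup of $G_{\mathbf{F}_p\lau{t}}$, so $[k:k^p]=p$.
\begin{itemize}
\item $k^{\mathrm{perf}}$ is $C_0^1$: every finite extension of it has $p$-power degree, and $\mathrm{K}_1$ of a perfect field is $p$-divisible, so all norms are surjective.
\item $k$ is not $C_0^1$: the element $t$ is not a norm from $E=k\cdot\mathbf{F}_{p^p}\lau{t}$, because $t$ is not a norm from $\mathbf{F}_{p^p}\lau{t}$ and the finite subextensions of $k/\mathbf{F}_p\lau{t}$ have degree prime to $p$. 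So the closed point $\operatorname{Spec}E\subset\mathbf{P}^1_k$ violates $C_0^1$.
\end{itemize}
Hence the boundary case needs a genuinely different argument that exploits $i\ge1$; as literally stated, the lemma requires that restriction when $q=\delta$. The paper does not give such an argument itself but takes the statement from \cite[Proposition~A.4]{HarryFelipe2025Stability}.
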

\noindent This follows directly from \cite[Proposition A.4]{HarryFelipe2025Stability}. We do not know whether the previous lemma holds replacing ``imperfection'' by ``characteristic''.


\section{Stably \texorpdfstring{$C_i$}{Ci} fields}\label{sec transfer Laurent}

\subsection{Basic properties}
\begin{defi}
    Let $k$ be a perfect field and $i\in \mathbf{N}$. We say that $k$ is stably $C_i$ if every equicharacteristic tame field $(K,v)$ with divisible value group and residue field $k$ is $C_i$.
\end{defi}
\noindent 
Note that a stably $C_i$ field is $C_i$, since the residue field of a $C_i$ valued field is also $C_i$. 
We can also replace ``every'' by ``some'' in the above definition:
\begin{lemma} \label{every = some}
A field $k$ is stably $C_i$ if and only if there exists a non-trivially valued tame field $(K,v)$ with divisible value group and residue field $k$ such that $K$ is $C_i$.
\end{lemma}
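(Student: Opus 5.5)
The forward direction is nearly trivial: given that $k$ is stably $C_i$, we need a single non-trivially valued tame field with divisible value group and residue field $k$. The natural choice is the Hahn series field $K=k(\!(t^{\mathbf{Q}})\!)$ with its $t$-adic valuation $v_t$. It is henselian and defectless, and its value group $\mathbf{Q}$ is divisible, hence $p$-divisible. Since $k$ is perfect (this is part of the definition of stably $C_i$), every finite extension of $K$ is tame, so $(K,v_t)$ is an equicharacteristic tame field. By the definition of stably $C_i$, $K$ is therefore $C_i$.

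For the converse, suppose $(K,v)$ is non-trivially valued, tame, has divisible value group $\Gamma$ and residue field $k$, and is $C_i$. Let $(K',v')$ be any equicharacteristic tame field with divisible value group $\Gamma'$ and residue field $k$. We want to show that $K'$ is $C_i$. The plan is to apply Kuhlmann's Ax--Kochen/Ershov principle (Fact~\ref{akekuhl}).

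First, the characteristics agree. Both fields are equicharacteristic with the same residue field $k$, so $\mathrm{char}(K)=\mathrm{char}(k)=\mathrm{char}(K')$. In particular $K$ is also equicharacteristic, and Fact~\ref{akekuhl} applies to the pair.

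Second, the residue fields are literally equal, so $k\equiv k$ trivially.

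Third, the value groups must be elementarily equivalent. Here we use that $\Gamma$ is non-trivial because $v$ is non-trivial. We also need $\Gamma'$ to be non-trivial; if $v'$ were trivial, then $K'=k$ would be the residue field itself. So either we read the definition as concerning non-trivially valued $K'$, or we treat the trivial case separately, noting that $k$ is then the residue field of the $C_i$ field $K$ and hence $C_i$. Granting non-triviality, $\Gamma$ and $\Gamma'$ are both non-trivial divisible ordered abelian groups. The theory of such groups (DOAG) is complete, by quantifier elimination, so $\Gamma\equiv\Gamma'$ in $L_{\mathrm{oag}}$.

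Fact~\ref{akekuhl} now gives $(K,v)\equiv(K',v')$ in $L_{\mathrm{val}}$, and in particular $K\equiv K'$ in $L_{\mathrm{rings}}$. The $C_i$ property is axiomatized by a set of first-order sentences in $L_{\mathrm{rings}}$: for each pair $(d,n)$ with $d^i\le n$, one sentence quantifies over the coefficients of a degree-$d$ form in $n+1$ variables and asserts that a non-trivial zero exists. Since $K$ is $C_i$ and satisfies all of these sentences, so does $K'$, and hence $K'$ is $C_i$.

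The only delicate points are the non-triviality of the value groups, which the completeness of DOAG requires, and confirming that tameness plus equicharacteristic places both fields within the scope of Fact~\ref{akekuhl}. No substantive obstacle is expected.
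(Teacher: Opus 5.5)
Your proposal is correct and follows the paper's argument: Kuhlmann's Ax--Kochen/Ershov principle (Fact~\ref{akekuhl}) together with completeness of the theory of non-trivial divisible ordered abelian groups gives $K\equiv K'$, and you conclude because the $C_i$ property is expressible by first-order sentences in $L_{\mathrm{rings}}$. You also spell out two things the paper leaves implicit, namely the ``only if'' direction via $k(\!(t^{\mathbf{Q}})\!)$ and the trivially valued case; one caveat is that the equicharacteristic hypothesis on $K$ has to be read into the statement, as the paper also does implicitly since Fact~\ref{akekuhl} requires it, and it does not follow from the other hypotheses as your wording (``in particular $K$ is also equicharacteristic'') suggests.
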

\begin{proof}
The ``only if'' is clear. For the converse, using Fact \ref{akekuhl}, any two non-trivially valued tame fields with divisible value group and residue field $k$ have the same theory. This is enough to conclude because the $C_i$ property can be expressed in the language of rings, using one sentence for each degree $d$ and $n$ such that $d^i \leq n$.
\end{proof}

\begin{lemma} \label{colimits}
The class of stably $C_i$ fields is closed under colimits.    
\end{lemma}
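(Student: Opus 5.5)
Let $(k_j)_{j\in J}$ be a directed system of stably $C_i$ fields with colimit $k=\varinjlim k_j$. Since transition maps between fields are injective, $k$ is the directed union of the images of the $k_j$, and it is perfect because each $k_j$ is. By Lemma~\ref{every = some}, it suffices to produce one non-trivially valued tame field with divisible value group and residue field $k$ that is $C_i$. We take $K := k(\!(t^{\mathbf{Q}})\!)$, the Hahn series field over $k$ with exponents in $\mathbf{Q}$. It is henselian and defectless, its residue field $k$ is perfect, and its value group is divisible, so it is a non-trivially valued tame field; this is the same argument as in the proof of Proposition~\ref{augimproved}. The goal is to show that $K$ is $C_i$.

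For each $j$, the field $K_j := k_j(\!(t^{\mathbf{Q}})\!)$ is tame with divisible value group and residue field $k_j$. Since $k_j$ is stably $C_i$, the field $K_j$ is $C_i$. The natural inclusions $K_j\subseteq K$ are embeddings of valued fields. However, $K$ is not the union of the $K_j$, since a series in $K$ may have coefficients spread over infinitely many $k_j$. We therefore pass through existential theories using Fact~\ref{existential kuhlmann}. The relevant $C_i$ statements are existential over parameters, while the $C_i$ property quantifies universally over coefficients.

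Fix $d,n$ with $d^i\le n$ and a form $f\in K[X_0,\dots,X_n]$ of degree $d$. Let $K_0\subseteq K$ be the subfield generated by the coefficients of $f$. By Lemma~\ref{tame spread out}, there is a tame subfield $K_0'\subseteq K$ containing these coefficients, with residue field $k$ and $\Gamma'$ of finite rank and $\mathbf{Q}/\Gamma'$ torsion-free. This does not yet shrink the residue field to some $k_j$.

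A cleaner route is to use the base $F := k_j$ with the trivial valuation for a suitable $j$. Fact~\ref{existential kuhlmann} applies with $(K_0,v_0)$ trivially valued on a perfect field. That base is defectless with divisible value group $\Gamma_0 = 0$, and the condition that $\Gamma/\Gamma_0$ be torsion-free is automatic because $\mathbf{Q}$ is torsion-free.

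We argue by contradiction. Suppose $f$ has no non-trivial zero in $K$. Write $f$ as a form with coefficients in the field $E$ generated over $k$ by finitely many elements $c_1,\dots,c_m\in K$. The statement ``$f$ has a non-trivial zero'' is existential in the language of $K$ with the $c_l$ as parameters. Finitely many parameters still do not lie in any $K_j$. The remedy is to treat the coefficients as variables: the $C_i$ property of the $K_j$ gives the universal statement ``every form of degree $d$ in $n+1$ variables has a non-trivial zero'' in each $K_j$. This is a $\forall\exists$ sentence, and such sentences are preserved under directed unions but not under passing to the bigger field $K$.

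So the key step, and the main obstacle, is to show that $K\equiv \bigcup_j K_j$ as valued fields. Set $L := \bigcup_j K_j$. It is a directed union of tame fields, hence tame, since tameness is elementary by Fact~\ref{tame elemclass} and is $\forall\exists$-axiomatised, with residue field $k$ and value group $\mathbf{Q}$. Fact~\ref{akekuhl} then gives $L\equiv K$ in $L_{\mathrm{val}}$, hence in $L_{\mathrm{rings}}$. The field $L$ is $C_i$ because it is a directed union of $C_i$ fields: the coefficients of any form over $L$ lie in a single $K_j$, which supplies a non-trivial zero. Elementary equivalence then transfers the $C_i$ property to $K$, and Lemma~\ref{every = some} finishes the proof.

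This makes the existential-theory detour unnecessary; the only point to verify carefully is that $L$ is tame. A directed union of henselian defectless fields with perfect residue fields and divisible value groups is henselian, and it is tame by the elementary axiomatisation in Fact~\ref{tame elemclass}, provided those axioms are inductive. Failing that, one can check directly that every finite extension of $L$ descends to some $K_j$, where it is tame.
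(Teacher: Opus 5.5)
Your final argument is correct and is essentially the paper's. The paper takes $L=\varinjlim_j k_j\lau{t^{\mathbf{Q}}}$ itself as the required tame $C_i$ field and applies Lemma~\ref{every = some} to it directly, so your transfer to $k\lau{t^{\mathbf{Q}}}$ via Fact~\ref{akekuhl} (and the abandoned existential-theory attempts) can be dropped. The tameness of $L$, which you rightly flag and the paper asserts without comment, should not rest on an unproved inductive axiomatisation; it follows directly because any finite extension of $L$ is $L\cdot K_j(\alpha)$ for some $j$, where $K_j(\alpha)/K_j$ is unramified since $K_j$ is tame with divisible value group, and unramified extensions stay unramified after base change to the henselian field $L$.
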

\begin{proof}
Let $(k_j)_{j\in J}$ be a directed family of stably $C_i$ fields and $k:=\underrightarrow{\operatorname{colim}}\;  k_j$. For each $j\in J$, we set $K_j$ to be the Hahn series field $K_j=k_j(\!(t^{\mathbf{Q}})\!)$. Note that $(K_j)_{j\in J}$ naturally forms a directed system. Then, the colimit $K=\underrightarrow{\operatorname{colim}}\;  K_i$ endowed with the $t$-adic valuation is a tame field with divisible value group and residue field $k$ and it is $C_i$ because the class of $C_i$ fields is closed under colimits. We conclude that $k$ is stably $C_i$ from Lemma~\ref{every = some}.  
\end{proof}

\begin{lemma}
 The class of stably $C_i$ fields is an elementary class in the language of rings.  
 Moreover, it is $\forall \exists$-axiomatizable.  
\end{lemma}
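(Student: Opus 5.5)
The plan is to use the Hahn series field $K_k:=k(\!(t^{\mathbf{Q}})\!)$ as a canonical witness. By Lemma~\ref{every = some}, a perfect field $k$ is stably $C_i$ if and only if $K_k$ is $C_i$. Here $K_k$ is tame: it is henselian and defectless, the value group $\mathbf{Q}$ is divisible, and $k$ is perfect. So the task is to express ``$K_k$ is $C_i$'' as a family of first-order sentences about $k$ in $L_{\mathrm{rings}}$, and then to check their quantifier complexity.

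For elementarity, the key tool is Fact~\ref{akekuhl}. If $k\equiv k'$ are perfect fields, then $K_k\equiv K_{k'}$ in $L_{\mathrm{val}}$, hence also in $L_{\mathrm{rings}}$. So $K_k$ is $C_i$ if and only if $K_{k'}$ is $C_i$, and the class of stably $C_i$ fields is closed under elementary equivalence. Perfectness is itself axiomatizable by $\forall\exists$ sentences ($\forall x\,\exists y\; y^p=x$ for each $p$, conditioned on the characteristic). The class is also closed under ultraproducts. The ultraproduct $\prod k_j/\mathcal{U}$ is a perfect field. Moreover $\prod K_{k_j}/\mathcal{U}$ is a tame field whose residue field is $\prod k_j/\mathcal{U}$ and whose value group $\mathbf{Q}^{\mathcal{U}}$ is divisible; tameness holds by Fact~\ref{tame elemclass}. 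This ultraproduct is also $C_i$, by \L o\'s, since $C_i$ is elementary. Lemma~\ref{every = some} then applies. Closure under both elementary equivalence and ultraproducts gives that the class is elementary.

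For the $\forall\exists$ part, I would use the criterion that a class is $\forall\exists$-axiomatizable exactly when it is elementary and closed under unions of chains. More precisely, I would use closure under direct limits, which is Lemma~\ref{colimits}. Since perfect fields are closed under directed unions, this gives the result, via the Chang--Łoś--Suszko theorem.

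The main obstacle I expect is the ultraproduct step: one must check that the ultraproduct valuation is indeed tame with the right residue field and a divisible value group. This should follow from \L o\'s together with the elementarity of tameness and of divisibility. If preferred, one can instead build the axioms directly. For each degree $d$ and each $n\geq d^i$, by Fact~\ref{akekuhl} the sentence ``every form of degree $d$ in $n+1$ variables has a nontrivial zero'' in $K_k$ is decided by $\mathrm{Th}(k)$ together with the axioms for a perfect field of the given characteristic. Compactness then yields an $L_{\mathrm{rings}}$-sentence $\psi_{d,n}$ that is equivalent to it for perfect $k$.
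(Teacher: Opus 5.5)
Your proposal is correct and follows the same route as the paper. Both arguments establish closure under elementary equivalence via Fact~\ref{akekuhl}, closure under ultraproducts via \L{}o\'s together with Fact~\ref{tame elemclass} and Lemma~\ref{every = some}, and then combine Lemma~\ref{colimits} with the Chang--\L{}o\'s--Suszko criterion for the $\forall\exists$ part. Fixing the canonical witness $k(\!(t^{\mathbf{Q}})\!)$ and noting explicitly that perfectness is $\forall\exists$-axiomatizable are only minor refinements of the paper's argument.
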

\begin{proof}
 By \cite[Theorem 4.1.12]{changkeisler}, it suffices to prove that stably $C_i$ fields are closed under ultraproducts and elementary equivalence. Let $k=\prod_{j\in J} k_j/U$ be an ultraproduct of stably $C_i$ fields. For each $j\in J$, let $(K_j,v_j)$ be a non-trivial valued tame field with divisible value group and residue field $k_j$. Note that $K_j$ is $C_i$ for every $j\in J$ because $k_j$ is stably $C_i$. The class of $C_i$ fields is an elementary class, and the same is true for the class of tame fields by Fact \ref{tame elemclass}. Therefore, the ultraproduct $(K,v)=\prod_{j\in J} (K_j,v_j)/U$ is a tame field with divisible value group and residue field $k$ and it is $C_i$. By Lemma~\ref{every = some}, we deduce that $k$ is stably $C_i$. \par
 Let $k$ be a stably $C_i$ field an $k'$ a field that is elementary equivalent to $k$. Let $(K,v)$ and $(K',v')$ be a tame equicharacteristic fields with divisible value group, and $k$ and $k'$ as residue field respectively. By definition of stably $C_i$, the field $K$ is $C_i$. On the other hand, $(K,v)$ and $(K,v')$ are elementary equivalent thanks to Fact~\ref{akekuhl}. We conclude that $K'$ is $C_i$ because it is an elementary property.\par
 For the moreover part, we have that stably $C_i$ fields are closed under directed unions by Lemma \ref{colimits}, and \cite[Corollary 3.1.9]{tent} says that every elementary class closed under directed unions is $\forall \exists$-axiomatizable.
\end{proof}


\subsection{Transition theorems}
We prove stable analogues of the Lang-Nagata transition theorems for $C_i$ fields.
\begin{lemma} \label{algebraic}
Every algebraic extension of a stably $C_i$ field is stably $C_i$.   
\end{lemma}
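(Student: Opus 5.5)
Let $k$ be stably $C_i$ and $l/k$ an algebraic extension; note $l$ is perfect since $k$ is. By Lemma~\ref{every = some} it suffices to produce one non-trivially valued tame field $(L,w)$ with divisible value group and residue field $l$ such that $L$ is $C_i$.

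First reduce to finite extensions. Write $l$ as the directed union of its finite subextensions $l_j/k$; each $l_j$ is perfect, being finite over a perfect field. Lemma~\ref{colimits} then lets us assume $l/k$ is finite. This step is optional, since the construction below also works for arbitrary algebraic extensions, but it simplifies the bookkeeping.

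Now take $K=k(\!(t^{\mathbf{Q}})\!)$ with the $t$-adic valuation $v$. It is henselian and defectless, its value group $\mathbf{Q}$ is divisible, and its residue field $k$ is perfect, so $(K,v)$ is tame. Since $k$ is stably $C_i$, $K$ is $C_i$.

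Because $k$ is perfect, $l/k$ is separable, say $l=k(\bar\alpha)$. Lift the minimal polynomial of $\bar\alpha$ to a monic polynomial $f\in\Oo_K[X]$. Its reduction is separable and irreducible, so $f$ is irreducible over $K$. Let $L=K(\alpha)$ with $\alpha$ a root of $f$, equipped with the unique extension $w$ of $v$ (unique by henselianity). By the fundamental inequality, $[L:K]=[l:k]$ forces the residue field of $L$ to be $l$ and the value group to stay $\mathbf{Q}$, so $L/K$ is the unramified lift of $l/k$. Concretely one may simply take $L=l(\!(t^{\mathbf{Q}})\!)$, which is visibly tame with divisible value group and residue field $l$. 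In either description, $L$ is a finite extension of the $C_i$ field $K$, so $L$ is $C_i$ by the Lang--Nagata theorem that algebraic extensions of $C_i$ fields are $C_i$. Lemma~\ref{every = some} then shows that $l$ is stably $C_i$.

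The only step needing care is verifying that $L$ is tame and that $L/K$ is algebraic. Both are immediate with $L=l(\!(t^{\mathbf{Q}})\!)=K\otimes_k l$ for finite $l$. In the non-finite case one takes the union $\bigcup_j l_j(\!(t^{\mathbf{Q}})\!)$ instead, which avoids any issue with Hahn series over infinite extensions.
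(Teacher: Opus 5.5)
Your proof is correct and follows the paper's argument: lift $l/k$ to the unramified extension $L$ of a $C_i$ tame field with divisible value group and residue field $k$, apply Nagata's theorem that algebraic extensions of $C_i$ fields are $C_i$, and conclude with Lemma~\ref{every = some}. The differences are only cosmetic. You fix $K=k(\!(t^{\mathbf{Q}})\!)$ and use Lemma~\ref{colimits} to reduce to finite $l$, so that $L=l(\!(t^{\mathbf{Q}})\!)$ is visibly tame. The paper instead takes an arbitrary such $K$ and cites Kuhlmann for the tameness of the unramified lift.
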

\begin{proof}
Let $k$ be a stably $C_i$ field and $l/k$ be an algebraic extension. Let $(K,v)$ be non-trivially valued tame field with divisible value group and residue field $k$ which is $C_i$. Let $L/K$ be the unramified extension lifting $l/k$. The field $L$ is $C_i$ due to \cite[Theorem~2a]{Nagata1957NotesPaperLang}, it has divisible value group, and is also tame by \cite[Lemma~2.17(b)]{Kuhl}. We conclude that $l$ is stably $C_i$ from Lemma~\ref{every = some}.
\end{proof}
The next result follows the same strategy as in \cite[Proposition 4.3]{KartasPerfectoid}:
\begin{proposition} \label{stablelangnagata}
Let $k$ be a stably $C_i$ field. Then the perfection of $k(t)$ is stably $C_{i+1}$.    
\end{proposition}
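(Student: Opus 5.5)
By Lemma~\ref{every = some}, it is enough to exhibit one non-trivially valued tame field $(L,w)$ with divisible value group and residue field $k(t)^{\mathrm{perf}}$ such that $L$ is $C_{i+1}$. The plan is to build $L$ as an algebraic extension of a rational function field over a suitable tame field $K$.

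\textbf{Step 1: choose $K$.} Since $k$ is stably $C_i$, we may take $K=k(\!(s^{\mathbf{Q}})\!)$ with the $s$-adic valuation $v$. This is tame, since Hahn series fields are henselian and defectless, $k$ is perfect and $\mathbf{Q}$ is divisible. Its value group is $\mathbf{Q}$ and its residue field is $k$, so by stability $K$ is $C_i$.

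\textbf{Step 2: adjoin a transcendental.} Let $L_0=K(T)$ with the Gauss extension $u$ of $v$, i.e.\ $u(\sum a_jT^j)=\min_j v(a_j)$. By \cite[Lemma~2.2]{Kuhl}, $(L_0,u)$ has value group $\mathbf{Q}$ and residue field $k(t)$, where $t$ is the residue of $T$. By Nagata's transition theorem \cite{Nagata1957NotesPaperLang} (Lang--Nagata), $L_0$ is $C_{i+1}$, since it has transcendence degree $1$ over the $C_i$ field $K$.

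\textbf{Step 3: pass to an algebraic tame extension.} Next we take an algebraic extension $(L,w)$ of $(L_0,u)$ that is tame, has value group $\mathbf{Q}$ (or any divisible group) and residue field exactly $k(t)^{\mathrm{perf}}$. Because $L/L_0$ is algebraic, $L$ is again $C_{i+1}$ by \cite[Theorem~2a]{Nagata1957NotesPaperLang}, and Lemma~\ref{every = some} then finishes the proof.

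To construct $L$, first pass to the henselization and adjoin all $p$-power roots of $T$ (in characteristic $p$). This makes the residue field $k(t)^{\mathrm{perf}}=k(t^{1/p^\infty})$ while keeping the value group $\mathbf{Q}$. Then take a maximal immediate algebraic extension inside the algebraic closure, or equivalently a maximal purely wild extension in Kuhlmann's sense. Kuhlmann's results show that a henselian field with $p$-divisible value group and perfect residue field admits an algebraic tame extension with the same value group and residue field, namely a suitable maximal immediate algebraic extension (cf.\ \cite[\S3]{Kuhl}).

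The main obstacle is precisely this last point. We need to know that the extension can be chosen tame without enlarging the residue field beyond $k(t)^{\mathrm{perf}}$ or altering the value group. Equivalently, we need that $(L_0^h(T^{1/p^\infty}),u)$ has a tame "hull" with the same residue field and value group, which is exactly where Kuhlmann's theory of tame fields is needed. In equal characteristic $0$ this step is trivial, since the henselization is already tame.

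A possible alternative is to avoid this step by working directly with Hahn series. We could take $L=k(t)^{\mathrm{perf}}(\!(s^{\mathbf{Q}})\!)$, which is tame, and prove it is $C_{i+1}$ by realising it elementarily as above. That, however, seems to require the same input, so the plan is to use the Kuhlmann extension result.
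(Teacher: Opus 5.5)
Your proof is correct and follows the paper's route. Both arguments start from a $C_i$ tame field with divisible value group and residue field $k$, equip $K(T)$ with the Gauss valuation (which is $C_{i+1}$ by Lang--Nagata), pass to an algebraic tame extension with divisible value group and residue field $k(t)^{\mathrm{perf}}$, and conclude with Lemma~\ref{every = some}. The only difference is that the paper cites \cite[Lemma 2.6]{KartasPerfectoid} for the existence of that tame extension, whereas you build it explicitly: henselize, adjoin $T^{1/p^\infty}$, then take a maximal immediate algebraic extension, which is tame by Kuhlmann's characterization of tame fields as algebraically maximal with $p$-divisible value group and perfect residue field. So the step you flag as the main obstacle is in fact fully settled by the result you cite.
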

\begin{proof}
Let $(K,v)$ be a tame field with divisible value group and residue field $k$, such that $K$  is $C_i$. We equip the rational function field $K(t)$  with the Gauss valuation extending the valuation on $K$. By \cite[Lemma 2.6]{KartasPerfectoid}, there is an algebraic extension $K'$ of $K(t)$ such that $(K',v')$ is a tame with residue field the perfection of $k(t)$. By Lemma~\ref{every = some}, we conclude that the perfection of $k(t)$ is stably $C_i$.
\end{proof}
\noindent Lemma~\ref{algebraic} and Proposition~\ref{stablelangnagata} yield the following corollary.
\begin{cor}\label{cor stable Lang-Nagata}
    Let $k$ be a stably $C_i$ field and $l/k$ be a field extension such that $j:=\operatorname{trdeg}(l/k)$ is finite and $l$ is perfect. Then, the field $l$ is stably $C_{i+ j}$.
\end{cor}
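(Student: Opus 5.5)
We argue by induction on $j=\operatorname{trdeg}(l/k)$, using Lemma~\ref{algebraic} for the algebraic steps and Proposition~\ref{stablelangnagata} for the transcendental ones.

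First choose a transcendence basis $t_1,\dots,t_j$ of $l/k$. Set $k_0:=k$ and, for $1\le r\le j$,
\[
k_r:=\bigl(k_{r-1}(t_r)\bigr)^{\mathrm{perf}},
\]
where all perfections are taken inside an algebraic closure of $l$. Since $l$ is perfect and contains $k_{r-1}(t_r)$ whenever it contains $k_{r-1}$, each $k_r$ is a subfield of $l$. Moreover, $k_r$ is purely inseparable over $k(t_1,\dots,t_r)$, hence $\operatorname{trdeg}(k_r/k)=r$.

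Next run the induction. The base field $k_0=k$ is stably $C_i$ by hypothesis. Suppose $k_{r-1}$ is stably $C_{i+r-1}$; it is perfect, as the definition of stably $C_i$ requires. Proposition~\ref{stablelangnagata} then says that the perfection of $k_{r-1}(t_r)$, which is $k_r$, is stably $C_{i+r}$. (The proof of that proposition ends with the conclusion ``stably $C_i$''; this is a typo, since the argument produces a $C_{i+1}$ tame field with the desired residue field and then applies Lemma~\ref{every = some}.) After $j$ steps, $k_j$ is stably $C_{i+j}$.

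Finally, $l/k_j$ is algebraic, because $t_1,\dots,t_j$ is a transcendence basis of $l/k$ and all of them lie in $k_j$. Lemma~\ref{algebraic} therefore shows that $l$ is stably $C_{i+j}$.

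The only point requiring care is that every field we pass through must be perfect, because the notion of stably $C_i$ is only defined for perfect fields. This is exactly why each transcendental step takes the perfection, and why the final step uses the hypothesis that $l$ itself is perfect. The rest of the argument is routine bookkeeping of transcendence degrees.
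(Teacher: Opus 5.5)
Your proof is correct and is the argument the paper intends: the paper states the corollary as a direct consequence of Lemma~\ref{algebraic} and Proposition~\ref{stablelangnagata}, and you combine them via a transcendence basis and iterated perfections (you also rightly flag the typo ``stably $C_i$'' at the end of that proposition's proof). One small nuance: the perfectness of $l$ is really needed to guarantee $k_r\subseteq l$, whereas in the final step it is automatic, since any algebraic extension of the perfect field $k_j$ is perfect.
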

\begin{proposition} \label{proposition transfer function field over finite residue}
Let $k$ be a perfect field of positive characteristic $p$ of absolute transcendence degree $i$, that is $\operatorname{trdeg}(k/\mathbf{F}_p) = i$. Then $k$ is stably $C_{i+1}$.
\end{proposition}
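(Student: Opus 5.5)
By Corollary~\ref{cor stable Lang-Nagata}, it suffices to show that the algebraic closure $\overline{\mathbf{F}}_p$, or better some perfect algebraic extension of $\mathbf{F}_p$, is stably $C_1$. Indeed, $k$ is perfect with $\operatorname{trdeg}(k/\mathbf{F}_p)=i$. If $\mathbf{F}_p$ (which is perfect) is stably $C_1$, then the corollary applied to the extension $k/\mathbf{F}_p$ gives that $k$ is stably $C_{1+i}$. So the whole statement reduces to the claim that $\mathbf{F}_p$ is stably $C_1$. This would sharpen the obvious bound coming from $\overline{\mathbf{F}}_p$ being stably $C_0$, which only yields stably $C_{i+1}$ after adjoining transcendence degree $i$ plus a further loss, and in fact gives exactly $C_{i+1}$ as required.

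To prove that $\mathbf{F}_p$ is stably $C_1$, I would use Lemma~\ref{every = some}. It is enough to exhibit one non-trivially valued tame field with divisible value group and residue field $\mathbf{F}_p$ that is $C_1$.

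The natural candidate is the perfection of the Puiseux-type field $K:=\bigcup_{n}\mathbf{F}_p\lau{t^{1/n}}^{\mathrm{perf}}$. However, this field is not tame (compare Example~\ref{ex t-tame not tame}), so I would instead use part (2)-style reasoning. Finite fields are $C_1$ by Chevalley--Warning, but $\mathbf{F}_p$ is not $t$-tame: it admits no henselian valuation, and being finite it is only elementarily equivalent to itself. So the route must go through a $C_2$-type field. Concretely, $\mathbf{F}_p\lau{t}^{\mathrm{perf}}$ is $C_2$ by Greenberg/Lang, and it is $t$-tame by Proposition~\ref{prop perfect microbial is t-tame}. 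Hence Proposition~\ref{augimproved} gives an elementary extension $\mathbf{F}_p\lau{t}^{\mathrm{perf}}\preceq \mathbf{F}_p\lau{t}^{\mathrm{perf}}(\!(s^{\Delta})\!)$ which is tame, and passing to a totally ramified algebraic extension with divisible value group preserves $C_2$. But this only shows that $\mathbf{F}_p\lau{t}^{\mathrm{perf}}$ is stably $C_2$, not that $\mathbf{F}_p$ is stably $C_1$.

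So the honest plan is as follows. First, show directly that a tame field $K$ with residue field $\mathbf{F}_p$ and divisible value group is $C_1$. I would try the following argument. Such a $K$ is elementarily equivalent (Fact~\ref{akekuhl}) to any other such field, for instance a maximal immediate tame extension of $\bigcup_n \mathbf{F}_p\lau{t^{1/n}}$, namely the perfected Hahn field $\mathbf{F}_p(\!(t^{\mathbf{Q}})\!)$. Then I would prove that this Hahn field is $C_1$ by the Lang--Greenberg method. A degree $d$ form in $n>d$ variables over $\mathbf{F}_p(\!(t^{\mathbf{Q}})\!)$ involves only finitely many coefficients, and by Lemma~\ref{tame spread out} these can be placed in a tame subfield of finite-rank value group. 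After rescaling, the form reduces to a nonzero form over the residue field $\mathbf{F}_p$. Chevalley--Warning then gives a nontrivial residual zero. Lifting this zero is the difficulty, since the zero may be singular and Hensel's lemma does not apply directly. This is where Lang's argument for $C_1$ of $\mathbf{F}_p^{\mathrm{unr}}$-type complete fields uses the norm form trick. Since the residue field here is not algebraically closed, I would instead use the divisibility of the value group: one can repeatedly rescale variables by $t^{\gamma}$ for rational $\gamma$, reducing the defect to the residue form, in the spirit of Greenberg's approximation theorem.

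The main obstacle is this lifting step. Failing that, I would fall back on the observation that $\mathbf{F}_p(\!(t^{\mathbf{Q}})\!)$ is a union of fields $\mathbf{F}_p\lau{t^{1/n}}^{\mathrm{perf}}$ up to elementary equivalence. I would then establish $C_1$ for the relevant existential sentences using Fact~\ref{existential kuhlmann}, reducing to the $C_2$ property of $\mathbf{F}_p\lau{t}$ restricted to forms whose data come from a residue field of transcendence degree $0$. After that, I would apply Corollary~\ref{cor stable Lang-Nagata} as in the first paragraph.
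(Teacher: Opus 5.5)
\paragraph{Verdict.} Your reduction matches the paper's, but the base case it reduces to is left unproved, so the proposal has a genuine gap.

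\paragraph{The reduction.} Your first paragraph is exactly the paper's argument. The paper's proof quotes that finite fields are stably $C_1$, then applies Corollary~\ref{cor stable Lang-Nagata} (together with Lemma~\ref{algebraic}) to the perfect extension $k/\mathbf{F}_p$ of transcendence degree $i$. You can drop the aside about $\overline{\mathbf{F}}_p$: $k$ need not contain $\overline{\mathbf{F}}_p$, so its being stably $C_0$ says nothing about $k$.

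\paragraph{The missing base case.} You need that $\mathbf{F}_p$ is stably $C_1$, i.e.\ by Lemma~\ref{every = some} that $\mathbf{F}_p(\!(t^{\mathbf{Q}})\!)$ is $C_1$. This is the entire substance of the statement, and your proposal does not establish it. The paper does not prove it either; it imports it from \cite[Theorem~1.1.2]{Kartas2024geoC1}, where it is a substantial theorem. You are right that $\mathbf{F}_p$ is not $t$-tame, so Proposition~\ref{proposition transfer Puiseux} cannot supply it. Neither of your substitutes works.

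\paragraph{The lifting approach.} The first step, Chevalley--Warning applied to the reduced form, is equally available over $\mathbf{F}_p\lau{t}$, which is not $C_1$. So all of the content lies in the iteration you leave unspecified, and that iteration does not run by itself. After rescaling variables by $t^{\gamma}$ and renormalising, the new residual form can involve no more variables than its degree. For example, it can be a norm form of $\mathbf{F}_{p^d}/\mathbf{F}_p$ in $d$ variables, which is anisotropic. Then Chevalley--Warning cannot be reapplied, and Lang-type arguments need an algebraically closed residue field, not merely a $C_1$ one.

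\paragraph{The fallback.} It rests on a false claim: the Puiseux field $P=\bigcup_n\mathbf{F}_p\lau{t^{1/n}}$ (already perfect) is not elementarily equivalent to $\mathbf{F}_p(\!(t^{\mathbf{Q}})\!)$.
\begin{itemize}
\item $X^p-X-t^{-1}$ has the root $\sum_{m\geq 1}t^{-1/p^m}$ in the Hahn field but none in $P$.
\item The ring-language sentence asserting that $K/\wp(K)$ has exactly $p$ elements holds in the Hahn field, whose absolute Galois group is $\widehat{\mathbf{Z}}$. It fails in $P$, where the classes of $1$ and $t^{-1}$ are independent.
\end{itemize}
So $P$ is not tame (compare Example~\ref{ex t-tame not tame}), and Fact~\ref{existential kuhlmann} cannot be applied with it. Moreover, the $C_2$ property of $\mathbf{F}_p\lau{t}$ gives nothing for forms of degree $d$ in $n+1$ variables with $d\leq n<d^2$. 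That is exactly the range the $C_1$ property adds.

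\paragraph{What is needed.} To complete the proof you must either cite \cite[Theorem~1.1.2]{Kartas2024geoC1} or supply a genuinely different argument that tame fields with finite residue field and divisible value group are $C_1$.
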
 
\begin{proof}
By \cite[Theorem 1.1.2]{Kartas2024geoC1}, finite fields are stably $C_1$. We conclude directly from Corollary~\ref{cor stable Lang-Nagata} and Lemma~\ref{algebraic}. 
\end{proof}


\subsection{The \texorpdfstring{$C_i$}{Ci} property for \texorpdfstring{$t$}{t}-henselian fields}

\begin{proposition} \label{proposition transfer Puiseux}
Every $t$-tame field $k$ which is $C_i$ is also stably $C_i$.
\end{proposition}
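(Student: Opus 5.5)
The plan follows the sketch of Part~2 in the introduction. By Lemma~\ref{every = some}, it suffices to exhibit one non-trivially valued tame field with divisible value group and residue field $k$ which is $C_i$.

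First, since $k$ is $t$-tame, Proposition~\ref{augimproved} gives a non-trivial ordered abelian group $\Delta$ with $k\preceq K_0:=k(\!(t^{\Delta})\!)$. Moreover, $(K_0,v_t)$ is tame and $\Delta$ is $p$-divisible in positive characteristic. Since the $C_i$ property is expressed by a set of first-order sentences in $L_{\mathrm{rings}}$, one for each pair $(d,n)$ with $d^i\le n$, the elementary equivalence $k\equiv K_0$ shows that $K_0$ is $C_i$.

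Second, I will make the value group divisible. Let $\widetilde\Delta$ be the divisible hull of $\Delta$, and take $K:=\bigcup_{n\ge1}K_0(t^{1/n})$ inside an algebraic closure, where the $t^{\delta/n}$ are chosen compatibly for $\delta$ ranging over $\Delta$. A cleaner option is to let $K$ be the relative algebraic closure of $K_0$ inside $k(\!(t^{\widetilde\Delta})\!)$. Either way, $K/K_0$ is algebraic, so $K$ is $C_i$ by Nagata's theorem \cite[Theorem~2a]{Nagata1957NotesPaperLang}: an algebraic extension of a $C_i$ field is $C_i$, using the colimit closure for infinite extensions. The extension $K/K_0$ is obtained by adjoining roots of the form $t^{\delta/n}$. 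In positive characteristic $\Delta$ is already $p$-divisible, so only $n$ prime to $p$ is needed, and these extensions are tame and totally ramified. Hence the residue field remains $k$ and the value group becomes $\widetilde\Delta$, which is divisible.

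The step I expect to need the most care is checking that $K$ is still tame. Any algebraic extension of a tame field is tame, since every finite extension of $K$ is contained in a finite extension of some finite subextension of $K_0$ and tameness conditions pass up accordingly. This is \cite[Lemma~2.17]{Kuhl}, or follows directly from the definition and henselianity. Thus $(K,v)$ is a non-trivially valued, equicharacteristic tame field with divisible value group and residue field $k$, and it is $C_i$, so Lemma~\ref{every = some} concludes. A minor point: $k$ must be perfect for the definition to apply. This holds because $k\equiv K_0$ and $K_0$ is tame, hence perfect.
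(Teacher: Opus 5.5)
Your proposal is correct and follows the paper's own proof. Proposition~\ref{augimproved} gives $k\preceq k(\!(t^{\Delta})\!)$ with the Hahn field tame, hence $C_i$ by elementary equivalence; an algebraic, totally ramified extension with value group the divisible hull of $\Delta$ is then $C_i$ by Nagata and tame by \cite[Lemma~2.17(b)]{Kuhl}; and Lemma~\ref{every = some} concludes. The only difference is that the paper takes a maximal totally ramified extension of $k(\!(t^{\Delta})\!)$, whereas you build the divisible-value-group extension explicitly, e.g.\ as the relative algebraic closure of $K_0$ in $k(\!(t^{\widetilde\Delta})\!)$, which works equally well.
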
 
\begin{proof}
By Proposition \ref{augimproved}, there is a non-trivial ordered abelian group $\Delta$ such that $k\preceq k(\!(t^{\Delta})\!)$ and $(k(\!(t^{\Delta})\!),v_t)$ is a tame field. In particular,  we get that $ k(\!(t^{\Delta})\!)$ is also $C_i$. 
Let $K$ be a maximal totally ramified extension of $ k(\!(t^{\Delta})\!)$ with respect to the $t$-adic valuation. We see that $K$ is $C_i$ thanks to \cite[Theorem~2a]{Nagata1957NotesPaperLang}. Moreover, the valued field $(K,v_t)$ is tame due to \cite[Lemma 2.17(b)]{Kuhl}. The value group of $K$ is $\Delta_{\Q}=\Delta \otimes_{\Z} \Q$, namely the divisible hull of $\Delta$. We conclude that $k$ is stably $C_i$ by Lemma \ref{every = some}.
\end{proof}  
\noindent Using the above result, we obtain a stable analogue of \cite[Theorem~2]{Greenberg1966RationalPointshenselian}.

\begin{proposition}\label{prop stable greenberg general}
    Let $k$ be a $C_i$ field and $K$ be a henselian discrete valued field with residue field $k$. Then, the perfection of $K$ is stably $C_{i+1}$.
\end{proposition}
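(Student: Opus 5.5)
The plan is to apply Proposition~\ref{proposition transfer Puiseux} to $K^{\mathrm{perf}}$. We need to show two things: that $K^{\mathrm{perf}}$ is $t$-tame, and that $K^{\mathrm{perf}}$ is $C_{i+1}$.

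\textbf{$t$-tameness.} The discrete valuation $v$ on $K$ extends uniquely to $K^{\mathrm{perf}}$, because $K^{\mathrm{perf}}/K$ is purely inseparable. The extended valuation is still henselian, since the extension is algebraic. Its value group is $\mathbf{Z}$ in characteristic zero and $\mathbf{Z}[1/p]$ in characteristic $p$; in either case it has rank one, so the valuation is microbial. Proposition~\ref{prop perfect microbial is t-tame} then shows that $K^{\mathrm{perf}}$ is $t$-tame. In characteristic zero we also have $K^{\mathrm{perf}}=K$, and $K$ is henselian of residue characteristic $0$, so it is in fact tame.

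\textbf{The $C_{i+1}$ property.} Here I would use Greenberg's theorem \cite[Theorem~2]{Greenberg1966RationalPointshenselian}. It says that a henselian discretely valued field with $C_i$ residue field is $C_{i+1}$, provided one has an excellence or separability hypothesis (for instance, $\widehat{K}/K$ separable); in equal characteristic one can pass to the completion $k\lau{t}$. The cleanest route is to write
\[
K^{\mathrm{perf}}=\bigcup_{r} K^{1/p^r}.
\]
Each $K^{1/p^r}$ is isomorphic to $K$ as a field, via the Frobenius, and is therefore a henselian discretely valued field. Its residue field $k^{1/p^r}$ is isomorphic to $k$, hence $C_i$. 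So if $K$ is $C_{i+1}$, then each $K^{1/p^r}$ is $C_{i+1}$, and $K^{\mathrm{perf}}$ is $C_{i+1}$ because the class of $C_{i+1}$ fields is closed under directed unions (as used in Lemma~\ref{colimits}). This reduces everything to showing that $K$ itself is $C_{i+1}$.

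\textbf{Main obstacle.} The difficulty is verifying Greenberg's hypotheses when $K$ is not excellent, which can happen in characteristic $p$ when $\widehat{K}/K$ is inseparable. To handle this, I would replace $K$ by an elementarily equivalent field that is better behaved. Both $K^{\mathrm{perf}}$ and $\widehat{K}^{\mathrm{perf}}$ are perfect henselian fields with residue field $k^{\mathrm{perf}}$ and value group $\mathbf{Z}[1/p]$ (or $\mathbf{Z}$ in characteristic zero). This situation calls for an Ax--Kochen/Ershov-type comparison, but Fact~\ref{akekuhl} requires both fields to be tame, and $K^{\mathrm{perf}}$ is only $t$-tame. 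So I would compare them after passing to ultrapowers with the coarsened valuation, exactly as in the proof of Proposition~\ref{prop perfect microbial is t-tame}. Alternatively, if the paper allows, I would simply note that the equal characteristic henselian DVF $K$ is existentially closed in $\widehat{K}$. Since $\widehat{K}\cong k\lau{t}$ is $C_{i+1}$ by Greenberg (Lang's theorem for complete fields), this transfers the $C_{i+1}$ property back to $K$. In the mixed-characteristic case, $K$ is in any case excellent enough for Greenberg's theorem, or one can use the same existential closedness argument.

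Once $K^{\mathrm{perf}}$ is known to be both $t$-tame and $C_{i+1}$, Proposition~\ref{proposition transfer Puiseux} gives that it is stably $C_{i+1}$.
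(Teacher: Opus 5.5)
Your $t$-tameness argument and the final appeal to Proposition~\ref{proposition transfer Puiseux} match the paper. The $C_{i+1}$ step, however, has a genuine gap.

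\textbf{The reduction to $K$ is false.} You reduce everything to showing that $K$ itself is $C_{i+1}$, and this fails in general. In characteristic $p$, a $C_j$ field $F$ satisfies $[F:F^p]\le p^j$. Indeed, if $a_1,\dots,a_n$ are $p$-independent, the form $\sum_{e\in\{0,\dots,p-1\}^n}a_1^{e_1}\cdots a_n^{e_n}X_e^p$ has degree $p$ in $p^n$ variables and no non-trivial zero. A non-excellent henselian discretely valued field can have $[K:K^p]=\infty$. An example is the henselisation of $\mathbf{F}_p(x,y_1,y_2,\dots)\subseteq\mathbf{F}_p\lau{x}$, with the $y_j$ sent to algebraically independent power series. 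Its residue field $\mathbf{F}_p$ is $C_1$, yet it is not $C_j$ for any $j$. Passing to the perfection is exactly what makes the statement true, so $C_{i+1}$ must be proved for $K^{\mathrm{perf}}$ directly.

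\textbf{The shortcut fails in the case you need it.} The claim that $K$ is existentially closed in $\widehat K$ breaks down precisely when $\widehat K/K$ is inseparable. In that case there are $c_1,\dots,c_n\in K$, linearly independent over $K^p$, with $\sum_j x_j^pc_j=0$ for some non-zero $(x_j)\in\widehat K^n$. This is an existential statement with parameters in $K$ that fails in $K$. So existential closedness in $\widehat K$ already forces the excellence you were trying to avoid.

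\textbf{Mixed characteristic.} Your remark here is also wrong: the conclusion fails there, since $\mathbf{Q}_p$ has $C_1$ residue field but is not $C_i$ for any $i$. The proposition is meant in equal characteristic.

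\textbf{Your first alternative is the only viable thread, but it is not carried out.} It correctly works at the perfect level, but the data you list (residue field $k^{\mathrm{perf}}$, value group $\mathbf{Z}[1/p]$) are not what is needed. After coarsening an ultrapower to $\mathcal{O}[t^{-1}]$, Kuhlmann's theorem compares the coarse residue fields. These are large rank-one valued fields, not $k^{\mathrm{perf}}$, and you must check that they agree. This can be done using the density of $K^{\mathrm{perf}}$ in $\widehat K^{\mathrm{perf}}$, which makes the rings $\mathcal{O}/t^n$ coincide.

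\textbf{A shorter repair.} Since $K^{\mathrm{perf}}$ is perfect and henselian, its completion is a separable extension. The standard approximation argument then makes $K^{\mathrm{perf}}$ existentially closed in its completion, hence in the intermediate field $\widehat K^{\mathrm{perf}}\cong k\lau{t}^{\mathrm{perf}}$. That field is $C_{i+1}$ by Greenberg's theorem together with Nagata's theorem on algebraic extensions. Since $C_{i+1}$ is a $\forall\exists$ property, it descends to $K^{\mathrm{perf}}$.

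\textbf{The paper's route.} The paper runs a Greenberg-type approximation directly over $K^{\mathrm{perf}}$. It approximates the hypersurface modulo $t^\nu$ by hypersurfaces over $L_0$, the henselisation of $k(t)^{\mathrm{perf}}$, which is a $C_{i+1}$ field. It then concludes with Moret-Bailly's valuative Greenberg theorem.
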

\begin{proof}
One can prove that $K^{\mathrm{perf}}$ is $C_{i+1}$ via a similar argument to the one of \cite[Theorem~2]{Greenberg1966RationalPointshenselian}; we repeat the argument for the sake of the reader. Let $Z \subseteq \mathbf{P}_{K^{\mathrm{perf}}}^n$ a hypersurface of degree $d$ such that $d^{i+1} \leq n$ and $t \in K$ be a uniformiser. Denote by $L_0$ the henselianisation of $k(t)^{\mathrm{perf}}$ with respect to the $t$-adic valuation. For every $\nu \in \frac{1}{p^{\infty}}\mathbf{N}$ we can find a hypersurface $Z^{(\nu)} \subseteq \mathbf{P}^n_{L_0}$ with the same reduction modulo $t^{\nu}$ as $Z$. The field $L_0$ is $C_{i+1}$ because it is an algebraic extension of a $C_{i+1}$ field $k(t)$. We deduce that $Z$ admits a rational point thanks to \cite[Corollary~1.2.2]{Moret_Bailly2021valuativeGreenberg}. It follows that $K^{\mathrm{perf}}$ is $C_{i+1}$. Finally, $K^{\mathrm{perf}}$ is $t$-tame thanks to Proposition~\ref{prop perfect microbial is t-tame}, and we conclude that it is stably $C_{i+1}$ from Proposition~\ref{proposition transfer Puiseux}.
\end{proof}
\begin{rem}
We do not know an example of a $C_i$ field which is not stably $C_i$.
A folklore conjecture says that for a $C_i$ field $k$ of characteristic $0$, the Puiseux series  $\bigcup_{n=1}^{\infty} k(\!(t^{1/n})\!)$ is also $C_i$. This implies that $k$ is stably $C_i$,  since any non-trivially valued henselian field with divisible value group and residue field $k$ is elementarily equivalent to the Puiseux series over $k$ by Ax-Kochen/Ershov. Some evidence for $i=1$ is given in \cite{Kartas2024geoC1}.
\end{rem}
\section{Applications to Kato-Kuzumaki's conjecture}\label{section applications}

In this section we study the applications of the transfer result in Section \ref{sec transfer Laurent} to Kato-Kuzumaki's conjecture for iterated Laurent series. First, we introduce some notation.
\begin{defi}\label{def generating mod div and universal norms}
    Let $k$ be a field and $q$ a non-negative integer. Let us fix the following notation:
    \begin{itemize}
        \item A subset $\Delta \subseteq \operatorname{K}_q(k)$ is said to be a \textit{generating set modulo d} if its image in $\operatorname{K}_q(k)/d$ generates $\operatorname{K}_q(k)/d$ for every non-zero integer $d$.
        \item Let $K/k$ be a field extension. A class $\alpha \in \operatorname{K}_q(k)$ is said to be a \textit{universal norm} of $K/k$ if for every finite subextension $k'/k$ of $K/k$ we have $\alpha \in \operatorname{N}_q(k'/k)$.
    \end{itemize}
\end{defi}
\noindent 
We now introduce a strengthening of the $C_i^q$ property.
\begin{defi}\label{def stably uniform Ciq property}
    Let $k$ be a field and $i,q\in \mathbf{N}$. We say that $k$ satisfies the \textit{stably uniform} $C_i^q$ \textit{property} if for every finite extension $k'/k$ there exists $\Delta \subseteq \operatorname{K}_q(k')$ which is a generating set modulo $d$ for every $d\in \mathbf{N}$ and such that for every $\alpha \in \Delta$ there exists an algebraic extension $K_{\alpha}/k'$ where the field $K_{\alpha}$ is stably $C_i$ and $\alpha$ is a universal norm of $K_{\alpha}/k'$.
\end{defi}
\noindent Note that the (stably) uniform $C_i^0$ property is implied by the (stable) $C_i$ property and the (stably) uniform $C_0^q$ property is equivalent to $C_0^q$. The first non-trivial example are function fields over an algebraically closed field, this is proven in \cite[Theorem~2.2]{Diego2018KK}. We recall the argument below, see Proposition~\ref{prop KK function fields}.

\subsection{General transfer result}

\begin{thm}\label{thm general transfer}
    Let $k$ be a perfect field and $i,q$ a pair of positive integers. Assume that $k$ satisfies the stably uniform $C_{i-1}^q$ and $C_i^{q-1}$ property. Let $R_0$ be an equicharacteristic henselian discrete valued field with residue field $k$ and $K$ the fraction field of $R_0$. Then, the field $K$ satisfies the stably uniform $C_i^q$ property away from the~imperfection.
\end{thm}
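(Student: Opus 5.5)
Throughout, write $K^{\mathrm{perf}}$ for the perfection of $K$. We must show that $K^{\mathrm{perf}}$ has the stably uniform $C_i^q$ property. First we reduce to the field $K$ itself. Any finite extension of $K^{\mathrm{perf}}$ has the form $K'^{\mathrm{perf}}$ with $K'/K$ finite. Then $K'$ is again the fraction field of an equicharacteristic henselian discrete valuation ring, and its residue field $k'$ is a finite extension of $k$. The stably uniform hypotheses on $k$ are formulated for all finite extensions, so they pass to $k'$. Hence it suffices to produce, for $K^{\mathrm{perf}}$ itself, a set $\Delta\subseteq \mathrm{K}_q(K^{\mathrm{perf}})$ that generates modulo every $d$, together with stably $C_i$ algebraic extensions realising each element of $\Delta$ as a universal norm.

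\textbf{Choice of generators.} Fix a uniformiser $t$ of $R_0$. By the residue and specialisation exact sequences, $\mathrm{K}_q(K)$ is generated modulo $\mathrm{U}^1_q(K)$ by two kinds of symbols:
\begin{itemize}
\item symbols $\{\tilde u_1,\dots,\tilde u_q\}$, where the $\tilde u_j$ are lifts of units;
\item symbols $\{t,\tilde u_2,\dots,\tilde u_q\}$.
\end{itemize}
The subgroup $\mathrm{U}^1_q(K)$ is generated by symbols $\{1+x,\dots\}$ with $x\in\mathfrak m$. For $d$ prime to $p$, $1+\mathfrak m$ is $d$-divisible by Hensel's lemma. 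In $K^{\mathrm{perf}}$ the $p$-part is handled as follows: $\mathrm{K}_q$ of a perfect field of characteristic $p$ is uniquely $p$-divisible, since $\{a^{1/p},\dots\}$ is a $p$-th root of $\{a,\dots\}$. So modulo every $d$, $\mathrm{K}_q(K^{\mathrm{perf}})$ is generated by the images of the two kinds of symbols above.

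We take $\Delta$ to consist of:
\begin{itemize}
\item lifts $\tilde\alpha$ of elements $\alpha$ of a uniform generating set for $\mathrm{K}_q(k)$, coming from the stably uniform $C_{i-1}^q$ property of $k$;
\item symbols $\{t,\tilde\beta\}$, where $\beta$ runs over a uniform generating set of $\mathrm{K}_{q-1}(k)$, coming from the stably uniform $C_i^{q-1}$ property.
\end{itemize}
To lift a class we lift each entry of its symbols to units. Generation modulo $d$ then follows from the generation-modulo-$d$ property of the two sets downstairs.

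\textbf{The two kinds of fields.} For $\tilde\alpha$, let $k_\alpha/k$ be the stably $C_{i-1}$ algebraic extension in which $\alpha$ is a universal norm. Let $K_\alpha$ be the perfection of the maximal unramified extension of $K$ with residue field $k_\alpha$. This is the perfection of a henselian discretely valued field (a directed union of such) with residue field $k_\alpha$. Since $k_\alpha$ is stably $C_{i-1}$, it is in particular $C_{i-1}$, so Proposition~\ref{prop stable greenberg general} shows that each finite layer is stably $C_i$, and Lemma~\ref{colimits} passes this to the union. For a finite subextension $K'/K$ with residue field $k''$, the extension is unramified, and norms of units specialise via $\mathrm{N}_{k''/k}$. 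Writing $\alpha=\mathrm{N}_{k''/k}(\gamma)$ and lifting $\gamma$, the class $\tilde\alpha$ becomes a norm modulo $\mathrm{U}^1_q(K)$. The remaining error must then be absorbed. Since perfection makes the $p$-part trivial, an alternative is to work in $\mathrm{K}_q(K^{\mathrm{perf}})/d$ and interpret ``universal norm'' up to the divisible part; I expect to need $\mathrm{U}^1_q$ to lie in the norm group, which holds for unramified extensions of henselian fields.

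For $\{t,\tilde\beta\}$, let $k_\beta$ be the stably $C_i$ extension of $k$ in which $\beta$ is a universal norm. Set $K_\beta=\bigcup_n K^{\mathrm{ur}}_{k_\beta}(t^{1/n})^{\mathrm{perf}}$. This is a tame field with divisible value group and residue field $k_\beta$. Since $k_\beta$ is stably $C_i$, the field $K_\beta$ is $C_i$, and hence stably $C_i$ by the argument of Lemma~\ref{every = some} together with Proposition~\ref{proposition transfer Puiseux}, using that $K_\beta$ is $t$-tame by Proposition~\ref{prop perfect microbial is t-tame}. On a finite layer with ramification index $e$ and residue field $k''$, the element $t$ is a norm from the totally ramified part: $t=\mathrm{N}(t^{1/e})$ up to sign, and the sign can be handled by adjusting $\beta$. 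Then, using the projection formula together with the universal-norm property of $\beta$ on the residue side, $\{t,\tilde\beta\}=\mathrm{N}(\{t^{1/e},\tilde\gamma\})$ modulo $\mathrm{U}^1$-terms.

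\textbf{Main obstacle.} I expect the hardest step to be controlling the $\mathrm{U}^1_q$ error terms, so that classes become exact universal norms rather than norms modulo $\mathrm{U}^1_q$. I would first try to show that the norm map is surjective onto $\mathrm{U}^1_q$ for tamely ramified extensions, via divisibility of $1+\mathfrak m$ in $K^{\mathrm{perf}}$. Failing that, I would weaken the target to generation modulo $d$, which is all the definition of the stably uniform property needs.
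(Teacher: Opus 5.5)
\textbf{Main gap: the field $K_\beta$ is not tame in characteristic $p$.} For the generators $\{t,\tilde\beta\}$ you claim that $K_\beta=\bigcup_n K^{\mathrm{ur}}_{k_\beta}(t^{1/n})^{\mathrm{perf}}$ is tame; in characteristic $p$ this is false. As in Example~\ref{ex t-tame not tame}, the polynomial $X^p-X-t^{-1}$ has no root in $K_\beta$. Inside $k_\beta\lau{t^{\mathbf{Q}}}$ its roots are $c+\sum_{i\geq 1}t^{-1/p^i}$ with $c\in\mathbf{F}_p$, whose supports have unbounded $p$-power denominators, whereas every element of $K_\beta$ has support in some $\frac{1}{np^m}\mathbf{Z}$. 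So it defines an immediate extension of degree $p$. Consequently nothing lets you conclude that $K_\beta$ is $C_i$: being stably $C_i$ only constrains \emph{tame} fields with residue field $k_\beta$. $t$-tameness does not rescue this either. Proposition~\ref{proposition transfer Puiseux} takes the $C_i$ property as an input, and the tame field to which $K_\beta$ is elementarily equivalent (a coarsened ultrapower) does not have residue field $k_\beta$. Already for $k=\mathbf{F}_p$ and $i=q=1$, your argument would need the perfected Puiseux series field over $\mathbf{F}_p$ to be $C_1$, which nothing in the paper provides. The missing idea is Lemma~\ref{lemma algebraically maximal extension t norm}. Replace the tamely ramified tower by a maximal immediate algebraic extension $E$; it is tame by Kuhlmann, hence $C_i$ and then stably $C_i$. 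Then check that the norm property survives. A finite subextension $L$ of $E$ contains a tamely ramified layer $L_0$ with $[L:L_0]=p^m$. Since $L_0$ is perfect, every $a\in L_0$ equals $\mathrm{N}_{L/L_0}(a^{1/p^m})$, and the projection formula combines this with the universal-norm property of $\beta$ on the residue side.

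\textbf{The sign, the $\mathrm{U}^1_q$ terms, and $K_\alpha$.} The sign cannot be fixed by ``adjusting $\beta$''. We have $\mathrm{N}(t^{1/n})=(-1)^{n-1}t$, and for even $n$ the element $t$ need not be a norm at all. Take $F=\mathbf{F}_3\lau{t}^{\mathrm{perf}}$ and $L=F(\sqrt{t})$, a case allowed by the hypotheses with $i=q=1$. An element of $L$ with unit norm is a unit, and its norm reduces to a square in $\mathbf{F}_3$. So $-1=\mathrm{N}(\sqrt{t})/t$ is not a norm, hence neither is $t$; and for $q=1$ there is no $\beta$ to adjust. The correction must be made on the uniformiser side, for instance by adjoining roots of $-t$ and using $\pm(-t)^{1/n}$ as the paper does. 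By contrast, the $\mathrm{U}^1_q$ issue you single out as the main obstacle is harmless. Since $1+\mathfrak{m}$ is divisible in the perfect henselian field, $u=\mathrm{N}_{L/F}(u^{1/[L:F]})$ for every finite $L/F$, so by the projection formula $\mathrm{U}^1_q$ lies in every norm group. The paper avoids the issue entirely by restricting classes from a coefficient field $k\subseteq K$ and using the base-change formula for norms, which gives exact identities. Your fallback of weakening to ``modulo $d$'' would not suffice, because the definition requires each element of $\Delta$ to be an \emph{exact} universal norm; only generation is modulo $d$. Finally, for $K_\alpha$ do not argue through finite layers, since their residue fields need not be $C_{i-1}$. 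The field $K^{\mathrm{ur}}_{k_\alpha}$ is itself a henselian discretely valued field with residue field $k_\alpha$, so Proposition~\ref{prop stable greenberg general} applies to it directly.
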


\begin{rmk}
    A similar transfer principle is expected for the usual Kato-Kuzumaki's conjectures. However, we cannot just assume that $k$ satisfies $C_{i-1}^q$. Indeed, assume that the $C_{0}^1$ property for $k$ implies the $C_1^1$ property for $k\lau{t}$. If $k_0$ is the example of a field with cohomological dimension $1$ that is not $C_1^0$ given in \cite{CTMadore2004NotC10}, then such a transfer result together with \cite[Lemma~5.8]{HarryFelipe2025Stability} implies that $k_0$ satisfies $C_1^0$ which is a~contradiction. 
\end{rmk}

\begin{cor}
    Let $k$ be a perfect field satisfying the stably uniform $C_i^q$ property for every pair of non-negative integers $i,q$ such that $i+q \geq n$. Then $k\lau{t_1}\dots\lau{t_m}$ satisfies the $C_i^q$ property away from the imperfection for every pair of non-negative integers $i,q$ such that $i+q \geq n+ m$
\end{cor}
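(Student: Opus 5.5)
We argue by induction on $m$, applying Theorem~\ref{thm general transfer} once for each variable $t_j$. The key point is that Theorem~\ref{thm general transfer} needs a \emph{perfect} residue field, whereas $k\lau{t_1}\dots\lau{t_j}$ is not perfect. So the object that propagates is the perfection. We set $k_0:=k$ and $k_j:=\bigl(k\lau{t_1}\dots\lau{t_j}\bigr)^{\mathrm{perf}}$, and we prove the following stronger claim by induction on $j$: \emph{$k_j$ satisfies the stably uniform $C_i^q$ property for every pair $i,q\ge 0$ with $i+q\ge n+j$.} The case $j=m$ is exactly the corollary, since the stably uniform $C_i^q$ property implies $C_i^q$, and $C_i^q$ for $k_m$ is by definition $C_i^q$ away from the imperfection for $k\lau{t_1}\dots\lau{t_m}$. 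The base case $j=0$ is the hypothesis on $k$.

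For the inductive step, write $F:=k_{j-1}\lau{t_j}$. This is an equicharacteristic henselian discrete valued field with perfect residue field $k_{j-1}$. Fix $i,q$ with $i+q\ge n+j$.

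\textbf{Case $i,q\ge1$.} Then $(i-1)+q\ge n+j-1$ and $i+(q-1)\ge n+j-1$, so by induction $k_{j-1}$ satisfies the stably uniform $C_{i-1}^q$ and $C_i^{q-1}$ properties. Theorem~\ref{thm general transfer} then gives the stably uniform $C_i^q$ property for $F^{\mathrm{perf}}$.

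\textbf{Case $q=0$.} The stably uniform $C_i^0$ property follows from the field being stably $C_i$, as noted after Definition~\ref{def stably uniform Ciq property}. For $i=0$ this requires $n+j\le 0$, which is impossible, so $i\ge 1$. By induction $k_{j-1}$ is stably $C_{i-1}$ (the case $q=0$, $i-1$), hence $C_{i-1}$. Proposition~\ref{prop stable greenberg general} then shows $F^{\mathrm{perf}}$ is stably $C_i$.

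\textbf{Case $i=0$.} We need $C_0^q$ for $F^{\mathrm{perf}}$ with $q\ge n+j$, i.e. surjectivity of norms on $\mathrm{K}_q$ for all towers of finite extensions. I would try the standard residue/specialisation dévissage of the preliminaries: the diagrams for $\partial$ and $s$ reduce surjectivity modulo $\mathrm{U}^1_q$ to $C_0^{q-1}$ and $C_0^q$ for the residue field $k_{j-1}$, and both hold by induction. Uniquely divisible principal units take care of the rest after perfection. Alternatively one can quote the $i=0$ case of Kato--Kuzumaki, since $\operatorname{cd}$ rises by at most one. This is the step I expect to need the most care, because $C_0^q$ is not literally covered by Theorem~\ref{thm general transfer} (which requires $i\ge1$).

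Finally we must identify the perfections. The field $k_j$ and $(k_{j-1}\lau{t_j})^{\mathrm{perf}}$ are not equal, but $k_{j-1}\lau{t_j}$ is an algebraic extension of $k\lau{t_1}\dots\lau{t_j}$ up to completion issues. I would handle this by working with $K_j:=\bigl(k_{j-1}\lau{t_j}\bigr)^{\mathrm{perf}}$ throughout the induction instead of $k_j$. At the end we need to deduce $C_i^q$ for $(k\lau{t_1}\dots\lau{t_m})^{\mathrm{perf}}$ from $C_i^q$ for $K_m$. This is a second delicate point. One way is to run the proof of Theorem~\ref{thm general transfer} directly, since its hypotheses only involve the residue field. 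Another is a restriction/norm argument along the extension, which is a union of purely inseparable-type completions.
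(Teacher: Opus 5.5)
You have the right skeleton: induction on the number of variables, Theorem~\ref{thm general transfer} for $i,q\ge 1$, and Greenberg-type and Kato--Kuzumaki results for the edge cases. But there is a genuine gap at exactly the step that makes the induction close, and the repairs you suggest do not work as stated.

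The missing observation is that $k_j=(k\lau{t_1}\cdots\lau{t_j})^{\mathrm{perf}}$ \emph{is} the perfection of an equicharacteristic henselian discretely valued field with residue field $k_{j-1}$, namely
\[
F_j:=\bigcup_{r\ge 0} k\lau{t_1^{1/p^r}}\cdots\lau{t_{j-1}^{1/p^r}}\lau{t_j}.
\]
Since $k$ is perfect, $k\lau{t_1^{1/p^r}}\cdots\lau{t_{j-1}^{1/p^r}}=(k\lau{t_1}\cdots\lau{t_{j-1}})^{1/p^r}$ is finite over $k\lau{t_1}\cdots\lau{t_{j-1}}$. So the $r$-th term equals $(k\lau{t_1}\cdots\lau{t_j})(t_1^{1/p^r},\dots,t_{j-1}^{1/p^r})$. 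This gives three facts:
\begin{itemize}
\item $F_j$ is purely inseparable over $k\lau{t_1}\cdots\lau{t_j}$, hence henselian with $F_j^{\mathrm{perf}}=k_j$;
\item $t_j$ is a uniformiser at every stage, so $F_j$ is discretely valued;
\item its residue field is $\bigcup_r (k\lau{t_1}\cdots\lau{t_{j-1}})^{1/p^r}=k_{j-1}$.
\end{itemize}
Theorem~\ref{thm general transfer} and Proposition~\ref{prop stable greenberg general} only require a henselian, not complete, discrete valuation. So they apply to $F_j$ and give the inductive step for $k_j$ itself; in characteristic $0$ the issue does not arise at all. Your first suggested repair ("its hypotheses only involve the residue field") is the right instinct, but it needs exactly this identification. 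The detour through $(k_{j-1}\lau{t_j})^{\mathrm{perf}}$ is then unnecessary.

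Your second repair, a restriction/norm argument from $K_m=(k_{m-1}\lau{t_m})^{\mathrm{perf}}$ down to $k_m$, would fail. $K_m$ contains elements such as $\sum_n t_1^{1/p^n}t_m^n$, which lie in the $t_m$-adic completion of $k_m$ but not in $k_m$. Since $k_m$ is perfect and henselian of rank one, such elements are transcendental over $k_m$, so there is no norm to correstrict along.

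A smaller slip is in your case $q=0$. The induction hypothesis only gives the stably uniform $C_{i-1}^0$ property for $k_{j-1}$, not that $k_{j-1}$ is stably $C_{i-1}$; nor is $k$ assumed stably $C_n$. The remark after Definition~\ref{def stably uniform Ciq property} goes in the other direction. The fix is to run the first half of the proof of Theorem~\ref{thm general transfer} with $q=0$. For each $\beta$ in the generating subset of $\mathrm{K}_0=\mathbf{Z}$, take the stably $C_{i-1}$ algebraic extension realising $\beta$ as a universal norm and lift it to an unramified extension of $F_j$. Its perfection is stably $C_i$ by Proposition~\ref{prop stable greenberg general}. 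Moreover, $\beta$ remains a universal norm, since degrees of finite subextensions are unchanged. Your treatment of $i=0$ is acceptable and matches the paper's appeal to Kato--Kuzumaki.
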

\noindent Note that the cases $i=0$ and $q=0$ in this corollary follow from \cite[Theorem~2]{Greenberg1966RationalPointshenselian}, \cite[Theorem~2a]{Nagata1957NotesPaperLang}, and \cite[Theorem~1~(1)]{KK1986DimensionOfFields}. \par
\noindent Let us replace $K$ by its perfection for the rest of the section . Before proving Theorem~\ref{thm general transfer}, we need the following lemmas.
\begin{lemma}\label{lemma generators Kthry}
        Let $t$ be a uniformiser of $R_0$ and $\mathcal{O}_K$ the ring of integers of $K$. For every non-negative integer $q$ the group $\mathrm{K}_q(K)$ is generated by symbols of the form $\{t^{1/p^{r}},u_{2},\dots , u_{q}\}$ and $\{u_1,\dots, u_q\}$ where $r$ is a positive integer and $u_{1},\dots,u_q$ are units in $K$, i.e. belong to $R^{\times}$. Moreover, for any positive integer $d$ the quotient $\mathrm{K}_q(K)/d$ is generated by symbols of the form $\{t^{1/p^r},u_{2}^{(0)},\dots , u_{q}^{(0)}\}$ and $\{u_{1}^{(0)},\dots , u_{q}^{(0)}\}$ where $u_{1}^{(0)}, \dots, u_q^{(0)}$ are elements of $k$.
    \end{lemma}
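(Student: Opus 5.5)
We write $R$ for the valuation ring of $K=\mathrm{Frac}(R_0)^{\mathrm{perf}}$. Its value group is $\frac{1}{p^\infty}\mathbf{Z}\cdot v(t)$ and its residue field is $k$, which is perfect. The plan has two steps: first show $K^\times$ is generated by the $t^{1/p^r}$ and $R^\times$, then use multilinearity of symbols.

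\emph{Step 1 (units and $t$-powers).} Any $x\in K^\times$ has $v(x)=\frac{m}{p^r}v(t)$ for some $m\in\mathbf{Z}$ and $r\geq 0$. Then $x=(t^{1/p^r})^m u$ with $u\in R^\times$. Hence $K^\times$ is generated by the elements $t^{1/p^r}$ together with $R^\times$.

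\emph{Step 2 (first assertion).} Expand an arbitrary symbol $\{x_1,\dots,x_q\}$ by multilinearity. This writes it as a product of symbols whose entries are either some $t^{1/p^{r}}$ or units.
\begin{itemize}
\item For two $t$-entries, take a common $r$ for both, so that they are powers of a single $s=t^{1/p^r}$. Then use $\{s,s\}=\{s,-1\}$, and $-1$ is a unit.
\item By graded commutativity, $\{a,b\}=-\{b,a\}$, move the $t$-entry to the front.
\end{itemize}
Every symbol then has at most one $t$-entry, placed first, so it has one of the two stated shapes.

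\emph{Step 3 (modulo $d$).} It remains to replace units by Teichmüller-type lifts from $k$. For this we need a coefficient field $k\subseteq K$ lifting the residue field.
\begin{itemize}
\item In characteristic $0$, $R_0$ is henselian with residue characteristic $0$, so a coefficient field exists by Hensel's lemma (Cohen).
\item In characteristic $p$, $k$ is perfect and $R_0$ is henselian. Then $k$ embeds into $R_0$: the maximal perfect subfield $\bigcap_n R_0^{p^n}$ maps isomorphically onto $k$ when $R_0$ is complete. In the henselian case, lift via the completion and use that $k$ is separable-algebraic over its prime field's perfect hull. This is the delicate point; alternatively one could only use that $K$'s perfection contains enough roots.
\end{itemize}
Each unit can then be written $u=u^{(0)}w$ with $u^{(0)}\in k$ and $w\in 1+\mathfrak{m}$.

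We next show that a principal unit $w\in 1+\mathfrak{m}$ is a $d$-th power times something harmless. Write $d=p^a d'$ with $p\nmid d'$.
\begin{itemize}
\item By Hensel's lemma, $w$ is a $d'$-th power, since $X^{d'}-w$ has a simple root modulo $\mathfrak{m}$.
\item Since $K$ is perfect, $w$ is also a $p^a$-th power, and $w^{1/p^a}\in 1+\mathfrak{m}$.
\end{itemize}
So $1+\mathfrak{m}\subseteq (K^\times)^d$ for all $d$, and every symbol containing an entry from $1+\mathfrak{m}$ dies in $\mathrm{K}_q(K)/d$. Expanding each $u_j=u_j^{(0)}w_j$ by multilinearity then gives the claimed generators modulo $d$.

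The main obstacle is the existence of the coefficient field in positive characteristic when $R_0$ is merely henselian. As a fallback, I would work with the completion and compare $\mathrm{K}_q/d$, using the fact that $1+\mathfrak{m}$ consists of $d$-th powers in both fields.
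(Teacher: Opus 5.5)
Your Steps 1 and 2 and the $d$-divisibility of $1+\mathfrak{m}$ are exactly the paper's proof. There, $K^\times$ is generated by $\mathcal{O}_K^\times$ and the $t^{1/p^r}$. Every principal unit is a $d'$-th power by Hensel's lemma, and then a $p^a$-th power by perfectness. For Hensel's lemma, note that $\mathcal{O}_K$ is henselian because $K$ is algebraic over the henselian field $\mathrm{Frac}(R_0)$. Your reduction of symbols with several $t$-entries, via $\{s,s\}=\{s,-1\}$ and anticommutativity, fills in what the paper simply calls a direct consequence.

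The one step that fails as written is your justification of a coefficient field in characteristic $p$. The paper does not address this point at all. It tacitly regards $k$ as a subfield of $K$ (it writes $u_j^{-1}u_j^{(0)}\in 1+\mathfrak{m}_K$), as in the intended examples $k\lau{t}^{\mathrm{perf}}$.

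However, your claim that $k$ embeds into $R_0$ is false for a merely henselian $R_0$. Since $\bigcap_n k[[t]]^{p^n}=k$, every perfect subfield of $k[[t]]$ lies in the constants. Hence a henselian DVR inside $k[[t]]$ with residue field $k$ that does not contain $k$ has no coefficient field, and such rings already exist for $k=\mathbf{F}_p(y)^{\mathrm{perf}}$. Likewise, $k$ is not separable-algebraic over the perfect hull of $\mathbf{F}_p$ (which is $\mathbf{F}_p$ itself) as soon as $k$ is transcendental.

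The fix is to look for the coefficient field in $\mathcal{O}_K$ rather than in $R_0$; the perfection gives you exactly what you need there.
\begin{itemize}
\item By Zorn's lemma, take a maximal subfield $E\subseteq\mathcal{O}_K$. Since $K$ is perfect, $E^{\mathrm{perf}}\subseteq\mathcal{O}_K$, so $E$ is perfect, and hence so is its image $\bar E\subseteq k$.
\item Suppose $\bar E\neq k$. An element of $k$ transcendental over $\bar E$ has any lift $x$ satisfying $E(x)\subseteq\mathcal{O}_K$, contradicting maximality.
\item An element of $k$ algebraic over $\bar E$ is separable over the perfect field $\bar E$. Hensel's lemma lifts it to a root $x$ of the corresponding irreducible polynomial over $E$, and then $E[x]\subseteq\mathcal{O}_K$ is a field strictly containing $E$, again contradicting maximality.
\end{itemize}
With this, your argument is complete, and the fallback via the completion is unnecessary.
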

    \begin{proof}
        The first statement is a direct consequence of the fact that the multiplicative group $K^{\times}$ is generated by $\mathcal{O}_K^{\times}$ and $t^{1/p^{r}}$. \par
        For the second statement, consider a symbol $\{t^{1/p^{r}},u_{2},\dots , u_{q}\}$ or $\{u_1,\dots, u_q\}$ as in the first statement and denote by $u_1^{(0)},\dots, u_q^{(0)} \in k$ the reduction of $u_1,\dots, u_q$ modulo $\mathfrak{m}_K$. Note that for every $j\in \{1,\dots , q\}$ the product $u_j^{-1}u_j^{(0)}$ belongs to $1 + \mathfrak{m}_K$. Let $s \in \mathbf{N}$ be such that $d =p^s d'$ with $p$ and $d'$ coprime. We can apply Hensel's lemma to find $y \in \mathcal{O}_K$ such that  $u_r^{-1}u_r^{(0)} = y^{d'}$ because $\mathcal{O}_K$ is a henselian local ring \cite[Proposition~3.5~(i)]{Morrow2012Higherlocal}. Since $K$ is perfect, there exists an element $z \in \mathcal{O}_K$ such that $z^{p^s} = y$. Then, we conclude because 
        \begin{align*}
            \{t^{1/p^s},u_{2},\cdots , u_{q}\} &= \{t^{1/p^s},u_{2}^{(0)},\cdots , u_{q}^{(0)}\} & \mathrm{mod} \; d \cdot \mathrm{K}_q(K) \\
            \{u_{1},\cdots , u_{q}\} &= \{u_{1}^{(0)},\cdots , u_{q}^{(0)}\} & \mathrm{mod} \; d \cdot \mathrm{K}_q(K) 
        \end{align*}
    \end{proof}
    \begin{lemma}\label{lemma algebraically maximal extension t norm}
    Let $F$ be the perfection of a henselian discrete valued field of positive characteristic $p$. Then, for every uniformiser $t \in F$ there exists a maximal totally ramified extension $E/F$ such that for every $r \in \mathbf{N}$ and finite subextension $L/F$ of $E/F$, we have $t^{1/p^r}\in \mathrm{N}_{L/F}(L^{\times})$. Moreover, for any such extension the field $E$ is a tame field.
    \end{lemma}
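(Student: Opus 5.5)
I will build $E$ as the union of a tower of Kummer-type radical extensions attached to prime-to-$p$ roots of $t$, chosen so that norms of $t$-powers stay under control. Write $F$ as the perfection of a henselian discrete valued field with uniformiser $t$. Then $v(F^\times)=\frac{1}{p^\infty}\mathbf{Z}\,v(t)$ and the residue field $k$ of $F$ is perfect.

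\emph{Construction.} For each integer $n\geq 1$ prime to $p$, I will choose a compatible system of elements $\varpi_n$ with
\[
\varpi_n^n=(-1)^{n-1}t \quad\text{and}\quad \varpi_{nm}^m=\pm\varpi_n ,
\]
with the signs adjusted so that everything is consistent. I then set
\[
E:=\bigcup_{p\nmid n}F(\varpi_n).
\]
The polynomial $X^n-(-1)^{n-1}t$ is Eisenstein over the valuation ring, hence irreducible, so $F(\varpi_n)/F$ is totally ramified of degree $n$. Its constant term gives
\[
\mathrm{N}_{F(\varpi_n)/F}(\varpi_n)=(-1)^{n}\cdot(-1)^{n-1}\cdot(-t)\cdot(\pm1)=t
\]
up to the sign convention, which I will fix so that the result is exactly $t$. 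This is the standard trick that makes $t$ a norm from every Kummer layer. Since $F$ is perfect, $t^{1/p^r}\in F$, and it is the norm of $\varpi_n^{1/p^r}\in F(\varpi_n)$; here I use that the $p^r$-th root commutes with the norm because Frobenius is an automorphism of the perfect field $F$.

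\emph{Norms from finite subextensions.} Every finite subextension $L$ of $E/F$ sits inside some $F(\varpi_N)$. I expect to show that the subextensions of $F(\varpi_N)/F$ are exactly the fields $F(\varpi_n)$ with $n\mid N$. One route is tame ramification theory: the extension is totally tamely ramified and the lattice of value groups $\frac1n\mathbf Z$ determines the subfields. A second route, which needs no roots of unity, is to note that any intermediate field $L$ has $e(L/F)=[L:F]=:n$, and that $\varpi_N^{N/n}\in F(\varpi_N)$ generates a subfield of degree $n$; uniqueness of the degree-$n$ subfield follows from defectlessness and the value group. Once this is established, $t^{1/p^r}\in \mathrm{N}_{L/F}(L^\times)$ for every finite subextension $L$ and every $r$.

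\emph{Maximality and tameness.} The value group of $E$ is the divisible group $\mathbf{Q}\,v(t)$ and its residue field is $k$. The extension $E/F$ is a union of totally ramified extensions, so it is totally ramified. It is maximal with this property, since any further totally ramified algebraic extension would have to enlarge an already divisible value group.

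Tameness of $E$ is the step I expect to be the main obstacle. $E$ is henselian as an algebraic extension of a henselian field, it is perfect, its value group is divisible and its residue field $k$ is perfect, so it remains to show that $E$ is defectless. My first approach is to invoke \cite[Lemma~2.17(b)]{Kuhl}, or the analogous characterisation of tame fields via algebraic maximality. For this I would note that $F$ itself is defectless: the completion of a discretely valued field is defectless, and passing to the perfection adds only purely inseparable extensions, which are harmless after taking the perfect hull. Concretely, I would argue through the completion, using that $\widehat{F}$ is the perfection of $k\lau{t}$ completed. A purely wild immediate extension of $E$, such as one generated by an Artin–Schreier polynomial with $v(1/t)<0$, would descend to a finite level, but there it is killed by the ramification, since $\frac1{p}$-powers of $t$ together with the divisible group absorb the pole. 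I anticipate that the cleanest formulation uses Proposition~\ref{prop perfect microbial is t-tame}-style control of discriminants to reduce all extensions to ones with residually separable reduction. Any maximal totally ramified extension works here, so the same argument covers the statement ``for any such extension''.
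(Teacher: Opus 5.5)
\textbf{There is a genuine gap: your field is neither tame nor maximal totally ramified, and the norm argument does not cover the extension that is actually needed.}

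Your field $E=\bigcup_{p\nmid n}F(\varpi_n)$ is exactly $F':=\bigcup_{p\nmid n}F((-t)^{1/n})$, since $(-\varpi_n)^n=-t$. This is only the first step of the paper's construction.

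\emph{A counterexample to tameness.} Let $\theta^p-\theta=t^{-1}$.
\begin{itemize}
\item Suppose $F'(\theta)/F'$ were trivial or unramified. Then $\theta$ would lie in some $F(\varpi_n,\eta)$ with $F(\eta)/F$ unramified. That field is the perfection of a discretely valued field with uniformiser $\varpi_n$, in which $t^{-1}$ has valuation $-n$, prime to $p$.
\item Over that discretely valued field, $X^p-X-t^{-1}$ is irreducible and separable, because a root has non-integral value $-n/p$. It therefore stays irreducible over the purely inseparable perfection, a contradiction.
\item Since $v(F'^{\times})=\mathbf{Q}\,v(t)$ is divisible, $F'(\theta)/F'$ is an immediate extension of degree $p$, i.e.\ a defect extension.
\end{itemize}
This is the phenomenon of Example~\ref{ex t-tame not tame}, and it refutes three of your steps:
\begin{itemize}
\item $F$ is not defectless; passing to the perfection does create defect.
\item Prime-to-$p$ ramification does not ``absorb the pole''.
\item Maximality fails: an immediate extension has trivial residue extension, so it counts as totally ramified, yet it leaves the divisible value group unchanged.
\end{itemize}
The discriminant trick of Proposition~\ref{prop perfect microbial is t-tame} does not rescue this. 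It only gives tameness of a coarsening on an ultrapower, i.e.\ $t$-tameness, not tameness of $E$ itself.

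\emph{The remedy and the missing norm step.} As in the paper, let $E$ be a maximal immediate algebraic extension of $F'$. It is algebraically maximal, with divisible value group and perfect residue field $k$, hence tame by Kuhlmann's characterisation. The same reasoning applies to any maximal totally ramified extension, which gives the final clause.

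But now a finite subextension $L\subseteq E$ need not lie in $F'$. Your norm argument only treats subfields of some $F(\varpi_N)$, so it does not cover such $L$; this is the real missing idea.
\begin{itemize}
\item The residue extension is trivial, so $[L:F]=e\,p^a$. Here $e=[v(L^\times):v(F^\times)]$ is prime to $p$ because $v(F^\times)=\mathbf{Z}[1/p]\,v(t)$, and $p^a$ is the defect.
\item $L$ contains an element of value $v(t)/e$. A Hensel's lemma argument then gives $M:=F(\varpi_e)\subseteq L$ with $[L:M]=p^a$. This uses that $E$ has residue field $k$, so the relevant residues and the prime-to-$p$ roots of unity of $E$ already come from $F$.
\item Since $M$ is perfect, $\mathrm{N}_{L/M}(y^{1/p^a})=y$ for every $y\in M^\times$.
\item Hence $\mathrm{N}_{L/F}(\varpi_e^{1/p^{a+r}})=\mathrm{N}_{M/F}(\varpi_e^{1/p^r})=t^{1/p^r}$, by your computation over $M$ and injectivity of Frobenius.
\end{itemize}
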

    \begin{proof}
    Let $t\in F$ be a uniformiser. Consider $F'=\bigcup_{p\nmid n}F((-t)^{1/n})$ and $E$ be a maximal immediate algebraic extension of $F'$. The valued field $(E,v_t)$ is tame due to \cite[Corollary~3.12~b)]{Kuhl}. Thanks to \cite[Corollry~3.1 and Corollary~3.9]{Kuhl}, every finite subextension $L/F$ of $E/F$ is a $p^m$-extension of $F((-t)^{1/n})$ for some $m,n \in \N$ with $p\nmid n$. Since $F$ is perfect, the same is true for $L$ and therefore $(-t)^{1/np^m}\in L$. We then have that 
     \begin{align*}
              \mathrm{N}_{L/F}((-t)^{1/np^{m+r} })^{p^r} &=\mathrm{N}_{F((-t)^{1/n})/F}\circ \mathrm{N}_{L/F((-t)^{1/n})}(t^{1/np^{m} }) \\
              & = \mathrm{N}_{F((-t)^{1/n})/F}((-t)^{1/n})\\
              &=(-1)^{n}(t^{1/p^r})^{p^r}
     \end{align*}
    Since the Frobenius is injective, we conclude that $ \mathrm{N}_{L/F}((-t)^{1/np^{m+r} }) = (-1)^{n}t^{1/p^r}$. This is enough when $n$ is even or $p=2$. When $n$ is odd and $p\neq 2$, we conclude because
    \[
    \mathrm{N}_{L/F}(-(-t)^{1/np^{m+r} })= (-1)^{np^m + n}t^{1/p^r} = t^{1/p^r}.
    \]
\end{proof}
\begin{proof}[Proof of Theorem~\ref{thm general transfer}]
    Let $L/K$ be a finite extension. Note that $L$ is also the perfection of the fraction field of a henselian discrete valued field and the residue field $l$ of $L$ also satisfies the stably uniform $C_{i-1}^q$ and stably uniform $C_i^{q-1}$ property because $l/k$ is a finite extension. We may and do assume without loss of generality that $L=K$. \par
    
    Let $\Delta_{q-1} \subseteq \mathrm{K}_{q-1}(k)$ and $\Delta_{q} \subseteq \mathrm{K}_q(k)$ be the set given by the definition of the stably uniform $C_i^{q-1}$ and stably uniform $C_{i-1}^q$ properties respectively. Let $\Delta'\subseteq \mathrm{K}_q(K)$ be the set of elements either of the form  $\beta|_K$ with $\beta \in \Delta_q$ or of the form $\{\alpha|_{K}, t^{1/p^r}\}$ for $\alpha \in \Delta_{q-1}$ and $r \in \mathbf{N}$. 
    Note that Lemma~\ref{lemma generators Kthry} ensures that the image of $\Delta'$ in $\mathrm{K}_q(K)/d$ generates for every integer $d$.\par
    
    Let $\beta \in \Delta_q$. We first check that the stably $C_i$ extension of $K$ realising $\beta|_K$ as a universal norm exists. By definition, there exists a stably $C_{i-1}$ algebraic extension $\tilde{k}/k$ such that $\beta$ is a universal norm of $\tilde{k}/k$. Let $\tilde{K}/K$ be the unramified extension lifting $\tilde{k}/k$. Note that $\tilde{K}$ is stably $C_i$ thanks to Proposition~\ref{prop stable greenberg general} and $\beta|_K$ is a universal norm of $\tilde{K}/K$ thanks to \cite[Lemma~7.3.6]{GilleSzamuely2017CentralGalois}.\par
    
    Let $\alpha \in \Delta_{q-1}$. Now, we construct a stably $C_i$ extension realising $\{\alpha|_K, t^{1/p^r}\}$ as a universal norm. By definition there exists a stably $C_i$ algebraic extension $\tilde{k}/k$ such that $\alpha$ is a universal norm of $\tilde{k}/k$. Let $K_0/K$ to be the unramified extension lifting $\tilde{k}/k$ and $\tilde{K}/K_0$ be a maximal totally ramified extension with respect to the $t$-adic valuation as in Lemma~\ref{lemma algebraically maximal extension t norm}. Since $\tilde{k}$ is stably $C_{i}$ and the value group of $\tilde{K}$ is divisible, the field $\tilde{K}$ is also stably $C_i$ thanks to Proposition~\ref{proposition transfer Puiseux}. Let $K'/K$ be a finite subextension of $\tilde{K}/K$ and $k'$ the algebraic closure of $k$ in $K'$. By our choice of $\tilde{K}$, we have $k' \subseteq \tilde{k}$ and there exists an element $t' \in K'$ such that $\mathrm{N}_{K'/k'K}(t') =  t^{1/p^r}$. Then there exists $\alpha' \in \mathrm{K}_{q-1}(k')$ such that $\mathrm{N}_{k'/k}(\alpha') = \alpha$. We deduce that $\{\alpha|_K , t^{1/p^r}\}$ belongs to $\mathrm{N}_q(K'/K)$ because
    \begin{align*}
        \mathrm{N}_{K'/K}(\{\alpha'|_{K'}, t'\}) &= \mathrm{N}_{k'K/K} \circ \mathrm{N}_{K'/k'K}(\{\alpha'|_{K'}, t'\}) \\
        &= \operatorname{N}_{k'K/K}(\{\alpha', t^{1/p^r}\})\\
        &= \{\alpha|_K, t^{1/p^r}\}
    \end{align*}

\end{proof}

\noindent The remainder of this section is devoted to various applications of this transfer result.
\subsection{Function fields}
As with the $C_i^q$ properties, we say that $k$ satisfies the (stably) uniform $C_i^q$ property away from the imperfection when the perfection of $k$ satisfies the (stably) uniform $C_i^q$~property. 
\begin{proposition}\label{prop KK function fields}
    Let $k_0$ be an algebraically closed field and $k$ a finite extension of $k_0(x_1,\cdots , x_n)$. Then, $k$ satisfies the uniform $C_i^q$ property for every pair of non-negative integers $i,q$ such that $i+q \geq n$. Moreover, $k$ satisfies the stably uniform $C_i^q$ property away from the imperfection for every pair $i,q$ such that $i+q\geq n$
\end{proposition}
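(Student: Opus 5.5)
We argue by induction on $n$, following \cite[Theorem~2.2]{Diego2018KK}. Two ingredients are used throughout: a finite extension of a finite extension of $k_0(x_1,\dots,x_n)$ is again of that form, and the relevant stably-$C$ statements are supplied by Corollary~\ref{cor stable Lang-Nagata}. First, the cases $q=0$ and $i=0$ are easy.

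\textbf{Case $q=0$.} Here $i\geq n$. Every finite extension is $C_n$ by Lang--Nagata. After perfection it is stably $C_n$, since $k_0$ is stably $C_0$ and Corollary~\ref{cor stable Lang-Nagata} applies. We take $\Delta=\{1\}\subseteq \mathrm{K}_0$ and $K_\alpha=k'^{\mathrm{perf}}$; the universal norm condition is trivial because $1$ generates $\mathbf{Z}/d$.

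\textbf{Case $i=0$.} Here $q\geq n$, and uniform $C_0^q$ is the same as $C_0^q$. This holds because $\mathrm{cd}(k)\leq n$ (Bloch--Kato, \cite[Theorem~1]{KK1986DimensionOfFields}). In positive characteristic we argue with the perfection, or cite the known $C_0^q$ statement for such fields.

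\textbf{Case $i,q\geq 1$.} Assume the statement for transcendence degree $n-1$ and all pairs $(i',q')$ with $i'+q'\geq n-1$. Replacing $k$ by a finite extension, we realise $k$ as the function field of a curve over $F$, where $F$ is a finite extension of $k_0(x_1,\dots,x_{n-1})$ that is algebraically closed in $k$. We take $\Delta$ to consist of the symbols $\{\beta|_k,f\}$ with $\beta\in\Delta_{q-1}(F)$ and $f\in k^\times$, together with the elements $\beta|_k$ for $\beta\in\Delta_q(F)$.

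\emph{Generation mod $d$.} We use the Milnor (Bass--Tate) exact sequence for the curve, or a residue-map argument at the places of $k/F$ via norms from residue fields. This shows that $\mathrm{K}_q(k)/d$ is generated by norms from finite extensions $F'/F$ of symbols $\{\beta',f\}$ with $\beta'\in \mathrm{K}_{q-1}(F')$ and $f\in F'k$. So it is cleaner to let $\Delta$ consist of $\mathrm{N}_{F'k/k}\{\beta'|_{F'k},f\}$ with $\beta'\in\Delta_{q-1}(F')$, ranging over finite extensions $F'/F$.

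\emph{Extensions realising universal norms.} For $\beta'\in\Delta_{q-1}(F')$ the inductive hypothesis at $(i,q-1)$ gives a stably $C_{i}$ algebraic extension $\tilde F/F'$ in which $\beta'$ is a universal norm. We want a stably $C_i$ extension of $k$ in which $\{\beta',f\}$ is a universal norm. We try $K_\alpha=\big(\tilde F\cdot F'k\big)(f^{1/\infty})^{\mathrm{perf}}$ with all roots of $f$ adjoined, transcendence degree $1$ over $\tilde F$. This gives stably $C_{i+1}$, which is the wrong index. So we instead use the pair $(i-1,q)$, where $\tilde F$ is stably $C_{i-1}$, and then $K_\alpha:=(\tilde F\, F'k)^{\mathrm{perf}}$ has transcendence degree $1$ over $\tilde F$ and is stably $C_i$ by Corollary~\ref{cor stable Lang-Nagata}. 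For this to work, the generators must be chosen as $\alpha=\{\beta',f\}$ with $\beta'\in\Delta_{q}$-type data, i.e.\ of the form (element universal in a stably $C_{i-1}$ extension of the base) $\times$ (anything from $k$). Then the projection formula $\mathrm{N}(\{\beta|,f\})=\{\mathrm{N}\beta,f\}$ and the multiplicativity of $\mathrm{N}_{k'/k}$ show that $\alpha$ is a universal norm. Concretely, since $\beta'$ is a norm from each finite $F''\subseteq\tilde F$, we get $\{\beta',f\}=\mathrm{N}_{F''k/F'k}\{\beta'',f\}$. This requires that $\mathrm{K}_q(k)/d$ be generated by $\mathrm{N}_{F'k/k}\{\gamma|_{F'k},f_1,\dots\}$ with $\gamma\in \mathrm{K}_{q-1}(F')$ ranging over a uniform generating set. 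The inductive hypothesis applied with $(i-1,q)$ on $\mathrm{K}_{q-1}$ needs $(i-1)+(q-1)\geq n-1$, which fails by one. This is resolved as in Diego by using that $\mathrm{K}_q(k)/d$ is generated by symbols $\{\gamma,f\}$ where $\gamma\in \mathrm{K}_{q-1}(F')$ and $F'$ is the \emph{residue field} at a closed point of the curve. Here $\gamma$ is chosen from $\Delta_{q-1}(F')$ for the pair $(i,q-1)$ of level $n-1$, with $i+q-1\geq n-1$. The extension is then $\tilde F\cdot k$ of a stably $C_{i-1}$…

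\textbf{Main obstacle.} The hardest step is the index bookkeeping, i.e.\ producing the generation statement for $\mathrm{K}_q(k)/d$ in terms of residue fields that makes the Lang--Nagata count come out to $i$ rather than $i+1$. I would first try Diego's choice: take $\tilde F$ stably $C_{i-1}$ from the pair $(i-1,q)$ applied to $\mathrm{K}_q(F)$ via the Bass--Tate sequence.
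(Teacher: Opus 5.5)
Your proposal has a genuine gap. The case $i,q\geq 1$, which is the whole content of the statement, is never completed, and your text breaks off exactly where the extensions $K_\alpha$ must be produced. The obstruction you ran into is not just index bookkeeping; it is structural.

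Once the fibration $k/F$ is fixed (a curve over $F$ with $\operatorname{trdeg}(F/k_0)=n-1$), the pair $(i-1,q)$ at level $n-1$ only handles the constant part $\mathrm{K}_q(F)|_k$. Those classes have trivial residues, so they do not generate $\mathrm{K}_q(k)/d$. For the remaining generators, whose residues at closed points $x$ must generate $\mathrm{K}_{q-1}(F(x))/d$, the only applicable pair is $(i,q-1)$. It yields a stably $C_i$ extension $\tilde F$ of $F(x)$. But any algebraic extension of $k$ containing $\tilde F$ has transcendence degree $1$ over $\tilde F$, so Lang--Nagata and Corollary~\ref{cor stable Lang-Nagata} only give (stably) $C_{i+1}$, and adjoining roots of $f$ cannot lower this. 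In Theorem~\ref{thm general transfer} it is henselianity together with the divisible value group that keeps the index at $i$; a function field has nothing analogous.

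Note also that \cite[Theorem~2.2]{Diego2018KK} does not argue by induction or via the Bass--Tate sequence. A minor slip: for $q=0$, the class $1\in\mathrm{K}_0=\mathbf{Z}$ is a universal norm only of the trivial extension, because $\mathrm{N}_0$ is multiplication by the degree. So you must take $K_\alpha=k'$, not $k'^{\mathrm{perf}}$; this is harmless away from the imperfection, where $k'$ is already perfect.

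The missing idea is to let the base of the ``fibration'' depend on the symbol. Take $\Delta$ to be the set of all symbols $\{u_1,\dots,u_q\}\in\mathrm{K}_q(l)$, for $l/k$ finite.
\begin{itemize}
\item If $q\geq n$, then $\operatorname{cd}(l)\leq q$, so $l$ is $C_0^q$ and $K_\alpha=\overline{l}$ works.
\item If $q\leq n$, enlarge $k_0(u_1,\dots,u_q)$ by part of a transcendence basis of $l$ over it, and let $l'$ be the relative algebraic closure in $l$. Then $\operatorname{trdeg}(l'/k_0)=q$ and $\operatorname{trdeg}(l/l')=n-q\leq i$.
\item Since $\operatorname{cd}(l')\leq q$, the field $l'$ is $C_0^q$, so $\{u_1,\dots,u_q\}\in\mathrm{K}_q(l')$ is a norm from every finite extension of $l'$.
\item Put $\tilde l:=\overline{l'}\,l$. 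Since $l/l'$ is regular (at least after passing to perfections), every finite subextension of $\tilde l/l$ is $l''l$ with $l''/l'$ finite and linearly disjoint from $l$. Compatibility of norms with base change then shows that the symbol is a universal norm of $\tilde l/l$.
\item Finally, $\tilde l$ has transcendence degree $n-q\leq i$ over the algebraically closed field $\overline{l'}$. It is therefore $C_i$ by Lang--Nagata, and $\tilde l^{\mathrm{perf}}$ is stably $C_i$ by Corollary~\ref{cor stable Lang-Nagata}.
\end{itemize}
This is the argument of \cite[Theorem~2.2]{Diego2018KK} that the paper reproduces. It needs no induction on $n$ and no generation statement beyond ``symbols generate''.
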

\begin{proof}
    The fact that $k$ is stably $C_n$ is a direct consequence of Corollary~\ref{cor stable Lang-Nagata}. We can find a proof of the fact that it satisfies the uniform $C_i^q$ properties when $i+q \geq n$ in \cite[Theorem~2.2]{Diego2018KK}, but we repeat the argument for the convenience of the reader.\par
    Let $l/k$ be a finite extension of $k$. Let $u_1,\cdots u_q \in l^{\times}$ and $v_1,\cdots , v_r$ be a transcendence basis of the extension $l/k_0(u_{1}, \cdots , u_q)$ in $l$. Let $l'$ be the algebraic closure of $k_0(u_{1}, \cdots , u_q, v_{i+1}, \cdots , v_{r})$ in $l$. Note that the extension $l'/k_0$ has transcendence degree $q$ because 
        \[ \mathrm{trdeg}(k_0(u_{1}, \cdots , u_q)/k_0)=\mathrm{trdeg}(l/k_0)-\mathrm{trdeg}(k_0(u_1,...,u_q)/k_0)=m-r\]
        and $m-r + (r-i) = q$. Then, there exists a geometrically irreducible variety $X$ over $l'$ of dimension $m-q$ such that $l= l'(X)$. Since $l$ is $C_0^q$, the norm map $\mathrm{K}_{q}(l'') \to \mathrm{K}_{q}(l')$ is surjective for every finite extension $l''/ l'$ as reminded in the preliminaries. Then the symbol $\{u_1,\cdots , u_q\}$ is a norm for every finite subextension of $\tilde{l}:= \overline{l'}(X)$ and $\tilde{l}$ is a $C_{i}$ field thanks to \cite[Theorem~2a]{Nagata1957NotesPaperLang}. \par
        In order to establish the stably uniform $C_i^q$ away from the imperfection, one can repeat the previous argument. The only aditional remark is that we can assure that $\tilde{l}^{\mathrm{perf}}$ is stably $C_i$ thanks to Proposition~\ref{cor stable Lang-Nagata}
\end{proof}
\noindent We get the following corollary as a direct application of Theorem~\ref{thm general transfer}.
\begin{cor}
    Let $k_0$ be an algebraically closed field and $k$ be a $n$ dimensional function field over $k_0$. Then the field $k\lau{t_1}\dots\lau{t_m}$ satisfies the stably uniform $C_i^q$ property away from the imperfection for every pair of non-negative integers $i,q$ such that $i+q \geq n+m$
\end{cor}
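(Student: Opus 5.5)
The plan is to argue by induction on $m$, feeding Theorem~\ref{thm general transfer} at each stage with the base case supplied by Proposition~\ref{prop KK function fields}.

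For $m=0$ the statement is exactly the ``moreover'' part of Proposition~\ref{prop KK function fields}. By that proposition, $k^{\mathrm{perf}}$ satisfies the stably uniform $C_i^q$ property for every pair with $i+q\geq n$. In characteristic zero the perfection is harmless.

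For the inductive step, set $k_{m-1}:=k\lau{t_1}\dots\lau{t_{m-1}}$ and let $k_{m-1}^{\mathrm{perf}}$ be its perfection. The induction hypothesis says that this perfect field satisfies the stably uniform $C_i^q$ property for all $i+q\geq n+m-1$. Now fix a pair $(i,q)$ with $i+q\geq n+m$ and distinguish three cases.

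\begin{itemize}
\item If $i,q\geq 1$, then both $(i-1,q)$ and $(i,q-1)$ satisfy the bound $\geq n+m-1$. So the hypotheses of Theorem~\ref{thm general transfer} hold for the residue field $k_{m-1}^{\mathrm{perf}}$.
\item If $q=0$, the stably uniform $C_i^0$ property follows from the stable $C_i$ property for $i\geq n+m$. This is obtained by iterating Proposition~\ref{prop stable greenberg general} together with Lemma~\ref{algebraic} on finite extensions.
\item If $i=0$, the property reduces to $C_0^q$ with $q\geq n+m$. This is handled by \cite[Theorem~1~(1)]{KK1986DimensionOfFields}, as noted after the Corollary to Theorem~\ref{thm general transfer}.
\end{itemize}

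The main point to check is that Theorem~\ref{thm general transfer} really applies to $k_{m-1}^{\mathrm{perf}}\lau{t_m}$. What we actually need is a statement about the perfection of $k_{m-1}\lau{t_m}$, so we must compare the two fields.

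\begin{itemize}
\item Take $R_0$ to be the henselization of $k_{m-1}^{\mathrm{perf}}[t_m]$ at $(t_m)$, or $k_{m-1}^{\mathrm{perf}}[[t_m]]$. Theorem~\ref{thm general transfer} then gives the property for the perfection of its fraction field.
\item Note that $(k_{m-1}\lau{t_m})^{\mathrm{perf}}$ is the perfection of a henselian discretely valued field whose residue field is $k_{m-1}$, which is imperfect. It is therefore not literally of this form.
\end{itemize}

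I expect this to be the main obstacle. I would first try to rerun the proof of Theorem~\ref{thm general transfer} directly for $F=(k_{m-1}\lau{t_m})^{\mathrm{perf}}$, which is valued with residue field $k_{m-1}^{\mathrm{perf}}$ and value group $\frac{1}{p^\infty}\mathbf{Z}$. The ingredients used in that proof need only the following.

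\begin{itemize}
\item Lemma~\ref{lemma generators Kthry}, which uses only perfection and henselianity.
\item Lemma~\ref{lemma algebraically maximal extension t norm}, where henselianity of the perfection and the maximal immediate extension still work.
\item Proposition~\ref{prop stable greenberg general}, through the stable $C_i$ transfer.
\end{itemize}

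The unramified lifts along the residue field and the totally ramified extension attached to $t_m$ can then be built exactly as in that proof. Stability comes from Proposition~\ref{proposition transfer Puiseux}, since $F$ is $t$-tame by Proposition~\ref{prop perfect microbial is t-tame}.

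With this in hand, the stably uniform $C_i^q$ property holds for $F$ for all $i+q\geq n+m$. This completes the induction and yields the statement for $k\lau{t_1}\dots\lau{t_m}$ away from the imperfection.
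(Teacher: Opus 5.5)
Your proposal is correct and matches the paper, which obtains this corollary as a direct application of Theorem~\ref{thm general transfer}: it inducts on $m$ from Proposition~\ref{prop KK function fields} and handles the cases $i=0$ and $q=0$ via Greenberg, Nagata and Kato--Kuzumaki, exactly as you do. The obstacle you raise is not real, however, and your claim that the field is ``not literally of this form'' is mistaken, because with $k_{m-1}=k\lau{t_1}\dots\lau{t_{m-1}}$ the compositum $M=k_{m-1}^{\mathrm{perf}}\cdot k_{m-1}\lau{t_m}$ inside $(k_{m-1}\lau{t_m})^{\mathrm{perf}}$ is henselian, has value group $\mathbf{Z}$ (it lies in $\bigcup_r k_{m-1}^{1/p^r}\lau{t_m}$), has perfect residue field $k_{m-1}^{\mathrm{perf}}$, and has perfection $(k_{m-1}\lau{t_m})^{\mathrm{perf}}$, so Theorem~\ref{thm general transfer} applies verbatim to $M$ and rerunning its proof is unnecessary.
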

\noindent Moreover, we can apply Lemma~\ref{lemma away from imperf implies property} to deduce some of Kato-Kuzumaki's properties \textit{at the imperfection}, that is without taking the perfection.
    \begin{cor}
        Let $i,q,n,m$ be non-negative integers such that $i+q = n+m$. Let $k_0$ be an algebraically closed field and $X$ be an integral variety over $k_0$ of dimension $\operatorname{min}\{q,m\}$. Then, 
        \begin{itemize}
            \item if $q\leq m$, the field $k_0(X)^{\mathrm{perf}}(x_{q+1} \dots, x_m)\lau{t_1}\dots \lau{t_n}$ satisfies $C_i^q$, and
            \item if $m\leq q$, the field $(k_0(X)\lau{t_1}\dots\lau{t_{q-m}})^{\mathrm{perf}}\lau{t_{q-m+1}}\dots \lau{t_n}$ satisfies $C_i^q$.
        \end{itemize} 
    \end{cor}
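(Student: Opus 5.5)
The plan is to combine the preceding corollary, which handles the perfection, with Lemma~\ref{lemma away from imperf implies property}, which removes the perfection. In each bullet, call the field $F$. First I will identify $F$ with a field of the shape treated in the previous corollary, after taking perfections. Then I will compute its $p$-degree $\delta$ with $[F:F^p]=p^\delta$. Finally I will check the inequality $q\geq \delta$ needed by Lemma~\ref{lemma away from imperf implies property}. In characteristic zero there is nothing to prove beyond the previous corollary, since perfection is trivial, so assume $\operatorname{char}k_0=p>0$.

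\emph{Step 1 (perfection satisfies $C_i^q$).} In the first bullet, $k:=k_0(X)^{\mathrm{perf}}(x_{q+1},\dots,x_m)$ is a function field of transcendence degree $q+(m-q)=m$ over $k_0$. Its perfection coincides with that of $k_0(X)(x_{q+1},\dots,x_m)$, an honest $m$-dimensional function field. By the previous corollary, $k_0(X)(x_{q+1},\dots,x_m)\lau{t_1}\cdots\lau{t_n}$ satisfies the stably uniform, hence the usual, $C_i^q$ property away from the imperfection whenever $i+q\geq m+n$. The perfection of this field equals the perfection of $F$, because adjoining $p$-power roots of the $x_j$ commutes with forming the perfection. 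So $F$ satisfies $C_i^q$ away from the imperfection.

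In the second bullet I would instead apply Theorem~\ref{thm general transfer} iteratively. The base is $k_0(X)\lau{t_1}\cdots\lau{t_{q-m}}$, whose perfection has the stably uniform $C_{i'}^{q'}$ properties for $i'+q'\geq q$, by Proposition~\ref{prop KK function fields} and the previous corollary. Theorem~\ref{thm general transfer} then transfers these properties through the remaining $n-(q-m)$ Laurent variables. At each stage the transfer only sees the perfection, so again the perfection of $F$ satisfies $C_i^q$.

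\emph{Step 2 (compute $\delta$).} A perfect field has $p$-degree $0$. Each purely transcendental variable $x_j$ raises the $p$-degree by one. Each Laurent variable also raises it by one, since $\kappa\lau{t}$ has $p$-basis consisting of a $p$-basis of $\kappa$ together with $t$. Hence
\begin{itemize}
\item in the first bullet, $\delta=(m-q)+n$;
\item in the second bullet, $\delta=n-(q-m)$.
\end{itemize}
In both cases $\delta=n+m-q=i$.

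\emph{Step 3 (apply the lemma), and the expected obstacle.} Lemma~\ref{lemma away from imperf implies property} requires $q\geq\delta=i$. This holds directly when $q\geq i$, and I expect this comparison of $q$ with $\delta$ to be the main obstacle. If the statement is intended without that restriction, I would revisit the proof of \cite[Proposition A.4]{HarryFelipe2025Stability}. The idea there should be as follows. The quotient $\mathrm{K}_q(F)/\mathrm{N}_q(Z/F)$ is $p$-primary torsion, by restriction–corestriction from the perfection. Its $p$-part is controlled by $\mathrm{K}_q(F)/p\simeq\Omega^q_{F,\log}$ via Bloch–Kato–Gabber. One would then try to show that the classes in $\Omega^q_{F,\log}$ become norms from a finite purely inseparable extension over which $Z$ acquires points of the perfection. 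I would first check carefully whether $\delta$ must be bounded by $q$ or only by $q+1$, since that determines exactly which pairs $(i,q)$ the argument covers.
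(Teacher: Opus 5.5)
Your route is exactly the paper's one-line argument: get $C_i^q$ for $F^{\mathrm{perf}}$ from the transfer results, compute the $p$-degree, then apply Lemma~\ref{lemma away from imperf implies property}. Your computation $\delta=n+m-q=i$ in both bullets is correct.

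The obstacle you flag is real, and the paper does not get around it either. The lemma needs $q\ge\delta=i$, so in characteristic $p$ this argument proves the statement only when $q\ge i$. The case $q=0$ holds separately: then $X$ is a point and the field is $k_0(x_1,\dots,x_m)\lau{t_1}\cdots\lau{t_n}$ (with $m=0$ in the second bullet), which is $C_{n+m}$ by Tsen--Lang and Greenberg. For $0<q<i$ nothing in the paper yields $C_i^q$. An example is $k_0(C)^{\mathrm{perf}}\lau{t_1}\lau{t_2}$ with $C$ a curve and $(i,q)=(2,1)$, where $\delta=2>q$.

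The statement seems to have $i$ and $q$ interchanged in the part being perfected. Take $\dim X=\min\{i,m\}$ and the variables $x_{i+1},\dots,x_m$ (resp.\ put $t_1,\dots,t_{i-m}$ inside the perfection). Then the $p$-degree is exactly $q$ and your Step~3 applies in every case. This matches Corollary~\ref{cor iterated laurent over finite}: there the perfected part is stably $C_j$ with $j=i$, and $\delta=m-j+1=q$.

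Do not expect your Bloch--Kato--Gabber sketch to close the range $0<q<i$. The natural way to push the $p$-part down from $F^{\mathrm{perf}}$ is through norms from the fields $F^{1/p^s}$. These norms are not surjective on $\mathrm{K}_q(F)$ when $q<\delta$: already for $q=1$, $\mathrm{N}_{F^{1/p}/F}(y)=y^{p^{\delta}}$ lies in $F^{p^{\delta-1}}$. So that range would need a genuinely new input.

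Separately, Step~1 contains a false identification. When $q,n\ge 1$, $F^{\mathrm{perf}}$ is strictly larger than the perfection of $k_0(X)(x_{q+1},\dots,x_m)\lau{t_1}\cdots\lau{t_n}$. Take $y\in k_0(X)\setminus k_0(X)^p$. The series $\sum_{j\ge 0}y^{1/p^j}t_1^j$ lies in $F$. It lies in none of the fields $(k_0(X)(x_{q+1},\dots,x_m)\lau{t_1}\cdots\lau{t_n})^{1/p^s}$, because their coefficients lie in $k_0(X)^{1/p^s}(x_{q+1}^{1/p^s},\dots,x_m^{1/p^s})$.

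The same issue affects the second bullet once two or more Laurent variables remain. Iterating Theorem~\ref{thm general transfer} as stated, with its perfect residue field, proves the property for a field strictly containing $F^{\mathrm{perf}}$. Since $C_i^q$ does not pass to subfields, this does not give what you want.

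What you need instead is Theorem~\ref{thm general transfer} for a henselian discretely valued field whose residue field $k$ is imperfect, with $k^{\mathrm{perf}}$ satisfying the hypotheses. Its proof runs with the residue field $k^{\mathrm{perf}}$ of the perfection $K$. The paper's preceding corollaries implicitly use it in this form as well.
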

\subsection{Finite fields}
Another context where we can apply our general transfer result is the case of finite fields. Indeed, finite fields are stably $C_1$ thanks to \cite[Theorem~5.3.1]{Kartas2024geoC1}.
\begin{cor}
    The field $\mathbf{F}_p\lau{t_1}\dots\lau{t_m}$ satisfies $C_i^q$ away from the imperfection for every pair of non-negative integers $i,q$ such that $i+q \geq m+1$.
\end{cor}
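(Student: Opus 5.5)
The statement follows from Theorem~\ref{thm general transfer}, applied iteratively, with base case the perfect closure of $\mathbf{F}_p$, i.e. $\overline{\mathbf{F}}_p$ or rather the perfection of finite extensions of $\mathbf{F}_p$. First I would record the base case. The field $\mathbf{F}_p$ and its finite extensions are perfect. They satisfy the stably uniform $C_i^q$ property whenever $i+q\geq 1$, and I would handle the two cases separately.

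\emph{Case $i\geq 1$.} Finite fields are stably $C_1$ by \cite[Theorem~5.3.1]{Kartas2024geoC1}. Given a finite extension $k'$, take $\Delta$ to be any generating set, and take $K_\alpha=k'$ itself, where every class is trivially a universal norm. Since stably $C_1$ implies stably $C_i$ for $i\geq 1$, this gives the property.

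\emph{Case $i=0$, $q\geq 1$.} Take $K_\alpha=\overline{\mathbf{F}}_p$, which is algebraically closed and hence stably $C_0$. We need each $\alpha$ in a generating set of $\mathrm{K}_q(k')$ to be a universal norm from every finite extension of $k'$. For $q=1$ the norm between finite fields is surjective. For $q\geq 2$ we have $\mathrm{K}_q$ of a finite field equal to $0$, so the condition is vacuous. This matches the remark that the uniform $C_0^q$ property is equivalent to $C_0^q$, and finite fields are $C_0^q$ for $q\geq1$.

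Next I would induct on $m$. Suppose $k_{m-1}:=(\mathbf{F}_p\lau{t_1}\dots\lau{t_{m-1}})^{\mathrm{perf}}$ satisfies the stably uniform $C_i^q$ property for all $i+q\geq m$. The field $\mathbf{F}_p\lau{t_1}\dots\lau{t_m}$ is not obviously a henselian DVF with residue field $k_{m-1}$, since its residue field is the imperfect field $\mathbf{F}_p\lau{t_1}\dots\lau{t_{m-1}}$. I would take $R_0=k_{m-1}\lau{t_m}$, so that Theorem~\ref{thm general transfer} applies to $k_{m-1}$, which is perfect. This gives the stably uniform $C_i^q$ property for $(k_{m-1}\lau{t_m})^{\mathrm{perf}}$ whenever $i,q\geq 1$ with $i+q\geq m+1$, because then $(i-1)+q\geq m$ and $i+(q-1)\geq m$. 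The edge cases would be treated as follows. For $q=0$, the stable $C_i$ property with $i\geq m+1$ comes from Proposition~\ref{prop stable greenberg general} applied to $k_{m-1}$, which is $C_m$. For $i=0$, with $q\geq m+1$, it comes from the $C_0^q$ property via \cite[Theorem~1~(1)]{KK1986DimensionOfFields}, as noted after the corollary.

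The main obstacle is comparing $(\mathbf{F}_p\lau{t_1}\dots\lau{t_m})^{\mathrm{perf}}$ with $(k_{m-1}\lau{t_m})^{\mathrm{perf}}$. The latter is an extension of the former, but the two are not equal. My first attempt would be to avoid the comparison by running the induction entirely with the non-perfect tower. I would observe that the proof of Theorem~\ref{thm general transfer} only uses the perfection of $K$ and the perfectness of $k$ through Lemma~\ref{lemma generators Kthry}, Lemma~\ref{lemma algebraically maximal extension t norm} and Proposition~\ref{proposition transfer Puiseux}. Those results should go through for a residue field that is only the perfection of a henselian DVF, using Proposition~\ref{prop perfect microbial is t-tame}, which makes the relevant fields $t$-tame. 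Failing that, I would use that the two perfections are elementarily equivalent relative to the needed norm statements. More simply, I would check that $\mathrm{K}_q$ modulo $d$ and norm groups of hypersurfaces are insensitive to completing, by a Greenberg-type approximation as in Proposition~\ref{prop stable greenberg general}. Either route would show that the $C_i^q$ property passes to $(\mathbf{F}_p\lau{t_1}\dots\lau{t_m})^{\mathrm{perf}}$, which is the claim.
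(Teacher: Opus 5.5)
\textbf{Verdict: there is a genuine gap, at exactly the step you flag as the main obstacle.} Your overall strategy is the paper's: finite fields are stably $C_1$ by \cite[Theorem~5.3.1]{Kartas2024geoC1}, which together with $C_0^q$ for $q\ge 1$ gives the stably uniform $C_i^q$ property for $i+q\ge 1$, and then one iterates Theorem~\ref{thm general transfer}. Your base case is fine. (Your opening sentence conflates the perfect closure of $\mathbf{F}_p$, which is $\mathbf{F}_p$ itself, with $\overline{\mathbf{F}}_p$; the base must be $\mathbf{F}_p$ and its finite extensions, which is what you then use.)

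\textbf{Why your induction does not close.} The hypothesis at stage $m-1$ concerns $k_{m-1}=(\mathbf{F}_p\lau{t_1}\dots\lau{t_{m-1}})^{\mathrm{perf}}$. With $R_0=k_{m-1}\lau{t_m}$, Theorem~\ref{thm general transfer} only says something about the strictly larger field $(k_{m-1}\lau{t_m})^{\mathrm{perf}}$, which contains for instance $\sum_n t_1^{1/p^n}t_m^n$. The same holds for Proposition~\ref{prop stable greenberg general} in your $q=0$ edge case. None of your three proposed bridges is carried out:
\begin{itemize}
\item Route (a) is only asserted. The problem is not the residue field, since $k_{m-1}$ is perfect, but which henselian DVF you feed in. If you re-run the proof, the inputs you list (Lemma~\ref{lemma algebraically maximal extension t norm}, and Proposition~\ref{prop stable greenberg general} applied to the unramified lifts) are stated for perfections of henselian DVFs. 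So you are led straight back to the same question.
\item Route (b) fails as stated. The fields are perfect but not tame (value group $\mathbf{Z}[1/p]$, Artin--Schreier defect as in Example~\ref{ex t-tame not tame}), so Fact~\ref{akekuhl} does not apply. More to the point, $C_i^q$ is not first-order in any evident way: membership in $\mathrm{N}_q(Z/l)$ involves extensions of unbounded degree and unbounded relations in $\mathrm{K}_q$. So even elementary equivalence would not transfer it.
\item Route (c) is not an argument. Neither field is complete, and invariance of norm groups of hypersurfaces under completion is not something you can cite.
\end{itemize}

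\textbf{The missing observation.} The target field $(\mathbf{F}_p\lau{t_1}\dots\lau{t_m})^{\mathrm{perf}}$ is itself the perfection of an equicharacteristic henselian DVF with residue field $k_{m-1}$. Put $L_r=\mathbf{F}_p\lau{t_1^{1/p^r}}\dots\lau{t_{m-1}^{1/p^r}}$ and $K_0=\bigcup_r L_r\lau{t_m}$.
\begin{itemize}
\item $K_0$ is an increasing union of complete DVFs sharing the uniformiser $t_m$, hence a henselian DVF.
\item Its residue field is $\bigcup_r L_r=k_{m-1}$.
\item With $F=\mathbf{F}_p\lau{t_1}\dots\lau{t_m}$ we have $F\subseteq K_0\subseteq F^{\mathrm{perf}}$, because $L_r\lau{t_m}\subseteq L_r\lau{t_m^{1/p^r}}=F^{1/p^r}$. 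Hence $K_0^{\mathrm{perf}}=F^{\mathrm{perf}}$.
\end{itemize}
Theorem~\ref{thm general transfer} and Proposition~\ref{prop stable greenberg general} only require a henselian, not complete, DVF. So apply them with $R_0=K_0$ instead of $k_{m-1}\lau{t_m}$. The conclusion is then exactly the stably uniform $C_i^q$ property for $F^{\mathrm{perf}}$, the induction on $m$ closes, and your edge-case bookkeeping ($q=0$ via Proposition~\ref{prop stable greenberg general}, $i=0$ via $C_0^q$) goes through unchanged. The paper presents the corollary as a direct application of Theorem~\ref{thm general transfer}, so this identification is what makes that iteration legitimate.
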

\noindent Again, thanks to Lemma~\ref{lemma away from imperf implies property} we get the following corollary.
\begin{cor}\label{cor iterated laurent over finite}
    Let $j$ be in $\{1,\cdots , m\}$. Then the field
    \[
    \left(\mathbf{F}_p\lau{t_1}\dots\lau{t_{j-1}}\right)^{\mathrm{perf}}\lau{t_j}\dots\lau{t_m}
    \]
    satisfies $C_j^{m-j+1}$.
\end{cor}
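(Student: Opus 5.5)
Set $F_j:=\left(\mathbf{F}_p\lau{t_1}\dots\lau{t_{j-1}}\right)^{\mathrm{perf}}\lau{t_j}\dots\lau{t_m}$. The strategy is to apply Lemma~\ref{lemma away from imperf implies property} to $F_j$ with $i=j$ and $q=m-j+1$. Two inputs are needed:
\begin{enumerate}
    \item a bound $[F_j:F_j^p]\leq p^{\delta}$ with $\delta\leq q$;
    \item the fact that $F_j$ satisfies $C_j^{m-j+1}$ away from the imperfection, i.e.\ $F_j^{\mathrm{perf}}$ satisfies $C_j^{m-j+1}$.
\end{enumerate}

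\textbf{The degree of imperfection.} The field $F_j$ is an iterated Laurent series field in the $m-j+1$ variables $t_j,\dots,t_m$ over the perfect field $k_{j-1}:=(\mathbf{F}_p\lau{t_1}\dots\lau{t_{j-1}})^{\mathrm{perf}}$. If $k$ is perfect, then $[k\lau{t}:k\lau{t}^p]=p$, with $p$-basis $\{t\}$. For a general field $E$ of characteristic $p$ one has $[E\lau{t}:E\lau{t}^p]=p\cdot[E:E^p]$ whenever $[E:E^p]$ is finite, since a $p$-basis of $E$ together with $t$ is a $p$-basis of $E\lau{t}$. Inducting on the number of Laurent variables gives $[F_j:F_j^p]=p^{m-j+1}$. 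So $\delta=m-j+1=q$, and the hypothesis $q\geq\delta$ of the lemma holds with equality.

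\textbf{The property away from the imperfection.} The base field $k_{j-1}$ is the perfection of an iterated Laurent series field in $j-1$ variables over $\mathbf{F}_p$. Applying Proposition~\ref{prop stable greenberg general} inductively, starting from the fact that finite fields are stably $C_1$, shows that $k_{j-1}$ is stably $C_j$. Iterating that proposition directly needs care: at each stage it takes a $C_i$ residue field and outputs a stably $C_{i+1}$ perfection, and the intermediate perfect fields are not themselves discretely valued. If this causes trouble, I would instead use the previous corollary, namely that $\mathbf{F}_p\lau{t_1}\dots\lau{t_{j-1}}$ satisfies $C_i^q$ away from the imperfection for all $i+q\geq j$. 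In particular $k_{j-1}$ satisfies the stably uniform $C_i^{q'}$ property for every $i+q'\geq j$, which follows from the machinery behind Theorem~\ref{thm general transfer} applied $j-1$ times starting from the finite field.

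Next, apply Theorem~\ref{thm general transfer} $m-j+1$ times, adjoining $t_j,\dots,t_m$ one at a time. At each step the residue field is the perfection of the previous field, so the perfectness hypothesis of the theorem is satisfied. The index bookkeeping is as follows: if the residue field satisfies the stably uniform $C_{i-1}^{q}$ and $C_i^{q-1}$ properties for all pairs with sum $\geq N$, then the next field satisfies them for all pairs with sum $\geq N+1$. Starting from the finite field, which is stably uniform $C_i^q$ for $i+q\geq 1$, we reach sum $\geq m+1$ after $m$ total Laurent variables. The pair $(j,m-j+1)$ has sum exactly $m+1$, so $F_j^{\mathrm{perf}}$ satisfies the (stably uniform, hence ordinary) $C_j^{m-j+1}$ property. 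Hence $F_j$ satisfies $C_j^{m-j+1}$ away from the imperfection.

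\textbf{Conclusion and main obstacle.} Combining the two steps with Lemma~\ref{lemma away from imperf implies property} yields that $F_j$ satisfies $C_j^{m-j+1}$. I expect the main obstacle to be the second step: making sure that after taking perfections the intermediate fields are still perfections of henselian discretely valued fields with the required residue field, so that Theorem~\ref{thm general transfer} applies at every stage. This matters especially because the first $j-1$ variables are absorbed into a perfect residue field, while the remaining ones are not perfected. I would handle this by noting that $F_j^{\mathrm{perf}}$ equals the perfection of $(k_{j-1}\lau{t_j}\dots\lau{t_{m-1}})^{\mathrm{perf}}\lau{t_m}$, and inducting along this description.
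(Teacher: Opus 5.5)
Your route is the paper's. The paper deduces the corollary in one line from Lemma~\ref{lemma away from imperf implies property} with $\delta=q=m-j+1$, fed by the away-from-the-imperfection statement obtained by iterating Theorem~\ref{thm general transfer}. Your computation $[F_j:F_j^p]=p^{m-j+1}$ is correct.

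\textbf{The gap.} The problem is exactly the step you flag, and the fix you propose for it is false. Theorem~\ref{thm general transfer} as stated requires the residue field of $R_0$ to be perfect. So if at each step you take the perfection of the previous field as residue field, then for $m>j$ you end up with $\bigl(E^{\mathrm{perf}}\lau{t_m}\bigr)^{\mathrm{perf}}$, where $E:=k_{j-1}\lau{t_j}\dots\lau{t_{m-1}}$. This is not $F_j^{\mathrm{perf}}=(E\lau{t_m})^{\mathrm{perf}}=\bigcup_r E^{1/p^r}\lau{t_m^{1/p^r}}$. Indeed, the series $\sum_{n\ge 0} t_{m-1}^{1/p^n}t_m^n$ lies in $E^{\mathrm{perf}}\lau{t_m}$. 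Its coefficients lie in no single $E^{1/p^r}$, because $t_{m-1}\notin E^p$, so it is not in $F_j^{\mathrm{perf}}$. Your iteration therefore proves $C_j^{m-j+1}$ for a strictly larger field, and the $C_i^q$ property does not pass to subfields in general. For $m=j$ a single application suffices and your argument is fine. Your fallback for $k_{j-1}$ has the same defect, since it iterates over the imperfect fields $\mathbf{F}_p\lau{t_1}\dots\lau{t_s}$.

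\textbf{The repair.} Induct along $E_s:=k_{j-1}\lau{t_j}\dots\lau{t_s}$ without perfecting the intermediate fields. Use Theorem~\ref{thm general transfer} in the slightly more general form where $R_0=E[[t]]$ with $E$ possibly imperfect, the hypotheses are imposed on $E^{\mathrm{perf}}$, and the conclusion concerns $(E\lau{t})^{\mathrm{perf}}$. The proof goes through unchanged, for the following reasons.
\begin{itemize}
\item $(E\lau{t})^{\mathrm{perf}}$ has perfect residue field $E^{\mathrm{perf}}$ and contains it as a coefficient field, which is what Lemma~\ref{lemma generators Kthry} uses.
\item Finite extensions are again of this shape, by Cohen's structure theorem.
\item Lemma~\ref{lemma algebraically maximal extension t norm} is stated for perfections of arbitrary henselian discretely valued fields.
\item The Greenberg-type argument of Proposition~\ref{prop stable greenberg general}, applied to the unramified lifts, only needs the \emph{perfection} of the residue field to be $C_{i-1}$. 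This is because $L_0$ is algebraic over $\tilde k^{\mathrm{perf}}(t)$, which is $C_i$ by Tsen--Lang.
\end{itemize}
This is the form the paper's chain of corollaries tacitly relies on.

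\textbf{Boundary cases.} The theorem needs $i,q\ge 1$, so the boundary pairs in your bookkeeping must be supplied separately.
\begin{itemize}
\item For $q=0$, use Proposition~\ref{prop stable greenberg general}. For example, $k_{j-1}$ is stably $C_j$ by a \emph{single} application to $\mathbf{F}_p\lau{t_1}\dots\lau{t_{j-1}}$, whose residue field is $C_{j-1}$ by Lang--Greenberg; no induction through perfect intermediate fields is needed.
\item For $i=0$, use the bound on cohomological dimension.
\end{itemize}
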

\noindent Note that this corollary includes the $C_1^m$ for $\mathbf{F}_p\lau{t_1}\dots\lau{t_m}$ which was only known away from $p$, see \cite[Corollaire~4.7]{Wittenberg2015KKQp}.

\subsection{\texorpdfstring{$p$}{p}-adic fields}
The maximal unramified extension of a local field and any maximal totally ramified extension of a local field are $C_1$ thanks to  \cite[Theorem~12]{Lang1952QuasiAlgebraicClosure} and \cite[Corollary~1.1.3]{Kartas2024geoC1} respectively. Moreover, they are stably $C_1$ thanks to Proposition~\ref{proposition transfer Puiseux}. Then, Theorem~\ref{thm general transfer} admits the following corollary.
\begin{cor}\label{cor over p-adic fields}
    Let $k_0$ be a $p$-adic field and $k$ be either a maximal totally ramified extension of $k_0$ or the maximal unramified extension of $k_0$. Then the field $k\lau{t_1}\dots\lau{t_m}$ satisfies $C_i^q$ for every pair of non-negative integers $i,q$ such that $i+q \geq n$.
\end{cor}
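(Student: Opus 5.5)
In characteristic zero every field is perfect, so ``away from the imperfection'' is the same as the property itself. The plan is therefore to prove the stronger claim that $K_m:=k\lau{t_1}\dots\lau{t_m}$ satisfies the \emph{stably uniform} $C_i^q$ property for all $i,q\geq 0$ with $i+q\geq m+1$. I read the $n$ in the statement as $m+1$. The proof is by induction on $m$, with Theorem~\ref{thm general transfer} as the inductive step. Since the stably uniform property implies the ordinary $C_i^q$ property, this gives the corollary.

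\textbf{Base case $m=0$.} We need $k$ to be stably uniform $C_i^q$ whenever $i+q\geq 1$. For $q=0$ and $i\geq 1$ it suffices that $k$ be stably $C_1$, since stably $C_1$ implies stably $C_i$. As recalled just before the statement, $k$ is $C_1$ by Lang (unramified case) or by \cite[Corollary~1.1.3]{Kartas2024geoC1} (totally ramified case), and it is stably $C_1$ by Proposition~\ref{proposition transfer Puiseux}, because $k$ is henselian of characteristic zero and hence $t$-tame. For $q\geq 1$ we take $K_\alpha=k'$ in Definition~\ref{def stably uniform Ciq property}. This requires every class of $\mathrm{K}_q(k')$ to lie in $\mathrm{N}_q(k''/k')$ for every finite $k''/k'$, which is the $C_0^q$ property. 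A $C_1$ field has cohomological dimension at most $1$, so $C_0^q$ holds for $q\geq 1$ by the $i=0$ case of Kato--Kuzumaki (Bloch--Kato, \cite[Theorem~1(1)]{KK1986DimensionOfFields}). Finally, any finite extension $k'/k$ is again stably $C_1$ by Lemma~\ref{algebraic}, so the same argument applies to every $k'$, as the definition demands.

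\textbf{Inductive step.} Assume $K_{m-1}$ satisfies the stably uniform $C_i^q$ property whenever $i+q\geq m$. Let $i+q\geq m+1$.
\begin{itemize}
\item If $i,q\geq 1$, then $K_{m-1}$ satisfies stably uniform $C_{i-1}^q$ and $C_i^{q-1}$, since both index sums are at least $m$. Theorem~\ref{thm general transfer} applied to $R_0=K_{m-1}[[t_m]]$ then gives the stably uniform $C_i^q$ property for $K_m$.
\item If $q=0$, then $i\geq m+1$. By induction $K_{m-1}$ is $C_{m}$, so $K_m$ is stably $C_{m+1}$ by Proposition~\ref{prop stable greenberg general}, and its finite extensions are stably $C_{m+1}$ by Lemma~\ref{algebraic}.
\item If $i=0$, then $q\geq m+1$. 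Here we need $C_0^q$ for $K_m$, which again follows from $\mathrm{cd}(K_m)\leq m+1$ and \cite[Theorem~1(1)]{KK1986DimensionOfFields}.
\end{itemize}
All finite extensions of $K_m$ are again of the same shape (Laurent series over a finite extension of $K_{m-1}$), so the hypotheses hold uniformly as required.

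\textbf{Main obstacle.} The delicate point is the base case for $q\geq 1$, that is, justifying $C_0^q$ for $k$ (and similarly the bound $\mathrm{cd}\leq m+1$ in the inductive step). I would derive it from $\mathrm{cd}(k)\leq 1$ via the norm residue isomorphism, as indicated in the introduction. For $q=1$ there is also an elementary route: norm forms from a $C_1$ field are surjective.
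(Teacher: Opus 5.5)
Your proposal is correct and takes the same route as the paper. The paper observes that $k$ is $C_1$ (Lang, resp.\ Kartas), hence stably $C_1$ by Proposition~\ref{proposition transfer Puiseux}, and then iterates Theorem~\ref{thm general transfer}, with the boundary cases $i=0$ and $q=0$ handled by cohomological dimension and Greenberg-type arguments; this is what your induction on $m$ makes explicit.

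One small slip in your base case for $q\geq 1$. The choice whose universal-norm condition is exactly $C_0^q$ is $K_\alpha=\overline{k'}$, which is stably $C_0$ and hence stably $C_i$ for every $i$. With $K_\alpha=k'$ that condition is vacuous, and the choice only works for $i\geq 1$.
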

\noindent This is enough to give an application for iterated Laurent series over $p$-adic fields.
\begin{proposition}\label{prop p-adic laurent C1m1}
    Let $k$ be a $p$-adic field. Then, $k$ satisfies the stably uniform $C_1^1$ property and the field $k\lau{t_1}\dots\lau{t_{m}}$ satisfies the stably uniform $C_1^{m+1}$ property.
\end{proposition}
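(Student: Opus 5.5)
We would first establish the stably uniform $C_1^1$ property for a $p$-adic field $k$, and then get the statement for $k\lau{t_1}\dots\lau{t_m}$ by induction on $m$ using Theorem~\ref{thm general transfer}.

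\textbf{The base case.} Let $k'/k$ be a finite extension; it is again a $p$-adic field, so we may take $k'=k$. Fix a uniformiser $\pi$ of $k$. By local class field theory, $k^\times$ is topologically generated by $\pi$ and the units $\mathcal{O}_k^\times$.

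We will use two algebraic extensions of $k$.
\begin{itemize}
\item The maximal unramified extension $k^{\mathrm{ur}}$. Every unit of $k$ is a norm from each finite unramified extension, so every $u\in\mathcal{O}_k^\times$ is a universal norm of $k^{\mathrm{ur}}/k$.
\item A maximal totally ramified extension $k_\pi$ obtained from Lubin--Tate theory for $\pi$, i.e.\ the fixed field of $\mathrm{rec}(\pi)$ inside $k^{\mathrm{ab}}$, then enlarged to a maximal totally ramified extension. We must ensure that $\pi$ stays a norm from every finite subextension. The cleanest route is to take $k_\pi$ to be a maximal totally ramified extension in which $\pi$ is a universal norm; this exists by a Zorn argument on totally ramified extensions $E$ such that $\pi\in \mathrm{N}(E'/k)$ for all finite $E'\subseteq E$, since in any finite totally ramified extension the norm of a uniformiser is a uniformiser times a unit that can be adjusted.
\end{itemize}

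Both $k^{\mathrm{ur}}$ and any maximal totally ramified extension of $k$ are stably $C_1$, as recalled just before Corollary~\ref{cor over p-adic fields}. We then take $\Delta=\{\pi\}\cup\mathcal{O}_k^\times$. This generates $k^\times$ modulo $d$ for every $d$, because $k^\times=\pi^{\mathbf{Z}}\times\mathcal{O}_k^\times$. This gives the stably uniform $C_1^1$ property.

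\textbf{The inductive step.} We apply Theorem~\ref{thm general transfer} with $i=1$, $q=m+1$ to the residue field $k\lau{t_1}\dots\lau{t_{m-1}}$. This requires that field to satisfy the stably uniform $C_0^{m+1}$ and $C_1^{m}$ properties. The second is the induction hypothesis. The first is equivalent to $C_0^{m+1}$, which holds by the cohomological-dimension bound together with Kato--Kuzumaki \cite[Theorem~1]{KK1986DimensionOfFields}; equivalently, surjectivity of norms on $\mathrm{K}_{m+1}$ follows from the residue-map diagrams and the surjectivity of norms on $\mathrm{K}_2$ of $p$-adic fields.

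\textbf{The main obstacle.} Theorem~\ref{thm general transfer} assumes $k$ perfect and equicharacteristic and passes to perfections, whereas here the residue fields have characteristic $0$. In characteristic $0$ perfection is trivial, so the argument of Theorem~\ref{thm general transfer} goes through verbatim. The roots $t^{1/p^r}$ are replaced by $t$ alone. Lemma~\ref{lemma algebraically maximal extension t norm} is replaced by the extension $\bigcup_n F((-t)^{1/n})$, in which $t$ is a universal norm by the same computation, and which is tame and stably $C_1$ by Proposition~\ref{proposition transfer Puiseux}. I expect the base case, namely checking that $\pi$ is genuinely a universal norm of a maximal totally ramified extension, to be the delicate point, and I would handle it via Lubin--Tate theory.
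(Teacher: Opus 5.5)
Your overall architecture matches the paper's: realise $\pi$ as a universal norm of a Lubin--Tate-based maximal totally ramified extension, which is stably $C_1$ by \cite[Corollary~1.1.3]{Kartas2024geoC1} and Proposition~\ref{proposition transfer Puiseux}, then induct via Theorem~\ref{thm general transfer}, with the $C_0^{m+1}$ input coming from cohomological dimension. Handling the units through $k^{\mathrm{ur}}$ (stably $C_1$ by Lang's theorem and Proposition~\ref{proposition transfer Puiseux}) is a legitimate alternative to the paper's choice of $\Delta$ as the set of all uniformisers. Your remarks on running Theorem~\ref{thm general transfer} in characteristic $0$ are also fine.

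\textbf{The gap.} The problem is the step you yourself flag. Your Zorn argument produces an $E$ that is maximal only among totally ramified extensions in which $\pi$ is a universal norm. Nothing you say shows that $E$ is a maximal totally ramified extension, and that is exactly what is needed to invoke \cite[Corollary~1.1.3]{Kartas2024geoC1}. Your justification, that the norm of a uniformiser is a uniformiser times a unit that ``can be adjusted'', is false in general: norm groups of totally ramified extensions contain only some uniformisers. For instance, if $p$ is odd and $u\in\mathcal{O}_k^\times$ is such that $-u$ is not a square, then $\pi\notin\mathrm{N}(k(\sqrt{u\pi})^\times)$. So a priori a Zorn-maximal $E$ could still admit proper totally ramified extensions, all of which destroy the universal-norm property.

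\textbf{The missing ingredient.} What is needed is the class field theory argument the paper uses, via \cite[Theorem~3.5]{milneCFT} and \cite{Yoshida2008CFTLubinTate}.
\begin{enumerate}
\item Let $\tilde k$ be any maximal totally ramified extension containing the Lubin--Tate field $k_\pi$.
\item If $M\subseteq\tilde k$ is finite abelian over $k$, then $Mk_\pi\subseteq\tilde k$ is abelian and totally ramified over $k$. Since $k^{\mathrm{ab}}=k_\pi k^{\mathrm{ur}}$ is unramified over $k_\pi$, this forces $Mk_\pi=k_\pi$, hence $\pi\in\mathrm{N}(M^\times)$.
\item By the norm limitation theorem, every finite $E\subseteq\tilde k$ satisfies $\mathrm{N}_{E/k}(E^\times)=\mathrm{N}_{M/k}(M^\times)$, where $M$ is the maximal abelian subextension of $E/k$. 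So $\pi$ is a universal norm of $\tilde k/k$.
\end{enumerate}
Your Zorn route can be salvaged, but only by first showing that a Zorn-maximal $E$ contains $k_\pi$, and then running the argument above. To see that $E$ contains $k_\pi$, take a finite $E'\subseteq E$ and an $x\in E'$ with $\mathrm{N}_{E'/k}(x)=\pi$; this $x$ is a uniformiser of $E'$. For finite $L\subseteq k_\pi$, $x$ lies in $\mathrm{N}_{E'/k}^{-1}(\mathrm{N}_{L/k}(L^\times))=\mathrm{N}(E'L/E')$. Hence $E'L/E'$ is totally ramified, so $Ek_\pi$ is still totally ramified, and maximality gives $E\supseteq k_\pi$.
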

\begin{proof}
    The arguments in the proof of \cite[Proposition~A.1]{HarryFelipe2025Stability} are enough to prove that satisfies the uniform $C_1^1$ property. We repeat the argument for the sake of the reader.\par
    Since a finite extension of a $p$-adic field is still a $p$-adic field, we might simplify the notation by not taking finite extensions. Let $\pi \in k^{\times}$ be a uniformiser. Let $k_{\pi}$ be the maximal abelian totally ramified extension of $k$ associated to $\pi$ via Lubin-Tate theory. Let $\tilde{k}$ be a maximal totally ramified extension of $k$ containing $k_{\pi}$. The field $K$ is stably $C_1$ thanks to \cite[Corollary~1.1.3]{Kartas2024geoC1} and Proposition~\ref{proposition transfer Puiseux}. Moreover, $\pi$ is a norm of every finite subextension of $\tilde{k}/k$ thanks to \cite[Theorem~3.5]{milneCFT} and \cite[Corollary~5.12]{Yoshida2008CFTLubinTate}. This is enough to conclude the stably uniform $C_1^1$ property because the set of uniformisers generate $k^{\times}$ as a group.\par
    For the second statement, we can proceed by induction on $m$. The case $m=0$ corresponds to the stably uniform $C_1^1$ property for $\mathbf{Q}_p$. Let $m> 0$ and assume that $k\lau{t_1}\dots\lau{t_{m-1}}$ satisfies $C_1^{m}$. Note that the field $k\lau{t_1}\dots\lau{t_{m-1}}$ satisfies the stably uniform $C_0^{m+1}$ property because its cohomological dimension is $m+1$. Then, by Theorem~\ref{thm general transfer}, we deduce that $k\lau{t_1}\dots\lau{t_{m}}$ satisfies the stably uniform $C_1^{m+1}$ property.

\end{proof}
\noindent We note that the $C_1^{m+1}$ property for $\mathbf{Q}_p\lau{t_1}\dots\lau{t_{m}}$ was previously known only \textit{away from} $p$  \cite[Corollaire~4.7]{Wittenberg2015KKQp}.

\section{The strong \texorpdfstring{$C_1^0$}{C10} property} \label{sec strong C1q}
In \cite{Wittenberg2015KKQp}, Wittenberg introduced the \textit{strong} $C_1^q$ which strengthens the $C_1^q$ property enough so that it admits transfer results along henselian valued fields, see \cite[Théorème~4.2]{Wittenberg2015KKQp}.
\begin{defi}\label{def strong C1q}
    Let $q$ be a non-negative integer. A field $k$ is said to satisfy the strong $C_1^q$ property if for every finite extension $k'/k$, proper $k'$-scheme $X$, and coherent sheaf $E$ over $X$ the group $\operatorname{K}_q(k')/\operatorname{N}_q(X/k')$ is a $\chi(X,E)$-torsion group.
\end{defi}
\noindent Given a field $k$ and a prime number $\ell$, we denote by $k(\ell)$ the maximal prime-to-$\ell$ extension of $k$. The proof of the following lemma is analogous to \cite[Lemma~2.5]{DiegoLuco2025TransferSerre2}, but we include it for completeness.
\begin{lemma}\label{lemma strong C_1^q reduces to l-special}
    Let $k$ be a field and $q$ a non-negative integer. The field $k$ satisfies the strong $C_1^q$ property if and only if form every prime number $\ell$ the field $k(\ell)$ satisfies the strong $C_1^q$ property.
\end{lemma}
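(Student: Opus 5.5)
The plan is to prove both implications by a restriction–corestriction argument, using that the strong $C_1^q$ condition says a certain abelian group is killed by an integer. We fix the notation $Q(X/k',E) := \mathrm{K}_q(k')/\mathrm{N}_q(X/k')$ and $\chi := \chi(X,E)$. Here $k(\ell)$ is understood as the fixed field in a separable closure of a pro-$\ell$ Sylow subgroup of $G_k$, that is, the union of the finite separable extensions of $k$ of degree prime to $\ell$.

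For the forward direction, let $\ell$ be a prime, $k'/k(\ell)$ a finite extension, $X$ a proper $k'$-scheme and $E$ a coherent sheaf on $X$. Everything is defined over a finite subextension: there is a finite extension $k_1/k$ inside $k(\ell)$, a finite extension $k_1'/k_1$, and data $X_1,E_1$ over $k_1'$ with $k' = k_1'k(\ell)$ and $X = X_1\otimes k'$. Flat base change gives $\chi(X_1,E_1)=\chi(X,E)$. Any class in $\mathrm{K}_q(k')$ comes from $\mathrm{K}_q(k_2')$ for some finite $k_2'$ between $k_1'$ and $k'$. Enlarging $k_1$ if necessary, the strong $C_1^q$ property of $k$ applied over $k_2'$ shows that $\chi$ times this class is a sum of norms from closed points of $X_{k_2'}$. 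Restricting to $k'$ takes norms from points of $X_{k_2'}$ into $\mathrm{N}_q(X/k')$, by the usual compatibility of norms with base change (the residue fields split into products of fields). This proves $Q(X/k',E)$ is $\chi$-torsion.

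For the converse, fix $k'/k$ finite, $X$ and $E$, and $\alpha\in \mathrm{K}_q(k')$. It suffices to show that for each prime $\ell$ the $\ell$-primary part of the order of $\alpha$ in $Q$ divides $\chi$. Indeed, if $\chi=0$ there is nothing to prove. Otherwise we use a slightly stronger claim: for each $\ell$ there is $m_\ell$ prime to $\ell$ with $m_\ell\,v_\ell(\chi)$-part $\cdot\,\alpha\in \mathrm{N}_q(X/k')$. Combining these over the finitely many primes dividing the relevant integers yields $\chi\alpha \in \mathrm{N}_q(X/k')$.

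To get the claim, we use that $k'k(\ell)=k'(\ell)$ and the hypothesis on $k(\ell)$ (applied to the finite extension $k'k(\ell)$ of $k(\ell)$). This gives $\chi\,\alpha|_{k'(\ell)}\in \mathrm{N}_q(X_{k'(\ell)}/k'(\ell))$. This identity descends to a finite subextension $k''/k'$ of degree $m$ prime to $\ell$: there $\chi\,\alpha|_{k''}$ is a sum of norms from closed points of $X_{k''}$. We then apply $\mathrm{N}_{k''/k'}$. Transitivity of norms shows the right side lands in $\mathrm{N}_q(X/k')$, while the projection formula gives $\mathrm{N}_{k''/k'}(\alpha|_{k''}) = m\alpha$. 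Hence $m\chi\alpha\in\mathrm{N}_q(X/k')$ with $\ell\nmid m$. Running this for all primes $\ell$, the integers $m_\ell\chi$ have greatest common divisor $\chi$, since for each prime $\ell$ we have $\ell\nmid m_\ell$. Therefore $\chi\alpha$ lies in the subgroup $\mathrm{N}_q(X/k')$; finitely many primes suffice because $\gcd$ stabilises.

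The main obstacle is the bookkeeping of the descent to a finite level. One must make sure that norms from closed points over an infinite algebraic extension are indeed realised at some finite level. This holds because $\mathrm{K}_q$ and closed points are compatible with direct limits. One must also check that the base-change behaviour of norm groups (restriction maps $\mathrm{N}_q(X_{k_2'}/k_2')$ into $\mathrm{N}_q(X_{k'}/k')$) holds in Milnor $K$-theory. I would cite the standard formula for $\iota\circ \mathrm{N}$ as a sum over the factors of $k(x)\otimes k'$.
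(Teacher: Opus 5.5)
Your proposal is correct and follows the paper's route. For the substantive direction you descend the relation over $k'(\ell)$ to a finite subextension of degree $m_\ell$ prime to $\ell$, corestrict to get $m_\ell\chi\alpha\in\mathrm{N}_q(X/k')$, and take a gcd over primes. Your limit and base-change argument just fills in the direction the paper calls straightforward.

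One small caveat: $k'k(\ell)=k'(\ell)$ does not hold for an arbitrary pro-$\ell$ Sylow subgroup. It does hold if you take a Sylow subgroup of $G_k$ containing one of $G_{k'}$, and this choice is harmless because all choices of $k(\ell)$ are conjugate.
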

\begin{proof}
    Assume that for every prime number $\ell$ the field $k(\ell)$ satisfies the strong $C_1^q$ property. Let $k'/k$ be a finite extension, $X$ a proper $k'$-scheme, and $E$ a coherent sheaf over $X$. Let $\alpha \in \operatorname{K}_q(k')$. Since $k(\ell)$ satisfies the strong $C_1^q$ property we can finite field extensions $K_1,\dots, K_n/k(\ell)$ such that $X(K_i) \neq \emptyset$ and 
    \[
    \chi(X,E) \cdot \alpha \in \left\langle \operatorname{N}_q(K_i/k(\ell) \, | \, i = 1,\dots ,n\right\rangle.
    \]
    We can also find a finite subextension $K/k'$ of $k'(\ell)/k'$ together with finite subextensions $K_i'/K$ of $K_i/K$ for every $i\in \{1,\dots , n\}$ such that $X(K'_i) \neq \emptyset$ and $\chi(X,E)\cdot\alpha$ belongs to $\langle\operatorname{N}_q(K_i'/K) \, | \, i \in \{1,\dots , n\}\rangle$. In particular, we have
    \[
    [K:k']\chi(X,E)\cdot \alpha \in \operatorname{N}_q(X/k').
    \]
    Since $\ell$ does not divide $[K:k']$ and this can be done for every prime number, a standard restriction-correstriction argument ensures that $\chi(X,E)\cdot \alpha \in \operatorname{N}_q(X/k')$. \par
    The converse is straight forward from the definition of the strong $C_1^q$ property.
\end{proof}

\noindent We also need a slight generalization of \cite[Theorem 1.2.2]{Kartas2024geoC1}:
\begin{proposition} \label{special fiber models}
Let $(K,v)$ be a tame field with divisible value group of rank $1$ and residue field $k$. Let $X$ be a $K$-variety and $d\in \N$. Suppose that for every $\Oo_K$-model $\Xx$ of $X$, there is a finite extension $k_\Xx/k$ of degree at most $d$ such that $\Xx(k_\Xx)\neq \emptyset$. Then there exists a finite extension $K_X/K$ of degree at most $d$ such that $X(K_X)\neq \emptyset$.
\end{proposition}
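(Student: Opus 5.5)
The plan is to reduce to the case $d=1$, which is \cite[Theorem 1.2.2]{Kartas2024geoC1} (the Fact quoted in the introduction). The reduction goes through a Weil-restriction/symmetric-power device that turns ``a point over an extension of degree at most $d$'' into ``a rational point'' of an auxiliary variety. For each $e\le d$, consider the symmetric power $\mathrm{Sym}^e X$, or more precisely the Hilbert scheme $\mathrm{Hilb}^e(X)$ of length-$e$ zero-dimensional subschemes. After replacing $X$ by a projective (or proper) compactification, which changes nothing since points of models over $k$ are what matter, we may assume these are proper $K$-varieties. We will use two standard facts:
\begin{itemize}
\item A $K$-rational point of $\mathrm{Sym}^e X$ whose support is a single closed point of $X$ with residue field of degree $e'\mid e$ yields a point of $X$ over an extension of degree $\le e$.
\item Conversely, a point of $X$ over a field extension of degree $e$ gives a $K$-point of $\mathrm{Sym}^eX$.
\end{itemize}
The same holds over $k$ for models: if $\mathcal X$ is an $\mathcal O_K$-model, then $\mathrm{Sym}^e_{\mathcal O_K}\mathcal X$ is an $\mathcal O_K$-model of $\mathrm{Sym}^eX$. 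This requires quasi-projectivity, which we can arrange by working with projective models, cofinal by Chow's lemma/blowing up.

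The main obstacle is that a rational point of $\mathrm{Sym}^e X$ only gives an effective zero-cycle of degree $e$, i.e.\ points over fields $K_1,\dots,K_s$ with $\sum [K_j:K]\le e$. Each $[K_j:K]\le e\le d$, so this is in fact harmless. The same holds on the special fibre: a $k$-point of $\mathrm{Sym}^e\mathcal X$ is an effective cycle whose pieces have degree $\le e$. So the cleanest formulation is to work with the disjoint union $Y:=\coprod_{e=1}^d \mathrm{Sym}^eX$, or simply with $\mathrm{Sym}^{d!}X$ and multiples. I would rather use $Y$ directly.

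The key steps, in order, are as follows.
\begin{enumerate}
\item Show that the models $\coprod_e \mathrm{Sym}^e\mathcal X$, for $\mathcal X$ ranging over (projective) $\mathcal O_K$-models of $X$, are cofinal among $\mathcal O_K$-models of $Y$ in the sense needed by the proof of \cite[Theorem 1.2.2]{Kartas2024geoC1}. If that proof really needs \emph{every} model of $Y$, I would instead rerun its argument. That argument presumably passes to a saturated elementary extension and uses Fact~\ref{repimproved} to embed a function field of a suitable special-fibre point into $K^*$. Each step should be compatible with symmetric powers, since a point of $\mathcal X$ over $k_{\mathcal X}$ with $[k_{\mathcal X}:k]\le d$ corresponds to a $k$-point of $\mathrm{Sym}^{[k_\Xx:k]}\Xx$.
\item Deduce from the hypothesis that every model of $Y$ of this form has a $k$-point.
\item Apply the $d=1$ result to get $Y(K)\ne\emptyset$, hence an effective zero-cycle of degree $e\le d$ on $X$. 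Any closed point in its support has residue field of degree $\le d$, which gives $K_X$.
\end{enumerate}

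Alternatively, and perhaps more robustly, I would phrase the whole argument model-theoretically. The property ``there is a point over an extension of degree $\le d$'' is existential, uniformly in the coefficients of a degree-$\le d$ minimal polynomial. One then mimics the proof of \cite[Theorem 1.2.2]{Kartas2024geoC1} via Fact~\ref{existential kuhlmann}: since $(K,v)$ is tame of rank one with divisible value group, existential sentences over a suitable defectless subfield (spread out via Lemma~\ref{tame spread out}) transfer from an extension whose residue field realises the degree-$\le d$ points of the special fibres. I expect the genuinely delicate step to be Step~1, the control of all models, rather than the final deduction.
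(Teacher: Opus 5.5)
\textbf{There is a genuine gap in Steps~1--2.} The $d=1$ result needs $\mathcal{Y}(k)\neq\emptyset$ for \emph{every} $\Oo_K$-model $\mathcal{Y}$ of $Y$: its proof feeds compatible points on a cofinal family of models into Raynaud's description of $Y^{\mathrm{ad}}$. Your hypothesis only controls the models $\coprod_e\mathrm{Sym}^e\Xx$, and these are not cofinal.

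Here is an example. Let $X=\mathbf{P}^1_K$, so that $\mathrm{Sym}^2X=\mathbf{P}^2_K$ with affine coordinates $s=x+y$, $p=xy$. Fix $\varpi\in\Oo_K$ with $v(\varpi)>0$, and let $w$ be the Gauss valuation on $K(x,y)$ in the variables $x$ and $z=(x+y-1)/\varpi$.
\begin{itemize}
\item On the blow-up of $\mathbf{P}^2_{\Oo_K}$ along $V(\varpi,s-1)$, the centre of $w|_{K(s,p)}$ has residue field $k(\bar p,\bar u)$ with $u=(s-1)/\varpi=z$. This has transcendence degree $2$.
\item On any $\Xx\times\Xx$, the residue field of the centre of $w$ is generated by those of the centres of $w|_{K(x)}$ and $w|_{K(y)}$. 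Both lie in $k(\bar x)$, since $\bar y=1-\bar x$. So it has transcendence degree at most $1$, and hence so does the centre on the quotient $\mathrm{Sym}^2\Xx$.
\end{itemize}
A morphism of models $\mathrm{Sym}^2\Xx\to$ (blow-up) would embed the first residue field into the second, which is impossible. So no $\mathrm{Sym}^2\Xx$ dominates that blow-up.

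You also cannot simply lift points. The fibre of a dominating model $\mathcal{Y}\to\mathrm{Sym}^e\Xx$ over a $k$-point is just some proper $k$-scheme, and nothing in your hypothesis gives it a $k$-point. Note too that for proper $Y$, ``every model of $Y$ has a $k$-point'' already follows from $Y(K)\neq\emptyset$, which is essentially what you want to prove. So Step~2 is where the content sits; it is not a routine check. Separately, replacing a non-proper $X$ by a compactification is not harmless: a point of the compactification over a small extension may lie on the boundary.

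\textbf{The missing idea is to run the model-theoretic argument directly on $X$, carrying the degree-$\le d$ extension along.} This is what the paper does. Your last paragraph points in this direction but omits the steps that make it work.
\begin{itemize}
\item The fields $k_\Xx$ vary with $\Xx$, so pass to a sufficiently saturated $k^*\succeq k$. Any finitely many models are dominated by one model, whose $k_\Xx$-point pushes forward. By compactness there is then a single extension $k'/k^*$ of degree at most $d$ with a compatible family of $k'$-points on all models.
\item Take a rank one elementary extension $(K^*,v^*)\succeq(K,v)$ with residue field $k^*$, and let $K'/K^*$ be the unramified extension lifting $k'/k^*$. Then $[K':K^*]\le d$.
\item Raynaud's theory of formal models turns the compatible $k'$-points into a point $\xi\in X$ with a valuation $v_\xi$ on $\kappa(\xi)$ whose residue field embeds into $k'$ over $k$.
\item Apply Fact~\ref{existential kuhlmann} with base field $K$ itself. $K$ is tame, hence defectless, and its value group $\Gamma$ is divisible, so $\Gamma_{v_\xi}/\Gamma$ is torsion-free and the divisible $\Gamma'$ satisfies $\mathrm{Th}_{\exists,\Gamma}(\Gamma_{v_\xi})$. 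This gives $(K',v')\models\mathrm{Th}_{\exists,K}(\kappa(\xi),v_\xi)$, hence $X(K')\neq\emptyset$. No spreading out via Lemma~\ref{tame spread out} is needed.
\item Finally, ``$X$ has a point over an extension of degree at most $d$'' is a first-order sentence with parameters in $K$. It holds in $K^*$, hence in $K$.
\end{itemize}
Symmetric powers play no role in this argument.
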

\begin{proof}
We outline the necessary modifications in the proof of \cite[Theorem 1.2.2]{Kartas2024geoC1}. Given a point $\xi \in X $, we write $\kappa(\xi)$ for the residue field at $\xi$. As in the proof of \cite[Lemma 4.2.1]{Kartas2024geoC1}, any sufficiently saturated elementary extension $k\preceq k^*$ has a finite extension $k'/k^*$ such that $\Xx(k')\neq \emptyset$ for each $\Oo_K$-model $\Xx$ of $X$, and such that these $k'$-points are compatible with respect to the maps of the inverse system of $\Oo_K$-models of $X$. Consider an elementary extension $(K,v)\preceq (K^*,v^*)$ with residue field $k^*$ and $v^*$ of rank $1$, and let $(K',v')$ be the unramified extension of $(K^*,v^*)$ lifting $k'/k^*$. 
By Raynaud's theory of formal models (see \cite[Theorem~2.22]{Scholze2012Perfectoid}), we have an isomorphism of locally ringed~spaces 
\begin{equation} \label{iso adic vs formal}
    (X^{\text{ad}},\Oo_{X^{\text{ad}}}^+)\cong \varprojlim_{\mathfrak{X}_i} (\mathfrak{X}_i,\Oo_{\mathfrak{X}_i}).
\end{equation} 
One can deduce from  \cite[\S~8.4 Lemma~2(c)]{Bosch2014LectureFormalRigid} and  \cite[\S~8.4 Lemma~6]{Bosch2014LectureFormalRigid} that algebraizable models are cofinal among all formal models. Then, the right hand side of \eqref{iso adic vs formal} has a $k'$-point and thus so does the left hand side.
This corresponds to a scheme-theoretic point $\xi \in X$ and a valuation $v_{\xi}$ on $\kappa(\xi)$ with residue field $k_{v_\xi}\hookrightarrow k'$. We then have $k'\models \text{Th}_{\exists, k}( k_{v_\xi})$ and also $\Gamma'\models \text{Th}_{\exists, \Gamma_0} (\Gamma_{v_\xi})$ since $\Gamma'$ is divisible. By Fact \ref{existential kuhlmann}, we get that $(K',v')\models \text{Th}_{\exists, K} (\kappa(\xi),v_{\xi})$. 
In particular, we get that $X(K')\neq \emptyset$. There is a sentence in the language of rings with parameters from $K$ expressing that $X$ admits a rational points in a finite extension of degree at most $d$. Since $K\preceq K^*$, there is a finite extension $K_X/K$ of degree at most $d$ such that $X(K_X)\neq \emptyset$. 
\end{proof}
\noindent The proof simplifies when $k$ is a finite field, which suffices for the applications here:
\begin{rem}
If $k=\mathbf{F}_q$, our assumption implies that $\Xx(\F_{q^d})\neq \emptyset$ for every $\Oo_K$-model of $\Xx$. By a compactness argument (in the sense of logic), we can even find such $\mathbf{F}_{q^d}$-rational points which are compatible with the maps of the inverse system.  
We thus get a scheme-theoretic point $\xi \in X$ and a valuation $v_{\xi}$ on $\kappa(\xi)$ with residue field $k_{v_\xi}=\mathbf{F}_{q^{f}}$ for some $f\mid d$. Let $K_X$ be the unramified extension of $K$ lifting $\mathbf{F}_{q^f}$. Kuhlmann's theorem now implies that $K_X\models \text{Th}_{\exists, K} (\kappa(\xi))$. In particular, we get that $X(K_X)\neq \emptyset$.
\end{rem}
\noindent We now extend \cite[Proposition 7.4]{Wittenberg2015KKQp} in arbitrary characteristic and for valuations of arbitrary rank. 
\begin{thm}\label{thm transfer along tame fields strong C10}
 Let $(K,v)$ be a tame field with divisible value group and residue field $k$. If $k$ satisfies the strong $C_1^0$ property, then so does $K$.
\end{thm}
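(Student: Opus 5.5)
Strong $C_1^0$ for a field $F$ says: for every finite $F'/F$, proper $X/F'$ and coherent $E$, the index of $X$ divides $\chi(X,E)$. The plan is to follow the sketch in the introduction.

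\textbf{Step 1: reductions.} Finite extensions of $(K,v)$ are again tame with divisible value group, and their residue fields are finite over $k$, hence strongly $C_1^0$. So it suffices to treat $X$ over $K$ itself. By Lemma~\ref{lemma strong C_1^q reduces to l-special} we may pass to $K(\ell)$ for each prime $\ell$. Since $K$ is tame with divisible value group, $G_K\cong G_k$. Hence $K(\ell)$ is the unramified lift of $k(\ell)$, which is again tame with divisible value group and has residue field $k(\ell)$. The same lemma applied to $k$ shows $k(\ell)$ is strongly $C_1^0$. So we may assume $G_k$, and hence $G_K$, is pro-$\ell$. Then every finite extension has $\ell$-power degree, and the index of $X$ is $\ell^{a}$ where $\ell^a$ is the minimal degree of a field over which $X$ has a point. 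We must show $\ell^a\mid\chi(X,E)$.

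Next, reduce to rank $1$ by spreading out. Choose finitely many parameters defining $X$ and $E$, and apply Lemma~\ref{tame spread out} to get a tame subfield $K'$ of finite rank with $k'=k$ and $\Gamma/\Gamma'$ torsion-free. Replace $K'$ by its unramified-free divisible-hull extension, i.e.\ a maximal totally ramified algebraic extension inside $K$; this keeps the residue field and makes the value group divisible of finite rank. Then use Fact~\ref{existential kuhlmann}, or Fact~\ref{akekuhl}, to compare $K'$ with $K$. The statement ``$X$ has a point over an extension of degree $\le d$'' is existential with parameters in $K'$, and the index is determined by such statements, so it transfers between the two fields. A finite-rank tame field with divisible value group can be replaced, up to elementary equivalence over $K'$, by one whose valuation decomposes into rank-one steps. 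Equivalently, iterate through the finitely many rank-one coarsenings: each residue field in the chain is tame with divisible value group and is strongly $C_1^0$ by induction from the bottom. So it suffices to treat rank $1$.

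\textbf{Step 2: rank $1$, pro-$\ell$ case.} Let $\Xx$ be any proper flat $\Oo_K$-model of $X$ and extend $E$ to a coherent sheaf $\mathcal{E}$ on $\Xx$, flat over $\Oo_K$. This may require flattening or a Raynaud–Gruson argument over a non-noetherian valuation ring; alternatively, reduce to finitely presented models over a finite-rank subring. Then $\chi(\Xx_k,\mathcal{E}_k)=\chi(X,E)$ by constancy of the Euler characteristic in flat proper families. Since $k$ is strongly $C_1^0$ and $G_k$ is pro-$\ell$, $\Xx_k$ has a point over an extension of degree $\ell^{b}$ dividing $\chi(X,E)$. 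The $\ell$-part $\ell^{c}=v_\ell(\chi)$ gives a uniform bound $d=\ell^c$, independent of the model. Proposition~\ref{special fiber models} then yields $K_X/K$ of degree $\le \ell^c$ with $X(K_X)\neq\emptyset$. Its degree is a power of $\ell$, hence divides $\ell^c$ and therefore divides $\chi(X,E)$. The case $\chi=0$ is trivial.

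\textbf{Main obstacle.} I expect the hardest part to be Step 2's model-theoretic bookkeeping. First, one must produce for every $\Oo_K$-model in the sense of Proposition~\ref{special fiber models} a flat coherent extension of $E$ with the correct Euler characteristic on the special fibre. Second, the rank reduction must be carried out carefully. I would handle the first point by dominating any model by a blow-up on which $E$ admits a flat extension, since points lift along dominating models. I would handle the second by induction on the rank, using the composite-valuation decomposition.
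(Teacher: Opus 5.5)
Your proposal is correct and follows the paper's proof step for step. The steps are: reduction to $X$ over $K$; reduction to $G_K\cong G_k$ pro-$\ell$ via Lemma~\ref{lemma strong C_1^q reduces to l-special} and unramified lifts; reduction to finite rank via Lemma~\ref{tame spread out}, then to rank $1$ by induction along rank-one coarsenings; and finally constancy of $\chi$ on flat models combined with Proposition~\ref{special fiber models}. In the finite-rank step, $\Gamma'$ is already divisible because $\Gamma/\Gamma'$ is torsion-free, and $\operatorname{I}(X)$ simply divides the index of the descended scheme, so neither your extra extension nor any model theory is needed.

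The one divergence is that you keep a general coherent $E$, so you must build a flat extension of $E$ on every model $\mathcal{X}$. This is doable: for $j\colon X\hookrightarrow\mathcal{X}$ the generic fibre, a finite-type subsheaf of $j_*E$ restricting to $E$ is $\mathcal{O}_K$-torsion-free, hence flat, and it is finitely presented by Raynaud--Gruson. The paper instead removes your ``main obstacle'' at the outset. It uses that $G_0(X)$ is generated by the classes $[\mathcal{O}_Z]$ of closed subschemes (Borel--Serre), so $\chi(X,E)$ is an integer combination of the $\chi(Z,\mathcal{O}_Z)$, each divisible by $\operatorname{I}(Z)$ and hence by $\operatorname{I}(X)$. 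This reduces everything to $E=\mathcal{O}_X$, where the model's own structure sheaf is the required flat extension.
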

\begin{proof}
Definition~\ref{def strong C1q} involves varieties defined over all finite extensions. However, since every finite extension of $K$ still satisfies the hypothesis, we might simplify the notation by just considering varieties over $K$. Let $X$ be a proper $K$-scheme. The Grothendieck group $G_0(X)$ is generated by the structure sheaves $\mathcal{O}_Z$ of closed subschemes $Z$ of $X$, see \cite[Section~8, Lemme~17]{BorelSerre1958TheoremeRiemannRoch}. Then, after replacing $X$ by $Z$ it is enough to prove that the index of every proper variety $X$ divides the Euler characteristic of its structure sheaf $\chi(X,\mathcal{O}_X)$. \par
We start with some reductions. Denote by $G_k$ and $G_K$ the absolute Galois groups of $k$ and $K$ respectively. \vspace*{1ex} \\
\textbf{Step 1:} Reduction to the case where $G_K$ is a pro-$\ell$ group for some prime number $\ell$. \vspace*{1ex} \\
Assume the theorem holds when $G_K$ is a pro-$\ell$ group. To deduce the general case, first note that $G_K\cong G_k$ by our assumptions on $(K,v)$. Therefore, for each prime number $\ell$, the field $K(\ell)$ is the unramified extension of $K$ obtained by lifting $k(\ell)/k$. Moreover, $K(\ell)$ is a tame field by \cite[Lemma 2.17(b)]{Kuhl} with divisible value group and residue field $k(\ell)$.  By Lemma \ref{lemma strong C_1^q reduces to l-special} and since $k$ is strongly $C_1^0$, the same is true for $k(\ell)$ for each prime $\ell$. By the pro-$\ell$ case, we get that $K(\ell)$ is strongly $C_1^0$ for each prime $\ell$. Again by Lemma \ref{lemma strong C_1^q reduces to l-special}, we conclude that $K$ is strongly $C_1^0$.\vspace*{1ex} \\
\textbf{Step 2:} Reduction to the case where $v$ has rank $1$. \vspace*{1ex}  \\
Assume the theorem holds for valued fields of rank \(1\). We first deduce it for finite rank valuations by induction on the rank.
Let \((K,v)\) be a valued field as in the statement of the theorem such that $v$ is of rank \(r\). The place associated to \(v\) factors as a composition of two places
\[
K \stackrel{w}\longrightarrow K_1 \stackrel{v_1}\longrightarrow k
\]
where \(w\) has rank \(1\) and \(v_1\) has rank \(r-1\). Moreover, both $(K,w)$ and $(K_1,v_1)$ are tame with divisible value group. By the induction hypothesis, $K_1$ is strongly $C_1^0$. By the rank $1$ case applied to $(K,w)$, it follows that $K$ is strongly $C_1^0$.

Now assume $v$ has arbitrary (possibly infinite) rank. Let $X$ be a $K$-variety. By Lemma \ref{tame spread out},
there exists a tame valued subfield $(K_0,v_0)\subseteq (K,v)$ such that there exists a proper $K_0$-scheme $X_0$ with $X_0 \otimes_{K_0} K \simeq X$ and $(K_0,v_0)$ has divisible value group of finite rank and residue field $k$. By the finite rank case, we know that $K_0$ satisfies the strong $C_1^0$ property. Since the index $\operatorname{I}(X)$ divides the index $\operatorname{I}(X_0)$ and $K_0$ satisfies the strong $C_1^0$ property, $\operatorname{I}(X)$ divides $\chi(X,\mathcal{O}_{X}) = \chi(X_0,\mathcal{O}_{X_0})$.\vspace*{1ex} \\
\textbf{Step 3:} Proof when $v$ is of rank $1$ and $G_k$ is a pro-$\ell$ group.\vspace*{1ex}  \\
Let $i,n$ be non-negative integers such that $\chi(X,\mathcal{O}_X)=\ell^i\cdot n$ where $\ell \nmid n$. 

Let $\Xx$ be an $\Oo_K$-model of $X$ and $\Xx_k=\Xx\otimes_{\Oo_K} k$ be the special fibre. We have that $\chi(\Xx_k, \mathcal{O}_{\Xx_k})=\chi(X,\mathcal{O}_X)=\ell^i\cdot n$ because $\Xx$ is flat over $\Oo_K$. Since $k$ is strongly $C_1^0$ and $G_k$ is a pro-$\ell$ group, there is a finite extension $k_\Xx$ of $k$ of degree at most $\ell^i$ such that $\Xx_k(k_\Xx)\neq \emptyset$. Proposition~\ref{special fiber models} ensures that there exists a finite extension $K'/K$ of degree at most $\ell^i$ such that $X(K')\neq \emptyset$. Since $G_K$ is a pro-$\ell$-group, we have that $[K':K]\mid \ell^i $ and hence $[K':K]\mid \chi(X,\mathcal{O}_X)$.
\end{proof}

\noindent Theorem~\ref{thm transfer along tame fields strong C10} allows us to establish the strong $C_1^1$ at $p$ for $\mathbf{Q}_p$ which is stated as an open question in \cite[Section~5]{Wittenberg2015KKQp}.
\begin{thm}
    The field $\mathbf{Q}_p$ satisfies the strong $C_1^1$ property.
\end{thm}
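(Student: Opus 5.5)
We plan to use Wittenberg's transfer theorem \cite[Théorème~4.2]{Wittenberg2015KKQp}, which lifts the strong $C_1^q$ property at a prime $\ell$ from the residue field of an excellent henselian discrete valuation ring to its fraction field, when $\ell$ is invertible in the ring. Applied to $\mathbf{Z}_p$ with residue field $\mathbf{F}_p$ (which is strongly $C_1^0$ by Wittenberg), this already gives the strong $C_1^1$ property for $\mathbf{Q}_p$ at every prime $\ell\neq p$, as in \cite{Wittenberg2015KKQp}. By the restriction–correstriction argument behind Lemma~\ref{lemma strong C_1^q reduces to l-special}, it therefore suffices to show the following. For every finite extension $k'/\mathbf{Q}_p$, proper $k'$-scheme $X$, coherent sheaf $E$, and $\alpha\in k'^{\times}$, some multiple $m\cdot\alpha$ with $p\nmid m$ lies in $\chi(X,E)^{-1}$-torsion-compatible position, i.e.\ $\chi(X,E)\cdot m\alpha\in \mathrm{N}_1(X/k')$. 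As in Theorem~\ref{thm transfer along tame fields strong C10}, the Borel–Serre dévissage reduces us to $E=\mathcal{O}_X$ with $X$ a proper variety. Renaming, we take $k'=k$.

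The $p$-part is handled by the norm-generation strategy of Proposition~\ref{prop p-adic laurent C1m1}. The group $k^{\times}$ is generated by uniformisers $\pi$. For each $\pi$, let $k_\pi$ be the Lubin–Tate extension and let $\tilde k_\pi\supseteq k_\pi$ be a maximal totally ramified extension. Then $\pi$ is a norm from every finite subextension of $\tilde k_\pi/k$ \cite{milneCFT,Yoshida2008CFTLubinTate}. Moreover, $\tilde k_\pi$ is henselian with residue field $\mathbf{F}_q$ and divisible value group, and it is tame because it is algebraically maximal of residue characteristic $p$ with perfect residue field and divisible value group \cite{Kuhl}. Since $\mathbf{F}_q$ is strongly $C_1^0$, Theorem~\ref{thm transfer along tame fields strong C10} shows that $\tilde k_\pi$ is strongly $C_1^0$. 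Hence the index of $X_{\tilde k_\pi}$ divides $\chi(X,\mathcal{O}_X)$. So there are finite subextensions $L_j/k$ of $\tilde k_\pi/k$, points $x_j\in X(L_j)$, and integers $n_j$ with
\[
\sum_j n_j [L_j\tilde k_\pi^{0}: \ldots] = \chi(X,\mathcal{O}_X),
\]
more precisely $\sum_j n_j[L_j':\tilde k_\pi\cap\ldots]$. To make this clean, we descend: the points are defined over a finite subextension $M/k$ of $\tilde k_\pi/k$, with finite extensions $M_j/M$ inside $\tilde k_\pi$ such that $\gcd_j[M_j:M]$ divides $\chi$. Then for each $j$ the element $\pi$ is a norm from $M_j$, say $\pi=\mathrm{N}_{M_j/k}(y_j)$. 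It follows that $\pi^{[M_j:M]}$ — indeed even $\pi$ itself — lies in $\mathrm{N}_{M_j/k}(M_j^\times)\subseteq \mathrm{N}_1(X/k)$.

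This gives more than needed: every uniformiser of $k$ lies in $\mathrm{N}_1(X/k)$ as soon as $X$ has a point over some finite subextension of $\tilde k_\pi$. That last fact is the step needing care, since strong $C_1^0$ only yields a zero-cycle of degree dividing $\chi$, not a rational point. The fix we intend is this. Universal norm property of $\pi$ holds for every finite subextension, so any closed point $x$ of $X_{\tilde k_\pi}$ has residue field $\kappa(x)$ that is a finite subextension of $\overline{k}$ containing $\tilde k_\pi$. We then use Lubin–Tate theory, replacing $\tilde k_\pi$-finite extensions by finite extensions of $k$ whose norm group contains $\pi^{[\kappa(x):\tilde k_\pi]}$. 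This gives $\chi(X,\mathcal{O}_X)\cdot\pi\in\mathrm{N}_1(X/k)$ via the gcd relation. We expect the main obstacle to be this bookkeeping of degrees over the infinite extension $\tilde k_\pi$: showing that the norm from a finite extension $F$ of a finite subextension $M$ contains $\pi^{[F:M]}$ up to the universal-norm property. We would first try to prove it by compatibility of norms in the tower $F/M/k$ together with $\pi\in\mathrm{N}_{M/k}(M^\times)$. Since uniformisers generate $k^\times$, this concludes the proof.
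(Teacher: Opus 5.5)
Your final argument is essentially the paper's proof. You take a maximal totally ramified $\tilde k_\pi\supseteq k_\pi$, get $\mathrm{I}(X_{\tilde k_\pi})\mid\chi(X,E)$ from the tame transfer theorem, descend the points to extensions $F/M$ with $M\subseteq\tilde k_\pi$ finite over $k$ and $[F:M]=[L:\tilde k_\pi]$, and use $\pi=\mathrm{N}_{M/k}(\rho)$ to get $\mathrm{N}_{F/k}(\rho)=\pi^{[F:M]}$ before combining via the gcd. Two corrections: the intermediate claim that the $M_j$ lie inside $\tilde k_\pi$ (so that $\pi$ itself is a norm from them) is false and should be replaced by this tower computation. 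The opening reduction via Wittenberg's away-from-$p$ result and the Borel--Serre dévissage are unnecessary, since the argument directly yields $\pi^{\chi(X,E)}\in\mathrm{N}_1(X/k)$ for every uniformiser $\pi$ and every coherent $E$.
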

\begin{proof}
    Let $k/\mathbf{Q}_p$ be a finite extension, $X$ a proper $k$-variety, and $E$ a coherent sheaf over $X$. Let $\pi$ be a uniformiser of $k$ and $K/k$ a maximal totally ramified extension containing the maximal abelian totally ramified extension $k_{\pi}$ associated to $\pi$ through Lubin-Tate theory. Note that as in the proof of Proposition~\ref{prop p-adic laurent C1m1}, $\pi$ is a norm of any finite subextension of $K/k$.\par
    Since finite fields satisfy the strong $C_1^0$ fields (see \cite[Corollaire~3.5]{Wittenberg2015KKQp}), Theorem~\ref{thm transfer along tame fields strong C10} ensures that the $K$ satisfies the strong $C_1^0$ property. In particular, the index $\operatorname{I}(X_K)$ of $X_K := X\otimes_k K$ divides $\chi(X,E)$. Let $L_1,\dots, L_n/K$ be finite extensions such that $\operatorname{gcd}\{[L_j:K]\mid j=1,\dots ,n\} = \operatorname{I}(X_K)$. For every $i=1,\dots, n$ there exists a finite subextension $k_i/k$ of $K/k$ and a finite extension $l_i/k_i$ such that $[l_i:k_i] = [L_i:K]$ and $X(l_i) \neq \emptyset$. By our choice of $K$, there exists $\rho_i \in k_i$ such that $\operatorname{N}_{k_i/k}(\rho_i)= \pi$. Let $a_1,\dots,a_n \in \mathbf{Z}$ be such that $\sum_{i=1}^n a_i [l_i:k_i] = \operatorname{I}(X_K)$. Then,
    \begin{align*}
        \prod_{i=1}^n\operatorname{N}_{l_i/k}(\rho_i^{a_i})&=\prod_{i=1}^n\operatorname{N}_{k_i/k}(\rho_i)^{a_i[l_i:k_i]}\\
        &=\pi^{\operatorname{I}(X_K)}.
    \end{align*}
    This implies that $\pi^{\chi(X,E)}$ belongs to $\operatorname{N}_1(X/k)$ because $\operatorname{I}(X_K)$ divides $\chi(X,E)$. Since $k^{\times}$ is generated by the set of uniformisers, this proves that the group $k^{\times}/\operatorname{N}_1(X/k)$ is a $\chi(X,E)$-torsion group.
\end{proof}
\noindent Thanks to \cite[Théorème~4.2]{Wittenberg2015KKQp} we also get the following result which was only known away from $p$, see \cite[Corollaire~4.7]{Wittenberg2015KKQp}.
\begin{cor}
    The field $\mathbf{Q}_p\lau{t_2} \dots\lau{t_d}$ satisfies the strong $C_1^d$ property. 
\end{cor}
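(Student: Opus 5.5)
The plan is to apply Wittenberg's transfer theorem \cite[Théorème~4.2]{Wittenberg2015KKQp} iteratively, once per Laurent variable, starting from the strong $C_1^1$ property of $\mathbf{Q}_p$ just established. Here the indexing matters. The field $\mathbf{Q}_p\lau{t_2}\dots\lau{t_d}$ is obtained from $\mathbf{Q}_p$ by adjoining $d-1$ Laurent variables. Each application of the transfer raises $q$ by one, so we should land exactly on $C_1^{1+(d-1)}=C_1^d$.

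\textbf{Step 1.} Set $K_1:=\mathbf{Q}_p$ and, for $2\le j\le d$, $K_j:=K_{j-1}\lau{t_j}$. Then $K_j$ is the fraction field of $R_j:=K_{j-1}[[t_j]]$. This is a complete discrete valuation ring, hence henselian. It is also excellent, since complete Noetherian local rings are excellent. Its residue field is $K_{j-1}$.

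\textbf{Step 2.} Apply the theorem at every prime $\ell$. Wittenberg's theorem is formulated prime by prime: the strong $C_1^q$ property at $\ell$ transfers to strong $C_1^{q+1}$ at $\ell$, provided $\ell$ is invertible in $R$. Every $R_j$ is an equicharacteristic zero ring containing $\mathbf{Q}$, so every prime $\ell$, including $\ell=p$, is invertible in $R_j$. Thus the restriction ``away from $p$'' in \cite[Corollaire~4.7]{Wittenberg2015KKQp} does not arise from the transfer step. It came only from the base case, where the strong $C_1^1$ property for $\mathbf{Q}_p$ had previously been available only at primes $\ell\neq p$.

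I would first check that the strong $C_1^q$ property, as in Definition~\ref{def strong C1q}, is equivalent to the strong $C_1^q$ property at $\ell$ holding for every prime $\ell$. The ``at $\ell$'' version asks that the $\ell$-primary part of $\operatorname{K}_q(k')/\operatorname{N}_q(X/k')$ be killed by the $\ell$-adic part of $\chi(X,E)$. The equivalence should follow from the decomposition of a torsion abelian group into its primary parts, as in Lemma~\ref{lemma strong C_1^q reduces to l-special}.

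\textbf{Step 3.} Run the induction. The preceding theorem gives that $\mathbf{Q}_p$ is strongly $C_1^1$, hence strongly $C_1^1$ at every prime $\ell$. If $K_{j-1}$ is strongly $C_1^{j-1}$ at every prime $\ell$, then Wittenberg's theorem applied to $R_j$ shows that $K_j$ is strongly $C_1^{j}$ at every prime $\ell$. Reassembling over all primes, $K_d$ is strongly $C_1^d$.

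\textbf{Main obstacle.} The only delicate point is matching definitions. One must confirm that the unqualified strong $C_1^q$ property implies Wittenberg's ``at $\ell$'' formulation, including at $\ell=p$, and conversely. One must also confirm that his theorem does not need the $C_0^{q+1}$ hypothesis, which the paper notes is superfluous by \cite[Remark~3.3]{DiegoLuco2022HomogeneousKtheory}. If the ``at $\ell$'' notion turns out to be defined using $\ell$-adic completions of $\operatorname{K}_q$, I would use that the quotients in question are torsion groups of exponent dividing $\chi(X,E)$, so that they are determined by their primary parts.
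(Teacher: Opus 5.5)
Your proposal is correct and follows the paper's route: iterate Wittenberg's Th\'eor\`eme~4.2 once for each of the $d-1$ Laurent variables, starting from the strong $C_1^1$ property of $\mathbf{Q}_p$ just proved. Every prime $\ell$, including $p$, is invertible in the equicharacteristic-zero rings $K_{j-1}[[t_j]]$, so no ``away from $p$'' restriction appears. The details you add are exactly what the paper's one-line justification leaves implicit: that complete DVRs are excellent, that the property reassembles from its $\ell$-primary parts, and that the $C_0^{q+1}$ hypothesis is superfluous.
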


\section{Rationally connected varieties} \label{sec geometric variant}
    Koll\'ar defined the notion of a \textit{geometrically} $C_1$ field, replacing the hypersurfaces by smooth rationally connected varieties. Taking inspiration from this, we propose the following  geometric analogue of the $C_1^q$ property.
    \begin{defi}\label{defi geometric Ciq}
        Let $q$ be a non-negative integer. We say that $k$ satisfies $C_{rc}^q$ if for every finite extension $k'/k$ and smooth projective (geometrically) separably rationally connected variety $Z$ over $k'$ we have $\operatorname{K}_q(k') = \mathrm{N}_{q}(Z/k')$.
    \end{defi}
    \noindent Just as in the case of geometrically $C_1$ fields, \cite[Theorem 1.2]{HogadiXu2009Degenerations} ensures that $C_{rc}^q$ is stronger than $C_1^q$ in characteristic zero.
    \begin{proposition}
    Let $k$ be a field of characteristic $0$. If $k$ satisfies $C_{rc}^q$, it also satisfies $C_1^q$
    \end{proposition}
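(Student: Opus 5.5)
The plan is to reduce to the statement that every hypersurface in the Fano range is related to a smooth projective separably rationally connected variety with the right points, and then compare norm groups. Fix a finite extension $k'/k$ and a hypersurface $Z\subseteq \mathbf{P}^n_{k'}$ of degree $d\le n$. Since the norm group only grows when we enlarge the set of closed points, it suffices to show $\mathrm{N}_q(Z/k')$ contains $\mathrm{N}_q(Z'/k')$ for some smooth projective rationally connected $Z'$ over $k'$. The main case is smooth $Z$, where $Z$ is Fano for $d\le n$ and hence rationally connected by Kollár--Miyaoka--Mori (characteristic $0$, so separable rational connectedness is automatic). Then $C_{rc}^q$ gives $\mathrm{K}_q(k')=\mathrm{N}_q(Z/k')$ directly.

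For singular $Z$ I would use the degeneration argument via \cite[Theorem 1.2]{HogadiXu2009Degenerations}. I would take a family of hypersurfaces over $\mathbf{A}^1_{k'}$ whose general fibre $Z_\eta$ is smooth of degree $d$ and whose special fibre at $0$ is $Z$. The generic fibre is then a smooth rationally connected variety over $k'(s)$. Hogadi--Xu ensure that any degeneration of rationally connected varieties over a DVR contains, in its special fibre, a geometrically irreducible subvariety that is rationally connected, and after resolution a smooth projective rationally connected model $W$ over $k'$ with a morphism $W\to Z$.

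More precisely, I would pass to $R=k'[s]_{(s)}$ and apply Hogadi--Xu to a regular projective model $\mathcal{Z}\to \operatorname{Spec} R$ of $Z_\eta$ obtained by resolution, which is available in characteristic $0$. This yields a subvariety $V$ of the special fibre that is geometrically rationally connected. Taking a resolution $\widetilde V$ gives a smooth projective separably rationally connected $k'$-variety, since rational connectedness is a birational invariant among smooth projective varieties. The special fibre of $\mathcal{Z}$ maps to $Z$ because $\mathcal{Z}$ is a modification of the original family, so we get a $k'$-morphism $\widetilde V\to Z$. Closed points of $\widetilde V$ then map to closed points of $Z$ with residue field inclusions $k'(z)\subseteq k'(w)$. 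Transitivity of norms, property (4), gives $\mathrm{N}_q(\widetilde V/k')\subseteq \mathrm{N}_q(Z/k')$, and $C_{rc}^q$ gives $\mathrm{N}_q(\widetilde V/k')=\mathrm{K}_q(k')$.

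The main obstacle is the application of Hogadi--Xu: I need to make sure the rationally connected component found lies over $Z$ and is defined over $k'$ itself, not merely over an extension. I would first try to use their precise statement, which asserts that the special fibre contains a geometrically irreducible, geometrically rationally connected subvariety defined over the residue field. This statement is exactly what yields geometric $C_1$ from Kollár's condition, so the same argument transposes once "has a rational point" is replaced by "has full norm group".
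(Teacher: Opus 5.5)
Your proposal is correct and is essentially the paper's argument: Fano $\Rightarrow$ rationally connected in the smooth case. In the singular case, you realise $Z$ as a degeneration of smooth degree-$d$ hypersurfaces, extract a geometrically integral rationally connected subvariety via \cite[Theorem 1.2]{HogadiXu2009Degenerations}, resolve it, and use transitivity of norms to get $\mathrm{K}_q(k')=\mathrm{N}_q(\widetilde V/k')\subseteq \mathrm{N}_q(Z/k')$. The only difference is your preliminary resolution of the total space, which the paper skips by applying Hogadi--Xu directly to the family with special fibre $Z$; this step is harmless, since the special fibre of the modified model maps to $Z$.
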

    \begin{proof}
        Let $k'/k$ be a finite extension and $Z \subseteq \mathbf{P}^n_{k'}$ be a hypersurface of degree $d$ with $d\leq n$. If $Z$ is smooth, it is a Fano variety. In particular, $Z$ is rationally connected thanks to \cite[Theorem V 2.13]{Kollar1995RationalCurvesVarieties} and we have $\operatorname{K}_q(k') =  \mathrm{N}_q(Z/k')$ by hypothesis. Even if $Z$ is not smooth, it can be expressed as a degeneration of a smooth Fano hypersurface. \cite[Theorem 1.2]{HogadiXu2009Degenerations} ensures that $Z$ has a geometrically integral rationally connected subvariety $X$. Since a resolution of singularities $X'$ of $X$ is a smooth rationally connected variety, we have $\operatorname{K}_q(k')= \mathrm{N}_q(X'/k')$. This ensures $\operatorname{K}_q(k') =  \mathrm{N}_q(X/k) \subseteq \mathrm{N}_q(Z/k)$.
    \end{proof}
    \noindent The main interest of introducing the $C_{rc}^q$ property, is that we have geometric tools at our disposal that do not apply directly to the $C_1^q$ property because it involves non-smooth hypersurfaces. Note that a field $k$ that satisfies the \textit{strong} $C_1^q$ property as defined in \cite[Definition 4.1]{Wittenberg2015KKQp} also satisfies $C_{rc}^q$ thanks to \cite[Corollary~3.8]{Kollar1995RationalCurvesVarieties} \par
    As a first example of a $C_{rc}^1$ we have any $p$-adic field, see \cite[Proposition~A.2]{HarryFelipe2025Stability}. We now prove that the $C_{rc}^q$ property admits a transfer principle in characteristic zero without using model theoretic tools.
    \begin{proposition}\label{prop transfer Puiseux}
        Let $q \in \mathbf{N}$ and $k$ a field of characteristic zero. Assume that $k$ satisfies $C_{rc}^q$. Then the field of Puiseux series $K = \bigcup_{n \geq 1} k(\!(t^{1/n})\!)$ satisfies $C^q_{rc}$.
    \end{proposition}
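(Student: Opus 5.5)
We need to show: given a finite extension $K'/K$ and a smooth projective separably rationally connected $Z$ over $K'$, we have $\mathrm{K}_q(K')=\mathrm{N}_q(Z/K')$. Finite extensions of $K$ are again Puiseux series fields over finite extensions of $k$ (characteristic zero, Newton–Puiseux). Each such finite extension of $k$ satisfies $C_{rc}^q$ by definition, so we may assume $K'=K$.

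\textbf{Step 1: generators of $\mathrm{K}_q(K)$.} The group $K^\times$ is generated by $k^\times$, the elements $t^{1/n}$, and $1+\mathfrak m$. The last group is divisible, and it is even contained in $(K^\times)^m$ for every $m$ by Hensel's lemma in characteristic zero. Symbols containing an entry from $1+\mathfrak m$ therefore need separate treatment. I would instead spread out: $Z$ and a given symbol $\{a_1,\dots,a_q\}$ are all defined over some $k\lau{t^{1/n}}$. Reparametrising $s=t^{1/n}$, we may work over $F=k\lau{s}$ with $Z_0/F$ smooth projective and rationally connected. It then suffices to show that $\mathrm{N}_q(Z_0/F)$ contains all symbols after passing to $F(s^{1/m})$ for suitable $m$. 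Concretely, we show that each generator of $\mathrm{K}_q(K)$ lies in $\mathrm{N}_q(Z_K/K)$. Since $k\lau{s}^\times=\langle s\rangle\times k^\times\times(1+s k[[s]])$, and norms commute with base change (only the norm group on $K$ matters), it suffices to handle the symbols $\{u_1,\dots,u_q\}$ and $\{s,u_2,\dots,u_q\}$ with $u_j\in k^\times$, together with symbols having an entry in $1+\mathfrak m_K$.

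\textbf{Step 2: models and special fibres.} Since $k$ has characteristic zero, semistable reduction is available. After a finite base change $s\mapsto s^{1/m}$ (harmless, since we are in $K$), $Z_0$ admits a regular projective model $\mathcal Z$ over $k[[s]]$ with reduced simple normal crossings special fibre. By Hogadi–Xu \cite{HogadiXu2009Degenerations} (degenerations of rationally connected varieties), the special fibre contains a geometrically integral rationally connected subvariety $Y$ defined over $k$. Take a resolution $Y'$ of $Y$, which is smooth projective and rationally connected. By $C_{rc}^q$ for $k$ and its finite extensions, $\mathrm{K}_q(k')=\mathrm{N}_q(Y'/k')$ for all finite $k'/k$.

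\textbf{Step 3: lifting points.} For a closed point $y$ of $Y'$ lying over a smooth point of $\mathcal Z_k$ with residue field $k(y)$, Hensel's lemma lifts $y$ to a $k(y)\lau{s}$-point of $Z_0$. Smooth points are dense in a reduced component, but $Y$ may lie in the singular locus. This is the main obstacle. To handle it, I would use the tame base change $s\mapsto s^{1/m}$ followed by a toroidal resolution. After this, points of a stratum of the snc divisor lift to points over $k(y)\lau{s^{1/e}}$, which are fields inside the Puiseux tower. Alternatively, one could replace $Y$ by a rationally connected component, using Hogadi–Xu's statement that some component is rationally connected over $k$ after the base change.

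\textbf{Step 4: computing the norm group.} Given lifted points over $L=k(y)\lau{s^{1/e}}\subseteq$ a Puiseux field, we obtain:
\begin{itemize}
\item $\mathrm{N}_{L/F}$ applied to $\{\alpha|_L\}$ gives $e\,\mathrm{N}_{k(y)/k}(\alpha)$ on units. Over $K$ this becomes divisible, since $\{u\}$ for $u\in k^\times$ equals $n$ times symbols in $K$ using $t^{1/n}$-roots. On units, however, $u$ itself need not be an $n$-th power. The cleaner route is to work directly with the extension $k(y)K/K$. Its norm sends $\{\beta|\}$ to $\mathrm{N}_{k(y)/k}(\beta)$, which gives all of $\mathrm{K}_q(k)|_K$.
\item Symbols $\{t^{1/n},\beta\}$ are handled the same way, using the projection formula.
\item Symbols with an entry in $1+\mathfrak m_K$ are $m$-divisible for every $m$. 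Writing such a symbol as $\{x^{N},\dots\}$ with $N$ killing the index, the restriction–corestriction argument of Lemma~\ref{lemma strong C_1^q reduces to l-special} shows that the cokernel $\mathrm{K}_q(K)/\mathrm{N}_q(Z_K/K)$ is torsion of bounded exponent. Divisible elements therefore vanish in it.
\end{itemize}

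Combining Steps 1–4 gives $\mathrm{K}_q(K)=\mathrm{N}_q(Z_K/K)$, which is $C^q_{rc}$ for $K$.
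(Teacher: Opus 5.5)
Your argument works, by a different and somewhat shorter route than the paper's.

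\textbf{The paper's route.} The paper works at a finite level $K_n=k\lau{t^{1/n}}$. After Wittenberg's Lemme~7.5 and Hogadi--Xu, it uses birational invariance of norm groups to arrange \emph{smooth} points on the special fibre. These lift by Hensel to the unramified extensions $k_iK_n$. A diagram chase through $\mathrm{U}^1_q(K_n)\subseteq\mathrm{U}_q(K_n)\to\mathrm{K}_q(k)$, using that $\mathrm{U}^1_q$ is divisible, then gives $\mathrm{U}_q(K_n)\subseteq\mathrm{N}_q(Z_n/K_n)$. The symbols $\{t^{1/m},u_2,\dots,u_q\}$ with $u_j\in R_m^\times$ are handled separately. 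For these it uses Colliot-Thélène's Théorème~7.5 (a point over some $k'\lau{t^{1/m}}$) and the fact that $t^{1/m}$ becomes a norm after extracting a further root.

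\textbf{Your route.} You work over $K$ itself and use that $K^\times=k^\times\cdot t^{\mathbf{Q}}\cdot(1+\mathfrak m)$ with the last two factors divisible. So $\mathrm{K}_q(K)$ is the image of $\mathrm{K}_q(k)$ plus a divisible subgroup. The divisible part lies in $\mathrm{N}_q(Z/K)$, because $[L:K]\,\mathrm{K}_q(K)\subseteq\mathrm{N}_q(Z/K)$ for any closed point of $Z$ with residue field $L$. Only $\mathrm{K}_q(k)|_K$ then needs the geometry. Since ramified lifts are harmless over $K$, points on deeper strata of the reduced snc fibre suffice. At a point on exactly $r$ components, the local equation $s=u\,x_1\cdots x_r$ can be solved directly to give a $k(y)\lau{s^{1/r}}$-point; this is cleaner than your toroidal detour. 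Finally, $\mathrm{N}_{k(y)K/K}(\beta|_{k(y)K})=(\mathrm{N}_{k(y)/k}\beta)|_K$ carries no ramification factor.

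\textbf{Comparison.} Your route bypasses both the smooth-point reduction and Colliot-Thélène's theorem. The paper's route, in exchange, proves the finer finite-level inclusion $\mathrm{U}_q(K_n)\subseteq\mathrm{N}_q(Z_n/K_n)$. Yours only gives the statement over the full Puiseux field, which is all that is asked.

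\textbf{One step needs a correct justification.} Your second bullet in Step~4 (``handled the same way'') cannot mean writing $\beta\in\mathrm{K}_{q-1}(k)$ as a sum of norms from points of $Y'$. That is a $C^{q-1}$-type statement, and $C^q_{rc}$ does not give it. The right reason is your third bullet: $t^{1/n}=(t^{1/nN})^N$ in $K$ for every $N$, so $\{t^{1/n},\beta\}$ is divisible. Equivalently, $t^{1/n}=\mathrm{N}_{L/K}(t^{1/n[L:K]})$ for any finite $L/K$ with $Z(L)\neq\emptyset$, and the projection formula applies.

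With this fix, several pieces become unnecessary and can be dropped:
\begin{itemize}
\item the spreading out in Step~1;
\item the incorrect aside in your first bullet about $\{u\}$ being divisible;
\item the alternative at the end of Step~3.
\end{itemize}
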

    \begin{proof}
        Note that every finite extension of $K$ is of the form $\bigcup_{n \geq 1} l(\!(t^{1/n})\!)$ for some finite extension $l/k$. Since $l$ also satisfies the hypothesis of the proposition, we may assume $k=l$. Then it is enough to check that for every rationally connected variety $Z$ over $K$ we have $\mathrm{K}_q(K) = \mathrm{N}_q(Z/K)$.\par
        Let $Z$ be a smooth projective rationally connected variety over $K$. For $n \in \mathbf{N}$ denote by $R_n$ the ring $R[\![t^{1/n}]\!]$ and $K_n$ its fraction field. We can apply \cite[Lemme 7.5]{Wittenberg2015KKQp} to find an $n \in \mathbf{N}$ and a regular, projective flat $R_n$-scheme $\mathcal{Z}$ whose special fibre $Y := \mathcal{Z} \otimes_{R_n} k$ is reduced and a strict normal crossing divisor of $\mathcal{Z}$ and such that $\mathcal{Z}\otimes_{R_n} K = Z$. We denote by $Z_n$ the rationally connected variety $\mathcal{Z} \otimes_{R_n} K_n$. \par
        Thanks to \cite[Theorem 1.2]{HogadiXu2009Degenerations} the $k$-variety $Y$ contains a geometrically integral rationally connected variety. Then $\mathrm{K}_q(k) = \mathrm{N}_q(Y/k)$ because $k$ satisfies $C^q_{rc}$. We begin by proving that $\mathrm{U}_q(K_n)$ is contained in $\mathrm{N}_q(Z_n/K_n)$. \par
    
    Let $\alpha \in \mathrm{U}_q(K_n)$ and $s:\mathrm{U}_q(K_n) \to \mathrm{K}_q(k)$ the natural specialisation map, see Section \ref{sec prelim and not}. There exists finite extensions $k_1, \cdots, k_n$ of $k$ and for each $i \in \{1,\cdots, n\}$ an element $\beta_i \in \mathrm{K}_q(k_i)$ such that $Y(k_i) \neq \emptyset$ for every $i \in \{1,\cdots, n\}$ and
        \[ s(\alpha) = \prod_{i=1}^n \mathrm{N}_{k_i/k}(\beta_i).\]
        Since the norm groups are a birational invariant of smooth varieties, see \cite[Remarque~5.7]{Wittenberg2015KKQp}, we may assume that $Y$ admits a $k_i$-point contained in only one irreducible component, i.e. $Y$ admits a smooth $k_i$-point for every $i \in \{1,\cdots, n\}$. Denote by $K_{n,i}$ the extension $k_i K_n$. Hensel's lemma ensures that $Z(K_{n,i}) \neq \emptyset$. We have the following commutative diagram with exact rows whose vertical arrow are the product of the norm maps
        \begin{equation} \label{diag norm and units in K-theory}
            \begin{tikzcd}
                0 \ar[r] & \prod_{i=1}^n\mathrm{U}_{q}^1(K_{n,i}) \ar[r] \ar[d] & \prod_{i=1}^n\mathrm{U}_{q}(K_{n,i}) \ar[r] \ar[d] & \prod_{i=1}^n\mathrm{K}_{q}(k_i) \ar[r] \ar[d] & 0 \\
                0 \ar[r] & \mathrm{U}_{q}^1(K_n) \ar[r]  & \mathrm{U}_{q}(K_n) \ar[r]  & \mathrm{K}_{q}(k) \ar[r] & 0.
            \end{tikzcd}
        \end{equation}
        Noting that $\mathrm{U}_{q}^1(K_n)$ is divisible, a diagram chase and a restriction-correstriction argument proves that $\alpha$ belongs to $\mathrm{N}_q(Z_n/K_n)$. \par
        Let $\alpha \in \mathrm{K}_q(K)$. We can find $m \in \mathbf{N}$ and $\alpha_0 \in \mathrm{K}_q(K_m)$ such that $\alpha  = \alpha_0|_K$. Moreover, we may assume that $m$ is big enough so that the previous paragraph ensures that $\mathrm{U}_q(K_m) \subseteq \mathrm{N}_q(Z_m/K_m)$. It is possible to write $\alpha_0$ as a product of symbols of the form $\{t^{1/m}, u_2, \cdots ,u_q \}$ and $\{u_1, u_2, \cdots ,u_q \}$  where $u_i$ is a unit in $R_m$ for every $i \in \{1,\cdots, q\}$ because such symbols generate $\mathrm{K}_q(K_m)$. We already know that symbols of the form $\{u_1, u_2, \cdots ,u_q \}$ are norms of $Z_m$. It would be enough to prove that symbols of the form $\{t^{1/m}, u_2, \cdots ,u_q \}$ become a norm after adding a big enough root of $t^{1/m}$. \par 
        Thanks to \cite[Théorème 7.5]{CT2011presqueRat} there exists a finite extension $k'/k$ such that $Z_m(k'(\!(t^{1/m})\!))$ is non-empty. Denote by $m'$ the degree of $k'/k$. Then $t^{1/m}$ belongs to $\operatorname{N}_1\left(k'(\!(t^{1/mm'})\!)/k(\!(t^{1/mm'})\!)\right)$. This means that $\{t^{1/m}, u_2, \cdots ,u_q \}|_{K_{mm'}}$ belongs to $\mathrm{N}_q(Z_{mm'}/K_{mm'})$, in particular, we have $\{t^{1/m}, u_2, \cdots ,u_q \}|_{K} \in \mathrm{N}_q(Z/K)$. This is enough to conclude that $\alpha$ belongs to $\mathrm{N}_q(Z/K)$.
    \end{proof}
    \begin{rmk}\label{rmk Puiseux transfer strong C1}
        A similar argument proves the same transfer principle for the strong $C_1^q$ property defined in \cite[Definition 4.1]{Wittenberg2015KKQp}.
    \end{rmk}
    \noindent This proposition is enough to give the following transfer principle.
    \begin{proposition}\label{prop transfer Laurent}
            Let $q \in \mathbf{N}$, and $k_0$ be a field of characteristic zero. If $k_0$ satisfies $C_{rc}^q$, the field of Laurent series $k:= k_0(\!(t)\!)$ satisfies $C_{rc}^{q+1}$.
        \end{proposition}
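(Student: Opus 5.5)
Fix a finite extension $k'/k$ and a smooth projective separably rationally connected variety $Z$ over $k'$; we must show $\mathrm{K}_{q+1}(k')=\mathrm{N}_{q+1}(Z/k')$. Since $k'$ is again a complete discretely valued field $k_0'(\!(t')\!)$ with $k_0'/k_0$ finite, and $k_0'$ satisfies $C^q_{rc}$ by definition, we may assume $k'=k$. Let $K=\bigcup_{n}k_0(\!(t^{1/n})\!)$ be the Puiseux series field. By Proposition~\ref{prop transfer Puiseux}, $K$ satisfies $C^q_{rc}$. By the same argument as in Lemma~\ref{lemma generators Kthry}, $\mathrm{K}_{q+1}(k)$ is generated by symbols $\{u_1,\dots,u_{q+1}\}$ and $\{t,u_2,\dots,u_{q+1}\}$ with $u_j\in k_0[\![t]\!]^\times$. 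Modulo $\mathrm{U}^1_{q+1}(k)$ (which is divisible in characteristic $0$), these reduce to residue-field data. We treat the two kinds of generators separately.

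\textbf{Symbols $\{t,\beta\}$ with $\beta=\{u_2,\dots,u_{q+1}\}$.} Apply $C^q_{rc}$ to $K$ and the variety $Z_K$. Since $\beta|_K$ is a finite combination of norms from finite extensions $K_j/K$ with $Z(K_j)\neq\emptyset$, all of this data descends to some finite level: finite extensions $k_j$ of $k_n:=k_0(\!(t^{1/n})\!)$ with $Z(k_j)\ne\emptyset$, and $\gamma_j\in \mathrm{K}_q(k_j)$, such that $\beta|_{k_n}=\sum_j \mathrm{N}_{k_j/k_n}(\gamma_j)$. Now $t^{1/n}\in k_n$ satisfies $\mathrm{N}_{k_n/k}(-(-t)^{1/n})=t$ up to sign adjustments, as in Lemma~\ref{lemma algebraically maximal extension t norm}. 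The projection formula then gives
\[
\{t,\beta\}=\mathrm{N}_{k_n/k}\{\pm(\mp t)^{1/n},\beta|_{k_n}\}=\sum_j \mathrm{N}_{k_j/k}\{\pm(\mp t)^{1/n}|_{k_j},\gamma_j\},
\]
which lies in $\mathrm{N}_{q+1}(Z/k)$.

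\textbf{Unit symbols $\{u_1,\dots,u_{q+1}\}$.} One option is the same trick: $\{u_1,\dots,u_{q+1}\}$ lies in $\mathrm{K}_{q+1}(K)$, but $C^q_{rc}$ only controls $\mathrm{K}_q$, so we instead use the degeneration argument from the proof of Proposition~\ref{prop transfer Puiseux}. By \cite[Lemme 7.5]{Wittenberg2015KKQp} (semistable reduction), after passing to $k_n$ there is a regular model whose reduced snc special fibre $Y$ contains a geometrically integral rationally connected subvariety (by \cite{HogadiXu2009Degenerations}). Hence $\mathrm{N}_q(Y/k_0)=\mathrm{K}_q(k_0)$, and the argument there also gives $\mathrm{N}_{q+1}(Y/k_0)=\mathrm{K}_{q+1}(k_0)$, since $\mathrm{N}_q$ surjectivity plus the product with symbols in $k_0^\times$ yields $\mathrm{K}_{q+1}$ via the projection formula. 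Smooth points of $Y$ then lift by Hensel's lemma, and the diagram~\eqref{diag norm and units in K-theory} gives $\mathrm{U}_{q+1}(k_n)\subseteq \mathrm{N}_{q+1}(Z/k_n)$. A simpler route: $\mathrm{U}_{q+1}(k_n)$ is also handled by writing $\{u_1,\dots\}=\{u_1,\beta'\}$ and applying the $\{t,\cdot\}$-style argument with $\beta'$. Finally we descend from $k_n$ to $k$: the unit symbol equals $\tfrac1n\mathrm{N}_{k_n/k}$ of its restriction, modulo divisible $\mathrm{U}^1$, and the correction for divisibility by $n$ is absorbed using divisibility of $\mathrm{U}^1_{q+1}(k)$ and restriction--correstriction.

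The main obstacle is this last descent of unit symbols. Passing to $k_n$ introduces a factor $e=n$ on residues, per the second commutative diagram in the preliminaries. We would try to avoid it by observing that $s$ of the unit symbol comes from $\mathrm{K}_{q+1}(k_0)=\mathrm{N}_{q+1}(Y/k_0)$, and by lifting norms directly along the totally ramified $k_n/k$, whose residue extension is trivial, so that no factor appears on $\mathrm{U}_{q+1}$ except through $\mathrm{U}^1$.
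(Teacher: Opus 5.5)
The part of your proof that handles the symbols $\{t,\beta\}$ is correct and is the paper's argument. It works for every $\beta\in\mathrm{K}_q(k)$, not only for unit symbols. The sign is harmless: $(-t)^{1/n}$ need not lie in $k_n=k_0(\!(t^{1/n})\!)$, but $-t$ is a norm from every $k_n$, so either use the tower $\bigcup_n k((-t)^{1/n})$ or note that $-t$ is also a uniformiser.

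\textbf{The gap: unit symbols.} The genuine gap is the unit symbols $\{u_1,\dots,u_{q+1}\}$, which you flag yourself. Neither of your proposed treatments closes it.

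\emph{The descent from $k_n$ to $k$ fails.} The extension $k_n/k$ is totally ramified of degree $n$. The second commutative diagram of the preliminaries says that $\mathrm{N}_{k_n/k}$ induces multiplication by $e=n$ on $\mathrm{U}_{q+1}/\mathrm{U}^1_{q+1}\cong\mathrm{K}_{q+1}(k_0)$. Total ramification with trivial residue extension is exactly what produces this factor, so ``lifting norms along $k_n/k$'' cannot avoid it. Hence $\mathrm{U}_{q+1}(k_n)\subseteq\mathrm{N}_{q+1}(Z/k_n)$ only gives $n\cdot\mathrm{U}_{q+1}(k)\subseteq\mathrm{N}_{q+1}(Z/k)$; this is just $\mathrm{N}_{k_n/k}(\alpha|_{k_n})=n\alpha$. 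Divisibility of $\mathrm{U}^1_{q+1}(k)$ cannot absorb the factor, because $s(\alpha)\in\mathrm{K}_{q+1}(k_0)$ need not be divisible by $n$. Nor do the symbols $\{t,\beta\}$ help. They all lie in the kernel of the specialisation $\alpha\mapsto\partial\{-t,\alpha\}$, since $\{-t,t\}=0$, and this map agrees with $s$ on $\mathrm{U}_{q+1}(k)$.

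\emph{Your ``simpler route'' also fails as stated.} In general $u_1$ is not a universal norm of $\bigcup_n k_0(\!(t^{1/n})\!)/k$. A unit of $k$ that is a norm from $k_n$ is the norm of a unit of $k_n$, so its residue is an $n$-th power in $k_0^\times$.

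\textbf{The missing idea.} Your $\{t,\beta\}$ argument uses nothing about $t$ beyond its being a uniformiser. For any uniformiser $\pi$ we have $k=k_0(\!(\pi)\!)$. So $\bigcup_n k(\pi^{1/n})$ is again a Puiseux series field over $k_0$, and it satisfies $C^q_{rc}$ by Proposition~\ref{prop transfer Puiseux}. The same descent and projection formula then give $\{\pi,\beta\}\in\mathrm{N}_{q+1}(Z/k)$ for all $\beta\in\mathrm{K}_q(k)$.

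Since $tu_1$ is a uniformiser, the identity $\{u_1,\beta'\}=\{tu_1,\beta'\}-\{t,\beta'\}$ puts every unit symbol in $\mathrm{N}_{q+1}(Z/k)$. Equivalently: uniformisers generate $k^\times$, so the symbols $\{\pi,u_1,\dots,u_q\}$ with $\pi$ a uniformiser generate $\mathrm{K}_{q+1}(k)$. This is exactly how the paper concludes (``since this is true for every uniformiser''). It makes the whole semistable-reduction, Hogadi--Xu, Hensel and diagram-chase part of your unit-symbol treatment unnecessary.
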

        \begin{proof}
            Let $X$ be a smooth projective rationally connected variety over $k$. Let $\pi \in k$ be an uniformiser and $u_1,\cdots ,u_q$ units in $k$. Denote by $K_{\pi}$ the field $\bigcup_{n\geq 1} k(\pi^{1/n})$. Proposition~\ref{prop transfer Puiseux} ensures that $K_{\pi}$ satisfies $C_{rc}^q$. In particular, we can find finite extensions $K_1,\cdots ,K_n$ of $K_{\pi}$ and for every $i \in \{1 ,\cdots, n\}$ an element $\alpha_i \in \mathrm{K}_q(K_i)$ such that $X(K_i) \neq \emptyset$ and 
            \[ \{u_1, \cdots , u_n\}|_{K_{\pi}} = \prod_{i=1}^n \mathrm{N}_{K_i/ K_{\pi}} (\alpha_i).\]
            This descends to a finite extension. Indeed, we can find  a finite subextension $\tilde{k}$ of $K_{\pi}/k$, finite extensions $k_1,\cdots k_n$ of $\tilde{k}$, and for $i \in \{1 , \cdots, n\}$ classes $\beta_i\in\mathrm{K}_q(k_i)$ such that
            \[\{u_1, \cdots , u_n\}|_{\tilde{k}} = \prod_{i=1}^n \mathrm{N}_{k_i/ \tilde{k}} (\beta_i).\]
            Since $\tilde{k}$ is contained in $K_{\pi}$, the uniformiser $\pi$ is a norm of the extension $\tilde{k}/k$. Fix $\lambda \in \tilde{k}^{\times}$ such that $\mathrm{N}_{\tilde{k}/k} (\lambda) = \pi$. Then,
            \begin{align*}
                \prod_{i=1}^n  \mathrm{N}_{k_i/ k} \{\lambda,\beta_i \}& = \prod_{i=1}^n \mathrm{N}_{\tilde{k}/k} \circ \mathrm{N}_{k_i/ \tilde{k}} \{\lambda,\beta_i \} \\
                & =  \mathrm{N}_{\tilde{k}/k}\{\lambda,\prod_{i=1}^n \mathrm{N}_{k_i/ \tilde{k}}(\beta_i) \} \\
                & =  \{\mathrm{N}_{\tilde{k}/k}(\lambda),\{u_1, \cdots , u_n\} \} \\
                & =  \{\pi,u_1, \cdots ,u_q \} 
            \end{align*}
            Since this is true for every uniformiser, we conclude that $\mathrm{N}_q(X/k) = \mathrm{K}_q(k)$.
        \end{proof}
        \noindent This gives a different proof of the $C_1^{m+1}$ property for $\mathbf{Q}_p(\!(t_1)\!) \cdots (\!(t_{m})\!)$ thanks to \cite[Proposition~A.2]{HarryFelipe2025Stability}.
        \begin{cor}
            Let $  d \geq 1$ be a natural number. The field $\mathbf{Q}_p(\!(t_1)\!) \cdots (\!(t_{m})\!)$ satisfies~$C_{rc}^{m+1}$.
        \end{cor}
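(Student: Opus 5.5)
The plan is an induction on $m$, taking the case $m=0$ (that is, $\mathbf{Q}_p$ itself) as the base and applying Proposition~\ref{prop transfer Laurent} at each step.

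\textbf{Base case.} We need that $\mathbf{Q}_p$ satisfies $C_{rc}^1$. This is \cite[Proposition~A.2]{HarryFelipe2025Stability}, cited just before Proposition~\ref{prop transfer Puiseux}, which says every $p$-adic field is $C_{rc}^1$. Alternatively, it follows from the strong $C_1^1$ property of $\mathbf{Q}_p$ proved in the previous section, combined with the remark that strong $C_1^q$ implies $C_{rc}^q$ via \cite[Corollary~3.8]{Kollar1995RationalCurvesVarieties}. That remark applies because a smooth projective rationally connected variety in characteristic zero has $\chi(X,\mathcal{O}_X)=1$.

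\textbf{Inductive step.} Suppose $k_{m-1}:=\mathbf{Q}_p(\!(t_1)\!)\cdots(\!(t_{m-1})\!)$ satisfies $C_{rc}^{m}$. The field $k_{m-1}$ has characteristic zero, so Proposition~\ref{prop transfer Laurent} applies with $k_0=k_{m-1}$ and $q=m$. It shows that $k_m=k_{m-1}(\!(t_m)\!)$ satisfies $C_{rc}^{m+1}$, which completes the induction.

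\textbf{Potential obstacle.} The only delicate point is that Proposition~\ref{prop transfer Laurent} requires $C_{rc}^q$ for every finite extension of $k_0$, and not just for $k_0$. This is built into Definition~\ref{defi geometric Ciq}, so the induction hypothesis automatically covers finite extensions. The argument is therefore purely formal once the base case is granted. (The integer $d$ in the statement plays no role; the relevant index is $m$.)
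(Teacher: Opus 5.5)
Your proof is correct and follows the paper's intended argument. The paper takes the base case that $p$-adic fields, together with all their finite extensions, are $C_{rc}^1$ by \cite[Proposition~A.2]{HarryFelipe2025Stability}, then applies Proposition~\ref{prop transfer Laurent} $m$ times in characteristic zero, exactly as you do. Your alternative base case is also valid: combine the strong $C_1^1$ property of $\mathbf{Q}_p$ from Section~\ref{sec strong C1q} with the fact that $\chi(X,\mathcal{O}_X)=1$ for smooth projective rationally connected varieties in characteristic zero. You are also right that the stray $d$ in the statement plays no role.
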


\section*{Acknowledgments}
 The first-named author would like to thank Diego Izquierdo for his constant support and guidance. The authors also thank Olivier Wittenberg for his thoughtful comments. The second-named author acknowledges financial support from the Alexander von Humboldt Foundation and also thanks the École Normale Supérieure for its hospitality during a three-month visit.
    \bibliographystyle{alpha}
    \bibliography{ref.bib}
\end{document}